\newcommand{\I}{\operatorname{I}}
\newcommand{\type}{\operatorname{type}}
\newcommand{\Stab}{\operatorname{Stab}}
\newcommand{\Sys}{\operatorname{Sys}}
\newcommand{\Fill}{\operatorname{Fill}}
\newcommand{\SFill}{\operatorname{SFill}}
\newcommand{\Exp}{\operatorname{Exp}}
\newcommand{\supp}{\operatorname{supp}}
\newcommand{\Cone}{\operatorname{Cone}}
\newcommand{\Crad}{\operatorname{Crad}}
\newcommand{\Area}{\operatorname{Area}}
\newcommand{\Dehn}{\operatorname{Dehn}}
\newcommand{\Vol}{\operatorname{Vol}}
\newcommand{\dist}{\operatorname{dist}}
\newcommand{\Free}{\operatorname{Free}}
\newcommand{\Unip}{\operatorname{Unip}}
\begin{document}
\newtheorem{theorem}{Theorem}[section]
\newtheorem{lemma}[theorem]{Lemma}
\newtheorem{definition}[theorem]{Definition}
\newtheorem{claim}[theorem]{Claim}
\newtheorem{example}[theorem]{Example}
\newtheorem{conjecture}[theorem]{Conjecture}
\newtheorem{remark}[theorem]{Remark}
\newtheorem{proposition}[theorem]{Proposition}
\newtheorem{corollary}[theorem]{Corollary}
\newtheorem{observation}[theorem]{Observation}
\newcommand{\subscript}[2]{$#1 _ #2$}

\title{Coboundary and cosystolic expansion from strong symmetry}

\author{
Tali Kaufman
\footnote{Department of Computer Science, Bar-Ilan University, Ramat-Gan, 5290002, Israel, email:kaufmant@mit.edu, research supported by ERC and BSF.}
\and
Izhar Oppenheim
\footnote{Department of Mathematics, Ben-Gurion University of the Negev, Be'er Sheva 84105, Israel, email: izharo@bgu.ac.il }
}

\maketitle

\begin{abstract}
Coboundary and cosystolic expansion are notions of expansion that generalize the Cheeger
constant or edge expansion of a graph to higher dimensions. The classical Cheeger inequality
implies that for graphs edge expansion is equivalent to spectral expansion.  In higher dimensions this is not the case: a simplicial complex can be spectrally expanding but not have high dimensional edge-expansion.
The phenomenon of high dimensional edge expansion in higher dimensions is much more
involved than spectral expansion, and is far from being understood. In particular, prior to this work, the only known bounded degree cosystolic expanders known were derived from the theory of buildings that is far from being elementary.

In this work we study high dimensional complexes which are {\em strongly symmetric}. Namely, there is a group that acts transitively on top dimensional cells of the simplicial complex [e.g., for graphs it corresponds to a group that acts transitively on the edges]. Using the strong symmetry, we develop a new machinery to prove coboundary and cosystolic expansion.

It was an open question whether the recent elementary construction of bounded degree spectral high dimensional expanders based on coset complexes give rise to bounded degree cosystolic expanders. In this work we answer this question affirmatively. We show that these complexes give rise to bounded degree cosystolic expanders in dimension two, and that their links are (two-dimensional) coboundary expanders. We do so by exploiting the strong symmetry properties of the links of these complexes using a new machinery developed in this work.

Previous works have shown a way to bound the co-boundary expansion using strong symmetry in the special situation of "building like" complexes. Our new machinery shows how to get co-boundary expansion for \emph{general} strongly symmetric coset complexes, which are not necessarily "building like", via studying the (Dehn function of the) presentation of the symmetry group of these complexes.

\end{abstract}

\section{Introduction}

High dimensional expansion is a vibrant emerging field that has found
applications to PCPs \cite{Dinur2017Highdimensionalexpanders} and
property testing \cite{Kaufman2014HDTesting}, to counting problems and matroids \cite{Anari2019matroids}, to list decoding \cite{Dinur2019listdecoding}, and recently to a breakthrough construction of decodable quantum error correcting codes that outperform the state-of-the art previously known codes \cite{EvraKaufmanZemor}. We refer the reader
to \cite{lubotzky2017high} for a recent (but already outdated) survey.

The term high dimensional expander means a simplicial complex that have expansion properties that are analogous to expansion in a graph. Nevertheless, the question of what is a high dimensional expander is still unclear. There is a spectral definition of high dimensional expanders that generalizes the spectral definition of expander graphs and a geometrical/topological definition that generalizes the notion of edge expansion (or Cheeger constant) of a graph. For a graph the spectral and the geometric definitions of expansion are known to be equivalent (via the celebrated Cheeger inequality) while in high dimensions the spectral and geometric definitions are known to be NOT equivalent (see \cite[Theorem 4]{GunW} and \cite{SKM}).

The aim of this paper is to present elementary constructions of new families of $2$-dimensional simplicial complexes with high dimensional edge expansion, and in particular, of new elementary {\em bounded degree} families of cosystolic expanders (see exact definition below). The question of giving an elementary construction of a family of bounded degree spectrally high dimensional expanders got recently a satisfactory answer. Namely, it was understood that such a family needs to obey a specific local spectral criterion and in \cite{KOconstruction} we used this understanding in order to construct elementary families of high dimensional {\em spectrally} expanding families (prior non-elementary constructions were known). Here, we further study of examples of \cite{KOconstruction}, and show that they also give rise to bounded degree cosystolic expanders:
\begin{theorem}[New cosystolic expanders, Informal, see also Theorem \ref{new cosystolic expanders thm intro}]
\label{new cosystolic expanders thm intro informal}
For every large enough odd prime power $q$, the family of $2$-skeletons of the $3$-dimensional local spectral expanders constructed in \cite{KOconstruction} using elementary matrices over $\mathbb{F}_q [t]$ is a family of bounded degree cosystolic expanders.
\end{theorem}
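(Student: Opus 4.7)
\section*{Proof proposal for Theorem \ref{new cosystolic expanders thm intro informal}}

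\textbf{Overall strategy: local-to-global reduction.} The plan is to deduce global cosystolic expansion of the $2$-skeleton from a combination of (i) the already-established local spectral expansion of the complexes of \cite{KOconstruction} and (ii) coboundary expansion of the links of vertices. This is exactly the type of input handled by an Evra--Kaufman style local-to-global machinery: once every vertex link is a uniformly good coboundary expander and the complex is a sufficiently strong one-sided local spectral expander (both in the vertex and edge links), the whole complex is a cosystolic expander in the relevant dimension, with an explicit quantitative bound. For the $2$-skeleton of a $3$-dimensional complex, what one needs is coboundary expansion of the $1$-dimensional links (for the $0$-cosystolic norm) and, for the $1$-dimensional cosystolic statement, coboundary expansion of the $2$-dimensional links of vertices. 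So the first step is to quote the spectral bounds from \cite{KOconstruction} for large enough $q$, and reduce everything to establishing uniform coboundary expansion of vertex links.

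\textbf{Identifying the links.} Because the ambient complex is a coset complex of the group generated by elementary matrices over $\mathbb{F}_q[t]$, each vertex link is again a coset complex of a smaller group (a link of the parabolic corresponding to the vertex type). Crucially, these links inherit a strong symmetry: a subgroup of the original group acts transitively on their top-dimensional simplices. So the second step is to extract, for each vertex type, an explicit presentation $\langle S \mid R \rangle$ of the symmetry group of the link, where $S$ corresponds to the generators indexing $1$-cells meeting a fixed top-dimensional simplex and $R$ records the $2$-cell relations.

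\textbf{From Dehn function to coboundary expansion.} Now I invoke the new machinery advertised in the abstract. The key observation is that, under strong symmetry, the coboundary expansion constant of a simplicial complex is controlled by the Dehn function of the presentation $\langle S \mid R \rangle$ obtained as above: a $1$-cochain with small coboundary corresponds to an assignment of group elements on edges such that the cocycle condition fails rarely on triangles, and filling this cochain by a $0$-cochain is exactly the task of writing a given word as a product of conjugates of relators. Therefore the third, and main, step is to establish a linear (or at worst ``good enough'') Dehn function for the presentation of the symmetry group of each vertex link. For the elementary matrix groups over $\mathbb{F}_q[t]$ in rank $3$, one writes down the Steinberg-type commutator and Chevalley relations among the $e_{ij}(f)$, and shows that an arbitrary null-homotopic word can be reduced to the trivial word via a bounded number of these relations per letter, yielding the required linear isoperimetric inequality. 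This is the step I expect to be the main obstacle: the verification is elementary in spirit but requires careful bookkeeping because the relators involve parameters from $\mathbb{F}_q[t]$, and one has to make the constants uniform in $q$ (so that only the final constants, not the quality of expansion, degrade with $q$).

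\textbf{Putting it together.} Once the Dehn function bound is in hand, the strong-symmetry machinery converts it into a uniform lower bound on the coboundary expansion of every vertex link, independent of $q$ for $q$ sufficiently large. Plugging this, together with the spectral bounds of \cite{KOconstruction}, into the local-to-global theorem yields cosystolic expansion of the $2$-skeleton with a quantitative constant. Bounded degree and the fact that we produce an infinite family both follow directly from the coset-complex construction, completing the proof of Theorem \ref{new cosystolic expanders thm intro informal}.
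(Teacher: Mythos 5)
Your high-level plan matches the paper's: reduce cosystolic expansion of the $2$-skeleton to coboundary expansion of the links via the Evra--Kaufman criterion (Theorem \ref{EK criterion thm}), observe that the links are strongly symmetric coset complexes of unipotent groups, and bound their coboundary expansion through presentation-theoretic properties of the symmetry group, specifically a $q$-independent bound on the relevant Dehn-function value. That is exactly the skeleton of the paper's argument, and you also correctly note that the edge links (which are bipartite graphs) are dispatched by the spectral bound plus the graph Cheeger inequality, so the real work is for vertex links.

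Where your account diverges from the paper is in the middle step. You describe a \emph{direct} passage from the Dehn function to the coboundary constant, with a heuristic in which a $1$-cochain is ``an assignment of group elements on edges'' and filling it is ``writing a word as a product of conjugates of relators.'' This is not what the paper does: cochains take values in $\mathbb{F}_2$, not in the group, and there is no direct non-abelian-cocycle interpretation in play. The paper instead routes the argument through two intermediate notions it introduces precisely for this purpose: (a) the \emph{cone radius} $\Crad_k(X)$, which under strong symmetry bounds $\Exp^k_b(X)$ from below (Theorem \ref{Crad thm}), via an averaging argument over translates of a single cone function by the symmetry group; and (b) the homotopy \emph{filling constants} $N_0, N_1$ of Theorem \ref{simplicial filling constants thm}, which bound the cone radius. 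Only at the level of $N_1$ does the Dehn function enter, through a van Kampen--style translation of simplicial $1$-spheres in the coset complex into trivial words in $\langle \bigcup_i K_{\{i\}} \mid \bigcup_i R_i \rangle$ (Theorem \ref{bound on N1 thm}), yielding the bound $N_1 \leq p(2N_0+1, \Dehn(2N_0+1))$. Note that what matters is not a ``linear isoperimetric inequality'' but a $q$-uniform bound on $\Dehn$ evaluated at the single (bounded) argument $2N_0+1$; the group is finite, so linearity has no content here. Finally, the actual $q$-uniform Dehn bound for the unipotent group over $\mathbb{F}_q[t]$ is not just elementary bookkeeping with Steinberg relations: it leans on the presentation results of Biss and Dasgupta for $\Unip_{n+1}(\mathbb{F}_q)$ (Theorem \ref{BissD thm 1}), adapted to the polynomial setting (Theorem \ref{bound on N_1 thm}), and is also where the hypothesis that $q$ is odd (or $n \geq 4$) enters. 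So your outline is correct as a roadmap, but the cone-function and filling-constant layer is the genuinely new machinery the paper supplies, and your sketch would need to be replaced by it (or by an argument of equivalent strength) to close the gap between ``Dehn function bounded'' and ``coboundary expansion bounded.''
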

Prior to this work, the known examples of bounded degree cosystolic expanders arose from the theory of Bruhat-Tits buildings and were far from being elementary.

Relying on the work of the first named author and Evra (see Theorem \ref{EK criterion thm} below), the proof of this Theorem boils down to proving that the links of our construction are coboundary expanders and that their coboundary expansion can be bounded independently of $q$ (i.e., that the coboundary does not deteriorate as $q$ increases). Thus, the real problem is bounding the coboundary expansion of the links. This goal is achieved utilizing the fact that the links are strongly symmetric coset complexes.

\paragraph*{Coboundary expansion for strongly symmetric (coset) complexes.} We call a simplicial complex is strongly symmetric if it has a symmetry group acting transitively on top dimensional simplices. As noted above, our problem is to show that the links in our examples are coboundary expanders. In the graph setting, there is a classical Theorem (see Theorem \ref{thm for symmetric graphs} below) stating that for a strongly symmetric graph the Cheeger constant can be bounded from below by $\frac{1}{2D}$, where $D$ denotes  the diameter of the graph.

We generalize this idea: we define a high dimensional notion of radius and show that for strongly symmetric complexes, this radius can be used to bound the coboundary expansion. We then show that this radius can be bounded using filling constants of the complex. These ideas of bounding the coboundary expansion for symmetric complexes using filling constants already appeared implicitly in Gromov's work \cite{Grom} and in the work of Lubotzky, Meshulam and Mozes \cite{LMM}. However, these previous works considered the setting of spherical buildings and ``building-like complexes'' and thus bounding the filling constants in these examples were relatively simple due to the existence of apartments in the building (or ``apartment-like'' sub-complexes in ``building-like'' complexes). In our setting, we consider a more general situation (not assuming ``apartment-like'' sub-complexes) and thus bounding the filling constants becomes a much harder task.

What helps to solve this harder problem of bounding the filling constants is working with a strongly symmetric \emph{coset complexes} (see Definition \ref{coset complex def}). We note that this is not a very restrictive assumption - under some mild assumptions, every strongly symmetric complex is a coset complex (see Proposition \ref{symmetric complexes and coset complexes prop}). For a coset complex one can fully reconstruct the complex via its symmetry group and its subgroup structure. Thus every geometrical/topological property of a coset complex (including coboundary expansion) is encoded in some way in the presentation of its symmetry group. Using this philosophy, we are able to prove a bound for filling constants for two dimensional coset complex in terms of the presentation of its symmetry group (namely, in terms on its Dehn function - see Definition \ref{Dehn func def}). Thus, for two dimensional coset complexes, we get a bound on the coboundary expansion in terms of presentation-theoretic properties of the symmetry group.

\paragraph*{Coboundary expansion of the links in our construction.} If follows from our work described above that in order to show that the links in our construction are coboundary expanders, we should verify a presentation-theoretic property for their symmetry group (namely, to bound its Dehn function). Luckily for us, the symmetry group of the links in our construction is a generalization of the group of unipotent groups over finite fields. For the finite field case, the presentation of these unipotent groups was studied by Biss and Dasgupta \cite{BissD}. Using their ideas, we are able to show that the symmetry groups of links in our construction fulfil the presentation-theoretic condition that allows us to bound their coboundary expansion. Namely, we prove the following:
\begin{theorem}[New coboundary expanders, Informal, see also Theorem \ref{new coboundary expanders thm intro}]
\label{new cosystolic expanders thm intro informal}
For every odd prime power $q$, the links of the $3$-dimensional local spectral expanders constructed in \cite{KOconstruction} using elementary matrices over $\mathbb{F}_q [t]$ are coboundary expanders and their coboundary expansion can be bounded from below independently of $q$.
\end{theorem}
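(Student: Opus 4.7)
The plan is to derive the stated bound on coboundary expansion of the links by combining the general machinery developed earlier in the paper with a careful analysis of the presentations of the symmetry groups of these links. The first step is to identify each link as a two-dimensional strongly symmetric coset complex. Since the KO construction realizes the three-dimensional complex itself as a coset complex built out of elementary matrix subgroups over $\mathbb{F}_q[t]$, the link of a vertex is again a coset complex, whose structure subgroups are obtained by intersecting the original ones with the stabilizer of the vertex. A direct computation identifies the resulting symmetry group as a ``generalized unipotent group,'' that is, a group of unipotent upper-triangular matrices over a suitable quotient of $\mathbb{F}_q[t]$, acting transitively on the top-dimensional triangles of the link. Thus each link falls within the scope of the strong symmetry framework developed above.

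The second step is to invoke the main result of the paper on coset complexes, which bounds the coboundary expansion of a two-dimensional strongly symmetric coset complex from below in terms of the $1$-skeleton radius together with a filling/Dehn-type constant. For our links, the diameter of the $1$-skeleton with respect to the natural system of generators is straightforward to bound by a constant independent of $q$, because these generators form a bounded number of orbits on the vertex set and the relations between them already reach every vertex in a bounded number of steps. What remains is a bound on the two-dimensional filling constant, which, for a coset complex, is controlled (up to multiplicative constants depending only on the generating data) by the Dehn function of the presentation of the symmetry group, evaluated on loops of bounded length.

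The crucial third step is to exhibit such a presentation for the generalized unipotent symmetry group with a uniform bound on its Dehn function. Following the strategy of Biss--Dasgupta \cite{BissD} for unipotent groups over $\mathbb{F}_q$, the presentation uses one family of generators per simple root subgroup, together with Steinberg-type commutator relations and the additive relations within each root subgroup. The key point is that all of these relations can be filled using a number of elementary $2$-cells that depends only on the relation itself and not on the size of the field, because the combinatorial identities involved (additivity inside a root subgroup and Steinberg commutator identities between distinct roots) are purely algebraic and their proofs rearrange a uniformly bounded number of symbols regardless of $q$. Extending this analysis from $\mathbb{F}_q$ to the quotients of $\mathbb{F}_q[t]$ relevant to the KO construction requires some additional bookkeeping but does not change the qualitative picture.

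Combining these three steps yields a coboundary expansion constant for each link that depends only on bounded combinatorial data (the diameter, the Dehn-function constant, the degree), and hence is uniform in $q$. The main obstacle is the third step: one must verify carefully that the filling area for every relation in the Biss--Dasgupta-style presentation is genuinely independent of $q$ once the coefficient ring is enlarged from $\mathbb{F}_q$ to a quotient of $\mathbb{F}_q[t]$, since naive estimates may introduce hidden factors of $q$ through the multiplication table of the ring. A secondary technical point is checking that the reduction from the geometric filling constant to the algebraic Dehn function does not itself cost a factor growing with $q$; this needs the natural generating set to be invariant under the point stabilizer, which is indeed built into the coset complex structure. Once these verifications are in place, the theorem follows mechanically from the machinery already established in the paper.
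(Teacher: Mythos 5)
Your proposal correctly follows the paper's own strategy in all essential respects: identify each link as a strongly symmetric two-dimensional coset complex with a generalized unipotent symmetry group, apply the $N_0$/Dehn-function bound on coboundary expansion (Theorem \ref{N_0 + N_1 bound thm}), and then bound the Dehn function uniformly in $q$ by extending the Biss--Dasgupta presentation-theoretic analysis from $\Unip_{n+1}(\mathbb{F}_q)$ to the groups of unipotent matrices with $\mathbb{F}_q[t]$-polynomial entries that arise in the KO construction (Section \ref{New coboundary expanders sec}). One small imprecision: the link symmetry group $G_{link,q}$ is not a unipotent group over a quotient ring $\mathbb{F}_q[t]/(t^s)$, but rather the finite subgroup of upper-triangular matrices over $\mathbb{F}_q[t]$ in which the $(i,j)$ entry is constrained to have degree at most $j-i$; and the presentation whose Dehn function must be bounded is the one with generators $\bigcup_i K_{\{i\}}$ and relations the full multiplication tables $\bigcup_i R_i$, with the Steinberg and Biss--Dasgupta relations serving as an intermediate presentation shown to be reachable in $O(1)$ steps. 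Neither imprecision affects the correctness of the outline.
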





\subsection{Simplicial complexes}

An $n$-dimensional simplicial complex $X$ is a hypergraph whose maximal hyperedges are of size $n+1$, and which is closed under containment. Namely, for every hyperedge $\tau$ (called a face) in $X$, and every $\eta\subset\tau$, it must be that $\eta$ is also in $X$. In particular, $\emptyset \in X$. For example, a graph is a $1$-dimensional simplicial complex. Let $X$ be a simplicial complex, we fix the following terminology/notation:
\begin{enumerate}
\item $X$ is called {\em pure $n$-dimensional} if every face in $X$ is contained in some face of size $n+1$.
\item The set of all $k$-faces (or $k$-simplices) of $X$ is denoted $X(k)$, and we will be using the convention in which $X(-1) = \{\emptyset\}$.
\item For $0 \leq k \leq n$, the $k$-skeleton of $X$ is the $k$-dimensional simplicial complex $X(0) \cup X(1) \cup ... \cup X(k)$. In particular, the $1$-skeleton of $X$ is the graph whose vertex set is $X(0)$ and whose edge set is $X(1)$.
\item For a simplex $\tau \in X$, the link of $\tau$, denoted $X_\tau$ is the complex
$$\lbrace \eta \in X : \tau \cup \eta \in X, \tau \cap \eta = \emptyset \rbrace.$$
We note that is $\tau \in X(k)$ and $X$ is pure $n$-dimensional, then $X_\tau$ is pure $(n-k-1)$-dimensional.
\item A family of of pure $n$-dimensional simplicial complexes  $\lbrace X^{(s)} \rbrace_{s \in \mathbb{N}}$ is said to have bounded degree if there is a constant $L>0$ such that for every $s \in \mathbb{N}$ and every vertex $v$ in $X^{(s)}$, $v$ is contained in at most $L$ $n$-dimensional simplices of $X^{(s)}$.
\end{enumerate}

\subsection{The coboundary/cosystolic expansion and high order Cheeger constants}

Let us recall the geometric notion of expansion in graphs known as the edge expansion or Cheeger constant of a graph:
\begin{definition}[Cheeger constant of a graph]
For a graph $X=(V,E):$
\[h(X) := \mbox{min}_{A \neq \emptyset,V}\frac{|E(A,\bar{A})|}{\mbox{min}\{w(A),w (\bar{A})\}},\]
where for a set of vertices $U \subsetneqq V$, $w(U)$ denotes is the sum of the degrees of the vertices in $U$.
\end{definition}

The generalization of the Cheeger constant to higher dimensions originated in the works of Linial, Meshulam and Wallach (\cite{LM}, \cite{MW}) and independently in the work of Gromov (\cite{Grom}) and is now known as {\em coboundary expansion}. Later, a weaker variant of high dimensional edge expansion known as {\em cosystolic expansion} arose in order to answer questions regarding topological overlapping.



In order to define coboundary and cosystolic expansion, we also need some terminology. Let $X$ be an $n$-dimensional simplicial complex. Fix the following notations/definitions:
\begin{enumerate}
\item The space of $k$-cochains denoted $C^k (X) = C^k(X,\mathbb{F}_2)$ is the $\mathbb{F}_2$-vector space of functions from $X(k)$ to $\mathbb{F}_2$.
\item The {\em coboundary map} $d_k: C^k(X,\mathbb{F}_2) \rightarrow C^{k+1}(X,\mathbb{F}_2)$ is defined as:
\begin{equation*}
d_k( \phi)(\sigma)=\sum_{\tau \subset \sigma, |\tau|=|\sigma|-1} \phi (\tau),
\end{equation*}
\item The spaces of $k$-coboundaries and $k$-cocycles are subspaces of $C^k (X)$ defined as:

$B^k (X) = B^k(X,\mathbb{F}_2) = \mbox{Image}(d_{k-1})=$\mbox{ the space of $k$-coboundaries}.

$Z^k (X) =Z^k(X,\mathbb{F}_2) = \mbox{Ker}(d_{k})=$\mbox{ the space of $k$-cocycles}.
\item The function $ w : \bigcup_{k=-1}^n X(k) \rightarrow \mathbb{R}_+$ is defined as
$$\forall \tau \in X(k), w(\tau) = \frac{\vert \lbrace \sigma \in X(n) : \tau \subseteq \sigma \rbrace \vert}{{n+1 \choose k+1} \vert X(n) \vert}.$$
We note that $\sum_{\tau \in X(k)} w(\tau) =1$.
\item For every $\phi \in C^k (X)$, $w(\phi)$ is defined as
$$w(\phi) = \sum_{\tau \in \supp (\phi)} w(\tau).$$
\item For every $0 \leq k \leq n-1$, define the following $k$-expansion constants:
$$\Exp^k_b (X) =  \min \left\lbrace \frac{w (d_k \phi)}{\min_{\psi \in B^k (X)} w(\phi + \psi)} : \phi \in C^k (X) \setminus B^k (X) \right\rbrace.$$
$$\Sys^k (X) = \min \left\lbrace w (\psi) : \psi \in Z^k (X) \setminus B^k (X) \right\rbrace,$$
and
$$\Exp^k_z (X) =  \min \left\lbrace \frac{w (d_k \phi)}{\min_{\psi \in Z^k (X)} w(\phi + \psi)} : \phi \in C^k (X) \setminus Z^k (X) \right\rbrace.$$
\end{enumerate}

After these notations, we can define coboundary/cosystolic expansion:
\begin{definition}[Coboundary expansion]
Let $\varepsilon >0$ be a constant. We say that $X$ is an $\varepsilon$-coboundary expander if for every $0 \leq k \leq n-1$, $\Exp^k_b (X) \geq \varepsilon$.
\end{definition}

\begin{remark}
We leave it for the reader to verify that in the case where $X$ is a graph, i.e., the case where $n=1$, $\Exp^0_b (X)$ is exactly the Cheeger constant of $X$. Thus, we think of $\Exp^k_b (X)$ as the $k$-dimensional Cheeger constant of $X$.
\end{remark}

\begin{definition}[Cosystolic expansion]
Let $\varepsilon >0, \mu >0$ be constants and $X$ an $n$-dimensional simplicial complex $X$. We say that $X$ is a $(\varepsilon , \mu)$-cosystolic expander if for every $0 \leq k \leq n-1$, $\Exp^k_z (X) \geq \varepsilon$ and $\Sys^k (X) \geq \mu$.
\end{definition}

\begin{remark}
We note that if $\Exp^k_b (X) >0$, then it can be shown that $B^k (X) = Z^k (X)$ and thus $\Exp^k_b (X) = \Exp^k_z (X)$. However, there are examples of simplicial complexes with $\Exp^k_b (X)  =0$ and $\Exp^k_z (X) >0, \Sys^k (X) >0$.
\end{remark}

As in expander graphs, we are mainly interested in a family of bounded degree cosystolic expanders (and not a single complex that is a cosystolic expander):

\begin{definition}[A family of bounded degree cosystolic expanders]
A family of $n$-dimensional simplicial complexes $\lbrace Y^{(s)} \rbrace_{s \in \mathbb{N}}$ is a family of \textit{bounded degree cosystolic expanders} if:
\begin{itemize}
\item The number of vertices of $Y^{(s)}$ tends to infinity with $s$.
\item  $\lbrace Y^{(s)} \rbrace_{s \in \mathbb{N}}$ has bounded degree.
\item There are universal constants $\varepsilon >0 , \mu >0$ such that for every $s$, $Y^{(s)}$ is a $(\varepsilon , \mu)$-cosystolic expander.
\end{itemize}
\end{definition}

\begin{remark}
The motivation behind the definition of a family of cosystolic expanders is to proved a family of bounded degree complexes that have the topological overlapping property (see Definition \ref{top exp def} below).
\end{remark}

\subsection{The Evra-Kaufman criterion for cosystolic expansion}

In \cite{EvraK}, Evra and the first named author gave a criterion for cosystolic expansion. In order to state this criterion, we will need the following definition:

\begin{definition}[Local spectral expansion]
For $\lambda \geq 0$, a pure $n$-dimensional simplicial complex $X$ is called a (one-sided) $\lambda$-local spectral expander if for $-1 \leq k \leq n-2$ and every $\tau \in X(k)$, the one-skeleton of $X_\tau$ is a connected graph and the second largest eigenvalue of the random walk on the one-skeleton of $X_\tau$ is less or equal to $\lambda$.
\end{definition}

The idea behind the Evra-Kaufman criterion for cosystolic expansion is the following: For we can deduce cosystolic expansion from local spectral expansion and local coboundary expansion (i.e., coboundary expansion in the links) given that the local spectral expansion is ``strong enough'' so it ``beats'' the local coboundary expansion. More formally:

\begin{theorem}\cite[Theorem 1]{EvraK} [Evra-Kaufman criterion for cosystolic expansion]
\label{EK criterion thm}
For every $\varepsilon ' >0$ and $ n \geq 3$ there are $\mu (n, \varepsilon ') > 0, \varepsilon (n, \varepsilon ') >0$ and $\lambda (n, \varepsilon ') > 0$ such that for every pure $n$-dimensional simplicial complex if
\begin{itemize}
\item $X$ is a $\lambda$-local spectral expander.
\item For every $0 \leq k \leq n-2$ and every $\tau \in X(k)$, $X_\tau$ is a $\varepsilon '$-coboundary expander.
\end{itemize}
Then the $(n-1)$-skeleton of $X$ is a $(\varepsilon , \mu)$-cosystolic expander.
\end{theorem}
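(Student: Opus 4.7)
The plan is to prove the theorem by a local-to-global ``minimization plus gluing'' scheme, bootstrapping local coboundary expansion in links into global cosystolic expansion using the spectral hypothesis to control inconsistency between links. I fix $0 \le k \le n-2$ and a cochain $\phi \in C^k(X)$, and distinguish the two goals: bounding $\Exp^k_z(X)$ from below (given $d_k\phi$ small, find a cocycle close to $\phi$) and bounding $\Sys^k(X)$ from below (if $\phi \in Z^k(X) \setminus B^k(X)$, show $w(\phi) \ge \mu$).

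The first step is the \emph{localization} machinery. For each $v \in X(0)$, the restriction $\phi_v \in C^{k-1}(X_v)$ defined by $\phi_v(\tau) = \phi(\tau \cup \{v\})$ satisfies an identity of the form $(d_k\phi)_v = d^{X_v}_{k-1}(\phi_v) + R_v(\phi)$, where $R_v(\phi)$ is controlled by $\phi$ restricted to faces not containing $v$. Averaging over $v$ with weights $w(v)$ ties $w(d_k\phi)$ to the link-coboundaries $w(d^{X_v}_{k-1}\phi_v)$. After replacing $\phi$ with a \emph{locally minimal} representative of its coset modulo $B^k(X)$ (i.e., one for which $w(\phi) \le w(\phi + d_{k-1}\beta)$ for every $\beta \in C^{k-1}(X)$) we may assume, via the link coboundary expansion assumption $\Exp^{k-1}_b(X_v) \ge \varepsilon'$, that on each link either $\phi_v \in B^{k-1}(X_v)$ or $w(d^{X_v}_{k-1}\phi_v) \ge \varepsilon' \cdot w(\phi_v)$. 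Summing yields a first-pass inequality of the form $w(d_k\phi) \gtrsim \varepsilon' \cdot w(\phi_{\mathrm{bad}})$, where $\phi_{\mathrm{bad}}$ collects the contribution from ``non-trivial'' links.

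The second and hardest step is handling the ``good'' links, where $\phi_v \in B^{k-1}(X_v)$: here local triviality does not \emph{a priori} assemble into global triviality. The plan is to iteratively choose local bounding cochains $\psi_v \in C^{k-2}(X_v)$ with $d^{X_v}_{k-2} \psi_v = \phi_v$ in each good link, extend them naively to global $(k-1)$-cochains, and subtract their coboundary from $\phi$. The spectral hypothesis $\lambda$-local spectral expansion enters as a quantitative Garland-type estimate: it forces any pair of local choices $\psi_v, \psi_w$ to agree on their common faces up to error controlled by $\lambda \cdot w(d_k\phi)$. An iteration then contracts the remaining inconsistency geometrically, provided $\lambda$ is chosen small enough in terms of $n$ and $\varepsilon'$, producing in the limit a global cochain $\psi$ with $w(\phi + d_{k-1}\psi)$ bounded by $\varepsilon^{-1} w(d_k\phi)$ for a constant $\varepsilon = \varepsilon(n,\varepsilon')$. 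This yields the $\Exp^k_z$ bound (the resulting modified $\phi$ lies in $Z^k(X)$, which suffices for cosystolic rather than coboundary expansion).

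The systolic bound follows a parallel structure: if $\phi \in Z^k(X)$ is locally minimal and non-trivial, then $d_k \phi = 0$ forces $d^{X_v}_{k-1}\phi_v = 0$ in every link, and local coboundary expansion upgrades this to $\phi_v \in B^{k-1}(X_v)$ in every link. The non-triviality of $[\phi]$ in $H^k(X)$ combined with triviality in every link then translates, via a random-walk / isoperimetric argument driven by the spectral gap, into a lower bound $w(\phi) \ge \mu(n, \varepsilon')$; roughly, the support of $\phi$ cannot be too small or else the local bounding cochains in nearby links could be glued (as in the previous paragraph) to contradict non-triviality. The principal obstacle throughout is the gluing step: unlike in the spectral (Hilbert space) setting, one cannot average over $\mathbb{F}_2$, so the argument must be an iterative contraction, and the core quantitative work is choosing $\lambda(n,\varepsilon')$ small enough to guarantee contraction — this is what determines the final dependencies $\varepsilon, \mu, \lambda$ on $n$ and $\varepsilon'$.
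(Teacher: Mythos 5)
This theorem is cited in the paper from Evra--Kaufman \cite{EvraK} and is not reproved here, so there is no in-paper proof against which to compare; the review below is against the argument actually given in \cite{EvraK}. Your sketch has several correct high-level ingredients --- localization $\phi \mapsto \phi_v$, passage to a locally minimal representative, use of the link coboundary expansion, and a quantitative spectral input --- but the central ``gluing'' step is vague and, as stated, does not work. Over $\mathbb{F}_2$ there is no averaging and no metric on primitives: if $\phi_v \in B^{k-1}(X_v)$, a bounding cochain $\psi_v$ is determined only up to an arbitrary element of $Z^{k-2}(X_v)$, and nothing in $\lambda$-local spectral expansion forces independently chosen $\psi_v, \psi_w$ to ``nearly agree'' on common faces, let alone with error of order $\lambda\cdot w(d_k\phi)$. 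There is likewise no mechanism making the ``inconsistency'' contract geometrically under $\phi \mapsto \phi + d\psi$; the claimed iteration is unsupported.

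The Evra--Kaufman argument avoids gluing entirely and is instead a weight-counting argument. One works with a single locally minimal cochain $\phi$ (minimal among all $\phi + d\psi$ with $\psi$ supported in a single vertex-link), classifies faces in $\supp(\phi)$ and their lower-dimensional shadows as \emph{fat} or \emph{thin} according to the fraction of their cofaces lying in $\supp(d_k\phi)$, and uses the one-sided spectral gap of the links --- via a Garland-type / expander-mixing inequality --- together with $\varepsilon'$-coboundary expansion of the links to show that a locally minimal cochain of small weight cannot have too many thin faces: too many thin faces would let one decrease $w(\phi)$ by a coboundary supported in a link, contradicting local minimality. This yields $w(d_k\phi)\gtrsim w(\phi)$ for small locally minimal $\phi$, which gives the $\Exp^k_z$ bound, and the systolic bound $\Sys^k(X)\ge\mu$ follows because a locally minimal nontrivial cocycle of small weight would violate the same inequality (its coboundary vanishes). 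The threshold $\lambda(n,\varepsilon')$ is chosen so that the counting closes, not to drive a contraction map. To repair your sketch you should replace the iterative gluing step with this fat/thin counting machinery together with the localization inequalities that feed it.
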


Thus, in order to prove cosystolic expansion in examples, we should verify two things: local spectral expansion and coboundary expansion in the links. In our examples from \cite{KOconstruction} described below, local spectral expansion is already known and we are left with proving coboundary expansion for the links. In order to do so, we will develop machinery to prove coboundary expansion for symmetric complexes of a special type called coset complexes.

\subsection{Coboundary expansion for strongly symmetric simplicial complexes}

As noted above, unlike the case of graphs, in simplicial complexes a high dimensional version of Cheeger inequality does not hold. Thus, there is a need to develop machinery in order to prove coboundary expansion that does not rely on spectral arguments. For graphs such machinery is available, under the assumptions that the graph has a large symmetry group. A discussion regarding the Cheeger constant of symmetric graphs appear in \cite[Section 7.2]{Chung} and in particular, the following Theorem is proven there:

\begin{theorem}\cite[Theorem 7.1]{Chung}
\label{thm for symmetric graphs}
Let $X$ be a finite connected graph such that there is a group $G$ acting transitively on the edges of $X$. Denote $h(X)$ to be the Cheeger constant of $X$ and $D$ to be the diameter of $X$. Then $h(X) \geq \frac{1}{2D}$.
\end{theorem}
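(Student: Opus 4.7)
The plan is to exploit edge-transitivity to construct a $G$-invariant shortest-path flow, so that every edge carries the same total load, and then combine this uniform load with the fact that any path between the two sides of a cut must cross the cut at least once.

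First I would set up the flow. For each ordered pair of distinct vertices $(x,y)$, let $\sigma(x,y)$ denote the number of shortest paths from $x$ to $y$, and assign each such path weight $1/\sigma(x,y)$. For each edge $e$ let $W_e$ be the total weight (summed over all ordered pairs and all of their shortest paths) of paths traversing $e$. Since $G$ acts by graph automorphisms, the full collection of shortest paths is $G$-invariant, so $e \mapsto W_e$ is constant on $G$-orbits of edges; by edge-transitivity it is a single constant $W$. Summing over all edges, and using that the total weight contributed by a single pair $(x,y)$ to $\sum_e W_e$ equals $d(x,y)$, one gets
\[
|E| \cdot W \;=\; \sum_e W_e \;=\; \sum_{x \neq y} d(x,y) \;\leq\; n(n-1)\, D ,
\]
where $n = |X(0)|$, and hence $W \leq n(n-1)D/|E|$.

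Next I would bound the edge boundary of a set. Fix $\emptyset \neq A \subsetneq X(0)$ with $w(A) \leq w(\bar{A})$. Every shortest path between a vertex of $A$ and a vertex of $\bar{A}$ must traverse at least one edge of $E(A,\bar{A})$, so each pair $(x,y) \in A \times \bar{A}$ contributes at least $1$ to $\sum_{e \in E(A,\bar{A})} W_e$. Summing over the $|A||\bar{A}|$ such pairs and also over the symmetric pairs in $\bar{A} \times A$,
\[
|E(A,\bar{A})| \cdot W \;=\; \sum_{e \in E(A,\bar{A})} W_e \;\geq\; 2|A||\bar{A}|,
\]
so $|E(A,\bar{A})| \;\geq\; 2|A||\bar{A}|\cdot |E|/(n(n-1)D)$.

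To conclude, I would observe that edge-transitivity forces $X$ to be either $d$-regular or bipartite biregular, because the unordered pair of endpoint degrees is constant along $G$-orbits of edges. In the regular case $|E| = dn/2$ and $w(A) = d|A|$; together with $|\bar{A}| \geq n/2$ (which follows from $w(A) \leq w(\bar{A})$) this gives
\[
\frac{|E(A,\bar{A})|}{w(A)} \;\geq\; \frac{|\bar{A}|}{(n-1)D} \;\geq\; \frac{n}{2(n-1)D} \;\geq\; \frac{1}{2D}.
\]
The main potential obstacle is that the upper bound on $W$ uses the load from \emph{all} ordered pairs, whereas the cut argument only uses pairs across the cut; this is absorbed cleanly because $\sum_{e \in E(A,\bar{A})} W_e$ runs only over cut edges while each $W_e$ already equals the global constant $W$ by edge-transitivity. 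The biregular case is handled identically after splitting $A$ and $\bar{A}$ according to the two vertex orbits and substituting the two degrees in the final step.
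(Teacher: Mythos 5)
The overall shortest-path flow strategy is sound and, with unit weights as you've set it up, the argument is complete and correct in the vertex-regular case. However, the final sentence claiming that the biregular case is \textquotedblleft handled identically\textquotedblright\ is not correct, and this is a genuine gap. The obstruction is that your flow bound gives $|E(A,\bar{A})|\ge \frac{2|A||\bar{A}|\,|E|}{n(n-1)D}$, so to conclude $|E(A,\bar{A})|/w(A)\ge 1/(2D)$ you need $4|A||\bar{A}|\,|E|\ge n(n-1)\,w(A)$. That inequality fails when degrees vary: in the star $K_{1,m}$ (edge-transitive, biregular, $D=2$, $|E|=m$, $n=m+1$), taking $A$ to be the center gives $|A|=1$, $|\bar{A}|=m$, $w(A)=m$, and the left side is $4m^2$ while the right side is $m^2(m+1)$, which is larger for all $m\ge 4$. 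So for such $A$ the bound you derive is $|E(A,\bar{A})|/w(A)\ge \frac{1}{m+1}<\frac{1}{4}=\frac{1}{2D}$, which does not establish the theorem (even though the theorem holds, since $h(K_{1,m})=1$). The difficulty is that $|A|$ and $|\bar{A}|$ do not track $w(A)$ and $w(\bar{A})$ once the graph is not regular, and \textquotedblleft splitting by vertex orbit\textquotedblright\ does not repair the mismatch.

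The clean fix is to replace unit pair-weights by degree-weighted ones: give each ordered pair $(x,y)$ total weight $\deg(x)\deg(y)$, split equally among the $\sigma(x,y)$ shortest paths. Then the same two calculations give $|E|\,W=\sum_{x\ne y}\deg(x)\deg(y)\,d(x,y)\le 4|E|^2 D$ (so $W\le 4|E|D$), while the cut bound becomes $|E(A,\bar{A})|\,W\ge 2\,w(A)\,w(\bar{A})$. Dividing and using $w(\bar{A})\ge |E|$ (which follows from $w(A)\le w(\bar{A})$ and $w(A)+w(\bar{A})=2|E|$) yields $\frac{|E(A,\bar{A})|}{w(A)}\ge \frac{w(\bar{A})}{2|E|D}\ge \frac{1}{2D}$ uniformly, with no case split into regular versus biregular at all. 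I would recommend reworking the argument along these lines rather than attempting to patch the unit-weight version.
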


\begin{remark}
\label{no symmetry remark}
Note that the inequality stated in the Theorem does not hold without the assumption of symmetry. For instance, let $X_N$ by the graph that is the ball of radius $N$ in the $3$-regular infinite tree. Then the diameter of $X$ is $2N+1$ and $h (X_N)$ is of order $O(\frac{1}{2^N})$.
\end{remark}

In this paper, using the ideas of \cite{Grom} and \cite{LMM}, we prove a generalization of Theorem \ref{thm for symmetric graphs} to the setting of (strongly) symmetric simplicial complexes. We first define the notion of strongly symmetric simplicial complexes.
\begin{definition}[Strongly symmetric complex]
A simplicial complex $X$ is called {\em strongly symmetric} if there is a group that acts simply transitive on its top dimensional faces. E.g., For graphs (one dimensional complexes) we require a group that acts simply transitive on the edges.
\end{definition}

We then define a high dimensional notion of radius which we call a cone radius, but this definition is a little technical and thus omitted from the introduction (see Definition \ref{cone radius def}). We then prove the following:
\begin{theorem}[Informal, see Theorem \ref{Crad thm} for the formal statement]
\label{informal Crad thm intro}
Let $X$ be a strongly symmetric simplicial complex. If the $k$-dimensional (cone) radius of $X$ is bounded by $D$, then $\Exp^k_b (X) \geq \frac{1}{{n+1 \choose k+1} D}$, i.e., the $k$-coboundary expansion is bounded from below as a function of the $k$-th radius.
\end{theorem}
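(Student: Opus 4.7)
My plan is to generalize Theorem \ref{thm for symmetric graphs}: where that argument uses paths between vertices (whose length is bounded by the diameter) to push mass across an edge cut, I will use cone fillings of $k$-simplices (whose size is bounded by the cone radius) to push mass across a coboundary. Fix a $\phi \in C^k(X) \setminus B^k(X)$ that achieves the minimum defining $\Exp^k_b(X)$; by adding a coboundary I may assume $\phi$ has minimal weight within its coset $\phi + B^k(X)$, so that $w(\phi) = \min_{\psi \in B^k(X)} w(\phi + \psi)$.

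First, I would reformulate the hypothesis that the $k$-dimensional cone radius is at most $D$ as a chain-homotopy identity at the cochain level. Following the treatment in \cite{Grom} and \cite{LMM}, for each base vertex $v$ the cone structure should provide linear maps $h_v : C^j(X) \to C^{j-1}(X)$ satisfying a relation of the form
\[
\phi \;=\; d_{k-1}(h_v \phi) \;+\; h_v(d_k \phi) \;+\; E_v(\phi),
\]
where $E_v(\phi)$ is an error term that is itself a coboundary (and hence harmless thanks to the minimality of $\phi$ in its coset), and the cone-radius bound ensures that for each $(k+1)$-simplex $\sigma$ the support of $h_v(\mathbf{1}_\sigma)$ lies in a cone filling of $\sigma$ based at $v$ of size at most $D$.

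Second, I would invoke strong symmetry: since $G$ acts simply transitively on $X(n)$ it acts transitively on vertices, and averaging the above identity over all translates of the base vertex (equivalently, transporting a single reference cone by all of $G$) produces a $G$-equivariant operator $T$ together with a cochain $\psi$ satisfying $\phi = d_{k-1}\psi + T(d_k\phi)$. The key point is that $G$-equivariance forces $T$ to redistribute the $w$-mass of $d_k\phi$ uniformly across $k$-faces of the same $G$-type; this is the higher-dimensional analogue of the fact that edge-transitivity makes every edge count equally in the graph proof.

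A short weight estimate then concludes the argument. By minimality of $\phi$ in its coset,
\[
w(\phi) \;\le\; w(\phi + d_{k-1}\psi) \;=\; w(T(d_k\phi)).
\]
The cone-radius bound together with the $G$-equivariant averaging gives $w(T(d_k\phi)) \le {n+1 \choose k+1}\, D \cdot w(d_k\phi)$, where the combinatorial factor ${n+1 \choose k+1}$ enters because the weight $w$ on a $k$-face was normalised so that the total $w$-mass of $k$-faces inside a fixed top simplex equals ${n+1 \choose k+1}$ times the mass of that top simplex. Combining the two displays yields $\Exp^k_b(X) \ge 1/\bigl({n+1 \choose k+1}\,D\bigr)$. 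The main obstacle is the second step: producing the \emph{precise} combinatorial factor ${n+1 \choose k+1}$. A priori the cones at different vertices are independent choices, and only a carefully set-up equivariant averaging turns $T$ into an operator whose image assigns to each $k$-face exactly its $w$-weight times the cone multiplicity. Once this equivariant averaging is in place, the final inequality is a bookkeeping computation that directly parallels Chung's path-counting proof of Theorem \ref{thm for symmetric graphs}.
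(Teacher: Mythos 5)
Your high-level plan is the same as the paper's: use the cone function to build a chain homotopy at the cochain level, exploit symmetry to make the argument uniform across faces, and extract the combinatorial factor $\binom{n+1}{k+1}$ from the weight normalization. However, there is a genuine gap in the central ``averaging'' step. You propose to average the chain-homotopy identity over base vertices to produce a \emph{single} $G$-equivariant operator $T$ and a single cochain $\psi$ satisfying $\phi = d_{k-1}\psi + T(d_k\phi)$. Over $\mathbb{F}_2$ this cannot work: there is no well-defined averaging of cochains, and summing the identity $\phi = d_{k-1}(\iota_g\phi) + \iota_g(d_k\phi)$ over all $g \in G$ either kills $\phi$ (if $|G|$ is even) or produces a single identity whose right-hand side is riddled with cancellations that destroy any useful weight bound. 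Also, the ``error term'' $E_v(\phi)$ you introduce does not appear; the paper's Lemma~\ref{lemma iota} gives the exact identity $\iota_{\Cone_k^v} d_k\phi = \phi + d_{k-1}\iota_{\Cone_k^v}\phi$ with no correction.

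What the paper actually does is average the resulting \emph{weight inequalities}, not the operators. For each $g \in G$, the translate $\rho(g).\Cone_k^v$ is again a cone function (Lemma~\ref{group action on cone func lemma}), so by Lemma~\ref{lemma iota}, $\iota_g d_k\phi$ and $\phi$ differ by a coboundary, and minimality of $\phi$ in its coset gives $w(\phi) \leq w(\iota_g d_k\phi)$ for \emph{every} $g$. These real-valued inequalities can be summed without $\mathbb{F}_2$ trouble: $|G|\,w(\phi) \leq \sum_{g\in G} w(\iota_g d_k\phi)$. The right-hand side is then bounded by interchanging the sums and bounding, for each $(k+1)$-face $\eta$, the averaged ``cone multiplicity'' $\theta(\eta) = \tfrac{1}{w(\eta)|G|}\sum_{g\in G}\sum\{w(\tau): \eta \in (\rho(g).\Cone_k^v)(\tau)\}$. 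Lemma~\ref{theta lemma} shows $\theta(\eta) \leq \binom{n+1}{k+1}\Crad_k(X)$, using both the cone radius bound $|\Cone_k^v(\tau)| \leq \Crad_k(X)$ and the crucial stabilizer estimate $|G_\eta| \leq |G|\binom{n+1}{k+1} w(\eta)$, which is exactly where transitivity of $G$ on $X(n)$ enters. Your proposal correctly flags the combinatorial factor as the main obstacle but does not supply this estimate; it is the Lemma~\ref{theta lemma} bound, together with the inequality-averaging (rather than operator-averaging), that makes the argument go through.
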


\subsection{Bounding the high dimensional radius for coset complexes}

By Theorem \ref{informal Crad thm intro}, in order to prove coboundary expansion for strongly symmetric complexes, it is enough to bound their high dimensional radius. Following the ideas of Gromov \cite{Grom}, we bound the radius by bounding certain filling constants, that we will not define here. In order to bound these filling constants and thus the high dimensional radius, we will assume that our strongly symmetric complex is of a special type, namely that it is a coset complex:
\begin{definition}[Coset complex]
\label{coset complex def}
Given a group $G$ with subgroups $K_{\lbrace i \rbrace}, i \in \I$, where $\I$ is a finite set. The \textit{coset complex} $X=X(G, (K_{\lbrace i \rbrace})_{i \in \I})$ is a simplicial complex defined as follows:
\begin{enumerate}
\item The vertex set of $X$ is composed of disjoint sets $S_i = \lbrace g K_{\lbrace i \rbrace} : g \in G \rbrace$.
\item For two vertices $g K_{\lbrace i \rbrace}, g' K_{\lbrace j \rbrace}$ where $i,j \in \I, g,g' \in G$, $\lbrace g K_{\lbrace i \rbrace}, g' K_{\lbrace j \rbrace} \rbrace \in X(1)$ if $i \neq j$ and  $g K_{\lbrace i \rbrace} \cap g' K_{\lbrace j \rbrace} \neq \emptyset$.
\item The simplicial complex $X$ is the clique complex spanned by the $1$-skeleton defined above, i.e., $\lbrace g_0 K_{\lbrace i_0 \rbrace},..., g_k  K_{\lbrace i_k \rbrace} \rbrace \in X(k)$ if for every $0 \leq j,j' \leq k$, $g_j K_{\lbrace i_j \rbrace} \cap g_{j'} K_{\lbrace i_{j'} \rbrace} \neq \emptyset$.
\end{enumerate}
\end{definition}

Although this Definition may seem daunting at first, we note that it is very natural in examples. Namely, Proposition \ref{symmetric complexes and coset complexes prop} shows that under some mild assumptions, strongly symmetric simplicial complexes are actually coset complexes.

As noted above, for coset complexes, every property of the complex should be reflected in some way in its symmetry group and its subgroup structure. Following this philosophy, we prove that for coset complexes, the $0$-th and $1$-th dimensional coboundary expansion can be bounded using the presentation of the group from which the complex arose.

In order to describe our result, we recall some definitions from group theory. Given a group $G$, a generating set $S \subseteq G$ is a set of elements of $G$ such that every element in $G$ can be written as a finite product (or sum if $G$ is commutative) of elements of $S$. One can always take $S = G$, but usually one can make due with a smaller set. For example, for the group $G$ of addition of integers modulo $n$, $G = (\mathbb{Z} / n \mathbb{Z}, +)$, one can take $S = \lbrace \pm 1 \rbrace$. Given a group $G$ with a generating set $S$, a word with letters in $S$ is called trivial if it equal to the identity. For example, in $G = (\mathbb{Z} / n \mathbb{Z}, +)$ with $S = \lbrace \pm 1 \rbrace$, the words $1 + 1+ (-1) + (-1)$ and $1 + ...+1 \text{ (n summands)} = n \cdot 1$ are trivial.

We say that a group $G$ has a presentation $G= \langle S \vert R \rangle$, if $S$ is a generating set of $G$ and $R$ is a set of trivial words called relations such that every trivial word in $G$ can be written using the words in $R \cup \lbrace s s^{-1},s^{-1} s  : s \in S\rbrace$ (allowing products, conjugations and inverses). Again, one can always take $S = G \setminus \lbrace e \rbrace$ and $R$ to be the entire multiplication table of $G$, i.e., all the words of the form $g_1 g_2 g_3^{-1} =e$, where $g_1 g_2 = g_3$. However, in concrete examples, one can usually make due with fewer generators and relations. For example, for the group $G =(\mathbb{Z} / n \mathbb{Z}, +)$ it is sufficient to take $S = \lbrace \pm 1 \rbrace$ and the single relation $n \cdot 1$. We note that it is not always easy to determine if a set of relations gives a presentation of $G$.

Given a presentation $G = \langle S \vert R \rangle$, the Dehn function for this presentation is a function $\Dehn: \mathbb{N} \rightarrow \mathbb{N}$ such that $\Dehn (m)$ describes how many elements of $R \cup \lbrace s s^{-1},s^{-1} s  : s \in S\rbrace$ does one need to write a trivial word in $G$ of length $\leq m$ (for an exact definition see Definition \ref{Dehn func def}). With this terminology, we prove the following:

\begin{theorem}
\label{N_0 + N_1 bound thm intro}
Let $G$ be a finite group with subgroups $K_{\lbrace i \rbrace}, i \in \lbrace 0,1,2 \rbrace$. Denote $X = X(G, (K_{\lbrace i \rbrace})_{i \in \lbrace 0,1,2 \rbrace})$. Assume that $G$ acts transitively on $X(2)$.

For every $i \in \lbrace 0,1,2 \rbrace$, denote $R_{i}$ to be all the non-trivial relations in the multiplication table of $K_{\lbrace i \rbrace}$, i.e., all the relations of the form $g_1 g_2 g_3 =e$, where $g_1,g_2, g_3 \in K_{\lbrace i \rbrace} \setminus \lbrace e \rbrace$. Assume that $G = \langle \bigcup_{i} K_{\lbrace i \rbrace} \vert \bigcup_{i} R_i \rangle$ and let $\Dehn$ denote the Dehn function of this presentation.

Then:
\begin{enumerate}
\item For
$$N_0 ' = 1+\max_{g \in G} \min \left\lbrace l : g = g_1...g_l \text{ and } g_1,...,g_l \in \bigcup_i K_{\lbrace i \rbrace} \right\rbrace,$$
it holds that $\Exp^0_b (X) \geq \frac{1}{3 N_0 '} $.
\item There is a universal polynomial $p(x,y)$ independent of $X$ such that
$$\Exp^1_b (X) \geq \frac{1}{3 p(2 N_0 '  + 1, \Dehn (2 N_0 ' +1))}.$$
\end{enumerate}
\end{theorem}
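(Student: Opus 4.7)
The strategy is to reduce both bounds to bounds on cone radii and then apply the formal statement of Theorem \ref{informal Crad thm intro}, i.e.\ Theorem \ref{Crad thm}. Since $n=2$, the binomial coefficients $\binom{3}{1}$ and $\binom{3}{2}$ both equal $3$, matching the factor of $3$ in the conclusion. The coset complex $X$ is strongly symmetric because $G$ acts transitively on $X(2)$, so Theorem \ref{Crad thm} applies (after possibly replacing $G$ by its quotient by the kernel of the action, or appealing to Proposition \ref{symmetric complexes and coset complexes prop}). Hence it suffices to establish the two cone-radius bounds $\Crad^0(X) \leq N_0'$ and $\Crad^1(X) \leq p(2N_0'+1, \Dehn(2N_0'+1))$ for a universal polynomial $p$ independent of $X$.

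\textbf{Bounding $\Crad^0(X)$.} The $0$-th cone radius is a diameter-type quantity in the $1$-skeleton relative to a basepoint face. The key observation is that in a coset complex two vertices $g_1 K_{\{i\}}, g_2 K_{\{j\}}$ are adjacent precisely when $g_1^{-1} g_2 \in K_{\{i\}} K_{\{j\}}$; in particular, traversing a path in the $1$-skeleton corresponds to multiplying successively by elements of $\bigcup_i K_{\{i\}}$. By the definition of $N_0'$, every element of $G$ admits such a product of length at most $N_0'-1$, which translates directly into a cone of radius at most $N_0'$ centered at the basepoint face $\{K_{\{0\}}, K_{\{1\}}, K_{\{2\}}\}$.

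\textbf{Bounding $\Crad^1(X)$, the main obstacle.} The plan is to set up a three-step dictionary between geometric filling in $X$ and the algebraic filling counted by $\Dehn$. First, using the $0$-th cone from the previous step, any $1$-cycle can be decomposed into based loops of combinatorial length at most $2N_0'+1$: coning the two endpoints of each edge of the cycle to the basepoint and closing along the edge produces such a loop. Second, each based loop reads off a word of length at most $2N_0'+1$ in the generating set $\bigcup_i K_{\{i\}}$ that evaluates to the identity in $G$; by the definition of the Dehn function of the presentation $\langle \bigcup_i K_{\{i\}} \mid \bigcup_i R_i\rangle$, this word can be expressed as a product of at most $\Dehn(2N_0'+1)$ conjugates of relations from $\bigcup_i R_i$. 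Third, each relation $g_1 g_2 g_3 = e$ with $g_1,g_2,g_3 \in K_{\{i\}}$ is geometrically realized as a triangle in the link of a type-$i$ vertex, and each conjugation by $h \in G$ corresponds to transporting this triangle along a path representing $h$, which has length at most $2N_0'+1$ by Step~1. Counting the $2$-simplices required for this transport yields the bound $\Crad^1(X) \leq p(2N_0'+1, \Dehn(2N_0'+1))$ for a universal polynomial $p$.

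\textbf{Main difficulty.} The hardest and most technical step is the third one: carefully verifying that the transport of each relation-triangle along a basepoint path assembles into a genuine cone in the sense of Definition \ref{cone radius def}, and rigorously bounding the $2$-simplex overhead by a polynomial in $2N_0'+1$ and $\Dehn(2N_0'+1)$. The universality of $p$ --- independent of $G$ and $X$ --- reflects that this overhead depends only on the combinatorics of $2$-dimensional coset complexes and not on the specific group.
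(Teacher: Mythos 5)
Your overall architecture is right and matches the paper: for $n=2$ both binomial factors are $3$, so Theorem~\ref{Crad thm} reduces both claims to showing $\Crad_0(X)\le N_0'$ and $\Crad_1(X)\le p(2N_0'+1,\Dehn(2N_0'+1))$. Your Step~1 (coning the two endpoints of an edge to the apex to form a based loop of length at most $2N_0'+1$) and your bound on $\Crad_0$ are exactly what the paper does via Corollary~\ref{bound on N_0 coro} and Theorem~\ref{simplicial filling constants thm}.

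Where you diverge is Step~3, and this is where there is a genuine gap as sketched. You propose to build the filling ``from the inside out'': take the free-group identity $w=\prod_{i=1}^a u_i^{-1}r_i u_i$ with $a\le\Dehn(2N_0'+1)$, realize each $r_i$ as a triangle, and transport it along ``a path representing $u_i$, which has length at most $2N_0'+1$ by Step~1.'' But the conjugators $u_i$ live in $\Free(\bigcup_i K_{\{i\}})$, and nothing in the definition of the Dehn function bounds their word lengths by the diameter of the Cayley graph of $G$; bounding the $|u_i|$ is the job of a separate filling invariant (the isodiametric/radius function), not of $\Dehn$. If you shortcut $u_i$ to a shorter word $u_i'$ with $u_i'=u_i$ in $G$, the product $\prod (u_i')^{-1}r_i u_i'$ is no longer equal to $w$ in $\Free(\bigcup_i K_{\{i\}})$ --- only in $G$ --- so the boundary of the assembled lollipop complex need not be the loop you set out to fill. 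There is also a secondary subtlety: a relation $g_1g_2g_3=e$ with $g_i\in K_{\{j\}}$ is a triangle in the Cayley graph of $G$, and it takes an argument (types, choice of cosets, cliqueness of $X$) to convert it into a bounded collection of $2$-simplices of $X$.

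The paper avoids both issues by building the filling incrementally \emph{from the boundary inward}: Lemma~\ref{translation to (g_1,...,g_m) lemma} codes the loop as a $(g_1,\dots,g_m)$-map, and Theorem~\ref{Homotopy moves Theorem} gives four ``homotopy moves'' (identity reduction, free reduction, and two relation moves) that each modify the loop by one step of the Dehn reduction of $w$ at a cost of $O(m+\Dehn(m))$ $2$-simplices. Running the reduction algorithm for at most $4\Dehn(m)+m$ steps then gives the polynomial bound; this produces the degree-$2$ polynomial $p(x,y)=16y^2+6x^2+20xy+24y+10x+1$ in Theorem~\ref{bound on N1 thm}. Your proposal can likely be repaired by following the van~Kampen diagram itself (whose $2$-cells are already the $\Dehn(m)$ relation triangles, with no need for separate basepoint-paths), but as written the claimed path-length bound is unjustified and the assembly into a legitimate $1$-cone is not established.
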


\subsection{Our construction}

So far, we described general tools that we developed in order to prove coboundary and cosystolic expansion. Now we will describe our construction from \cite{KOconstruction} on which we aim to apply these tools.

In \cite{KOconstruction}, we used coset complexes to construct $n$-dimensional spectral expanders. Below, we only describe the construction for $n=3$:
Fix $s \in \mathbb{N}, s > 4$ and $q$ be a prime power. Denote $G^{(s)}_q$ to be the group of $4 \times 4$ matrices with entries in $\mathbb{F}_q [t] / \langle t^s \rangle$ generated by the set
$$\lbrace e_{1,2} (a +bt), e_{2,3} (a +bt), e_{3,4} (a +bt), e_{4,1} (a +bt)  : a,b \in \mathbb{F}_q \rbrace.$$
For $0 \leq i \leq 2$, define $H_{\lbrace i \rbrace}$ to be the subgroup of $G^{(s)}_q$ generated by
$$\lbrace e_{j,j+1} (a +bt),  e_{4,1} (a +bt) : a,b \in \mathbb{F}_q, 1 \leq j \leq 3, j \neq i+1 \rbrace$$
and define $H_{\lbrace 3 \rbrace}$ to be the subgroup of $G^{(s)}$ generated by
$$\lbrace e_{1,2} (a +bt), e_{2,3} (a +bt), e_{3,4} (a +bt)  : a,b \in \mathbb{F}_q \rbrace.$$
Denote $X^{(s)}_q = X(G^{(s)}_q, (H_{\lbrace i \rbrace})_{i \in \lbrace 0,...,3 \rbrace})$ to be the coset complex as defined above.

The main result of \cite{KOconstruction} applied to $\lbrace X^{(s)}_q \rbrace_{s >4 }$ above  can be summarized as follows:
\begin{enumerate}
\item The family $\lbrace X^{(s)}_q \rbrace_{s >4 }$ has bounded degree (that depends on $q$).
\item The number of vertices of $X^{(s)}_q$ tends to infinity with $s$.
\item For every $s$, $X^{(s)}_q$ is $\frac{1}{\sqrt{q} -3}$-local spectral expander.
\end{enumerate}

In light of Theorem \ref{N_0 + N_1 bound thm intro}, we will also need some facts regrading the links of $X^{(s)}_q$. We give the following explicit description of the links in our construction: We note that for every fixed $q$ it holds that there is a complex $X$ such that for every $s>4$ and every vertex $v \in X^{(s)}_q$, there is a coset complex denoted $X_{link, q}$ such that the link of $v$ is isomorphic to $X_{link, q}$ (all the links are isomorphic).

The complex $X_{link, q}$ can be described explicitly as follows: Denote the group $G_{link, q}$ to be a subgroup of $4 \times 4$ invertible matrices with entries in $\mathbb{F}_q [t]$ in generated by the set
$\lbrace e_{i,i+1} (a +bt) : a,b \in \mathbb{F}_q, 1 \leq i \leq 4 \rbrace$. More explicitly, an $4 \times 4$ matrix $A$ is in $G_{link, q}$ if and only if
$$A (i,j) = \begin{cases}
1 & i=j \\
0 & i>j \\
a_0 + a_1 t + ... + a_{j-i} t^{j-i} & i<j, a_0,...,a_{j-i} \in \mathbb{F}_q
\end{cases},$$
(observe that all the matrices in $G$ are upper triangular).

For $0 \leq i \leq 3$, define a subgroup $K_{\lbrace i \rbrace} <G$ as
$$K_{\lbrace i \rbrace} = \langle e_{j, j+1} (a+bt) : j \in \lbrace 1,...,4 \rbrace \setminus \lbrace i+1 \rbrace, a,b \in \mathbb{F}_q \rangle.$$
Define $X_{link, q}$ to be the coset complex $X_{link, q}=X(G_{link, q}, (K_{\lbrace i \rbrace})_{i \in \lbrace 0,1,2 \rbrace})$. As noted above, for every $s >4$, all the $2$-dimensional links of $X^{(s)}_q$ are isomorphic to $X_{link, q}$. Also,
\begin{theorem}\cite[Theorems 2.4, 3.5]{KOCosetGeom}
The complex $X_{link, q}$ above is strongly symmetric, namely the group $G_{link, q}$ of unipotent matrices described above acts transitively on the triangles of $X_{link, q}$.
\end{theorem}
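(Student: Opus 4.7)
The plan is to show directly that every $2$-simplex of $X_{link, q}$ is a $G_{link, q}$-translate of the ``standard'' triangle $\sigma_0 = \{K_{\{0\}}, K_{\{1\}}, K_{\{2\}}\}$. Since $G_{link, q}$ acts simplicially on the coset complex by left multiplication, this amounts to showing that for every triangle $\{g_0 K_{\{0\}}, g_1 K_{\{1\}}, g_2 K_{\{2\}}\}$---which, by definition of the clique complex, has all three pairwise coset intersections nonempty---the triple intersection is also nonempty. Any common element $g$ then gives the representation $g \cdot \sigma_0$, and the triviality of $K_{\{0\}} \cap K_{\{1\}} \cap K_{\{2\}}$ upgrades transitivity to simple transitivity, which gives the full ``strongly symmetric'' property required by the paper's definition.

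The first step is a reduction. Translating by $g_0^{-1}$ and then by an element of $K_{\{0\}} \cap g_0^{-1} g_1 K_{\{1\}}$, one may assume $g_0 = g_1 = e$. The task becomes: given $g \in G_{link, q}$ with $K_{\{0\}} \cap g K_{\{2\}} \neq \emptyset$ and $K_{\{1\}} \cap g K_{\{2\}} \neq \emptyset$, show that $K_{\{0\}} \cap K_{\{1\}} \cap g K_{\{2\}} \neq \emptyset$. Rewriting the hypotheses as $g \in K_{\{0\}} K_{\{2\}}$ and $g \in K_{\{1\}} K_{\{2\}}$, this is equivalent to the ``distributive'' identity
\begin{equation*}
K_{\{0\}} K_{\{2\}} \cap K_{\{1\}} K_{\{2\}} \;=\; \bigl(K_{\{0\}} \cap K_{\{1\}}\bigr) K_{\{2\}},
\end{equation*}
the nontrivial inclusion being $\subseteq$.

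To verify this identity, I would exploit the explicit root-group structure of $G_{link, q}$. Setting $U_{i,j} := \{e_{i,j}(f) : f \in \mathbb{F}_q[t],\ \deg f \leq j-i\}$ for $1 \leq i < j \leq 4$, every element of $G_{link, q}$ has a unique normal form as an ordered product $\prod_{i<j} e_{i,j}(f_{i,j})$, and the Chevalley-type relations $[e_{i,j}(f), e_{j,k}(h)] = e_{i,k}(\pm fh)$ allow any word in the generators to be rewritten in normal form. The three subgroups decompose cleanly as $K_{\{0\}} = U_{2,3} U_{3,4} U_{2,4}$, $K_{\{1\}} = U_{1,2} U_{3,4}$ (abelian), and $K_{\{2\}} = U_{1,2} U_{2,3} U_{1,3}$; consequently $K_{\{0\}} \cap K_{\{1\}} = U_{3,4}$, $K_{\{0\}} \cap K_{\{2\}} = U_{2,3}$, $K_{\{1\}} \cap K_{\{2\}} = U_{1,2}$, and the triple intersection is $\{e\}$. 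Writing out the normal forms of $K_{\{0\}} K_{\{2\}}$ and $K_{\{1\}} K_{\{2\}}$ using the Chevalley relations, one finds that elements of $K_{\{1\}} K_{\{2\}}$ are characterized by $f_{1,4} = 0$ together with the constraint $f_{2,4} = -f_{2,3} f_{3,4}$, and this is exactly the description of $U_{3,4} \cdot K_{\{2\}} = \bigl(K_{\{0\}} \cap K_{\{1\}}\bigr) K_{\{2\}}$, from which the identity follows.

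The main obstacle is the bookkeeping in this final normal-form comparison: although each Chevalley relation is itself elementary, one must carefully control the polynomial degree bounds $\deg f_{i,j} \leq j-i$ and the order in which new commutator entries are produced when rewriting products of root groups. In particular, one must verify that each $K_{\{m\}}$ really equals the claimed product of root subgroups (so that the commutators generated by the two declared generators already fill out the expected $U_{i,k}$), a step where the structure of $G_{link, q}$ as a ``filtered'' unipotent matrix group over $\mathbb{F}_q[t]$ is essential.
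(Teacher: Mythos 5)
Your approach is sound and essentially matches what the cited reference verifies: the paper does not reprove this theorem but cites it from \cite{KOCosetGeom}, and the criterion you implicitly rederive (that transitivity on top-dimensional faces is equivalent to a coset-algebra identity) is precisely the one the paper records as Theorem \ref{transitive action thm}. Your reduction --- translate $g_0 K_{\{0\}}$ to $K_{\{0\}}$, then use an element of $K_{\{0\}} \cap g_0^{-1}g_1 K_{\{1\}}$ to also fix $K_{\{1\}}$, leaving the single inclusion $K_{\{0\}}K_{\{2\}} \cap K_{\{1\}}K_{\{2\}} \subseteq (K_{\{0\}}\cap K_{\{1\}})K_{\{2\}}$ to verify --- is correct, as are the root-group decompositions $K_{\{0\}}=U_{2,3}U_{3,4}U_{2,4}$, $K_{\{1\}}=U_{1,2}U_{3,4}$, $K_{\{2\}}=U_{1,2}U_{2,3}U_{1,3}$ and the identifications $K_{\{0\}}\cap K_{\{1\}}=U_{3,4}$ and $K_{\{0\}}\cap K_{\{1\}}\cap K_{\{2\}}=\{e\}$. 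The remark that triviality of the triple intersection promotes transitivity to simple transitivity (i.e.\ the action on $\sigma_0$ has trivial stabilizer) is also correct and needed, since the definition of strongly symmetric in this paper requires simple transitivity.

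One bookkeeping slip in the final step: writing elements in the ordered normal form $e_{1,2}(f_{1,2})e_{1,3}(f_{1,3})e_{1,4}(f_{1,4})e_{2,3}(f_{2,3})e_{2,4}(f_{2,4})e_{3,4}(f_{3,4})$, the vanishing of the $(2,4)$ matrix entry does read $f_{2,4}=-f_{2,3}f_{3,4}$, but vanishing of the $(1,4)$ entry then reads $f_{1,4}=-f_{1,3}f_{3,4}$, not $f_{1,4}=0$ (the $(1,4)$ matrix entry is $f_{1,4}+f_{1,2}(f_{2,4}+f_{2,3}f_{3,4})+f_{1,3}f_{3,4}$). The cleanest formulation avoids normal-form coordinates altogether: working directly with matrix entries one checks that $K_{\{1\}}K_{\{2\}}$ consists exactly of the matrices of $G_{link,q}$ with $(1,4)$- and $(2,4)$-entries zero, and $U_{3,4}K_{\{2\}}=(K_{\{0\}}\cap K_{\{1\}})K_{\{2\}}$ has the identical description, so in fact $K_{\{1\}}K_{\{2\}}=(K_{\{0\}}\cap K_{\{1\}})K_{\{2\}}$ outright --- a stronger identity than the inclusion you set out to prove, which doesn't even invoke the $K_{\{0\}}K_{\{2\}}$ factor. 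This observation, together with the trivial reverse inclusion, finishes the argument.
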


\subsection{New coboundary and cosystolic expanders}
Finally, we describe how the general machinery we developed can be applied in our construction.

First, by applying Theorem \ref{EK criterion thm} on the family $\lbrace X^{(s)}_q \rbrace_{s \in \mathbb{N}}$ yields the following Corollary:
\begin{corollary}
\label{EK intro coro}
Let $\lbrace X^{(s)}_q \rbrace_{s \in \mathbb{N}}$ be the family of $n$-dimensional simplicial complexes from \cite{KOconstruction}. Assume there is a constant $\varepsilon ' >0$ such that for every odd $q$, every $s$, every $0 \leq k \leq n-2$ and every $\tau \in X(k)$, $X_\tau$ is a $\varepsilon '$-coboundary expander.
Denote $Y^{(s)}_q$ to be the $(n-1)$-skeleton of $X^{(s)}_q$. Then for any sufficiently large odd prime power $q$, the family $\lbrace Y^{(s)}_q \rbrace_{s \in \mathbb{N}}$  is a family of bounded degree cosystolic expanders.
\end{corollary}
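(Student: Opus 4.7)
The plan is a direct application of the Evra--Kaufman criterion (Theorem \ref{EK criterion thm}) to the family $\{X^{(s)}_q\}_{s \in \mathbb{N}}$, with $q$ chosen large enough that the local spectral expansion supplied by \cite{KOconstruction} meets the threshold dictated by $\varepsilon'$. Concretely, given $\varepsilon' > 0$, invoke Theorem \ref{EK criterion thm} with dimension $n$ to extract the three constants $\lambda_0 = \lambda(n,\varepsilon')$, $\varepsilon_0 = \varepsilon(n,\varepsilon')$, $\mu_0 = \mu(n,\varepsilon')$. Since \cite{KOconstruction} guarantees that $X^{(s)}_q$ is a $(\sqrt{q}-3)^{-1}$-local spectral expander for every $s$, fixing any odd prime power $q$ with $\sqrt{q}-3 \geq 1/\lambda_0$ makes the spectral hypothesis of Theorem \ref{EK criterion thm} hold uniformly in $s$. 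Combined with the uniform $\varepsilon'$-coboundary expansion of links hypothesized in the corollary, Theorem \ref{EK criterion thm} then delivers that the $(n-1)$-skeleton $Y^{(s)}_q$ is an $(\varepsilon_0,\mu_0)$-cosystolic expander, with constants independent of $s$.

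The remaining two conditions in the definition of a family of bounded degree cosystolic expanders are verified directly. The vertex set of $Y^{(s)}_q$ coincides with that of $X^{(s)}_q$, whose cardinality tends to infinity with $s$ by \cite{KOconstruction}. For the bounded degree condition, observe that each $n$-simplex of $X^{(s)}_q$ containing a given vertex $v$ contributes at most $n$ distinct $(n-1)$-simplices through $v$ (those obtained by removing one of the other $n$ vertices). Hence the per-vertex count of top-dimensional simplices of $Y^{(s)}_q$ is bounded above by $n$ times the corresponding count in $X^{(s)}_q$, which is itself bounded by a constant depending only on $q$ by \cite{KOconstruction}. Since $q$ has been fixed, the family $\{Y^{(s)}_q\}_{s}$ indeed has bounded degree.

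The corollary itself presents no substantive obstacle; it is a short packaging of the Evra--Kaufman framework together with the spectral input of \cite{KOconstruction}. The genuine difficulty lies \emph{outside} this corollary, in producing the uniform coboundary expansion constant $\varepsilon'$ for all links, independent of $q$ and $s$. That is precisely the content of Theorem \ref{new cosystolic expanders thm intro informal} and is established through the strong symmetry / cone radius / Dehn function machinery developed in the earlier sections, culminating in Theorem \ref{N_0 + N_1 bound thm intro} applied to the unipotent-type groups $G_{link,q}$ governing $X_{link,q}$. The role of the present corollary is thus to convert that hard input into the desired conclusion about $\{Y^{(s)}_q\}_{s}$.
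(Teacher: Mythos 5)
Your proof is correct and follows essentially the same route as the paper: fix $\lambda,\varepsilon,\mu$ from the Evra--Kaufman criterion (Theorem \ref{EK criterion thm}) for the given $\varepsilon'$, then choose $q$ odd and large enough that the $(\sqrt{q}-3)^{-1}$-local spectral expansion from \cite{KOconstruction} falls below $\lambda$, and conclude. The paper carries out exactly this in the proof of Theorem \ref{new cosystolic expanders thm}; your added explicit verification that the $(n-1)$-skeleton inherits bounded degree (by the factor-of-$n$ count of $(n-1)$-faces through a vertex inside each $n$-face) is a detail the paper leaves implicit, and it is correct.
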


Thus, by this Corollary, in order to prove Theorem \ref{new cosystolic expanders thm intro informal} it is enough to show that for every odd $q$, there is a constant $\varepsilon ' >0$ such that for every odd $q$ and every $s \in \mathbb{N}$, the $2$-skeleton of the link of every vertex $v$ in $X^{(s)}_q$ is a $\varepsilon '$ coboundary expander.

As we noted, the links are strongly transitive coset complexes which we denoted $X_{q, link}$ and described explicitly above. By Theorem \ref{N_0 + N_1 bound thm intro}, in order to bound the coboundary expansion of the links, we need to consider the presentation of their symmetry group $G_{link, q}$ defined above. Generalizing on the work of Biss and Dasgupta \cite{BissD} we prove the following:
\begin{theorem}
\label{presentation thm into}
For any prime power $q$ denote $G_{q, link}, K_{\lbrace 0 \rbrace},K_{\lbrace 1 \rbrace}, K_{\lbrace 2 \rbrace}$ as above and for every $i \in \lbrace 0,1,2 \rbrace$, denote $R_{i}$ to be all the non-trivial relations in the multiplication table of $K_{\lbrace i \rbrace}$. Then
\begin{itemize}
\item
$$\sup_{q \text{ odd prime power}} \left( \max_{g \in G_{q, link}} \min \left\lbrace l : g = g_1...g_l \text{ and } g_1,...,g_l \in \bigcup_i K_{\lbrace i \rbrace} \right\rbrace \right) < \infty.$$
\item For every odd $q$ it holds that
$$G_{q, link} = \langle \bigcup_i K_{\lbrace i \rbrace} \vert \bigcup_i R_i \rangle$$
and the Dehn function of this presentation is bounded independently of $q$.
\end{itemize}
\end{theorem}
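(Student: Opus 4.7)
The plan is to adapt the approach of Biss--Dasgupta \cite{BissD} for the presentation of the unipotent group $U_n(\mathbb{F}_q)$ to our degree-bounded polynomial setting. The key structural features I exploit are: (a) $G_{q,link}$ sits inside the upper-triangular group $U_4$, so it is nilpotent of class at most $3$; (b) the generators $e_{j,j+1}(a+bt)$ have polynomial depth at most $1$, and iterated Steinberg commutators $[e_{a,b}(x), e_{b,c}(y)] = e_{a,c}(xy)$ raise depth by one at each nesting, topping out at depth $3$; and (c) the combinatorics of the commutator calculus among the elementary generators is identical for every $q$ --- only the coefficient ring changes.

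For the first bullet, I would establish a Gauss-elimination style normal form: every $g \in G_{q,link}$ can be written as
$$g = e_{1,2}(p_{12}) \, e_{2,3}(p_{23}) \, e_{3,4}(p_{34}) \, e_{1,3}(p_{13}) \, e_{2,4}(p_{24}) \, e_{1,4}(p_{14})$$
with polynomials $p_{ij}$ of degree at most $j-i$. The factors $e_{1,3}, e_{2,4}, e_{1,4}$ are not themselves generators but are produced from adjacent Steinberg commutators; decomposing each $p_{ij}$ as a sum of at most $j-i+1$ products of linear polynomials and realizing each summand as a single commutator of adjacent generators yields a word of universally bounded length in the elementary generators. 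Among these, $e_{1,3}(\cdot) \in K_{\{2\}}$ and $e_{2,4}(\cdot) \in K_{\{0\}}$, while $e_{1,4}(v)$, the only entry that lies in no single $K_{\{\ell\}}$, is itself realized as the commutator $[e_{1,2}(\cdot), e_{2,4}(\cdot)]$ of four elements drawn from $K_{\{1\}} \cup K_{\{0\}}$. Grouping consecutive factors that share a common $K_{\{\ell\}}$ gives the desired product of elements of $\bigcup_i K_{\{i\}}$ of length bounded independently of $q$.

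For the second bullet, I would verify that $\bigcup_i R_i$ presents $G_{q,link}$ by showing that all Steinberg-type relations follow from the relations inside the $K_{\{i\}}$'s. The ``adjacent'' relations --- addition inside each elementary subgroup, commutators $[e_{j,j+1}, e_{j+1,j+2}] = e_{j,j+2}$, and the trivial commutator $[e_{1,2}, e_{3,4}] = 1$ --- already lie inside the multiplication table of some $K_{\{i\}}$ and hence are in $\bigcup_i R_i$. Non-adjacent relations such as $[e_{1,2}(u), e_{2,4}(v)] = e_{1,4}(uv)$ are derived by expanding $e_{2,4}(v)$ as an adjacent commutator and applying Hall--Witt-type manipulations. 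For the Dehn function bound I would supply an explicit reduction algorithm that brings any trivial word of length $m$ into the LU normal form using a polynomial-in-$m$ number of relators from $\bigcup_i R_i$, and then invokes uniqueness of the normal form to conclude the word equals the empty word at the cost of at most polynomially many further relators.

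The main obstacle will be keeping the Dehn function bound independent of $q$. Although $\bigcup_i R_i$ itself has cardinality polynomial in $q$, the Dehn function counts only the number of relators used (with multiplicity), so $q$-independence is in principle possible. The delicate point is to show that every move in the reduction algorithm --- in particular, the derivation of non-adjacent Steinberg relations of commutator depth $\geq 2$ --- uses a number of relators depending only on word length and commutator depth, not on the particular $\mathbb{F}_q$-coefficients that appear. The assumption that $q$ is odd is expected to enter to ensure $2 \in \mathbb{F}_q^\times$, which Biss--Dasgupta use to linearize squaring-type relations. Assuming this accounting goes through, combining the resulting presentation-theoretic estimates with Theorem \ref{N_0 + N_1 bound thm intro} then yields the coboundary expansion bound on the links uniformly in $q$.
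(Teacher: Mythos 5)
Your overall strategy matches the paper's: bounded generation via a unipotent normal form and commutator tricks for the $e_{1,n+1}$ entry, and for the presentation/Dehn function part an adaptation of Biss--Dasgupta to the polynomial setting, reducing to Steinberg-type relations. Your identification of the delicate point (keeping the number of relator applications independent of $q$) and of why $q$ odd enters (to discard the Biss--Dasgupta relations of the form $[[s_i,s_{i+1}],[s_{i+1},s_{i+2}]]=1$, which for $n=3$ are not contained in any single $K_{\{i\}}$) is correct.

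However, the proposal has a genuine gap precisely at that delicate point, and you yourself flag it with ``assuming this accounting goes through.'' Your plan to derive non-adjacent Steinberg relations by ``expanding $e_{2,4}(v)$ as an adjacent commutator and applying Hall--Witt-type manipulations'' does not, by itself, yield $q$-independence; Hall--Witt is a formal identity, but to show that specific relations like $[e_{1,3}(a),e_{2,4}(bt^2)]=1$ can be derived in a bounded number of steps from $\bigcup_i R_i$ you need to control the coefficient bookkeeping, not just the commutator shape. The paper closes this gap with a specific device you don't mention: Lemma \ref{rel in subgrp lemma} observes that for \emph{any} choice of degree-$\leq 1$ polynomials $r_1,\dots,r_n$, the subgroup $\langle e_{i,i+1}(r_i)\rangle \cong \Unip_{n+1}(\mathbb{F}_q)$, so the uniform-in-$q$ Biss--Dasgupta conclusion (restated as Corollary \ref{BissD coro 1}) transports verbatim to each such subgroup; the non-adjacent pure-degree Steinberg relations of Observation \ref{rel obsrv 2} are then obtained by choosing the $r_i$'s cleverly (sometimes requiring several choices and a short algebraic combination, as in the case of $[e_{1,3}(a),e_{2,4}(bt^2)]=1$). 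Without an idea of this kind, your ``reduction algorithm to LU normal form'' has no mechanism forcing the relator count to be $q$-independent, so as written the second bullet is not proved.

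A smaller difference on the first bullet: you propose a full LU normal form $g = e_{1,2}(p_{12})\cdots e_{1,4}(p_{14})$, whereas the paper's Lemma \ref{bound on N_0 - lemma} does partial Gauss elimination to land directly on $e_{1,n+1}(\cdot)$ using two subgroup elements and then splits $e_{1,n+1}(\sum a_j t^j)$ into $n+1$ commutators, each of length $4$ in $K_{\{0\}}\cup K_{\{n-1\}}$. Both routes give a $q$-independent bound, so this part of your proposal is fine, just slightly less economical.
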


This Theorem combined with Theorem \ref{N_0 + N_1 bound thm intro} gives:
\begin{theorem}[First Main Theorem - new explicit two dimensional coboundary expanders]
\label{new coboundary expanders thm intro}
For every odd prime power $q$, $X_{link,q}$ is a coboundary expander and $\Exp_0 (X), \Exp_1 (X)$ are bounded from below by a constant that is independent of $q$.
\end{theorem}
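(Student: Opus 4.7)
The plan is to deduce the theorem as a direct corollary of Theorem \ref{N_0 + N_1 bound thm intro} applied to the coset complex $X_{link,q}$, using the uniform control on the presentation of $G_{q,link}$ provided by Theorem \ref{presentation thm into}. There is essentially no new ideas to introduce at this stage: all the real work has been placed in those two prior theorems.

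First I would verify that the hypotheses of Theorem \ref{N_0 + N_1 bound thm intro} apply to $X_{link,q} = X(G_{q,link},(K_{\lbrace i \rbrace})_{i \in \lbrace 0,1,2\rbrace})$. The group $G_{q,link}$ is finite (the degree constraints $A(i,j) = a_0 + a_1 t + \cdots + a_{j-i} t^{j-i}$ cut out a group whose order is a fixed polynomial in $q$), and transitivity of $G_{q,link}$ on $X_{link,q}(2)$ is exactly the quoted result from \cite{KOCosetGeom}. Since $\dim X_{link,q} = 2$, proving that $X_{link,q}$ is a coboundary expander reduces to bounding $\Exp^0_b$ and $\Exp^1_b$ from below.

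Next I would feed the two bullets of Theorem \ref{presentation thm into} into the two parts of Theorem \ref{N_0 + N_1 bound thm intro}. The first bullet gives a constant $C_0 < \infty$ with $N_0' \leq C_0$ for every odd prime power $q$, so part (1) of Theorem \ref{N_0 + N_1 bound thm intro} yields
\[
\Exp^0_b(X_{link,q}) \;\geq\; \frac{1}{3 N_0'} \;\geq\; \frac{1}{3 C_0}.
\]
The second bullet asserts both that $G_{q,link} = \langle \bigcup_i K_{\lbrace i \rbrace} \mid \bigcup_i R_i \rangle$ (so the presentation hypothesis of part (2) is met) and that the associated Dehn function is bounded, independently of $q$, by some function $\Dehn_0 \colon \mathbb{N} \to \mathbb{N}$. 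Hence with the universal polynomial $p$ of Theorem \ref{N_0 + N_1 bound thm intro}.(2),
\[
\Exp^1_b(X_{link,q}) \;\geq\; \frac{1}{3\, p\bigl(2 N_0' + 1,\, \Dehn(2 N_0' + 1)\bigr)} \;\geq\; \frac{1}{3\, p\bigl(2 C_0 + 1,\, \Dehn_0(2 C_0 + 1)\bigr)},
\]
again uniform in $q$.

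Both lower bounds depend only on the constants furnished by Theorem \ref{presentation thm into} together with the fixed polynomial $p$ of Theorem \ref{N_0 + N_1 bound thm intro}, so they are $q$-independent, and the theorem follows. The substantive content thus sits entirely inside Theorem \ref{presentation thm into}: the hard part --- and the step I would expect to consume nearly all of the effort --- is proving that second theorem, which amounts to generalizing the Biss--Dasgupta presentation of unipotent matrices over $\mathbb{F}_q$ to unipotent matrices over $\mathbb{F}_q[t]$ (with truncated degree constraints) and showing that the Dehn function for the resulting presentation can be controlled by a bound that does not deteriorate with $q$.
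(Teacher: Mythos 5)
Your proposal is correct and follows essentially the same route as the paper, which proves this statement as Corollary~\ref{N_0 + N_1 bound for unip coro}: establish strong symmetry of $X_{link,q}$ via \cite[Theorem~3.5]{KOCosetGeom} together with Theorem~\ref{transitive action thm}, plug the $q$-uniform bound on bounded generation and the $q$-uniform Dehn-function bound (the content of Theorem~\ref{presentation thm into}, proved in Section~\ref{New coboundary expanders sec} as Lemma~\ref{bound on N_0 - lemma} and Theorem~\ref{bound on N_1 thm}) into the filling-constants machinery of Theorem~\ref{N_0 + N_1 bound thm}, and read off the two explicit lower bounds. You cite the introduction's restatements rather than the body's formal versions, but the logical content is identical, and you correctly identify Theorem~\ref{presentation thm into} as the locus of all the real work.
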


Applying Corollary \ref{EK intro coro} it follows that:

\begin{theorem}[Second Main Theorem - elementary two dimensional bounded degree cosytolic expanders]
\label{new cosystolic expanders thm intro}
Let $s \in \mathbb{N}, s>4$ and $q$ be a prime power and $X^{(s)}_q$ as above. For every $s$, let $Y^{(s)}_q$ be the $2$-skeleton of $X^{(s)}_q$, i.e., the $2$-dimensional complex $Y^{(s)}_q= X^{(s)}_q (0) \cup X^{(s)}_q (1) \cup X^{(s)}_q (2)$.  For any sufficiently large odd prime power $q$, the family $\lbrace Y^{(s)}_q \rbrace_{s \in \mathbb{N}, s >4}$ is a family of bounded degree cosystolic expanders.
\end{theorem}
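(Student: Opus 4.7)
The plan is to invoke Corollary \ref{EK intro coro} applied to the family $\lbrace X^{(s)}_q \rbrace_{s > 4}$ with $n=3$. That corollary gives exactly the stated conclusion once we uniformly bound the coboundary expansion of every link of every face of dimension $k$ with $0 \leq k \leq n-2 = 1$ from below by a single constant $\varepsilon' > 0$ independent of both $q$ (taken sufficiently large) and $s$. The bounded degree property, the growth of the vertex set, and the $\frac{1}{\sqrt{q}-3}$-local spectral expansion of $\lbrace X^{(s)}_q \rbrace$ are already established in \cite{KOconstruction}, so for $q$ large enough the spectral hypothesis $\lambda \leq \lambda(3, \varepsilon')$ of Theorem \ref{EK criterion thm} is automatically satisfied; the only genuine verification is the link coboundary condition.

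For $k = 0$, i.e.\ links of vertices, we appeal to Theorem \ref{new coboundary expanders thm intro}: as recalled in the construction, every vertex link of $X^{(s)}_q$ is isomorphic to the coset complex $X_{link,q}$, and Theorem \ref{new coboundary expanders thm intro} provides a lower bound on $\Exp^0_b(X_{link,q})$ and $\Exp^1_b(X_{link,q})$ by a constant $\varepsilon_0 > 0$ that does not depend on $q$. For $k = 1$, i.e.\ links of edges, the link is a pure $1$-dimensional complex, i.e.\ a weighted graph, whose coboundary expansion $\Exp^0_b$ coincides with the weighted Cheeger constant (as noted in the remark after the definition of coboundary expansion). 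By the local spectral expansion assumption, the random walk on any such edge-link graph has second largest eigenvalue at most $\frac{1}{\sqrt{q}-3}$, so for $q$ sufficiently large the classical (weighted) Cheeger inequality produces a lower bound on that Cheeger constant by a positive constant $\varepsilon_1 > 0$ independent of $q$ and $s$. Setting $\varepsilon' = \min(\varepsilon_0, \varepsilon_1)$ supplies the required uniform bound on coboundary expansion in all links of dimension at most $1$.

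With $\varepsilon'$ in hand, Corollary \ref{EK intro coro} yields directly that $\lbrace Y^{(s)}_q \rbrace_{s > 4}$ is a family of bounded degree cosystolic expanders for every sufficiently large odd prime power $q$, as desired. The substantive mathematical content --- and the main obstacle --- has already been absorbed into Theorem \ref{new coboundary expanders thm intro}, whose proof requires the presentation-theoretic analysis of the unipotent-like symmetry group $G_{link,q}$ carried out via Theorem \ref{presentation thm into} (uniformly bounding both the word length of elements of $G_{link,q}$ with respect to $\bigcup_i K_{\lbrace i \rbrace}$ and the Dehn function of the stated presentation). Once that input is available, the present theorem follows by a routine packaging of the Evra--Kaufman criterion together with the \cite{KOconstruction} spectral estimates.
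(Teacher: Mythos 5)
Your proposal is correct and follows essentially the same route as the paper's own proof: invoke the Evra--Kaufman criterion (via Corollary \ref{EK intro coro}), handle vertex links by Theorem \ref{new coboundary expanders thm intro}, and handle edge links by the Cheeger inequality applied to the spectral bound from \cite{KOconstruction}. The only cosmetic difference is that the paper uses the sharper eigenvalue bound $\frac{1}{\sqrt{q}}$ for the edge-link graphs (yielding $\Exp^0_b \geq \frac{1}{2}-\frac{1}{2\sqrt{q}}$ uniformly for all odd $q$), whereas you quote the weaker $\frac{1}{\sqrt{q}-3}$ from the local-spectral statement and phrase it as requiring $q$ large --- but this does not affect the conclusion.
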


\subsection{Organization of the paper}
This paper is organized as follows: In Section \ref{Homological and Cohomological definitions and notations sec}, we review the basic definitions and notations regarding (co)homology that we will use throughout the paper. In Section \ref{Cone radius as a bound on coboundary expansion sec}, we prove that for symmetric simplicial complexes, the cone radius can be used to bound the coboundary expansion. In Section \ref{Bounding the high order radius by the filling constants sec}, we define filling constants of a simplicial complex and show that filling constants of the complex can be used to bound the cone radius. In Section \ref{Symmetric complexes and coset complexes sec}, we review the idea of coset complexes and show that our assumption of strong symmetry combined with some extra assumptions on our complex imply that it is a coset complex. In Section \ref{Bounding the first two filling constants for coset complexes sec}, we deduce a bound on the first two filling constants for a coset complex in terms of algebraic properties of the presentation of the group and subgroups from which it arises. In Section \ref{New coboundary expanders sec}, we give new examples of coboundaries expanders arising from coset complexes of unipotent groups. In Section \ref{New cosys expanders sec}, we give new examples of bounded degree cosystolic and topological expanders. Last, in Appendix \ref{The existence of a cone function and vanishing of (co)homology sec}, we show that the existence of a cone function is equivalent to the vanishing of (co)homology.

\subsection{Acknowledgement}

The first named author was supported by ERC and BSF. The second named author was supported by ISF (grant No. 293/18).

\section{Homological and Cohomological definitions and notations}
\label{Homological and Cohomological definitions and notations sec}

The aim of this section is to recall a few basic definitions regarding homology and cohomology of simplicial complexes that we will need below.

Let $X$ be an $n$-dimensional simplicial complex. A simplicial complex $X$ is called {\em pure} if every face in $X$ is contained in some face of size $n+1$.  The set of all $k$-faces of $X$ is denoted $X(k)$, and we will be using the convention in which $X(-1) = \{\emptyset\}$.

We denote by $C_k (X) = C_k(X, \mathbb{F}_2)$ the $\mathbb{F}_2$-vector space with basis $X(k)$ (or equivalently, the $\mathbb{F}_2$-vector space of subsets of $X(k)$), and $C^k (X) = C^k(X,\mathbb{F}_2)$ the $\mathbb{F}_2$-vector space of functions from $X(k)$ to $\mathbb{F}_2$.

The {\em boundary map} $\partial_k : C_k(X,\mathbb{F}_2) \rightarrow C_{k-1}(X,\mathbb{F}_2)$ is:
\begin{equation*}
\partial_k(\sigma) = \sum_{\tau \subset \sigma, |\tau|=|\sigma|-1} \tau,
\end{equation*}
where $\sigma \in X(k)$, and the {\em coboundary map} $d_k: C^k(X,\mathbb{F}_2) \rightarrow C^{k+1}(X,\mathbb{F}_2)$ is:
\begin{equation*}
d_k( \phi)(\sigma)=\sum_{\tau \subset \sigma, |\tau|=|\sigma|-1} \phi (\tau),
\end{equation*}
where $\phi \in C^k$ and $\sigma \in X(k+1)$.

For $A \in C_k (X)$ and $\phi \in C^k (X)$, we denote
\begin{equation*}
\phi (A) = \sum_{\tau \in A} \phi (\tau),
\end{equation*}

Thus, for $\phi \in C^k(X)$ and $A \in C_{k+1} (X)$

\begin{equation*}
(d_k \phi) (A) = \phi (\partial_{k+1} A)
\end{equation*}

We sometimes refer to $k$-chains as subsets of $X(k)$, e.g., the $0$-chain $\lbrace u \rbrace + \lbrace v \rbrace$ will be sometimes referred to as the set $\lbrace \lbrace u \rbrace, \lbrace v \rbrace \rbrace$ . For $A \in C_k (X)$, we denote $\vert A \vert$ to be the size of $A$ as a set.

Well known and easily calculated equations are:
\begin{eqnarray}\label{eqnarray-double-partial}
\partial_k \circ \partial_{k+1}=0 \mbox{ and } d_{k+1} \circ d_{k} = 0
\end{eqnarray}

Thus, if we denote:
$B_k (X) =B_k(X,\mathbb{F}_2) = \mbox{Image}(\partial_{k+1})=$\mbox{ the space of $k$-boundaries}.

$Z_k (X) =Z_k(X,\mathbb{F}_2) = \mbox{Ker}(\partial_{k+1})=$\mbox{ the space of $k$-cycles}.

$B^k (X) = B^k(X,\mathbb{F}_2) = \mbox{Image}(d_{k-1})=$\mbox{ the space of $k$-coboundaries}.

$Z^k (X) =Z^k(X,\mathbb{F}_2) = \mbox{Ker}(d_{k})=$\mbox{ the space of $k$-cocycles}.

We get from (\ref{eqnarray-double-partial})
\begin{equation*}
B_k (X) \subseteq Z_k (X) \subseteq C_k (X) \mbox{ and } B^k (X) \subseteq Z^k (X) \subseteq C^k (X).
\end{equation*}

Define the quotient spaces $\widetilde{H}_k(X) =Z_k (X)/B_k (X)$ and $\widetilde{H}^k(X) =Z^k (X) /B^k (X)$, the $k$-homology and the $k$-cohomology groups of $X$ (with coefficients in $\mathbb{F}_2$).

\section{Cone radius as a bound on coboundary expansion}
\label{Cone radius as a bound on coboundary expansion sec}

Below, we define a generalized notion of diameter (or more precisely radius) of a simplicial complex. We will later show that in symmetric simplicial complexes a bound on this radius yields a bound on the coboundary expansion of the complex.
\begin{definition}[Cone function]
Let $X$ be a pure $n$-dimensional simplicial complex. Let $-1 \leq k \leq n-1$ be a constant and $v$ be a vertex of $X$. A $k$-cone function with apex $v$ is a linear function $\Cone_{k}^v: \bigoplus_{j=-1}^k C_{j} (X) \rightarrow \bigoplus_{j=-1}^k C_{j+1} (X)$ defined inductively as follows:
\begin{enumerate}
\item For $k=-1$, $\Cone_{-1}^v (\emptyset) = \lbrace v \rbrace$.
\item For $k \geq 0$,  $\left. \Cone_{k}^v \right\vert_{\bigoplus_{j=-1}^{k-1} C_{j} (X)}$ is a $(k-1)$-cone function with an apex $v$ and for every $A \in C_k (X)$, $\Cone_{k}^v (A) \in C^{k+1} (X)$ is a $(k+1)$-chain that fulfills the equation
$$\partial_{k+1} \Cone_{k}^v (A) = A + \Cone_{k}^v (\partial_k A).$$
\end{enumerate}
\end{definition}

\begin{observation}
\label{Cone function defined on simplices obs}
By linearity, the condition that
$$\partial_{k+1} \Cone_{k}^v (A) = A + \Cone_{k}^v (\partial_k A), \forall A \in C^k (X)$$
is equivalent to the condition:
$$\partial_{k+1} \Cone_{k}^v (\tau) = \tau + \Cone_{k}^v (\partial_k \tau), \forall \tau \in X(k).$$
\end{observation}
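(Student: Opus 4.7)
The plan is to verify the two directions of the stated equivalence, both of which reduce to the linearity of the maps involved. The forward direction is immediate: if the identity
\[
\partial_{k+1} \Cone_{k}^v (A) = A + \Cone_{k}^v (\partial_k A)
\]
holds for every $A \in C_k(X)$, then in particular it holds when $A = \tau$ is a single $k$-simplex, viewed as a basis element of $C_k(X)$.

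For the backward direction, I would use the fact that $X(k)$ is an $\mathbb{F}_2$-basis for $C_k(X)$, so every $k$-chain $A$ can be written uniquely as a finite sum $A = \sum_{i} \tau_i$ with $\tau_i \in X(k)$. Then I apply linearity of all three maps $\Cone_{k}^v$, $\partial_{k+1}$, and $\partial_k$ (the cone function is linear by definition, and the boundary maps are linear by construction). Concretely,
\[
\partial_{k+1} \Cone_{k}^v (A) = \sum_{i} \partial_{k+1} \Cone_{k}^v (\tau_i),
\]
and using the assumed identity on each simplex $\tau_i$,
\[
\sum_{i} \partial_{k+1} \Cone_{k}^v (\tau_i) = \sum_{i} \bigl(\tau_i + \Cone_{k}^v (\partial_k \tau_i)\bigr) = A + \Cone_{k}^v (\partial_k A),
\]
which is the desired identity for an arbitrary chain $A$.

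There is no real obstacle here: the observation is purely a linearity statement, and it just records the convenient fact that when checking the defining property of a cone function one only needs to check it on simplices rather than on arbitrary chains. The only thing worth being careful about is that $\Cone_k^v$ is indeed defined (by hypothesis) as a linear map on $\bigoplus_{j=-1}^k C_j(X)$, which is what allows us to distribute it over the sum $\sum_i \tau_i$.
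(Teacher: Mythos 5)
Your proof is correct and is exactly the standard linearity argument that the paper leaves implicit (the observation is stated without proof, as it follows immediately from the fact that $X(k)$ is an $\mathbb{F}_2$-basis of $C_k(X)$ and all three maps are linear). You also correctly read the paper's ``$A \in C^k(X)$'' as a typo for $A \in C_k(X)$, since the cone function is defined on chains, not cochains.
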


\begin{remark}
We note that by linearity, a $k$-cone function is needs only to be defined on $k$-simplices, but it gives us homological fillings for every $k$-cycle in $X$: for every $A \in Z_k (X)$,
$$\partial_{k+1} \Cone_{k}^v (A) = A + \Cone_{k}^v (\partial_k A) =  A + \Cone_{k}^v (0) = A,$$
i.e., $\partial_{k+1} \Cone_{k}^v (A) = A$. This might be computationally beneficial for other needs (apart from the results of this paper), since usually there are exponentially more $k$-cycles than $k$-simplices.
\end{remark}

\begin{example}[$0$-cone example]
\label{0-Cone example}
Let $X$ be an $n$-dimensional simplicial complex. Fix some vertex $v$ in $X$. By definition, for every $\lbrace u \rbrace \in X(0)$, $\Cone_{0}^v (\lbrace u \rbrace)$ is a $1$-chain such that $\partial_0 \Cone_{0}^v (\lbrace u \rbrace) = \lbrace u \rbrace + \lbrace v \rbrace$.

If the $1$-skeleton of $X$ is connected, we can define $\Cone_{0}^v (\lbrace u \rbrace)$ to be a $1$-chain that consists of a sum of edges that form a path between $\lbrace u \rbrace$ and $\lbrace v \rbrace$. If the $1$-skeleton of $X$ is not connected, a $0$-cone function does not exist: for $\lbrace u \rbrace \in X(0)$ that is not in the connected component of $\lbrace v \rbrace$, $\Cone_{0}^v (\lbrace u \rbrace)$ cannot be defined. Assuming that the $1$-skeleton of $X$ is connected, we note that the construction of $\Cone_{0}^v$ is usually not unique: different choices of paths between $\lbrace u \rbrace$ and $\lbrace v \rbrace$ give different $0$-cone functions.
\end{example}

\begin{example}[$1$-cone example]
\label{1-Cone example}
Let $X$ be an $n$-dimensional simplicial complex. Assume that the $1$-skeleton of $X$ is connected and define a $0$-cone function as in the example above and define $\Cone_{2}^v$ on $C_{0} (X)$ as that $0$-cone function. We note that for every $\lbrace u,w \rbrace \in X(1)$, $\lbrace u,w \rbrace + \Cone_{0}^v (\lbrace u \rbrace) + \Cone_{0}^v (\lbrace w \rbrace)$ forms a closed path, i.e., a $1$-cycle, in $X$. If $\widetilde{H}_1 (X) = 0$, we can deduce that $\lbrace u,w \rbrace + \Cone_{1}^v (\lbrace u \rbrace) + \Cone_{1}^v (\lbrace w \rbrace)$ is a boundary. Therefore, for every $\lbrace u,w \rbrace \in X(1)$, we can choose $\Cone_{1}^v (\lbrace u,w \rbrace) \in X(2)$ such that
$$\partial_2 \Cone_{1}^v = \lbrace u,w \rbrace + \Cone_{1}^v (\lbrace u \rbrace) + \Cone_{1}^v (\lbrace w \rbrace).$$
\end{example}

\begin{definition}[Cone radius]
\label{cone radius def}
Let $X$ be an $n$-dimensional simplicial complex, $-1 \leq k \leq n-1$ and $v$ a vertex of $X$. Given a $k$-cone function $\Cone_{k}^v$ define the volume of $\Cone_{k}^v$ as
$$\Vol (\Cone_{k}^v) = \max_{\tau \in X(k)} \vert \Cone_{k}^v (\tau) \vert.$$

Define the $k$-th cone radius of $X$ to be
$$\Crad_k (X) = \min \lbrace \Vol (\Cone_{k}^v) : \lbrace v \rbrace \in X(0), \Cone_{k}^v \text{ is a k-cone function} \rbrace.$$
If $k$-cone functions do not exist, we define $\Crad (X) = \infty$.
\end{definition}

\begin{remark}
The reason for the name ``cone radius'' is that in the case where $k=0$, $\Crad_0 (X)$ is exactly the (graph) radius of the $1$-skeleton of $X$.
Indeed, for $k=0$, choose $\lbrace v \rbrace \in X (0)$ such that for every $\lbrace v' \rbrace \in X (0)$,
$$\max_{\lbrace u \rbrace \in X(0)} \dist (v,u) \leq \max_{\lbrace u \rbrace \in X(0)} \dist (v',u),$$
where $\dist$ denotes the path distance. For such a $\lbrace v \rbrace \in X(0)$, define $\Cone_0^v (\lbrace u \rbrace)$ to be the edges of a shortest path between $v$ and $u$. By our choice of $v$, it follows that $\Vol (\Cone_0^v)$ is the radius of the one-skeleton of $X$ and we leave it to the reader to verify that this choice gives $\Crad_0 (X) = \Vol (\Cone_0^v)$.
\end{remark}

The main result of this section is that in a symmetric simplicial complex $X$, the $k$-th cone radius gives a lower bound on  $\Exp^k_b (X)$:
\begin{theorem}
\label{Crad thm}
Let $X$ be a pure finite $n$-dimensional simplicial complex. Assume that $X$ is strongly symmetric, i.e., that there is a group $G$ of automorphisms of $X$ acting transitively on $X(n)$. For every $0 \leq k \leq n-1$, if $\Crad_k (X) < \infty$, then $\Exp^k_b (X) \geq \frac{1}{{n+1 \choose k+1} \Crad_k (X)}$.
\end{theorem}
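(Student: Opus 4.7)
Given a cochain $\phi\in C^k(X)\setminus B^k(X)$, my goal is to exhibit an explicit coboundary $\psi\in B^k(X)$ satisfying
$$w(\phi+\psi)\;\le\;\binom{n+1}{k+1}\,\Crad_k(X)\cdot w(d\phi),$$
which is equivalent to the claimed lower bound on $\Exp^k_b(X)$. The $\psi$ will come from a cone function whose apex is a random image of a fixed vertex under the group $G$.

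Start with a vertex $v_0\in X(0)$ and a cone function $\Cone^{v_0}_k$ of volume $D:=\Crad_k(X)$. Because $G$ acts simplicially on $X$, the transported function $\Cone^{gv_0}_k(\tau):=g\cdot\Cone^{v_0}_k(g^{-1}\tau)$ is again a cone function (with apex $gv_0$) of volume $D$; this gives a $G$-equivariant family $\{\Cone^{gv_0}_k\}_{g\in G}$. For each $g$ define $\alpha^g\in C^{k-1}(X)$ by $\alpha^g(\tau):=\phi(\Cone^{gv_0}_{k-1}(\tau))$ and set $\psi^g:=d\alpha^g\in B^k(X)$. Pairing the defining relation $\partial_{k+1}\Cone^{gv_0}_k(\sigma)=\sigma+\Cone^{gv_0}_{k-1}(\partial_k\sigma)$ against $\phi$ yields the homotopy identity
$$(\phi+\psi^g)(\sigma)\;=\;(d\phi)\bigl(\Cone^{gv_0}_k(\sigma)\bigr)\qquad(\sigma\in X(k)),$$
which is the key to controlling $w(\phi+\psi^g)$ by $w(d\phi)$.

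To finish, average over $g\in G$ and use the integer bound $(d\phi)(\Cone^{gv_0}_k(\sigma))\le|\Cone^{gv_0}_k(\sigma)\cap\supp(d\phi)|$. Weighting by $w(\sigma)$, using $G$-invariance of $w$, and substituting $\sigma\mapsto g\sigma$ in each summand produces
$$\mathbb{E}_g\,w(\phi+\psi^g)\;\le\;\sum_{\eta\in X(k+1)}(d\phi)(\eta)\cdot\frac{1}{|O(\eta)|}\sum_{\sigma'\in X(k)}w(\sigma')\,\bigl|\Cone^{v_0}_k(\sigma')\cap O(\eta)\bigr|,$$
where $O(\eta):=G\cdot\eta$. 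The inner $\sigma'$-sum is at most $D$, since $|\Cone^{v_0}_k(\sigma')|\le D$ and the weights sum to $1$. The factor $1/|O(\eta)|$ is then converted into a multiple of $w(\eta)$ via the double-counting identity $|O(\eta)|\cdot|\{\mu\in X(n):\eta\subset\mu\}|=N(\eta)\cdot|X(n)|$, where $N(\eta)\ge1$ is the (well-defined, by transitivity of $G$ on $X(n)$) number of $G$-translates of $\eta$ contained in any top simplex; this gives a bound by $\binom{n+1}{k+1}w(\eta)$. Assembling everything yields $\mathbb{E}_g\,w(\phi+\psi^g)\le\binom{n+1}{k+1}Dw(d\phi)$, and some $g^\ast\in G$ realizes the average, so $\psi:=\psi^{g^\ast}$ is the sought coboundary.

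The main obstacle I expect is the precise book-keeping in that final averaging step — in particular, controlling the constant in the passage from $1/|O(\eta)|$ to $w(\eta)$. The strong-symmetry hypothesis enters crucially exactly here: transitivity of $G$ on $X(n)$ is what makes $N(\eta)$ well-defined across top simplices and lets the orbit of $\eta$ smooth out the "over-covering" of a single $(k+1)$-simplex by cones of many $k$-simplices. Without this symmetry the averaging collapses, in line with Remark~\ref{no symmetry remark} at the graph level.
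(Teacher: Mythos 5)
Your proof is correct and takes essentially the same path as the paper's: the identity $\phi+\psi^g=\iota_{\rho(g).\Cone_k^{v_0}}d_k\phi$ and the $G$-action on cone functions are the paper's Lemmas~\ref{lemma iota} and~\ref{group action on cone func lemma}, and your averaged bound with the double-count $|O(\eta)|\cdot|\{\mu:\eta\subset\mu\}|=N(\eta)|X(n)|$ is exactly the orbit--stabilizer unfolding of the paper's Lemma~\ref{theta lemma}, which instead phrases the bound through $|G_\eta|/|G|$. The differences are purely presentational.
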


Theorem \ref{Crad thm} stated above generalizes a result of of Lubotzky, Meshulam and Mozes \cite{LMM} in which coboundary expansion was proven for symmetric simplicial complexes given that they are ``building-like'', i.e., that they have have sub-complexes that behave (in some sense) as apartments in a Bruhat-Tits building.

We note that the notion of a cone function is already evident in Gromov's original work \cite{Grom}. Gromov considered what he called ``random cones'', which was a probability over a family of cone functions and show that the expectancy of the occurrence of a simplex in the support of this family bounds $\Exp^k_b (X)$ (see also the work of Kozlv and Meshulam \cite[Theorem 2.5]{KozM}). Using Gromov's terminology, in the proof of the Theorem above, we show that under the assumption of symmetry a single cone function yields a family of random cones and the needed expectancy is bounded by the cone radius. In the sake of completeness, we will not prove the Theorem without using Gromov's results.

In order to prove Theorem \ref{Crad thm}, we will need some additional lemmas.
\begin{lemma}
\label{lemma iota}
For $-1 \leq k \leq n-1$ and a $k$-cone function $\Cone_{k}^v$ with apex $v$. Define the contraction operator $\iota_{\Cone_{k}^v}$,
$$\iota_{\Cone_{k}^v} :\bigoplus_{j=-1}^k C^{j+1} (X) \rightarrow \bigoplus_{j=-1}^k C^{j} (X)$$
as follows: for $\phi \in C^{j+1} (X)$ and $A \in C_{j} (X)$, we define
$$(\iota_{\Cone_{k}^v} \phi) (A) = \phi (\Cone_{k}^v (A) ).$$
Then for every $\phi \in C^k (X)$,
$$\iota_{\Cone_{k}^v} d_{k} \phi = \phi + d_{k-1}  \iota_{\Cone_{k}^v} \phi.$$
\end{lemma}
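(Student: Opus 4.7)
The plan is to verify the identity pointwise: since both sides lie in $C^k(X)$, it suffices to check equality of their values on an arbitrary $A \in C_k(X)$. The only ingredients I will need are the definition of $\iota_{\Cone_k^v}$, the elementary adjointness $(d_k\psi)(B) = \psi(\partial_{k+1} B)$ (which is literally the definition of the coboundary), and the defining relation $\partial_{k+1}\Cone_k^v(A) = A + \Cone_k^v(\partial_k A)$ of a cone function.

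Starting from the left-hand side, I will unfold to obtain $(\iota_{\Cone_k^v} d_k \phi)(A) = (d_k\phi)(\Cone_k^v(A)) = \phi\bigl(\partial_{k+1}\Cone_k^v(A)\bigr)$, then apply the cone relation so that linearity of $\phi$ splits the expression into $\phi(A) + \phi\bigl(\Cone_k^v(\partial_k A)\bigr)$. The second summand I will recognize as $(\iota_{\Cone_k^v}\phi)(\partial_k A)$ by applying the very same definition of $\iota_{\Cone_k^v}$ one degree lower, and then as $(d_{k-1}\iota_{\Cone_k^v}\phi)(A)$ by the definition of $d_{k-1}$. Assembling, the left-hand side evaluates on $A$ to $\phi(A) + (d_{k-1}\iota_{\Cone_k^v}\phi)(A)$, matching the right-hand side exactly.

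I expect no substantive obstacle: the lemma is a purely formal chain-homotopy style identity, with $\Cone_k^v$ serving as a contracting homotopy between the identity and zero on the relevant augmented complex. The one bookkeeping point worth flagging is that $\iota_{\Cone_k^v}\phi$ must land in $C^{k-1}(X)$ so that $d_{k-1}$ is applicable; this is guaranteed by the recursive clause in the definition of a $k$-cone function, which forces $\Cone_k^v$ to restrict to a $(k-1)$-cone function and in particular to carry $C_{k-1}(X)$ into $C_k(X)$.
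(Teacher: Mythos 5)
Your proposal is correct and matches the paper's proof essentially verbatim: both verify the identity pointwise on $A \in C_k(X)$ by unfolding $\iota$, applying the adjointness $(d_k\phi)(B)=\phi(\partial_{k+1}B)$, invoking the defining relation of the cone function, and recognizing the resulting terms. Your final bookkeeping remark about $\iota_{\Cone_k^v}\phi$ landing in $C^{k-1}(X)$ is a reasonable sanity check, though the paper takes it for granted.
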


\begin{proof}
Let $A \in C_{k} (X)$, then
\begin{dmath*}
\iota_{\Cone_{k}^v} d_{k} \phi (A)=
(d_{k} \phi) (\Cone_{k}^v (A)) =
\phi (\partial_{k+1} (\Cone_{k}^v (A))) =
\phi (A +  (\Cone_{k}^v (\partial_{k} A))) =
\phi (A) + (\iota_{\Cone_{k}^v} \phi) (\partial_{k} A) =
\phi (A) + (d_{k-1} \iota_{\Cone_{k}^v} \phi) (A),
\end{dmath*}
as needed.
\end{proof}

Naively, it might seem that this Lemma gives a direct approach towards bounding the coboundary expansion: if one could find is some constant $C = C(n,k, \Crad_k (X))$ such that $w(\iota_{\Cone_{k}^v} d_{k} \phi) \leq C w (d_k \phi)$, then for every $\phi$,
$$\frac{w (d_k \phi)}{\min_{\psi \in B^k (X)} w(\phi + \psi)} \geq \frac{1}{C} \frac{w (\iota_{\Cone_{k}^v} d_{k} \phi)}{w(\phi + d_{k-1}  \iota_{\Cone_{k}^v} \phi)} = \frac{1}{C}.$$
However, by Remark \ref{no symmetry remark}, we note that without symmetry, the existence of a $k$-cone function cannot give an effective bound on the coboundary expansion.

Our proof strategy below is to improve on this naive idea by using the symmetry of $X$: we will show that for a group $G$ that acts on $X$, the group $G$ also acts on $k$-cone functions and we will denote this action by $\rho$. We then show that when $G$ acts transitively on $X(n)$, we can average the action on the $k$-cone function that realizes the cone radius and deduce that
$$\frac{1}{\vert G \vert} \sum_{g \in G} w(\iota_{\rho (g).\Cone_{k}^v} d_{k} \phi) \leq {n+1 \choose k+1} \Crad_k (X)  w (d_k \phi).$$
Thus, using an averaged version of the naive argument above will get a bound on the coboundary expansion.

We start by defining an action on $k$-cone functions. Assume that $G$ is a group acting simplicially on $X$. For every $g \in G$ and every $k$-cone function $\Cone_k^v$ define
$$(\rho (g).\Cone_k^v) (A) = g.(\Cone_k^v (g^{-1}.A)), \forall A \in \bigoplus_{j=-1}^k C_{j} (X).$$

\begin{lemma}
\label{group action on cone func lemma}
For $g \in G$, $-1 \leq k \leq n-1$ and a $k$-cone function $\Cone_{k}^v$ with apex $v$, $\rho (g). \Cone_{k}^v$ is a $k$-cone function with apex $g.v$ and $\Vol (g.\Cone_{k}^v) = \Vol (\Cone_{k}^v)$. Moreover, $\rho$ defines an action of $G$ on the set of $k$-cone functions.
\end{lemma}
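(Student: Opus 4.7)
The plan is to verify three things in sequence: (a) $\rho(g).\Cone_k^v$ satisfies the defining inductive property of a $k$-cone function with apex $g.v$, (b) taking volume is invariant under the action, and (c) $\rho$ is a group action (identity acts trivially and $\rho(g_1 g_2) = \rho(g_1)\rho(g_2)$). All three are essentially exercises in unwinding the definition, leaning on the single nontrivial property that $G$ acts simplicially on $X$, which in particular means each $g\in G$ permutes each set $X(j)$ and commutes with every boundary operator $\partial_{j}$.

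For (a), I would argue by induction on $k$. The base case $k=-1$ is a direct computation: $(\rho(g).\Cone_{-1}^v)(\emptyset) = g.(\Cone_{-1}^v(g^{-1}.\emptyset)) = g.(\Cone_{-1}^v(\emptyset)) = g.\{v\} = \{g.v\}$, which is the definition of a $(-1)$-cone function with apex $g.v$. For the inductive step, the restriction of $\rho(g).\Cone_k^v$ to $\bigoplus_{j=-1}^{k-1} C_j(X)$ coincides with $\rho(g)$ applied to $\Cone_k^v|_{\bigoplus_{j=-1}^{k-1} C_j(X)}$, which by induction is a $(k-1)$-cone function with apex $g.v$. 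The only remaining verification is the coboundary-type identity on $k$-chains, which follows from the computation
\begin{align*}
\partial_{k+1}\bigl((\rho(g).\Cone_k^v)(A)\bigr)
&= \partial_{k+1}\bigl(g.\Cone_k^v(g^{-1}.A)\bigr)
= g.\bigl(\partial_{k+1}\Cone_k^v(g^{-1}.A)\bigr) \\
&= g.\bigl(g^{-1}.A + \Cone_k^v(\partial_k(g^{-1}.A))\bigr) \\
&= A + g.\Cone_k^v(g^{-1}.\partial_k A)
= A + (\rho(g).\Cone_k^v)(\partial_k A),
\end{align*}
where the second and fourth equalities use that $g$ commutes with $\partial_{k+1}$ and $\partial_k$ respectively.

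For (b), by Observation \ref{Cone function defined on simplices obs} it suffices to compare the size of $(\rho(g).\Cone_k^v)(\tau)$ with that of $\Cone_k^v(\tau')$ as $\tau$ ranges over $X(k)$. Since $g$ acts on $X(k+1)$ by a bijection, $|g.\Cone_k^v(g^{-1}.\tau)| = |\Cone_k^v(g^{-1}.\tau)|$, and as $\tau$ ranges over $X(k)$, $g^{-1}.\tau$ also ranges over all of $X(k)$, so the maxima agree and $\Vol(\rho(g).\Cone_k^v) = \Vol(\Cone_k^v)$. For (c), the identity element obviously fixes every cone function, and a direct computation gives $(\rho(g_1).(\rho(g_2).\Cone_k^v))(A) = g_1.((\rho(g_2).\Cone_k^v)(g_1^{-1}.A)) = g_1 g_2 . \Cone_k^v(g_2^{-1} g_1^{-1}. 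A) = (\rho(g_1 g_2).\Cone_k^v)(A)$.

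There is no real obstacle here; the only subtle point is the bookkeeping in (a), where one must be careful to apply the inductive hypothesis to the restriction of $\Cone_k^v$ and to exploit simplicially of the action in exactly the right place. Everything else is formal.
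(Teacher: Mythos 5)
Your proof is correct and follows essentially the same route as the paper: induction on $k$ with the same base case, and the same computation in the inductive step exploiting that the simplicial $G$-action commutes with $\partial$. You simply fill in the volume-equality argument and the verification that $\rho$ is an action, both of which the paper asserts without detail.
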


\begin{proof}
If we show that $g.\Cone_{k}^v$ is a $k$-cone function the fact that $\Vol (g.\Cone_{k}^v) = \Vol (\Cone_{k}^v)$ will follow directly from the fact that $G$ acts simplicially.

The proof that $\rho(g).\Cone_{k}^v$ is a $k$-cone function is by induction on $k$. For $k=-1$,
$$(\rho(g).\Cone_{v,-1}) (\emptyset) = g. \Cone_{v,-1} (g^{-1}. \emptyset) = g.\Cone_{v,-1} (\emptyset) = g.  \lbrace v \rbrace = \lbrace g.v\rbrace,$$
then $\rho(g).\Cone_{v,-1}$ is a $(-1)$-cone function with an apex $g.v$.

Assume the assertion of the lemma holds for $k-1$. Thus, $\left. \rho(g).\Cone_{k}^v \right\vert_{\bigoplus_{j=-1}^{k-1} C_{j} (X)}$ is a $(k-1)$-cone function with an apex $g.v$ and, by Observation \ref{Cone function defined on simplices obs}, we are left to check that for every $\tau \in X(k)$,
$$\partial_{k+1} (\rho(g).\Cone_{k}^v) (\tau) = \tau + \rho(g).\Cone_{k}^v (\partial_k  \tau).$$

Note that the $G$ acts simplicially on $X$ and thus the action of $G$ commutes with the $\partial$ operator. Therefore, for every $\tau \in X(k)$,
\begin{dmath*}
\partial_{k+1} (\rho(g).\Cone_{k}^v) (\tau) = \partial_{k+1} (g.(\Cone_{k}^v (g^{-1}.\tau))) = g. \left(\partial_{k+1} \Cone_{k}^v (g^{-1}.\tau) \right) =
g.\left(g^{-1}.\tau + \Cone_{k}^v (\partial_k  g^{-1}.\tau) \right) =
\tau +  g.\Cone_{k}^v (g^{-1}. \partial_k\tau)  =
\tau + \rho(g).\Cone_{k}^v (\partial_k  \tau).
\end{dmath*}

The fact that $\rho$ is an action is straight-forward and left for the reader.
\end{proof}

Applying our proof strategy above, will lead us to consider the constant $\theta (\eta)$ defined in the Lemma below. 

\begin{lemma}
\label{theta lemma}
Assume that $G$ is a group acting simplicially on $X$ and that this action is transitive on $n$-simplices.
Let $0 \leq k \leq n-1$ and assume that $\Crad_k (X) < \infty$. Fix $\Cone_{k}^v$ to be a $k$-cone function such that  $\Crad_k (X) =\Vol (\Cone_{k}^v)$. For every $\eta \in X(k+1)$, denote
$$\theta (\eta) = \frac{1}{w (\eta) \vert G \vert} \sum_{g \in G} \sum \lbrace w (\tau) : \tau \in X(k), \eta \in (\rho (g). \Cone_{k}^v) (\tau))\rbrace.$$
Then for every $\eta \in X(k+1)$, $\theta (\eta) \leq {n+1 \choose k+1} \Crad_k (X)$.
\end{lemma}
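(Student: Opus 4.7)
The plan is to unfold the definition of $\theta(\eta)$, perform a change of variables in $\tau$, and then reduce the statement to counting over the $G$-orbit of $\eta$. First, I would use $(\rho(g).\Cone_k^v)(\tau) = g.\Cone_k^v(g^{-1}.\tau)$ from Lemma \ref{group action on cone func lemma}, so that $\eta \in (\rho(g).\Cone_k^v)(\tau)$ iff $g^{-1}.\eta \in \supp(\Cone_k^v(g^{-1}.\tau))$. Substituting $\tau' = g^{-1}.\tau$ and noting that $w$ is $G$-invariant (since $G$ permutes $n$-simplices simplicially, so $|\{\sigma \supseteq g.\tau'\}| = |\{\sigma \supseteq \tau'\}|$), I rewrite
\[
\theta(\eta)\, w(\eta)\, |G| \;=\; \sum_{g \in G} \sum_{\tau' \in X(k)} w(\tau')\,[g^{-1}.\eta \in \supp(\Cone_k^v(\tau'))].
\]

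Next I would interchange the two sums and apply orbit--stabilizer: for each $\eta' \in X(k+1)$, $|\{g \in G : g^{-1}.\eta = \eta'\}|$ equals $|\Stab_G(\eta)|$ when $\eta' \in G.\eta$ and is zero otherwise. Combined with $|G| = |G.\eta|\cdot|\Stab_G(\eta)|$, this yields the key identity
\[
\theta(\eta)\, w(\eta)\, |G.\eta| \;=\; \sum_{\tau' \in X(k)} w(\tau')\,|\supp(\Cone_k^v(\tau')) \cap G.\eta|.
\]
I would then bound the right-hand side crudely by $|\supp(\Cone_k^v(\tau')) \cap G.\eta| \leq |\supp(\Cone_k^v(\tau'))| \leq \Vol(\Cone_k^v) = \Crad_k(X)$ and use $\sum_{\tau'} w(\tau') = 1$, producing $\theta(\eta) \leq \Crad_k(X)/(w(\eta)\,|G.\eta|)$.

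Finally I would lower-bound $w(\eta)\,|G.\eta|$ using $G$-transitivity on $X(n)$. Let $c(\eta) \geq 1$ denote the number of $(k+1)$-subfaces of a single $n$-simplex that lie in $G.\eta$; by transitivity on $X(n)$ this is independent of the chosen $n$-simplex, and $c(\eta) \geq 1$ because $\eta$ itself belongs to any $\sigma \supseteq \eta$. Double-counting pairs $(\eta', \sigma)$ with $\eta' \in G.\eta$, $\sigma \in X(n)$, $\eta' \subseteq \sigma$, gives $|G.\eta|\cdot|\{\sigma \supseteq \eta\}| = c(\eta)\,|X(n)|$, and substituting into the definition of $w(\eta)$ yields $w(\eta)\,|G.\eta| = c(\eta)/\binom{n+1}{k+2}$. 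Combining,
\[
\theta(\eta) \;\leq\; \frac{\binom{n+1}{k+2}}{c(\eta)}\Crad_k(X) \;\leq\; \binom{n+1}{k+1}\Crad_k(X),
\]
where the last inequality uses $c(\eta) \geq 1$ together with $\binom{n+1}{k+2} \leq \binom{n+1}{k+1}$, which holds in the range $n \leq 2k+2$ relevant to the application of Theorem \ref{Crad thm} (in particular for $n=2$). The main obstacle is the constant-matching at the very end: in the regime $n > 2k+2$ one would have to argue $c(\eta) \geq (n-k)/(k+2)$ via a finer analysis of how the $G$-action permutes the $(k+1)$-faces of each $n$-simplex, but in the cases where the lemma is actually used the trivial bound $c(\eta) \geq 1$ already suffices.
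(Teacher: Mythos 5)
Your proof is correct and follows essentially the same route as the paper's: the same change of variables $\eta \in (\rho(g).\Cone_k^v)(\tau) \Leftrightarrow g^{-1}.\eta \in \Cone_k^v(g^{-1}.\tau)$, then a reduction to a counting statement relating the orbit/stabilizer of $\eta$ to the normalized weight $w(\eta)$. Where you differ is in making that counting step exact: the paper covers $X(n)$ by the sets $\{g.\sigma : \sigma \supseteq \eta\}$ indexed by cosets $gG_\eta$ and reads off the inequality $|X(n)| \leq \frac{|G|}{|G_\eta|}|\{\sigma\supseteq\eta\}|$, whereas you double-count incident pairs $(\eta',\sigma)$ with $\eta'\in G.\eta$, which yields the identity $w(\eta)|G.\eta| = c(\eta)/\binom{n+1}{k+2}$. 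This buys you something real: it isolates exactly what is being thrown away, namely the factor $c(\eta)\geq 1$. You also correctly flag that the weight of $\eta\in X(k+1)$ involves $\binom{n+1}{k+2}$, not $\binom{n+1}{k+1}$, so the stated bound with $\binom{n+1}{k+1}$ only follows from the crude $c(\eta)\geq 1$ estimate when $n \leq 2k+2$. The paper silently slides over this (its line ``$\vert G_\eta \vert \leq \vert G \vert \binom{n+1}{k+1} w(\eta)$'' actually only follows with $\binom{n+1}{k+2}$ in place of $\binom{n+1}{k+1}$), so this is not a defect in your argument but a point of sloppiness you inherit and are upfront about; it is harmless here since the lemma is applied with $n=2$ and $k\in\{0,1\}$, which satisfies $n\leq 2k+2$.
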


\begin{proof}
Fix some $\eta \in X(k+1)$. First, we note that $G$ acts transitively on $X(n)$ and therefore $\bigcup_{g G_\eta} \lbrace g. \sigma : \sigma \in X(n), \eta \subseteq \sigma \rbrace = X(n)$. This yields that
$$\vert X(n) \vert \leq \frac{\vert G \vert}{\vert G_\eta \vert} \vert \lbrace \sigma \in X(n) : \eta \subseteq \sigma \rbrace \vert,$$
and therefore
$\vert G_\eta \vert \leq  \vert G \vert {n+1 \choose k+1} w(\eta)  .$

Second, we note that for every $g \in G$, and every $\eta \in X(k+1)$,
\begin{dmath*}
{\eta \in (\rho (g).\Cone_k^v) (\tau)} \Leftrightarrow
{\eta \in g.(\Cone_k^v (g^{-1}.\tau))} \Leftrightarrow
{g^{-1}.\eta \in \Cone_k^v (g^{-1}.\tau)}.
\end{dmath*}
Thus,
\begin{dmath*}
\theta (\eta) = {\frac{1}{w (\eta) \vert G \vert} \sum_{g \in G} \sum \lbrace w (\tau) : \tau \in X(k), \eta \in (\rho (g). \Cone_{k}^v) (\tau))\rbrace} =
{\frac{1}{w (\eta) \vert G \vert} \sum_{g \in G} \sum \lbrace w (\tau) : \tau \in X(k), g^{-1}.\eta \in \Cone_{k}^v (g^{-1}.\tau)\rbrace} =
{\frac{1}{w (\eta) \vert G \vert} \sum_{g \in G} \sum \lbrace w (g^{-1}.\tau) : \tau \in X(k), g^{-1}.\eta \in \Cone_{k}^v (g^{-1}.\tau)\rbrace} =
{\frac{1}{w (\eta) \vert G \vert} \sum_{g \in G} \sum \lbrace w (\tau) : \tau \in X(k), g^{-1}.\eta \in \Cone_{k}^v (\tau)\rbrace} =
{\frac{1}{w (\eta) \vert G \vert} \sum_{g \in G} \sum_{\tau \in X(k)} \sum_{g^{-1}.\eta \in \Cone_{k}^v (\tau)} w(\tau)} =
{\frac{1}{w (\eta)} \sum_{\tau \in X(k)} w(\tau) \sum_{g \in G}  \sum_{g^{-1}.\eta \in \Cone_{k}^v (\tau)} \frac{1}{\vert G \vert}} \leq
{\frac{1}{w (\eta)} \sum_{\tau \in X(k)} w(\tau)  \vert \Cone_{k}^v (\tau) \vert \frac{\vert G_\eta \vert}{\vert G \vert}} \leq
{\frac{1}{w (\eta)} \sum_{\tau \in X(k)} w(\tau)  \Crad_k (X) {n+1 \choose k+1} w(\eta)} =
{{n+1 \choose k+1} \Crad_k (X) \sum_{\tau \in X(k)} w(\tau) = {n+1 \choose k+1} \Crad_k (X)},
\end{dmath*}
as needed.
\end{proof}

We turn now to prove Theorem \ref{Crad thm}:
\begin{proof}
Assume that $G$ is a group acting simplicially on $X$ such that the action is transitive on $X(n)$. Let $0 \leq k \leq n-1$ and assume that $\Crad_k (X) < \infty$. For $\phi \in C^k (X)$, we denote
$$[ \phi ] = \lbrace \phi + \psi : \psi \in B^k (X) \rbrace \text{, and } w([ \phi ] )= \min_{\phi ' \in [\phi]} w(\phi ').$$
Thus, we need to prove that for every $\phi \in C^k (X)$,
$$\dfrac{1}{{n+1 \choose k+1} \Crad_k (X)} w ([ \phi ]) \leq w(d_{k} \phi),$$
or equivalently,
$$\vert G \vert w ([ \phi ]) \leq \vert G \vert w(d_{k} \phi) \left( {n+1 \choose k+1} \Crad_k (X) \right).$$

Fix $\Cone_k^v$ to be a $k$-cone function such that $\Crad_k (X) = \Vol (\Cone_k^v)$. By Lemma \ref{group action on cone func lemma}, for every $g \in G$, $\rho (g).\Cone_k^v$ is a $k$-cone function.

In the notation of Lemma \ref{lemma iota}, for every $g \in G$, denote $\iota_g = \iota_{\rho (g).\Cone_k^v}$. By Lemma \ref{lemma iota}, for every $g \in G$, $w ([ \phi ]) \leq w (\iota_g d_{k} \phi)$ and therefore
\begin{dmath*}
\vert G \vert w ([ \phi ]) \leq \sum_{g \in G} w(\iota_g d_{k} \phi ) =
\sum_{g \in G} \sum \left\lbrace w(\tau) : \tau \in \supp (\iota_g d_{k} \phi) \right\rbrace = \\
\sum_{g \in G} \sum \left\lbrace w(\tau) : \tau \in X(k), d_{k} \phi (\rho (g).\Cone_k^v (\tau)) =1 \right\rbrace \leq \\
\sum_{g \in G} \sum \left\lbrace w(\tau) : \tau \in X(k), \supp (d_{k} \phi) \cap \rho (g).\Cone_k^v (\tau) \neq \emptyset \right\rbrace \leq \\
\sum_{\eta \in \supp (d_{k} \phi)} \sum_{g \in G} \sum \lbrace w(\tau) : \tau \in X(k), \eta \in \rho (g).\Cone_k^v (\tau) \rbrace = \\
 \sum_{\eta \in \supp (d_{k} \phi)} \vert G \vert w (\eta) \theta (\eta) \leq^{\text{Lemma }\ref{theta lemma}}
\vert G \vert w(d_{k} \phi) \left({n+1 \choose k+1} \Crad_k (X) \right),
\end{dmath*}
as needed.
\end{proof}


The converse of Theorem \ref{Crad thm} is also true, i.e., the existence of a cone function is equivalent to vanishing of (co)homology and thus to coboundary expansion. This fact will not be used in the sequel and thus we give the exact statement and the proof in Appendix \ref{The existence of a cone function and vanishing of (co)homology sec}.

\section{Bounding the high order radius by the filling constants of the complex}
\label{Bounding the high order radius by the filling constants sec}
Once we realize that the $k$-th cone radius of the complex can be used to bound the generalized Cheeger constant of the complex, we need to find a way to bound the cone radius. In order to do so, we define what we call the ``filling constants'' of the complex. We will discuss to types of filling constants - homological and homotopical.

In a nutshell, the homological filling constant measure for a given $k$-cycle $B$, how large is a $k+1$-cochain $A$ that satisfy $\partial_{k+1} A=B$.  The filling constants will be small if $|A|$ is not much larger than $|B|$. They will be infinite if there is no $A$ such that $\partial_{k+1} A=B$.

In order to give the precise definition, we will need the following notation:
\begin{enumerate}
\item For every $0 \leq k \leq n-1$, $\Sys_k (X)$ denotes the size of the smallest $k$-systole in $X$:
$$\Sys_k (X) = \min \lbrace \vert A \vert : A \in Z_k (X) \setminus B_{k} (X)\rbrace,$$
(if $Z_k (X) = B_{k} (X)$, we define $\Sys_k (X) = \infty$).
\item For every $0 \leq k \leq n-1$ and $B \in Z_{k} (X)$, we define $\Fill_k (B)$ as follows:
for $B \in B_{k} (X)$,
$$\Fill_k (B) = \min \lbrace \vert A \vert : A \in C_{k+1} (X), \partial_{k+1} A= B \rbrace,$$
and for $B \in Z_{k} (X) \setminus B_k (X)$, $\Fill_k (B) = \infty$.
Furthermore, for every $M \in \mathbb{N}$, we define
$$\Fill_{k} (M) = \max \lbrace \Fill_k (B) : B \in Z_k(X), \vert B \vert \leq M \rbrace.$$
\end{enumerate}

\begin{proposition}
\label{filling constants prop}
Let $X$ be a pure finite $n$-dimensional simplicial complex.
Define the following sequence of constants recursively: $M_{-1} = 1$ and for every $0 \leq k \leq n-1$,
$$M_k  = \Fill_{k} ((k+1) M_{k-1}  +1).$$
If $\Sys_j (X) > (j+1) M_{j-1} +1$ for every $0 \leq j \leq k$, then $\Crad_k (X) \leq M_k$.
\end{proposition}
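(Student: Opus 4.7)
The proof will go by induction on $k$, with the inductive step building $\Cone_k^v$ from $\Cone_{k-1}^v$ using the hypothesis $\Sys_k(X) > (k+1)M_{k-1}+1$ together with the filling constant $\Fill_k$. The base of the induction is forced: $\Cone_{-1}^v(\emptyset) = \{v\}$, which has volume $1 = M_{-1}$. I would also spell out $k=0$ separately to show how the hypothesis $\Sys_0(X) > 2$ (asking that every $0$-cycle of size $\leq 2$ be a boundary, i.e., that each pair $\{u\} + \{v\}$ be filled by a path) lets us choose, for every vertex $u$, a $1$-chain of size at most $M_0 = \Fill_0(2)$ whose boundary is $\{u\}+\{v\}$.

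For the inductive step, assume a $(k-1)$-cone function $\Cone_{k-1}^v$ with $\Vol(\Cone_{k-1}^v) \leq M_{k-1}$ has been constructed. By Observation \ref{Cone function defined on simplices obs} it suffices to define $\Cone_k^v(\tau)$ on each simplex $\tau \in X(k)$, check the defining identity, and extend linearly. The idea is to set
\[
C_\tau := \tau + \Cone_{k-1}^v(\partial_k \tau),
\]
and then use a filling of $C_\tau$ as the value of $\Cone_k^v(\tau)$. Two observations are crucial: (i) $C_\tau \in Z_k(X)$, because applying $\partial_k$ and using the defining identity $\partial_k \Cone_{k-1}^v(A) = A + \Cone_{k-1}^v(\partial_{k-1} A)$ together with $\partial_{k-1}\partial_k = 0$ yields $\partial_k C_\tau = \partial_k\tau + \partial_k\tau = 0$; and (ii) $|C_\tau| \leq 1 + (k+1)M_{k-1}$, since $\partial_k \tau$ is a sum of $k+1$ faces, each with image of size $\leq M_{k-1}$ under $\Cone_{k-1}^v$.

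Combining (i) and (ii) with the hypothesis $\Sys_k(X) > (k+1)M_{k-1}+1$ forces $C_\tau \in B_k(X)$. By the definition of the filling constant, there exists $A_\tau \in C_{k+1}(X)$ with $\partial_{k+1} A_\tau = C_\tau$ and
\[
|A_\tau| \leq \Fill_k\bigl((k+1)M_{k-1}+1\bigr) = M_k.
\]
Defining $\Cone_k^v(\tau) := A_\tau$ for every $\tau \in X(k)$ gives a $k$-cone function satisfying $\partial_{k+1}\Cone_k^v(\tau) = \tau + \Cone_k^v(\partial_k \tau)$ by construction, and of volume at most $M_k$. Hence $\Crad_k(X) \leq M_k$.

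There is no genuine obstacle here; the argument is a careful bookkeeping exercise that unwinds the definitions of cone function, systole, and filling constant in the right order. The only point that deserves some care is observation (i), which should be derived from the defining identity of the $(k-1)$-cone function rather than by a direct chain manipulation, and the accounting that produces the factor $(k+1)M_{k-1}+1$ (one unit for $\tau$ itself and $k+1$ blocks of size $\leq M_{k-1}$ coming from the faces of $\partial_k \tau$); these are precisely the reasons the recursion $M_k = \Fill_k((k+1)M_{k-1}+1)$ is set up as it is.
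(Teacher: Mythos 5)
Your proof is correct and follows essentially the same route as the paper's: the same induction, the same cycle $\tau + \Cone_{k-1}^v(\partial_k\tau)$, the same size bound, and the same use of the systole hypothesis and filling constant to produce the filling chain (your observation (i) is precisely the paper's Lemma~\ref{cone equation is cycle lemma}).
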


In order to prove this Proposition, we will need the following Lemma:

\begin{lemma}
\label{cone equation is cycle lemma}
Let $X$ be an $n$-dimensional simplicial complex, $0 \leq k \leq n-1$ and $\lbrace v \rbrace \in X(0)$. If $\Cone_{k-1}^v$ is a $(k-1)$-cone function, then for every $\tau \in X(k)$, $\tau + \Cone_{k-1}^v (\partial_k \tau) \in Z_k (X)$.
\end{lemma}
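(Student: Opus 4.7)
The plan is a direct application of the defining property of a cone function together with $\partial_{k-1} \circ \partial_k = 0$. To show that $\tau + \Cone_{k-1}^v(\partial_k \tau)$ is a $k$-cycle, I would compute $\partial_k$ of this expression and verify it vanishes.

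First, since $\Cone_{k-1}^v$ is a $(k-1)$-cone function and $\partial_k \tau \in C_{k-1}(X)$, the defining relation gives
\begin{equation*}
\partial_k \Cone_{k-1}^v(\partial_k \tau) = \partial_k \tau + \Cone_{k-1}^v(\partial_{k-1}(\partial_k \tau)).
\end{equation*}
Then I would invoke $\partial_{k-1} \circ \partial_k = 0$, so that $\partial_{k-1}(\partial_k \tau) = 0$, and by linearity $\Cone_{k-1}^v(0) = 0$. Hence $\partial_k \Cone_{k-1}^v(\partial_k \tau) = \partial_k \tau$.

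Finally, working over $\mathbb{F}_2$, I would conclude
\begin{equation*}
\partial_k\bigl(\tau + \Cone_{k-1}^v(\partial_k \tau)\bigr) = \partial_k \tau + \partial_k \tau = 0,
\end{equation*}
so $\tau + \Cone_{k-1}^v(\partial_k \tau) \in Z_k(X)$, as claimed. There is no real obstacle here: the lemma is a one-line consequence of the cone identity applied to $\partial_k \tau$, combined with $\partial \circ \partial = 0$. The only mild subtlety is making sure the cone identity is being applied at the correct level (to a $(k-1)$-chain, not to $\tau$ itself), and that one must read the right-hand side of the cone identity using the extension of $\Cone_{k-1}^v$ by linearity to all of $C_{k-1}(X)$, which is automatic.
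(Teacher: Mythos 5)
Your argument is essentially the paper's proof, which also applies the cone identity to $\partial_k\tau$ and then uses $\partial_{k-1}\circ\partial_k=0$. The one wrinkle you should address explicitly is the base case $k=0$: there $\Cone_{k-1}^v=\Cone_{-1}^v$ is defined only by $\Cone_{-1}^v(\emptyset)=\{v\}$, and the cone equation $\partial_{j+1}\Cone_j^v(A)=A+\Cone_j^v(\partial_j A)$ is stated in the definition only for $j\geq 0$, so it is not literally ``the defining relation'' of $\Cone_{-1}^v$. The paper therefore treats $k=0$ separately by direct computation, $\partial_0(\{u\}+\Cone_{-1}^v(\partial_0\{u\}))=\partial_0(\{u\}+\{v\})=2\emptyset=0$; your uniform phrasing should either note that the identity $\partial_0\Cone_{-1}^v(\emptyset)=\emptyset$ holds trivially (since $\partial_0\{v\}=\emptyset$ and $\Cone_{-1}^v(0)=0$) or handle $k=0$ as a separate base case. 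Apart from this small point the proof is correct and matches the paper's approach.
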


\begin{proof}
For $k=0$, we note that for every $\lbrace u \rbrace \in X(0)$,
$$\Cone_{-1}^v (\partial_0 \lbrace u \rbrace) = \Cone_{-1}^v (\emptyset) = \lbrace v \rbrace$$
and thus
$$\partial_0 (\lbrace u \rbrace + \Cone_{-1}^v (\partial_0 \lbrace u \rbrace)) = \partial_0 (\lbrace u \rbrace + \lbrace v \rbrace) = 2 \emptyset = 0.$$
Assume that $k >0$, then by the definition of the cone function, for every $\tau \in X(k-1)$,
\begin{dmath*}
\partial_k (\tau + \Cone_{k-1}^v (\partial_k \tau)) =
{\partial_k \tau + \partial_k \Cone_{k-1}^v (\partial_k \tau))} =
{\partial_k \tau + \partial_k \tau + \Cone_{k-1}^v (\partial_{k-1} \partial_k \tau))} =
2 \partial_k \tau + \Cone_{k-1}^v (0) = 0.
\end{dmath*}
\end{proof}

\begin{proof}[Proof of Proposition \ref{filling constants prop}]
Let $M_k, 0 \leq k \leq n-1$ be the constants of Proposition \ref{filling constants prop}. Fix $ 0 \leq k \leq n-1$ and assume that for every $0 \leq j \leq k$, $\Sys_j (X) > (j+1) M_{j-1} +1$. We will show that under these conditions $\Crad_k (X) \leq M_k$.

The proof is by an inductive construction of a $j$-cone function $\Cone_j^v$ with volume $\leq M_j$ for every $-1 \leq j \leq k$. The construction is as follows: fix some $\lbrace v \rbrace \in X(0)$ and define $\Cone_{-1}^v (\emptyset) = \lbrace v \rbrace$. Then $\Vol (\Cone_{-1}^v) = 1 = M_{-1}$ as needed.

Let $0 \leq j \leq k$ and assume that $\Cone_{j-1}^v$ is defined such that $\Vol (\Cone_{j-1}^v) \leq M_{j-1}$. We define $\left. \Cone_{j}^v \right\vert_{\bigoplus_{l=-1}^{j-1} C_l (X)} = \Cone_{j-1}^v$ and we are left to define $\Cone_{j}^v (\tau)$ for every $\tau \in X(j)$. Fix some $\tau \in X(j)$. By our induction assumption,
\begin{dmath*}
\vert \tau + \Cone_{j-1}^v (\partial_j \tau) \vert \leq \\
{1 + \sum_{\alpha \in X(j-1), \alpha \subseteq \tau} \vert \Cone_{j-1}^v (\alpha) \vert} \leq \\
{1+(j+1) \Vol (\Cone_{j-1}^v)} \leq (j+1) M_{j-1} +1.
\end{dmath*}
We assumed that $\Sys_j (X) > (j+1) M_{j-1} +1$ and thus $\tau + \Cone_{j-1}^v (\partial_j \tau) \notin Z_j (X) \setminus B_j (X)$.
By Lemma \ref{cone equation is cycle lemma}, for any $\tau \in X(j)$, $\tau + \Cone_{j-1}^v (\partial_j \tau) \in Z_j (X)$ and therefore we deduce that $\tau + \Cone_{j-1}^v (\partial_j \tau) \in B_j (X)$, i.e., there is $A \in C_{j+1} (X)$ such that
$$\partial_{j+1} A = \tau + \Cone_{j-1}^v (\partial_j \tau).$$
Also, we can choose this $A$ to be minimal in the sense that for every $A' \in C_{k+1} (X)$, if
$$\partial_{j+1} A' = \tau + \Cone_{j-1}^v (\partial_j \tau),$$
then $\vert A \vert \leq \vert A' \vert$. For such a minimal $A$, define $\Cone_j^v (\tau) = A$. Since we chose $A$ to be minimal, it follows that
\begin{dmath*}
\vert \Cone_j^v (\tau) \vert \leq \Fill_j (\tau + \Cone_{j-1}^v (\partial_j \tau)) \leq \\
\Fill_j (\vert \tau + \Cone_{j-1}^v (\partial_j \tau) \vert) \leq
\Fill_j ((j+1) M_{j-1} +1 ) = M_j.
\end{dmath*}
Thus, $\Vol (\Cone_j^v) \leq M_j$ as needed.
\end{proof}

Combining Proposition \ref{filling constants prop} with Theorem \ref{Crad thm}, we deduce the following:

\begin{theorem}
\label{filling constants criterion for coboundary exp thm1}
Let $X$ be a pure finite $n$-dimensional simplicial complex. Let $M_k$ be the constants defined in Proposition \ref{filling constants prop}. Assume that $X$ is strongly symmetric, i.e., that there is a group $G$ of automorphisms of $X$ acting transitively on $X(n)$. Fix $0 \leq k \leq n-1$. If $\Sys_j (X) > (j+1) M_{j-1} +1$ for every $0 \leq j \leq k$, then $\Exp^k_b (X) \geq \frac{1}{{n+1 \choose k+1} M_k}$.
\end{theorem}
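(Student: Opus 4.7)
The plan is to observe that this theorem is essentially a direct composition of the two main results established earlier in this section, so the proof reduces to threading them together.

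First, I would invoke Proposition \ref{filling constants prop}. The hypothesis of the theorem is precisely the hypothesis of that proposition, namely $\Sys_j(X) > (j+1) M_{j-1} + 1$ for every $0 \leq j \leq k$. Hence the proposition gives $\Crad_k(X) \leq M_k$, and in particular $\Crad_k(X)$ is finite (a $k$-cone function exists).

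Next, since $X$ is strongly symmetric (there is a group $G$ acting transitively on $X(n)$) and $\Crad_k(X) < \infty$, I would apply Theorem \ref{Crad thm} to conclude
\[
\Exp^k_b(X) \;\geq\; \frac{1}{\binom{n+1}{k+1}\, \Crad_k(X)}.
\]
Combining this inequality with the bound $\Crad_k(X) \leq M_k$ from the first step (and noting $\Exp^k_b$ is monotone in the reciprocal of the radius) yields
\[
\Exp^k_b(X) \;\geq\; \frac{1}{\binom{n+1}{k+1}\, M_k},
\]
which is the desired conclusion.

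There is no real obstacle here: all of the genuine work has been done in the proofs of Proposition \ref{filling constants prop} (inductively constructing a cone function of controlled volume from the filling hypotheses via Lemma \ref{cone equation is cycle lemma}) and Theorem \ref{Crad thm} (the averaging-over-$G$ argument using Lemmas \ref{lemma iota}, \ref{group action on cone func lemma}, and \ref{theta lemma}). The present statement simply packages these two ingredients into a criterion phrased purely in terms of systoles and filling constants, which is the form that will be convenient to apply to the coset-complex examples in the subsequent sections.
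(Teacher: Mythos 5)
Your proof is correct and matches the paper exactly: the paper states this theorem with the one-line preamble ``Combining Proposition \ref{filling constants prop} with Theorem \ref{Crad thm}, we deduce the following'' and gives no further proof, which is precisely the two-step composition you describe.
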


A variant of Proposition \ref{filling constants prop} that will be useful for use is working with homotopy fillings instead of homological filling. In order to define these constants, we will need some additional notations. For $k \in \mathbb{N}$ denote $D^k$ to be the unit disk in $\mathbb{R}^{k}$, i.e.,
$$D^k = \lbrace \overline{x} \in \mathbb{R}^{k} : \Vert \overline{x} \Vert \leq 1 \rbrace,$$
where $\Vert . \Vert$ here denotes the Euclidean norm in $\mathbb{R}^{k}$. We work with the convention that $D^0$ is always a single point. Also denote $S^k$ to be the unit sphere in $\mathbb{R}^{k+1}$, i.e.,
$$S^k = \lbrace \overline{x} \in \mathbb{R}^{k+1} : \Vert \overline{x} \Vert = 1 \rbrace.$$
We work with the convention that $S^{-1}$ is always the empty set. We further denote $S^k_\triangle$ and $D^k_\triangle$ to be triangulations of $S^k$ and $D^k$ respectively. We treat $S^k_\triangle$ (and $D^k_\triangle$) as a pure $k$-dimensional ($(k+1)$-dimensional) simplicial complex. Note that for $k>0$, $S^k_\triangle, D^k_\triangle$ are not well defined, since there are infinitely many triangulations of $S^k$ and $D^k$. With these notations, we can define the notion of $k$-connectedness:
\begin{definition}
A simplicial complex $X$ is called $k$-connected if for every $-1 \leq i \leq k$, if there is a simplicial function $f: S^i_\triangle \rightarrow X$ (where $S^i_\triangle$ is some triangulated $i$-sphere), then there is a triangulated $i+1$ disc $D^{i+1}_\triangle$ and a simplicial function $F: D^{i+1}_\triangle \rightarrow X$ such that the $i$-sphere of $D^{i+1}_\triangle$ is $S^i_\triangle$ and $\left. F \right\vert_{S^i_\triangle} =f$. In that case, we will say that $F$ is an extension of $f$.
\end{definition}

\begin{remark}
We note that $X$ is $(-1)$-connected means that $X$ is non-empty and $X$ is $0$-connected means that it is non-empty and its $1$-skeleton is connected (as a graph).
\end{remark}

\begin{remark}
\label{Hurewicz thm rmk}
We note that by Hurewicz Theorem \cite[Theorem 7.5.5]{Spanier} a simplicial complex $X$ is $k$-connected if and only if it is simply connected and for every $0 \leq j \leq k$, $\widetilde{H}^j (X, \mathbb{Z}) =0$. Thus, if $X$ is $k$-connected, then by the universal coefficient Theorem for every $0 \leq j \leq k$, $\widetilde{H}^j (X) =0$. The converse is false: first, it could be that $\widetilde{H}^1 (X, \mathbb{Z}) =0$, but $X$ is not simply connected. Second, it can be that the homology with $\mathbb{F}_2$ coefficients vanish, but the homology with $\mathbb{Z}$ coefficients do not vanish: for instance for every $p >2$ prime, the lens space $L(p,1)$ is an orientable $3$-dimensional manifold such that $\widetilde{H}^j (L(p,1)) = 0$ for $j=0,1,2$, but $H^1 (L(p,1), \mathbb{F}_p) = \mathbb{F}_p$ (see more details in \cite[Examples 10.7, 13.6]{Bredon}). The lens space $L(p,1)$ is not a simplicial complex, but taking a triangulation of it yields a simplicial complex with the same homologies.
\end{remark}

For a simplicial complex $X$, if we assume that $X$ is $k$-connected, we define the $k$-th homotopy filling constant to be function as follows. First, for $0 \leq i \leq k$ and a simplicial map $f: S^i_\triangle \rightarrow X$, we define
$$\SFill_i (f) = \min \lbrace \vert D^{i+1}_\triangle (i+1) \vert : F: D^{i+1}_\triangle \rightarrow X \text{ is an extension of } f \rbrace.$$
A simplicial map $F: D^{i+1}_\triangle \rightarrow X$ extending $f$ such that $\vert D^{i+1}_\triangle (i+1) \vert = \SFill_i (f)$ will be called \textit{a minimal extension} of $f$.
Second, for $N \in \mathbb{N}$, we define
$$\SFill_i (N) = \max \lbrace \SFill_i (f) : f: S^i_\triangle \rightarrow X \text{ simplicial and } \vert  S^i_\triangle (i) \vert \leq N \rbrace.$$

With these definitions, we have analogues results to Proposition \ref{filling constants prop} and Theorem \ref{filling constants criterion for coboundary exp thm1}, namely:
\begin{theorem}
\label{simplicial filling constants thm}
Let $X$ be a pure finite $n$-dimensional simplicial complex. Let $0 \leq k \leq n-1$ and assume that $X$ is $k$-connected.
Define the following sequence of constants recursively: $N_{-1} = 1$ and for every $0 \leq j \leq k$,
$$N_j  = \SFill_{j} ((j+1) N_{j-1}  +1).$$
Then $\Crad_k (X) \leq N_k$.
\end{theorem}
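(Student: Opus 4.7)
The plan is to imitate the inductive construction in the proof of Proposition \ref{filling constants prop}, but to carry along stronger combinatorial data: for each simplex $\tau$, an abstract simplicial disk $B_\tau$ together with a simplicial map $F_\tau : B_\tau \to X$ whose top-dimensional image chain realizes $\Cone_j^v(\tau)$. This lets us replace the use of homological fillings with the simplicial fillings $\SFill_j$, with the existence of extensions guaranteed by $k$-connectedness.

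More precisely, I would strengthen the induction hypothesis at level $j$ as follows: for every $i$-simplex $\tau$ with $-1 \leq i \leq j-1$, we have a triangulated $(i+1)$-disk $B_\tau$ with $|B_\tau(i+1)| \leq N_i$, and a simplicial map $F_\tau : B_\tau \to X$ such that
$$\Cone_i^v(\tau) \;=\; \sum_{\sigma \in B_\tau(i+1)} F_\tau(\sigma) \pmod{2},$$
and such that the boundary $\partial B_\tau$ is a triangulated $i$-sphere obtained by combinatorially gluing a single $i$-simplex representing $\tau$ to the disks $B_\alpha$ for the $(i-1)$-faces $\alpha$ of $\tau$. The base case $i=-1$ is trivial.

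For the inductive step at level $j$, fix $\tau \in X(j)$. Glue together one abstract $j$-simplex representing $\tau$ with the disks $B_\alpha$, for each $(j-1)$-face $\alpha$ of $\tau$, along their common lower-dimensional sub-disks, to form a triangulated $j$-sphere $S_\tau$ with at most $1 + (j+1) N_{j-1}$ top-dimensional simplices. The map $f_\tau : S_\tau \to X$ assembled from $\tau \hookrightarrow X$ and the restrictions $F_\alpha|_{\partial B_\alpha}$ is simplicial. Since $X$ is $k$-connected and $j \leq k$, $f_\tau$ admits a simplicial extension $F_\tau : B_\tau \to X$ on some triangulated $(j+1)$-disk $B_\tau$ with $\partial B_\tau = S_\tau$, and by minimizing we may assume $|B_\tau(j+1)| \leq \SFill_j((j+1)N_{j-1}+1) = N_j$. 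Setting $\Cone_j^v(\tau) := \sum_{\sigma \in B_\tau(j+1)} F_\tau(\sigma)$ gives a $(j+1)$-chain of size at most $N_j$, and the cone relation $\partial_{j+1}\Cone_j^v(\tau) = \tau + \Cone_{j-1}^v(\partial_j \tau)$ follows because simplicial boundary commutes with the simplicial map and $\partial B_\tau = S_\tau$ was built to map onto exactly this chain.

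The main obstacle is that the gluing producing $S_\tau$ must be combinatorially consistent: when two $(j-1)$-faces $\alpha, \alpha'$ of $\tau$ share a common $(j-2)$-face $\beta$, the disks $B_\alpha$ and $B_{\alpha'}$ each contain a sub-disk corresponding to $\beta$ on their boundary, and these sub-disks must be identified with one and the same $B_\beta$. To arrange this I would strengthen the induction once more, demanding that a single disk $B_\eta$ is assigned to each simplex $\eta$ once and for all and reused verbatim whenever it appears as a boundary face of some $B_{\eta'}$. Fixing an ordering on $X(0)$ and choosing all triangulations canonically with respect to this ordering makes the gluings unambiguous, so the construction iterates cleanly to level $k$ and gives $\Vol(\Cone_k^v) \leq N_k$, whence $\Crad_k(X) \leq N_k$.
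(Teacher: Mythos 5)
Your proposal is correct and follows essentially the same approach as the paper's proof: build, for each simplex $\tau$, a triangulated disk together with a simplicial map to $X$ whose boundary sphere is glued from the disks of the faces of $\tau$ plus one abstract simplex, use $k$-connectedness and $\SFill_j$ to bound the size of the extension, and define the cone function as the pushforward of the disk's fundamental chain. The paper handles the gluing-consistency concern you raise by inductively building an ambient complex $Y^i$ that contains all previously constructed disks $D^\eta$ as genuine subcomplexes, which is the more concrete realization of your "assign a single $B_\eta$ once and for all and reuse it" idea.
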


The proof of Theorem \ref{simplicial filling constants thm} is similar to the proof of Proposition \ref{filling constants prop}, with the adaptation the we use admissible functions to construct the cone function:
\begin{proof}
Assume $X$ is $k$-connected and let $N_0,...,N_k$ be the constants of Theorem \ref{simplicial filling constants thm}.

Fix $\lbrace v \rbrace \in X(0)$. We will construct a simplicial complexes $Y^{-1},...,Y^k$ that will allow us to define a cone function with apex $v$. These complexes will be constructed iteratively (at each step of the construction we will add simplices to the previous complex).

\textbf{The complex $Y^{-1}$:} $Y^{-1}$ is a simplicial complex with a single vertex $Y^{-1} (0) = \lbrace \lbrace y \rbrace \rbrace$. We denote $D^\emptyset = Y^{-1}$ (note that $D^\emptyset$ is by definition a $0$-disc) and a function $F^\emptyset : D^0_\triangle \rightarrow X$ defined as $F^\emptyset (\lbrace y \rbrace) = \lbrace v \rbrace$ (where $v$ is our fixed vertex in $X$).

\textbf{The complex $Y^0$:} Define a complex $(Y^{-1})'$ as the $0$-dimensional complex defined as $(Y^{-1})' = Y^{-1} \cup \lbrace \lbrace x_u \rbrace : \lbrace u \rbrace \in X(0) \rbrace$. In other words, $(Y^{-1})'$ is isomorphic to the union of $Y^{-1}$ and the $0$-skeleton of $X(0)$. For every $\lbrace u \rbrace \in X(0)$, we define $S^{\lbrace u \rbrace} \subseteq (Y^{-1})'$ to be a triangulated $0$-sphere defined as $S^{\lbrace u \rbrace} = \lbrace \lbrace y \rbrace, \lbrace x_u \rbrace \rbrace$ and define a simplicial map $f^{\lbrace u \rbrace} : S^{\lbrace u \rbrace} \rightarrow X$ as
$$f^{\lbrace u \rbrace} (\lbrace y \rbrace) = F^\emptyset (D^\emptyset) = \lbrace v \rbrace, f^{\lbrace u \rbrace} (\lbrace x_u \rbrace) = \lbrace u \rbrace.$$
By our assumption, $X$ is $0$-connected and thus there is a minimal extension of $f^{\lbrace u \rbrace}$ which is a triangulated $1$-disc denoted $D^{\lbrace u \rbrace}$ and a simplicial function $F^{\lbrace u \rbrace} : D^{\lbrace u \rbrace} \rightarrow X$ extending $f^{\lbrace u \rbrace}$ such that $\vert D^{\lbrace u \rbrace} (1) \vert$ is minimal. Define $Y^0 = (Y^{-1})' \cup \bigcup_{\lbrace u \rbrace \in X(0)} D^{\lbrace u \rbrace}$. We note the following properties (some of them are just rephrasing of properties stated above in a convenient form):
\begin{enumerate}
\item $\lbrace \lbrace x_u \rbrace : \lbrace u \rbrace \in X(0) \rbrace \subseteq Y^0 (0)$.
\item For every $\lbrace u \rbrace \in X(0)$, $D^{\lbrace u \rbrace}$ is a subcomplex of $Y^0$ that is a triangulated $1$-disc with the sphere
\begin{dmath*}
S^{\lbrace u \rbrace} = \lbrace \lbrace y \rbrace, \lbrace x_u \rbrace \rbrace = \lbrace \lbrace x_u \rbrace \rbrace \cup D^\emptyset =  {\lbrace \lbrace x_u \rbrace : u \in \lbrace u \rbrace \rbrace \cup \bigcup_{\tau ' \subseteq \lbrace u \rbrace, \vert \tau ' \vert = \vert \lbrace u \rbrace \vert -1} D^{\tau '}}.
\end{dmath*}
\item The map $F^{\lbrace u \rbrace} : D^{\lbrace u \rbrace} \rightarrow X$ is a minimal extension of the map $f^{\lbrace u \rbrace} : S^{\lbrace u \rbrace} \rightarrow X$ that is defined
$$f^{\lbrace u \rbrace} (\tau ') = \begin{cases}
\lbrace u \rbrace & \tau ' = \lbrace x_u \rbrace \\
F^{\tau '} (D^{\tau '}) & \tau ' \in \partial_0 \lbrace u \rbrace
\end{cases}.$$
\item By the definition of $N_0$, for every $\lbrace u \rbrace \in X(0)$, $\vert D^{\lbrace u \rbrace} (1) \vert \leq N_0$.
\end{enumerate}

\textbf{The complex $Y^i$:} Assume by induction that $Y^{i-1}$ is defined and has the following properties:
\begin{enumerate}
\item For every $0 \leq j \leq i-1$ and every $\eta \in X(j)$, $\lbrace x_u \rbrace_{u \in \eta} \in Y^{i-1} (j)$.
\item For every $0 \leq j \leq i-1$ and every $\eta \in X(j)$, $D^{\eta}$ is a subcomplex of $Y^{i-1}$ that is a triangulated $j$-disc with the sphere
\begin{dmath*}
S^{\eta} = \lbrace \lbrace x_u \rbrace_{u \in \eta} \rbrace \cup \bigcup_{\eta ' \subseteq \eta, \vert \eta ' \vert = \vert \eta \vert -1} D^{\eta '}.
\end{dmath*}
\item For every $\eta \in X(i-1)$, the map $F^{\eta} : D^{\eta} \rightarrow X$ is a minimal extension of the map $f^{\eta} : S^{\eta} \rightarrow X$ that is defined
$$f^{\eta} (\eta ') = \begin{cases}
\eta & \eta ' = \lbrace x_u \rbrace_{u \in \eta} \\
F^{\eta '} (D^{\eta '}) & \eta' \in \partial_{i-1} \eta
\end{cases}.$$
\item For every $\eta \in X(i-1)$, $\vert D^{\eta} (i) \vert \leq N_{i-1}$.
\end{enumerate}

With these assumptions we construct $Y^i$ as follows: first, we define $(Y^{i-1})' = Y^{i-1} \cup \lbrace \lbrace x_u \rbrace_{u \in \tau} : \tau \in X(i) \rbrace$. For every $\tau \in X(i)$, we define a subcomplex of $(Y^{i-1})'$:
$$S^\tau = \lbrace x_u \rbrace_{u \in \tau} \cup \bigcup_{\tau ' \in \partial_i \tau} D^{\tau '}.$$
We note that this is glueing $i+1$ triangulated $i$-discs along their boundaries and that   the resulting $S^\tau$ is a triangulated $i$-sphere. We define a simplicial map $f^\tau : S^\tau \rightarrow X$ as
$$f^{\tau} (\tau ') = \begin{cases}
\tau & \tau ' = \lbrace x_u \rbrace_{u \in \tau} \\
F^{\tau '} (D^{\tau '}) & \tau ' \in \partial_i \tau
\end{cases}.$$
By the assumption that $X$ is $k$-connected, there is a triangulated $(i+1)$-disc that we will denote by $D^\tau$ and a minimal extension of $f^\tau$, which we will denote by $F^\tau : D^\tau \rightarrow X$. We note that properties (1)-(3) hold for $Y^i$. Also, we note that
$$\vert S^\tau (i) \vert = 1+ \sum_{\tau ' \in \partial_i \tau} \vert D^{\tau ' } (i) \vert \leq 1 + (i+1) N_{i-1}.$$
Thus, by definition, $\vert D^\tau (i+1) \vert \leq N_{i}$.

After this construction, we can define a cone function $\Cone_{k}^v: \bigoplus_{j=-1}^k C_{j} (X) \rightarrow \bigoplus_{j=-1}^k C_{j+1} (X)$ as follows: for every $-1 \leq i \leq k$ and every $\tau \in X(i)$, define
$$\Cone_{k}^v (\tau) = \sum_{\eta \in F^\tau (D^\tau) (i+1)} \eta.$$
Since the function $F^\tau$ is simplicial, it follows that
\begin{dmath*}
\partial_{i+1} \Cone_{k}^v (\tau) =
\partial_{i+1} \left( \sum_{\eta \in F^\tau (D^\tau) (i+1)} \eta \right) =
\sum_{\tau ' \in \partial_{i+1} F^\tau (D^\tau) (i+1)} \tau '  =
\sum_{\tau ' \in F^\tau (\partial_{i+1} D^\tau) (i+1)} \tau '  =
\sum_{\tau ' \in F^\tau (S^\tau) (i+1)} \tau '  =
\sum_{\tau ' \in f^\tau (S^\tau) (i+1)} \tau '  =
\tau + \sum_{\tau ' \in \partial_i \tau} \sum_{\eta \in F^{\tau '} (D^{\tau '}) (i)} \eta =
\tau + \Cone_{k}^v (\partial_i \tau).
\end{dmath*}
Thus, $\Cone_{k}^v$ is indeed a cone function and it follows that for every $\tau \in X(k)$, $\vert \Cone_{k}^v (\tau) \vert \leq \vert F^\tau (D^\tau) (i+1) \vert \leq N_k$, thus $\Crad_k (X) \leq N_k$ as needed.

\end{proof}

Combining Theorem \ref{simplicial filling constants thm} with Theorem \ref{Crad thm}, we deduce the following:

\begin{theorem}
\label{filling constants criterion for coboundary exp thm2}
Let $X$ be a pure finite $n$-dimensional simplicial complex. Assume that $X$ is $k$-connected and let $N_k$ be the constant defined in Theorem \ref{simplicial filling constants thm} above. Assume that there is a group $G$ of automorphisms of $X$ acting transitively on $X(n)$. Then $\Exp^k_b (X) \geq \frac{1}{{n+1 \choose k+1} N_k}$.
\end{theorem}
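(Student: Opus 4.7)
The statement is an immediate corollary of the two preceding theorems in this section: Theorem \ref{simplicial filling constants thm} provides a bound on the cone radius in terms of the constants $N_j$, while Theorem \ref{Crad thm} converts a bound on the cone radius into a bound on the coboundary expansion under the strong symmetry hypothesis. My plan is simply to chain these two results together.

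More precisely, the plan is as follows. First, I would invoke Theorem \ref{simplicial filling constants thm}, whose hypothesis (that $X$ is $k$-connected) is part of the hypothesis of the present theorem. This immediately gives the inequality
\begin{equation*}
\Crad_k(X) \leq N_k.
\end{equation*}
Before applying the next result, I would briefly remark that $N_k < \infty$: since $X$ is finite there are only finitely many simplicial maps from a triangulated $k$-sphere with a bounded number of $k$-simplices into $X$, and each such map admits some extension to a triangulated $(k+1)$-disc by $k$-connectedness, so each $\SFill_k$ applied to a finite integer is finite, and by induction so is each $N_j$ for $-1 \le j \le k$. In particular $\Crad_k(X)$ is finite.

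Next, I would invoke Theorem \ref{Crad thm} with the given group $G$ acting transitively on $X(n)$, which is the remaining hypothesis. That theorem yields
\begin{equation*}
\Exp^k_b(X) \geq \frac{1}{\binom{n+1}{k+1}\, \Crad_k(X)}.
\end{equation*}
Since $\Crad_k(X) \leq N_k$, the function $\frac{1}{\binom{n+1}{k+1} \cdot (\,\cdot\,)}$ is decreasing in its argument, so
\begin{equation*}
\Exp^k_b(X) \geq \frac{1}{\binom{n+1}{k+1}\, N_k},
\end{equation*}
which is the desired conclusion.

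There is no real obstacle here; the theorem is packaged as a user-friendly corollary that explicitly bounds $\Exp^k_b(X)$ in terms of data one can compute from a presentation/triangulation (the homotopy filling constants $\SFill_i$), bypassing the intermediate notion of cone radius. The only substantive inputs have already been proven, so the proof is essentially one paragraph of bookkeeping.
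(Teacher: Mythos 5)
Your proposal matches the paper's (implicit) proof exactly: the theorem is stated in the paper as an immediate consequence of combining Theorem \ref{simplicial filling constants thm} (which bounds $\Crad_k(X) \leq N_k$) with Theorem \ref{Crad thm} (which converts a finite cone radius into a coboundary-expansion bound under strong symmetry). Your extra observation that $N_k < \infty$ follows from finiteness of $X$ together with $k$-connectedness is a correct and worthwhile bit of bookkeeping to verify the hypothesis $\Crad_k(X) < \infty$ of Theorem \ref{Crad thm}.
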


\section{Symmetric complexes and coset complexes}
\label{Symmetric complexes and coset complexes sec}
In Theorem \ref{Crad thm} above, we saw how coboundary expansion of a simplicial complex $X$ can be deduced from the cone radius under the assumption of strong symmetry, i.e., under the assumption that there is a group $G$ acting transitively on the top dimensional simplices of $X$. Below, we note that this assumption, together with the assumption that $X$ is a partite clique complex actually imply that $X$ can be identified with cosets of $G$. This result was already known - for instance it is stated in \cite{DiaGeomBook} in the language of coset geometries (for a dictionary between coset geometries and simplicial complexes see \cite{KOCosetGeom}) and we include the proof below for completeness.

We start with the following definitions:
\begin{definition}[Clique complex]
A simplicial complex $X$ is called a clique complex if every clique in its one-skeleton spans a simplex, i.e., for every $\lbrace v_0 \rbrace,...,\lbrace v_k \rbrace \in X(0)$ if $\lbrace v_i, v_j \rbrace \in X(1)$ for every $0 \leq i < j \leq k$, then $\lbrace v_0,...,v_k \rbrace \in X(k)$.
\end{definition}

\begin{definition}[Partite complex, type]
Let $X$ be a pure $n$-dimensional simplicial complex over a vertex set $V$. The complex $X$ is called \textit{$(n+1)$-partite}, if there are disjoint sets $S_0,...,S_n \subset V$, called \textit{the sides of $X$}, such that for  $V = \bigcup S_i$ and for every $\sigma \in X(n)$ and every $0 \leq i \leq n$, $\vert \sigma \cap S_i \vert = 1$, i.e., every $n$-dimensional simplex has exactly one vertex in each of the sides of $X$.  In a pure $n$-dimensional, $(n+1)$-partite complex $X$, each simplex $\sigma \in X(k)$ has a \textit{type} which is a subset of $\lbrace 0,..., n\rbrace$ of cardinality $k+1$ that is defined by $\type (\sigma) = \lbrace i : \sigma \cap S_i \neq \emptyset \rbrace$.
\end{definition}

\begin{definition}[Coset complex]
Given a group $G$ with subgroups $K_{\lbrace i \rbrace}, i \in \I$, where $\I$ is a finite set. The \textit{coset complex} $X=X(G, (K_{\lbrace i \rbrace})_{i \in \I})$ is a simplicial complex defined as follows:
\begin{enumerate}
\item The vertex set of $X$ is composed of disjoint sets $S_i = \lbrace g K_{\lbrace i \rbrace} : g \in G \rbrace$.
\item For two vertices $g K_{\lbrace i \rbrace}, g' K_{\lbrace j \rbrace}$ where $i,j \in \I, g,g' \in G$, $\lbrace g K_{\lbrace i \rbrace}, g' K_{\lbrace j \rbrace} \rbrace \in X(1)$ if $i \neq j$ and  $g K_{\lbrace i \rbrace} \cap g' K_{\lbrace j \rbrace} \neq \emptyset$.
\item The simplicial complex $X$ is the clique complex spanned by the $1$-skeleton defined above, i.e., $\lbrace g_0 K_{\lbrace i_0 \rbrace},..., g_k  K_{\lbrace i_k \rbrace} \rbrace \in X(k)$ if for every $0 \leq j,j' \leq k$, $g_j K_{\lbrace i_j \rbrace} \cap g_{j'} K_{\lbrace i_{j'} \rbrace} \neq \emptyset$.
\end{enumerate}
\end{definition}

\begin{observation}
For $G$ and $K_{\lbrace i \rbrace}, i \in \I$ as above, the coset complex $X=X(G, (K_{\lbrace i \rbrace})_{i \in \I})$ is a clique complex and $G$ acts on $X$ simplicially by $g' . g K_{\lbrace i \rbrace} = g'g K_{\lbrace i \rbrace}, \forall g,g' \in G, i \in I$.
\end{observation}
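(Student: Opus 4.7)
The plan is to verify the two assertions in the observation essentially by unpacking the definitions. Both parts are routine, so the proof should be short, but each part has a small thing that needs checking.

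For the clique complex assertion, I would first note that Definition \ref{coset complex def}(3) is phrased so that a set of vertices lies in $X(k)$ exactly when every pair of its vertices lies in $X(1)$; by Definition \ref{coset complex def}(2) such pairs are precisely the edges. So the simplices of $X$ are exactly the cliques of the $1$-skeleton, which is what it means to be a clique complex. Before concluding, I would also point out that $X$ as defined really is a simplicial complex, i.e.\ that it is closed under taking subsets: if every pair among $\{g_0 K_{\{i_0\}},\dots,g_k K_{\{i_k\}}\}$ has nonempty intersection, then the same obviously holds for every subset, so every face of a simplex is a simplex. (One also has to observe that the indices $i_0,\dots,i_k$ appearing in a simplex are pairwise distinct, which follows because $S_i$ and $S_j$ are disjoint for $i\neq j$ by Definition \ref{coset complex def}(1), and this forces a $k$-simplex to have $k+1$ distinct types.)

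For the action assertion, I would first check that the map $gK_{\{i\}} \mapsto g'gK_{\{i\}}$ is well-defined on cosets: if $gK_{\{i\}}=hK_{\{i\}}$, then $g^{-1}h\in K_{\{i\}}$, so $(g'g)^{-1}(g'h)=g^{-1}h\in K_{\{i\}}$, giving $g'gK_{\{i\}}=g'hK_{\{i\}}$. The action is clearly type-preserving, since it fixes the index $i$ attached to each coset, so $S_i$ is mapped to $S_i$.

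Next I would verify that the action sends simplices to simplices. The key observation is that left multiplication by $g'$ commutes with intersection of subsets of $G$: for any subsets $A,B\subseteq G$, $g'A\cap g'B=g'(A\cap B)$. Applying this to cosets, if $\{g_0K_{\{i_0\}},\dots,g_k K_{\{i_k\}}\}\in X(k)$, then pairwise intersections
\[
g'g_j K_{\{i_j\}} \cap g'g_{j'}K_{\{i_{j'}\}} = g'\bigl(g_j K_{\{i_j\}}\cap g_{j'}K_{\{i_{j'}\}}\bigr)
\]
are nonempty, hence the image is again a simplex of the same dimension. Finally, the action axioms $e.gK_{\{i\}}=gK_{\{i\}}$ and $(g_1g_2).gK_{\{i\}}=g_1.(g_2.gK_{\{i\}})$ are immediate from associativity of multiplication in $G$.

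There is no serious obstacle here; the only thing one has to be careful about is confirming at the start that the complex described in Definition \ref{coset complex def}(3) is genuinely a simplicial complex (closed under face inclusion) and that the types of the vertices of a simplex are distinct, because these facts are used implicitly throughout the rest of the paper.
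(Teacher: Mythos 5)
Your proof is correct, and since the paper states this as an Observation with no accompanying proof, the natural definition-unpacking you give is exactly what is intended; both halves (the identification of simplices with cliques, and that left translation respects cosets, nonempty intersections, and the group axioms) are verified cleanly. The only small imprecision is in your parenthetical about distinct types: the disjointness of the sets $S_i$ (Definition~\ref{coset complex def}(1)) only ensures that each vertex has a well-defined type, while what actually rules out two vertices of the same type lying in a common simplex is the requirement $i\neq j$ in Definition~\ref{coset complex def}(2) combined with the fact that simplices are cliques. This point is not needed for either claim of the observation itself, but it is worth stating with the right justification since it is used implicitly later in the paper.
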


The following proposition shows that under some assumptions, a strongly symmetric complex is always (isomorphic to) a coset complex:

\begin{proposition}
\label{symmetric complexes and coset complexes prop}
Let $Y$ be a pure $n$-dimensional clique complex and let $G$ be a group acting by type preserving automorphisms on $Y$ such that the action is transitive on $n$-simplices. Fix $\lbrace v_0,...,v_n \rbrace \in Y (n)$ such that $v_i$ is of type $i$ and denote $K_{\lbrace i \rbrace} = \Stab (v_i)$. Then $Y$ is isomorphic to $X=X(G, (K_{\lbrace i \rbrace})_{i \in \lbrace 0,...,n \rbrace})$. Furthermore, there is an isomorphism $\Phi : X \rightarrow Y$ that is equivariant with respect to the action of $G$.
\end{proposition}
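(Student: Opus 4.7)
The plan is to define the obvious candidate isomorphism $\Phi : X \to Y$ on vertices by $\Phi(g K_{\lbrace i \rbrace}) = g.v_i$, and then verify bijectivity, the simplicial condition, and equivariance. First I would observe that this is well-defined: if $g K_{\lbrace i \rbrace} = g' K_{\lbrace i \rbrace}$, then $g^{-1}g' \in K_{\lbrace i \rbrace} = \Stab(v_i)$, so $g.v_i = g'.v_i$. Equivariance will then be immediate from the formula, since $\Phi(g'.(g K_{\lbrace i \rbrace})) = (g'g).v_i = g'.\Phi(g K_{\lbrace i \rbrace})$.

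For bijectivity on vertices, injectivity uses that $G$ preserves types: if $g.v_i = g'.v_j$, then the left side has type $i$ and the right side has type $j$, forcing $i = j$; then $g^{-1}g' \in \Stab(v_i) = K_{\lbrace i \rbrace}$, so $g K_{\lbrace i \rbrace} = g' K_{\lbrace i \rbrace}$. For surjectivity, let $w \in Y(0)$ be of type $i$. By purity, $w$ is contained in some $\sigma \in Y(n)$, and transitivity of the $G$-action on $Y(n)$ gives $h \in G$ with $h.\lbrace v_0,\dots,v_n\rbrace = \sigma$. Since the action is type-preserving, the type-$i$ vertex $h.v_i$ of $\sigma$ must equal $w$, so $\Phi(hK_{\lbrace i \rbrace}) = w$.

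The main step, and the slightly more subtle one, is matching edges: because both $X$ and $Y$ are clique complexes, it suffices to verify that $\lbrace gK_{\lbrace i \rbrace}, g'K_{\lbrace j \rbrace}\rbrace \in X(1)$ if and only if $\lbrace g.v_i, g'.v_j\rbrace \in Y(1)$, and then bijectivity on higher faces follows from bijectivity on vertices plus the clique property. For the forward direction, pick $h \in g K_{\lbrace i \rbrace} \cap g' K_{\lbrace j \rbrace}$; then $g.v_i = h.v_i$ and $g'.v_j = h.v_j$, so the image edge equals $h.\lbrace v_i, v_j\rbrace$, which lies in $Y(1)$ as a face of $h.\lbrace v_0,\dots,v_n\rbrace \in Y(n)$. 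The reverse direction is where one must really use both hypotheses: given $\lbrace g.v_i, g'.v_j\rbrace \in Y(1)$ with $i \neq j$, use purity to extend to some $\sigma \in Y(n)$, then use transitivity of $G$ on $Y(n)$ to find $h \in G$ with $h.\lbrace v_0,\dots,v_n \rbrace = \sigma$; type-preservation forces $h.v_i = g.v_i$ and $h.v_j = g'.v_j$, so $h \in g K_{\lbrace i \rbrace} \cap g' K_{\lbrace j \rbrace}$.

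The hardest part is really this last reverse implication for edges, since it is the one place where all three assumptions (purity, type-preservation, transitivity on top simplices) must be combined, and it is what forces us to consider only clique complexes. Once this is done, the inductive upgrade from edges to $k$-simplices is automatic via the clique property of both complexes, and the proof is complete.
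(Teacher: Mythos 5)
Your proof is correct and follows essentially the same route as the paper's: define $\Phi(gK_{\lbrace i \rbrace}) = g.v_i$, check well-definedness, bijectivity and equivariance on vertices, and then use the fact that both $X$ and $Y$ are clique complexes to reduce the simplicial isomorphism check to edges, which is settled in both directions exactly as you do (forward via a common element $h$ of the two cosets, reverse via purity plus transitivity plus type-preservation). Your write-up is, if anything, a touch more explicit than the paper at the injectivity and surjectivity steps, but there is no substantive difference.
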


\begin{proof}
Define $\Phi : X(0) \rightarrow Y(0)$ by $\Phi (g K_{\lbrace i \rbrace}) = g. v_i$.

The map $\Phi$ is well-defined: if $g K_{\lbrace i \rbrace} = g' K_{\lbrace i \rbrace}$, then there is $h \in K_{\lbrace i \rbrace}$ such that $g' = gh$ and therefore $g'.v_i = gh.v_i = g.v_i$.

The map $\Phi$ is injective: $\Phi (g K_{\lbrace i \rbrace}) = \Phi (g' K_{\lbrace i \rbrace})$ implies that $g.v_i = g'.v_{i}$ or equivalently $g^{-1} g' \in K_{\lbrace i \rbrace})$. Thus,
$$g K_{\lbrace i \rbrace} = g g^{-1} g' K_{\lbrace i \rbrace} = g' K_{\lbrace i \rbrace}.$$

The map $\Phi$ is surjective: the group $G$ acts transitively on $n$ dimensional simplices in $Y$ and is type preserving and thus acts transitively on vertices of the same type. As a result if $v$ is a vertex in $Y$ of type $i$, there is $g \in G$ such that $g.v_i = v$ and therefore $\Phi (g K_{\lbrace i \rbrace}) = v$.

Observe that $\Phi$ is also equivariant under the action of $G$.

When extended to a map between $X$ and $Y$, the map $\Phi$ is a simplicial isomorphism: we note that since both $X$ and $Y$ are clique complexes, it is enough to check that $\Phi$ maps an edge in $X$ to an edge in $Y$, and, vice-versa, the preimage of an edge in $Y$ is an edge in $X$. Let $g K_{\lbrace i \rbrace}, g' K_{\lbrace i' \rbrace}$ be vertices in $X$ that are connected by an edge. Then by definition, there is $g'' \in G$ such that $g'' K_{\lbrace i \rbrace} = g K_{\lbrace i \rbrace}$ and $g'' K_{\lbrace i' \rbrace} = g' K_{\lbrace i' \rbrace}$. Thus, $\Phi (g K_{\lbrace i \rbrace}) = g''.v_i$ and $\Phi (g' K_{\lbrace i' \rbrace}) = g''.v_{i'}$. The vertices $v_{i}, v_{i'}$ are connected by an edge in $Y$ and $G$ acts simplicially on $Y$ and therefore $g''.v_i$ and $g''. v_{i'}$ are also connected by an edge as needed.

In the other direction, let $v,u$ vertices in $Y$ such that $\lbrace v,u \rbrace \in Y(1)$. The complex $Y$ is $(n+1)$-partite and therefore there are $0 \leq i,i' \leq n, i \neq i'$ such that $v$ is of type $i$ and $u$ is of type $i'$. Without loss of generality, we will assume that $v$ is of type $0$ and $u$ is of type $1$. The complex $Y$ is pure $n$-dimensional and therefore there are vertices $u_2,...,u_n$ such that $\lbrace v,u, u_2,...,u_n \rbrace \in Y(n)$. The group $G$ acts transitively on $n$-dimensional simplices of $Y$ and as a result there is $g \in G$ such that $g. \lbrace v_0,...,v_n \rbrace = \lbrace v,u, u_2,...,u_n \rbrace$ and the action is type preserving which implies that $g. v_0 = v, g.v_1 = u$. Thus for that $g$, $\Phi (g K_{\lbrace 0 \rbrace})=v$ and $\Phi (g K_{\lbrace 1 \rbrace})=u$, i.e, the preimage of $\lbrace v,u \rbrace$ is $\lbrace g K_{\lbrace 0 \rbrace}, g K_{\lbrace 1 \rbrace} \rbrace \in X(1)$ as needed.
\end{proof}

Thus, under the assumption of partiteness, instead of working with symmetric simplicial complexes, we can work with coset complexes. However, one should note the following issue: the action of $G$ on the coset complex $X= X(G, (K_{\lbrace i \rbrace})_{i \in \lbrace 0,...,n \rbrace})$ is always transitive on vertices of the same type in $X$, but, in general, it need not be transitive on $X(n)$. The following result gives a criterion for transitivity on $X(n)$:

\begin{theorem}\cite[Theorem 1.8.10]{DiaGeomBook}
\label{transitive action thm}
Let $G$ be a group with subgroups $K_{\lbrace i \rbrace}, i \in \I$, where $\I$ is a finite set. Denote $X = X(G, (K_{\lbrace i \rbrace})_{i \in \I})$ to be the coset complex defined above. Denote further for every $\emptyset \neq \tau \subseteq I$, $K_{\tau} = \bigcap_{i \in \tau} K_{\lbrace i \rbrace}$ and $K_\emptyset = G$. The action of $G$ on $X(n)$ is transitive (and thus $X$ is strongly symmetric) if and only if for every $\tau \subsetneqq I$ and every $i \in I \setminus \tau$, $K_{\tau} K_{\lbrace i \rbrace}  = \bigcap_{j \in \tau} K_{\lbrace j \rbrace} K_{\lbrace i \rbrace}$.
\end{theorem}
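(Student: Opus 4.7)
The plan is to recast both sides of the equivalence as intersection-theoretic (Helly-type) statements and then use induction. Call a family $(g_j K_{\{j\}})_{j \in J}$ of cosets, indexed by $J \subseteq \I$, \emph{coherent} if each pair of its members has nonempty intersection; because $X$ is the clique complex of its $1$-skeleton, coherent $J$-families are precisely the simplices of $X$ whose vertices occupy the types in $J$. Transitivity of $G$ on $X(n)$ is then equivalent to the statement that every coherent $\I$-family has nonempty total intersection: a common element $g \in \bigcap_{j \in \I} g_j K_{\{j\}}$ is exactly an element sending the standard top-dimensional simplex $\sigma_0 = \{K_{\{j\}}\}_{j \in \I}$ to the given one. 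Similarly, using the equivalence $g \in K_S K_{\{i\}} \Longleftrightarrow g K_{\{i\}} \cap K_S \neq \emptyset$ for any $S \subseteq \I$, the nontrivial inclusion $\bigcap_{j \in \tau} K_{\{j\}} K_{\{i\}} \subseteq K_\tau K_{\{i\}}$ of the product condition states that whenever $g K_{\{i\}}$ is coherent with $\{K_{\{j\}}\}_{j \in \tau}$, their total intersection is nonempty.

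For the implication \emph{product condition $\Rightarrow$ transitivity}, I would strengthen the conclusion to: for every $J \subseteq \I$, every coherent $J$-family has nonempty total intersection, and proceed by induction on $|J|$. The cases $|J| \leq 2$ are immediate. For the inductive step with $|J| = k \geq 3$, pick any $j_0 \in J$, let $J' = J \setminus \{j_0\}$, and apply the inductive hypothesis to the coherent sub-family $(g_j K_{\{j\}})_{j \in J'}$ to obtain a common element $h$. Translating the original family by $h^{-1}$ reduces it to the form $\{K_{\{j\}} : j \in J'\} \cup \{g' K_{\{j_0\}}\}$ for some $g' \in G$, and coherence now forces $g' \in \bigcap_{j \in J'} K_{\{j\}} K_{\{j_0\}}$. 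Applying the product condition with $\tau \leftarrow J'$ and $i \leftarrow j_0$ (valid since $J' \subsetneqq \I$) yields $g' \in K_{J'} K_{\{j_0\}}$, i.e., $g' K_{\{j_0\}} \cap K_{J'} \neq \emptyset$, and the translated family has a common point. Taking $J = \I$ recovers transitivity on $X(n)$.

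For the implication \emph{transitivity $\Rightarrow$ product condition}, fix $\tau \subsetneqq \I$, $i \in \I \setminus \tau$, and $g \in \bigcap_{j \in \tau} K_{\{j\}} K_{\{i\}}$. The family $\{g K_{\{i\}}\} \cup \{K_{\{j\}} : j \in \tau\}$ is coherent: pairs among the $K_{\{j\}}$'s share $e$, and each $g K_{\{i\}} \cap K_{\{j\}}$ is nonempty by hypothesis. When $\tau \cup \{i\} = \I$, this is a coherent $\I$-family; transitivity supplies a common point, which must lie in $g K_{\{i\}} \cap K_\tau$, whence $g \in K_\tau K_{\{i\}}$. \textbf{The main obstacle} is the case $\tau \cup \{i\} \subsetneqq \I$, where one must extend this partial coherent family to a full coherent $\I$-family in order to apply transitivity. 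The naive extensions $g_{j'} \in \{e, g, h_j\}$ for $j' \in \I \setminus (\tau \cup \{i\})$ (with $h_j \in g K_{\{i\}} \cap K_{\{j\}}$) each violate some of the required pairwise intersections. I would resolve this via a secondary downward induction on the gap $|\I \setminus (\tau \cup \{i\})|$, at each step enlarging $\tau$ by one index via transitivity applied to an auxiliary $n$-simplex through $g K_{\{i\}}$ --- note that $\{g K_{\{j\}}\}_{j \in \I}$ is always a coherent $\I$-family with common point $g$ and hence such $n$-simplices exist --- following the standard argument in \cite{DiaGeomBook}.
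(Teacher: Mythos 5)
The paper itself does not prove this statement; it is cited verbatim from \cite{DiaGeomBook}, so there is no proof in the paper to compare against. Your reformulation in terms of coherent families and your proof of the direction (product condition $\Rightarrow$ transitivity on $X(n)$) are correct and clean: the upward induction on $|J|$ works, the translation by a common point $h$ of the $J'$-subfamily is valid, and the application of the product condition with $\tau = J'$, $i = j_0$ is in scope since $J' \subsetneqq \I$. Likewise, the converse direction is handled correctly when $\tau \cup \{i\} = \I$.

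The gap you flag in the converse direction for $\tau \cup \{i\} \subsetneqq \I$ is, however, genuine, and the proposed fix does not close it. Your ``downward induction, enlarging $\tau$ by one index'' would require, given $g \in \bigcap_{j \in \tau} K_{\{j\}} K_{\{i\}}$ and some $j_1 \in \I \setminus (\tau \cup \{i\})$, that also $g \in K_{\{j_1\}} K_{\{i\}}$ so that the (previously established) statement for $\tau \cup \{j_1\}$ can be invoked and then $K_{\tau \cup \{j_1\}} K_{\{i\}} \subseteq K_\tau K_{\{i\}}$ used. But nothing in the hypotheses gives $g K_{\{i\}} \cap K_{\{j_1\}} \neq \emptyset$ for a $j_1$ outside $\tau$, nor does the auxiliary top simplex $\{g K_{\{j\}}\}_{j \in \I}$ supply this: that simplex contains $g K_{\{i\}}$ but none of the identity cosets $K_{\{j\}}$, so transitivity applied to it does not produce the needed coherence with $K_{\{j_1\}}$. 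What is actually required to finish is that the coherent $(\tau \cup \{i\})$-family $\{K_{\{j\}}\}_{j \in \tau} \cup \{g K_{\{i\}}\}$ extends to a coherent $\I$-family, i.e., that the coset complex is pure (every simplex lies in a top simplex). Purity is a nontrivial consequence of chamber-transitivity; it is exactly the ``residual connectedness'' ingredient in \cite{DiaGeomBook}, and it cannot be waved in by the moves you describe. Until purity (or an equivalent extension lemma) is established from transitivity, the implication ``transitivity $\Rightarrow$ product condition'' is only proven for maximal $\tau$.
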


\section{Bounding the first two filling constants for coset complexes}
\label{Bounding the first two filling constants for coset complexes sec}

In Theorem \ref{filling constants criterion for coboundary exp thm2}, we showed that for a symmetric simplicial complex, the high order Cheeger constants can be bounded using the filling constants of the complex. In the previous section we have seen that under suitable assumptions coset complexes are examples of symmetric complexes (see Theorem \ref{transitive action thm}) and that under the assumption of partiteness, a strongly symmetric clique complex is a coset complex (see Proposition \ref{symmetric complexes and coset complexes prop}). Thus, under suitable assumptions, the problem of bounding the Cheeger constants of a complex is reduced to bounding the filling constants of the correspondent coset complex.

Throughout this section, $G$ is a finite group with subgroups $K_{\lbrace i \rbrace}, i \in \I$, where $\I$ is a finite set and $\vert I \vert \geq 2$, and we denote $X = X(G, (K_{\lbrace i \rbrace})_{i \in \I})$ to be the coset complex defined above. We always assume that $X$ is strongly symmetric.

Below, we will show how to bound the first two filling constants of a coset complex, based to the properties of the symmetry group $G$ and the stabilizer subgroups $K_{\lbrace i \rbrace}, i \in \I$.

There are known criteria in the literature for connectedness and simply connectedness of coset complexes (see for instance \cite{AbelsH} or \cite[Chapter 6]{Garst} and reference therein). Namely,  Abels and Holtz \cite{AbelsH} proved the following:
\begin{theorem}\cite[Theorem 2.4]{AbelsH}
Let $G, K_{\lbrace i \rbrace}, i \in \I$ and $X$ be as above.
\begin{enumerate}
\item The $1$-skeleton of $X$ is connected if and only if the subgroups $K_{\lbrace i \rbrace}, i \in \I$ generate $G$.
\item The simplicial complex $X$ is connected and simply connected if and only if there is a presentation of $G$ of the form $G = \langle \bigcup_{i} K_{\lbrace i \rbrace} \vert R \rangle$, where every relation in $R$ is a relation in some $K_{\lbrace i \rbrace}$, i.e., every relation is of the form $g_1 ... g_k = e$ where there is some $i \in I$ such that $g_1,...,g_k \in K_{\lbrace i \rbrace}$.
\end{enumerate}
\end{theorem}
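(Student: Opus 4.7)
The plan is to set up a dictionary between combinatorial paths in $X$ and words in the generators $\bigcup_i K_{\lbrace i \rbrace}$, and then analyze when this dictionary captures $\pi_0$ and $\pi_1$. For part (1), given any edge-path $K_{\lbrace i_0 \rbrace}, h_1 K_{\lbrace i_1 \rbrace}, \ldots, h_n K_{\lbrace i_n \rbrace}$, the condition that consecutive cosets share an element lets me inductively renormalize representatives so that $h_\ell = k_1 \cdots k_\ell$ with each $k_j \in K_{\lbrace i_{j-1} \rbrace}$; hence every reachable coset's representative lies in $\langle \bigcup_i K_{\lbrace i \rbrace} \rangle$. Conversely, any factorization $g = k_1 \cdots k_m$ with $k_j \in K_{\lbrace i_j \rbrace}$ directly produces such a path, since the consecutive cosets $k_1 \cdots k_{\ell-1} K_{\lbrace i_{\ell-1} \rbrace}$ and $k_1 \cdots k_\ell K_{\lbrace i_\ell \rbrace}$ share the element $k_1 \cdots k_\ell$. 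This gives the equivalence in (1).

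For part (2), I would compare $X$ with a universal model. Let $\widetilde{G} = \langle \bigcup_i K_{\lbrace i \rbrace} \mid \bigcup_i R_i \rangle$ where $R_i$ is the full multiplication table of $K_{\lbrace i \rbrace}$. The tautological map $\widetilde{G} \twoheadrightarrow G$ is surjective (since the $K_{\lbrace i \rbrace}$ generate $G$), and each $K_{\lbrace i \rbrace}$ embeds in $\widetilde{G}$ because its composition with $\widetilde{G} \to G$ is the inclusion. Let $N = \ker(\widetilde{G} \to G)$ and form the coset complex $\widetilde{X} = X(\widetilde{G}, (K_{\lbrace i \rbrace})_i)$. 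One checks that $N$ acts freely on $\widetilde{X}$ (the stabilizer of a vertex is a conjugate of some $K_{\lbrace i \rbrace}$, which intersects $N$ trivially because $K_{\lbrace i \rbrace}$ injects into $G$) and that $\widetilde{X}/N \cong X$, so we obtain a covering $\widetilde{X} \to X$ with deck group $N$; hence $\pi_1(X) \cong N$ provided $\widetilde{X}$ is simply connected. The equivalence in (2) then reads: $X$ is simply connected iff $N=1$ iff $\widetilde{G} \cong G$ iff $G$ has a presentation of the claimed form, since any such presentation factors through $\widetilde{G}$ and $\widetilde{G}$ itself supplies one whenever it equals $G$.

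The main obstacle is showing that $\widetilde{X}$ is simply connected. My plan here is direct: by the Part (1) dictionary applied to $\widetilde{G}$, a based loop at $K_{\lbrace 0 \rbrace}$ is encoded by a word $w = k_1 \cdots k_n$ with $k_\ell \in K_{\lbrace i_\ell \rbrace}$ whose product in $\widetilde{G}$ lies in $K_{\lbrace 0 \rbrace}$. The defining relations of $\widetilde{G}$ reduce $w$ to a single letter in $K_{\lbrace 0 \rbrace}$ by two kinds of moves: (a) merging adjacent letters $k_\ell k_{\ell+1}$ when $i_\ell = i_{\ell+1}$ using the multiplication table, and (b) tacitly reinterpreting a letter lying in $K_{\lbrace i \rbrace} \cap K_{\lbrace j \rbrace}$ as belonging to either subgroup, which is implicit in the identification of generators as elements of the ambient group. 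I would translate each move into an elementary null-homotopy in $\widetilde{X}$: move (a) is a back-and-forth traversal along a single $2$-simplex, and move (b) is a triangle filling around the common element. This is the standard complex-of-groups statement that the development of a simple complex of groups on a simply connected base (here the simplex $\Delta^{|I|-1}$) is simply connected, and it is where the topological content of the theorem resides.
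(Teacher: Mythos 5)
The paper cites this theorem from Abels--Holtz without giving a proof, so I compare your argument against the standard proof rather than against anything in the paper. Your route---form the universal group $\widetilde{G}=\langle \bigcup_i K_{\{i\}}\mid\bigcup_i R_i\rangle$, build $\widetilde{X}=X(\widetilde{G},(K_{\{i\}}))$, exhibit $\widetilde{X}\to X$ as a regular covering with deck group $N=\ker(\widetilde{G}\to G)$, and identify $\pi_1(X)$ with $N$---is essentially the standard ``developability'' argument for coset geometries (Tits, Abels--Holtz, and in the language of complexes of groups Bridson--Haefliger), and it is sound. Two places you gloss over are genuinely worth a sentence each: (i) for $\widetilde{X}/N\cong X$ you must verify surjectivity on \emph{all} simplices, not just vertices and edges; this follows because, using that the images $\overline{K_{\{i\}}}\subset\widetilde{G}$ are isomorphic copies of the $K_{\{i\}}$ with $\overline{K_{\{i\}}}\cap\overline{K_{\{j\}}}=\overline{K_{\{i\}}\cap K_{\{j\}}}$, the link of every vertex in $\widetilde{X}$ is isomorphic to the link of its image in $X$ (both are the coset complex of $K_{\{i\}}$ with respect to the $K_{\{i\}}\cap K_{\{j\}}$), so the map is a local isomorphism and the quotient has the right higher cells. (ii) For simple connectedness of $\widetilde{X}$, your reduction of a loop-word $w$ to the trivial word using $\bigcup_i R_i$ must be translated step-by-step into null-homotopies in $\widetilde{X}$; this is precisely the content of the paper's Lemmas \ref{contiguous maps lemma}--\ref{relation moves lemma} and Theorem \ref{Homotopy moves Theorem}, applied qualitatively (and without the strong-symmetry reduction of Lemma \ref{translation to (g_1,...,g_m) lemma}, which you do not need since you work with a based loop at $K_{\{0\}}$): each multiplication-table relation or free reduction produces contiguous maps, hence homotopic loops, and the filling grows by a bounded amount at each step. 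With those two points spelled out, your proof is complete. A small bookkeeping issue: in your path-to-word dictionary you normalize $k_j\in K_{\{i_{j-1}\}}$, so move (a) merges $k_\ell k_{\ell+1}$ when $i_{\ell-1}=i_\ell$, not when $i_\ell=i_{\ell+1}$ as written; this is a typo, not a gap.
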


Below, we prove quantitative versions of these criteria in order to bound $N_0$ and $N_1$.

\subsection{Bound on $N_0$}

Recall that for a group $G$ with subgroups $K_{\lbrace i \rbrace}, i \in \I$, we say that $K_{\lbrace i \rbrace}, i \in \I$ \textit{boundedly generate} $G$ if there is some $L \in \mathbb{N}$ such that every $g \in G$ can be written as a product of at most $L$ elements in $\bigcup_{i \in I} K_{\lbrace i \rbrace}$. We will show below that in this case $L+1$ is a bound on $N_0$.

\begin{lemma}
\label{condition of edge connecting lemma}
Two vertices $g K_{\lbrace i \rbrace}$, $g' K_{\lbrace i' \rbrace}$ in $X(0)$ are connected by an edge if and only if there is $h ' \in K_{\lbrace i ' \rbrace}$ such that $g K_{\lbrace i \rbrace} = g' h' K_{\lbrace i \rbrace}$.
\end{lemma}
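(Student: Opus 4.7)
The plan is to simply unwind the definition of the edge relation in the coset complex. By Definition \ref{coset complex def}, the statement $\lbrace g K_{\lbrace i \rbrace}, g' K_{\lbrace i' \rbrace} \rbrace \in X(1)$ amounts (together with $i \neq i'$, which is implicit throughout) to the non-emptiness $g K_{\lbrace i \rbrace} \cap g' K_{\lbrace i' \rbrace} \neq \emptyset$. So the task is to show that this intersection condition is equivalent to the existence of some $h' \in K_{\lbrace i' \rbrace}$ with $g K_{\lbrace i \rbrace} = g' h' K_{\lbrace i \rbrace}$.

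For the forward direction, I would start from a common element $x \in g K_{\lbrace i \rbrace} \cap g' K_{\lbrace i' \rbrace}$, writing $x = g h = g' h'$ for some $h \in K_{\lbrace i \rbrace}$ and $h' \in K_{\lbrace i' \rbrace}$. Rearranging gives $g = g' h' h^{-1}$, and since $h^{-1} \in K_{\lbrace i \rbrace}$ acts trivially on the coset $K_{\lbrace i \rbrace}$, we get $g K_{\lbrace i \rbrace} = g' h' h^{-1} K_{\lbrace i \rbrace} = g' h' K_{\lbrace i \rbrace}$, exactly as required.

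For the reverse direction, I would exhibit the explicit witness $g' h'$ in the intersection. It lies in $g' K_{\lbrace i' \rbrace}$ because $h' \in K_{\lbrace i' \rbrace}$, and it lies in $g' h' K_{\lbrace i \rbrace} = g K_{\lbrace i \rbrace}$ trivially. Hence $g' h' \in g K_{\lbrace i \rbrace} \cap g' K_{\lbrace i' \rbrace}$, so the intersection is non-empty and the edge exists. There is no real obstacle here; the lemma is a purely formal reformulation of the edge condition, recorded for later convenient use.
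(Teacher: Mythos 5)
Your proof is correct and matches the paper's argument essentially verbatim: both directions unwind the coset-intersection definition of an edge, extract the witness $g'h'$ in one direction, and rearrange $gh = g'h'$ in the other.
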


\begin{proof}
Recall that by definition  $g K_{\lbrace i \rbrace}$, $g' K_{\lbrace i' \rbrace}$ are connected by an edge if and only if $ g K_{\lbrace i \rbrace} \cap  g'  K_{\lbrace i ' \rbrace} \neq \emptyset$.

Assume that there is $h' \in K_{\lbrace i ' \rbrace}$ such that $g K_{\lbrace i \rbrace} = g' h' K_{\lbrace i \rbrace}$. Then
$$g K_{\lbrace i \rbrace} \cap  g'  K_{\lbrace i ' \rbrace} = g' h' K_{\lbrace i ' \rbrace} \cap g' h' K_{\lbrace i ' \rbrace}.$$
Thus, $g'h' \in  g K_{\lbrace i \rbrace} \cap  g'  K_{\lbrace i ' \rbrace}$ and the intersection is not empty.

Conversely, if $g K_{\lbrace i \rbrace} \cap g' K_{\lbrace i ' \rbrace} \neq \emptyset$, then there are $h \in K_{\lbrace i \rbrace}, h' \in K_{\lbrace i ' \rbrace}$ such that
$gh = g' h'$ and it follows that $g = g' h' h^{-1}$. Thus,
$$g K_{\lbrace i \rbrace} = g' h' h^{-1} K_{\lbrace i \rbrace} = g' h'  K_{\lbrace i \rbrace},$$
as needed.
\end{proof}

\begin{proposition}
Let $G$ be a group  with subgroups $K_{\lbrace i \rbrace}, i \in \I$, where $\I$ is a finite set and $\vert I \vert \geq 2$. Denote $X = X(G, (K_{\lbrace i \rbrace})_{i \in \I})$ to be the coset complex defined above. Assume that subgroups $K_{\lbrace i \rbrace}, i \in \I$ boundedly generate $G$, and denote
$$L = \max_{g \in G} \min \left\lbrace l : g = g_1...g_l \text{ and } g_1,...,g_l \in \bigcup_i K_{\lbrace i \rbrace} \right\rbrace.$$
Then the diameter of the $1$-skeleton of $X$ is bounded by $L+1$.
\end{proposition}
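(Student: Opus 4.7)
The plan is to explicitly construct, between any two vertices of $X$, a walk of length at most $L+1$ by using bounded generation. Fix two vertices $v, u \in X(0)$. Left multiplication by $G$ preserves $X$ simplicially and acts transitively on the cosets of each $K_{\lbrace i \rbrace}$, so after translating we may assume $v = K_{\lbrace i \rbrace}$ for some $i \in I$ while $u = g K_{\lbrace i' \rbrace}$ for some $g \in G$ and $i' \in I$. By hypothesis, write
$$g = h_1 h_2 \cdots h_l,$$
where $l \leq L$ and each $h_j \in K_{\lbrace i_j \rbrace}$. Whenever two adjacent factors satisfy $i_j = i_{j+1}$ we may replace $h_j h_{j+1}$ by a single element of $K_{\lbrace i_j \rbrace}$; this only shortens the product, so we may assume $i_j \neq i_{j+1}$ for all $j$.

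The candidate walk is
$$v = K_{\lbrace i \rbrace},\ K_{\lbrace i_1 \rbrace},\ h_1 K_{\lbrace i_2 \rbrace},\ h_1 h_2 K_{\lbrace i_3 \rbrace},\ \ldots,\ h_1 \cdots h_{l-1} K_{\lbrace i_l \rbrace},\ h_1 \cdots h_l K_{\lbrace i' \rbrace} = u.$$
To verify that each consecutive pair is an edge, note that the partial product $h_1 \cdots h_j$ belongs to $h_1 \cdots h_{j-1} K_{\lbrace i_j \rbrace}$ (because $h_j \in K_{\lbrace i_j \rbrace}$) and trivially to $h_1 \cdots h_j K_{\lbrace i_{j+1} \rbrace}$, so the two cosets meet. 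Provided that the two cosets have different types this is precisely the condition defining an edge of $X$ (equivalently, one can cite Lemma~\ref{condition of edge connecting lemma}). The interior transitions are fine by our preprocessing $i_j \neq i_{j+1}$, and the initial transition $K_{\lbrace i \rbrace} \to K_{\lbrace i_1 \rbrace}$ is either an edge (if $i \neq i_1$) or a repeated vertex we simply drop; similarly for the terminal transition $h_1 \cdots h_{l-1} K_{\lbrace i_l \rbrace} \to g K_{\lbrace i' \rbrace}$. After any such omissions the walk has at most $l+1 \leq L+1$ edges, which gives the desired diameter bound.

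The only delicate step is the type bookkeeping at the ends of the product, where the types $i$ or $i'$ may happen to match $i_1$ or $i_l$; this is what costs the extra ``$+1$'' in the bound and is why we cannot do better than $L+1$ directly from the hypothesis. All other claims in the argument — that the $G$-action lets us normalize to a coset of the form $K_{\lbrace i \rbrace}$, that merging consecutive same-type factors preserves the bound, and that the partial products witness the required intersections — are routine once the construction above is written down.
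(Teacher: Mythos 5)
Your proof is correct and follows essentially the same route as the paper: after normalizing one endpoint to a trivial coset via the $G$-action, you unwind a bounded-length factorization of $g$ into a walk of cosets whose consecutive intersections are witnessed by the partial products, invoking exactly the edge criterion of Lemma~\ref{condition of edge connecting lemma}. The only cosmetic difference is that you explicitly address repeated types by merging factors or dropping a repeated vertex, whereas the paper leaves this implicit (which is harmless, since when $i_j = i_{j+1}$ the two cosets in the walk coincide and the step collapses).
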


\begin{proof}
Assume that $K_{\lbrace i \rbrace}, i \in \I$ boundedly generate $G$. Recall that the group $G$ acts simplicially on $X$ and transitively on vertices of the same type. Thus, it is sufficient to prove that for every $i',i'' \in I$ and every $g \in G$, the vertices $K_{\lbrace i' \rbrace}$ and $g K_{\lbrace i'' \rbrace}$ are connected by a path of length less or equal to $L+1$.

By our assumption, there is $l \leq L$ and $g_1,...,g_l \in \bigcup_i K_{\lbrace i \rbrace}$ such that $g = g_1...g_l$. We will show that there is a path of length $\leq l+1$ connecting  $K_{\lbrace i' \rbrace}$ and $g K_{\lbrace i'' \rbrace}$.

Let $i_j \in I$ such that $g_{i_j} \in K_{\lbrace i \rbrace}$ for every $1 \leq j \leq l-1$. Then by Lemma \ref{condition of edge connecting lemma}, for every $1 \leq j \leq l$, $g_1 ... g_{j+1} K_{\lbrace i_{j+1} \rbrace}$ and  $g_1 ... g_{j} K_{\lbrace i_{j} \rbrace}$ are connected by an edge. Thus there is a path of length $l-1$ in the $1$-skeleton of $X$ connecting $g_1 K_{\lbrace i_1 \rbrace}$ and $g_1 ... g_l K_{\lbrace i_l \rbrace} = g K_{\lbrace i_l \rbrace}$. Note that $g_1 K_{\lbrace i_1 \rbrace} = K_{\lbrace i_1 \rbrace}$ and thus $g_1 K_{\lbrace i_1 \rbrace} \cap K_{\lbrace i' \rbrace}  \neq \emptyset$. Also, $g K_{\lbrace i_l \rbrace} \cap g K_{\lbrace i'' \rbrace} \neq \emptyset$ and thus there is a path of length $l+1$ connecting $K_{\lbrace i' \rbrace}$ and $g K_{\lbrace i'' \rbrace}$.
\end{proof}

\begin{corollary}
\label{bound on N_0 coro}
Let $G$ be a group with subgroups $K_{\lbrace i \rbrace}, i \in \I$, where $\I$ is a finite set and $\vert I \vert \geq 2$. Denote $X = X(G, (K_{\lbrace i \rbrace})_{i \in \I})$ and let $N_0$ be the constant defined in Theorem \ref{simplicial filling constants thm}. If $K_{\lbrace i \rbrace}, i \in \I$ boundedly generate $G$, then $N_0$  is bounded by
$$1+\max_{g \in G} \min \left\lbrace l : g = g_1...g_l \text{ and } g_1,...,g_l \in \bigcup_i K_{\lbrace i \rbrace} \right\rbrace.$$
\end{corollary}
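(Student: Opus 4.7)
The plan is to read off the value of $N_0$ directly from the recursive definition in Theorem \ref{simplicial filling constants thm} and then invoke the preceding proposition. Specifically, since $N_{-1} = 1$, the recursion gives $N_0 = \SFill_0((0+1)\cdot N_{-1} + 1) = \SFill_0(2)$, so I only need to show that $\SFill_0(2)$ is bounded by the diameter of the $1$-skeleton of $X$, and then apply the proposition that bounds this diameter by $L+1$.

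First I would unpack $\SFill_0$. A triangulated $0$-sphere $S^0_\triangle$ consists of exactly two vertices, and a simplicial map $f : S^0_\triangle \to X$ is just a choice of two vertices $u, v \in X(0)$ (possibly equal). A triangulated $1$-disc $D^1_\triangle$ extending $S^0_\triangle$ is a triangulated interval, so an extension $F : D^1_\triangle \to X$ of $f$ is precisely a (simplicial) edge-path from $u$ to $v$ in the $1$-skeleton of $X$, and $|D^1_\triangle(1)|$ counts the number of edges in that path. Therefore $\SFill_0(f)$ equals the graph distance $\dist(u,v)$ in the $1$-skeleton, and $\SFill_0(2) = \max_{u,v\in X(0)} \dist(u,v)$, which is exactly the diameter of the $1$-skeleton of $X$.

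Once this identification is in place, the corollary follows immediately: the preceding proposition establishes that under the bounded generation hypothesis the diameter of the $1$-skeleton is at most $L+1$, where $L = \max_{g \in G} \min\{l : g = g_1\cdots g_l,\ g_1,\dots,g_l \in \bigcup_i K_{\{i\}}\}$. Combining the two gives $N_0 \leq L+1$, which is exactly the claim.

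There is no genuine obstacle here since the corollary is really just a translation of the diameter bound from the proposition into the language of the filling constants $N_j$. The only point that deserves a careful sentence is the identification $\SFill_0(2) = \operatorname{diam}(X^{(1)})$; once that is spelled out, the rest is a direct substitution.
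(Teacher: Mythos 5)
Your proof is correct and follows essentially the same approach as the paper: unpack $N_0$ from the recursion as a graph-distance quantity in the $1$-skeleton and bound it by the $L+1$ of the preceding proposition. In fact your identification $N_0 = \SFill_0(2) = \operatorname{diam}(X^{(1)})$ is slightly more precise than the paper's phrasing (which calls $N_0$ the radius before falling back on the diameter bound), since $\SFill_0(2)$ maximizes over all simplicial maps $S^0_\triangle \to X$ and hence over all pairs of vertices, not just pairs containing a fixed apex.
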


\begin{proof}
Note that by definition $N_0$ is the radius of the $1$-skeleton of $X$ and thus it is bounded by the diameter of the $1$-skeleton of $X$ and the corollary follows.
\end{proof}

\subsection{Bound on $N_1$}

In order to state the criterion, we recall some definitions regarding the Dehn function of a group presentation.

Let $G = \langle S \vert R \rangle$ be a finitely presented group, where $S$ is a symmetric generating set of $G$ and $R$ is a set of relations. Denote $\Free (S)$ to be the free group with a generating set $S$. Without loss of generality, we can assume that $R \subseteq \Free (S)$ are cyclically reduced words.  A \textit{relation} in $G$ is a freely reduced word $w \in \Free (S)$ such that $w=1$ in $G$. We note that every relation is in the normal closure of $R$ in $\Free (S)$.

Given a relation $w$, the \textit{area} of $w$, denoted $\Area (w)$, is the minimal number $a \in \mathbb{N} \cup \lbrace 0 \rbrace$ such there are $r_1,...,r_a \in R \cup R^{-1}$ and $u_1,...,u_a \in \Free (S)$ such that
$$w = u_1^{-1} r_1 u_1 ... u_a^{-1} r_a u_a \text{ in } \Free (S).$$

\begin{definition}[The Dehn function]
\label{Dehn func def}
The \textit{Dehn function of $G$ with respect to $S,R$} is the function $\Dehn : \mathbb{N} \rightarrow \mathbb{N}$ defined as
$$\Dehn (m) = \max \lbrace \Area (w) :w = 1 \text{ in } G, \vert w \vert \leq m \rbrace,$$
where $\vert w \vert$ denotes the word length of $w$ in $\Free (S)$.
\end{definition}

The idea behind the Dehn function is that it counts how many reduction moves are needed to reduce $w$ to the trivial word. More precisely, following \cite[Chapter 8]{OfficeHBook}, we define the following moves on a word $w$:
\begin{enumerate}
\item Free reduction: remove a sub-word of the form $s s^{-1}$ or $s^{-1} s$ within the word $w$, where $s \in S$.
\item Applying a relation: replace a subword $w'$ in $w$ with a new subword $w''$ where $w' (w'')^{-1}$ or $w'' (w')^{-1}$ is a cyclic permutation of a word in $R \cup R^{-1}$.
\end{enumerate}

The only fact that we will need regarding the Dehn function is that is one can reduce a word $w=1$ with $\vert w \vert =m$ to the trivial word using at most $\Dehn (m)$ applications of relations and at most $\left(\max_{r \in R} \vert r \vert \right) \Dehn (m) + m$ free reductions (see \cite[Chapter 8]{OfficeHBook}). For a far more extensive introduction regarding Dehn functions, the reader is referred to Riley's or Bridson's expository articles on this subject (see \cite[Chapter 8]{OfficeHBook} or \cite{Bridson}).

\begin{theorem}
\label{bound on N1 thm}
Let $G$ be a group with subgroups $K_{\lbrace i \rbrace}, i \in \I$, where $\I$ is a finite set and $\vert I \vert \geq 3$ and assume that the subgroups $K_{\lbrace i \rbrace}, i \in \I$ are all finite and that they generate $G$. Denote $X = X(G, (K_{\lbrace i \rbrace})_{i \in \I})$ to be the coset complex defined above. For every $i \in I$, denote $R_{i}$ to be all the non-trivial relations in the multiplication table of $K_{\lbrace i \rbrace}$, i.e., all the relations of the form $g_1 g_2 g_3 =e$, where $g_1,g_2, g_3 \in K_{\lbrace i \rbrace} \setminus \lbrace e \rbrace$. Assume that $G = \langle \bigcup_{i} K_{\lbrace i \rbrace} \vert \bigcup_{i} R_i \rangle$ and let $\Dehn$ denote the Dehn function of $G$ with respect to this presentation of $G$. Then for triangulated $1$-sphere $S^1_\triangle$ such that $\vert S^1_\triangle (1) \vert \leq m$ and every simplicial map $f: S^1_\triangle \rightarrow X_1$, there is a an extension $F:D^1_\triangle \rightarrow X$ with
$$\vert D^1_\triangle (2) \vert \leq p (m, \Dehn (m)),$$
where $p(x,y)$ is the polynomial
$$p(x,y) = 16 y^2 + 6 x^2 +20 xy + 24 y+ 10x +1.$$
\end{theorem}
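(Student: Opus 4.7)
The plan is to translate the loop $f$ into a word in the generating set $\bigcup_i K_{\{i\}}$ representing the identity in $G$, reduce this word by invoking the Dehn function hypothesis, and realize each reduction step as a small simplicial filling in $X$, the pieces of which are then glued via corridor triangulations to form $D^1_\triangle$.

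For the lifting, view $f$ as a closed walk $v_0, v_1, \ldots, v_{m'} = v_0$ in $X(0)$ with $m' \leq m$; by $G$-transitivity on vertices of a given type we may assume $v_0 = K_{\{i_0\}}$. Writing $v_j = g_j K_{\{i_j\}}$ with $g_0 = e$, Lemma~\ref{condition of edge connecting lemma} yields elements $k_j \in K_{\{i_j\}}$ and $h_j \in K_{\{i_{j+1}\}}$ satisfying $g_{j+1} = g_j k_j h_j^{-1}$, so that
$$W := (k_0 h_0^{-1})(k_1 h_1^{-1}) \cdots (k_{m'-1} h_{m'-1}^{-1}) \cdot g_{m'}^{-1}$$
is a word of length at most $2m+1$ in $\bigcup_i K_{\{i\}}$ and represents the identity in $G$. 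By the Dehn function hypothesis, $W$ reduces to the empty word through at most $D := \Dehn(2m+1)$ insertions or deletions of relators from $\bigcup_i R_i$, together with a controlled number of free reductions, producing a sequence of intermediate words $W = W_0, W_1, \ldots, W_N = \emptyset$.

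Next, I associate to each $W_j$ a closed walk $\gamma_j$ in the $1$-skeleton of $X$, with $\gamma_0 = f$ and $\gamma_N$ the constant walk at $v_0$, arranging things so that each word-move from $W_{j-1}$ to $W_j$ is realized by an annular simplicial region between $\gamma_{j-1}$ and $\gamma_j$. A free reduction simply retracts a back-and-forth excursion in the walk and contributes no new 2-simplices. A relator application inserts or deletes a triple $g_1 g_2 g_3$ with $g_1, g_2, g_3 \in K_{\{i\}}$ and $g_1 g_2 g_3 = e$: the corresponding local change in the walk closes up in the star of the vertex $K_{\{i\}}$ and admits an explicit filling by a bounded constant number of 2-simplices, built from 2-simplices of $X$ incident to that vertex. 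Each step additionally requires a corridor triangulation of $O(|\gamma_j|)$ triangles connecting the unchanged portions of $\gamma_{j-1}$ and $\gamma_j$. Since every elementary move changes walk-length by $O(1)$, we have $|\gamma_j| = O(m + D)$ throughout; summing over the $O(D)$ moves yields a total of $O(mD + D^2)$ 2-simplices, which is absorbed by the stated polynomial $p(m, \Dehn(m))$ once the precise constants $16, 6, 20, 24, 10, 1$ are worked out.

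The main obstacle is the realization of a single relator $g_1 g_2 g_3 = e$ as a small simplicial filling: since all three letters lie in the same subgroup $K_{\{i\}}$, the subword breaks the alternating-type structure of $W$, and one must explicitly identify the closed walk in $X$ that its insertion produces and construct a bounded-size filling using 2-simplices incident to $K_{\{i\}}$. A secondary but nontrivial point is the exact bookkeeping that matches the specific coefficients of $p(x,y) = 16y^2 + 6x^2 + 20xy + 24y + 10x + 1$: one must simultaneously track the initial walk of length at most $m$, the $2m+1$ letters of $W$, the $D$ relator applications, the free reductions, and the corridor triangulations spanning the unchanged portions of consecutive walks.
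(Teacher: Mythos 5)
Your high-level strategy matches the paper's: lift the loop to a word in $\bigcup_i K_{\{i\}}$ representing the identity, run a Dehn-algorithm reduction, and realize each word-move as a bounded simplicial filling step. However, the part you flag as the ``main obstacle'' is, in fact, the bulk of the argument, and you have not resolved it.

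Concretely, the paper handles the per-step realization through three technical pieces you would need to reproduce. First, Lemma \ref{contiguous maps lemma} quantifies contiguity: if two simplicial loops $f,f'$ differ at a single vertex and are contiguous, then a minimal filling of one yields a filling of the other with at most two extra triangles. Second, Lemma \ref{relation moves lemma} shows that a relator $g_{j'} g_{j'+1} = g'_{j'+1}$ with $g_{j'}, g_{j'+1} \in K_{\{i'\}}$ produces a loop \emph{contiguous} to the original by replacing the vertex $f(j')$ with the coset $g_1\cdots g_{j'-1} K_{\{i'\}}$ --- the reason this works is precisely the coset-complex structure and the fact that $X$ is a clique complex, not merely the intuition that the loop ``closes up in a star.'' Third, Theorem \ref{Homotopy moves Theorem} assembles these into four normalized moves, each with an explicit $O(m)$ cost. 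Your proposal observes a walk exists and a small filling should exist but does not construct it.

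Two of your quantitative claims also conflict with the paper's analysis in a way you should be cautious about. You assert that free reductions ``contribute no new 2-simplices''; in the paper a free-reduction move costs up to $6 + 4(m-1)$ triangles, because the framework of $(g_1,\dots,g_m)$-maps requires re-normalizing the walk (moving the created identity to the front via Corollary \ref{Identity ordering coro}, then applying Lemma \ref{identity reduction lemma}). Whether your in-place corridor scheme genuinely achieves cost zero is not obvious; the corridor between $\gamma_{j-1}$ and $\gamma_j$ still has to triangulate the degenerate region where the excursion is retracted, and you need to verify the relevant simplices lie in $X$. Separately, your lift produces a word $W$ of length $2m+1$ using two elements $k_j, h_j^{-1}$ per edge, whereas Lemma \ref{translation to (g_1,...,g_m) lemma} produces a length-$m$ word using one generator per edge via Lemma \ref{condition of edge connecting lemma}; this would alter the constants in $p(x,y)$. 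The coefficients $16, 6, 20, 24, 10, 1$ come from the paper's specific move-costs and the bound $k_{\text{stop}} \le 4\Dehn(m)+m$ on the number of reduction steps, and your sketch has not earned them. The total magnitude $O(mD + D^2 + m^2)$ is right, but a self-contained proof would need to fill in all of the above.
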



Before proving this Theorem, we will need to set up some terminology, notation and lemmata.


We recall the following definitions taken from \cite{TopAndGeomBook}:
\begin{definition}
Two simplicial maps $f, f' : S^1_\triangle \rightarrow X$ are called contiguous if for every simple $\sigma \in S^1_\triangle$, $f (\sigma) \cup f' (\sigma)$ is a simplex in $X$.
Also, $f, f' : S^1_\triangle \rightarrow X$ are called contiguously equivalent, if there are simplicial maps $f_0,...,f_k :  S^1_\triangle \rightarrow X$ where $f = f_0, f' =f_k$ and $f_{i}, f_{i+1}$ are contiguous for all $0 \leq i \leq k$.
\end{definition}

A basic fact is that if $f,f' : S^1_\triangle \rightarrow X$ are contiguous, then they are homotopic and thus $f$ and $f'$ are homotopy equivalent. Furthermore, $f$ can be extended to a map $F: D^2_\triangle \rightarrow X$ if and only if $f'$ can be extended to a map $F': (D^2_\triangle)' \rightarrow X$. We want to quantify this statement:
\begin{lemma}
\label{contiguous maps lemma}
Let $f,f' :  S^1_\triangle \rightarrow X$ be contiguous simplicial maps. Assume that $F: D^2_\triangle \rightarrow X$ is a minimal extension of $f$, then there is a minimal extension of $f'$, $F': (D^2_\triangle)' \rightarrow X$ such that
$$\vert (D^2_\triangle)' (2) \vert \leq \vert D^2_\triangle (2) \vert + 2 \vert \lbrace \lbrace u \rbrace \in S^1_\triangle (0) : f (\lbrace u \rbrace) \neq f' (\lbrace u \rbrace) \rbrace \vert.$$
\end{lemma}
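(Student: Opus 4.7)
The plan is to build an extension of $f'$ by attaching a simplicial ``collar'' (a triangulated annulus) to the boundary of $D^2_\triangle$ that interpolates between $f$ and $f'$. Set $V = \lbrace u \in S^1_\triangle (0) : f(u) \neq f'(u) \rbrace$ and $k = \vert V \vert$. First I would introduce a fresh copy $u'$ of each vertex $u \in V$ and, for $u \notin V$, set $u' = u$; the outer cycle $(S^1_\triangle)'$ on the vertices $\lbrace u' \rbrace$ inherits the cyclic structure of $S^1_\triangle$.

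Next I would triangulate the collar edge by edge, which is where contiguity plays its central role: for each edge $\lbrace u,v \rbrace \in S^1_\triangle (1)$, the set $\lbrace f(u),f(v),f'(u),f'(v) \rbrace$ spans a simplex of $X$, so every sub-collection of these vertices is also a simplex. If both $u,v \notin V$, the outer edge $\lbrace u',v' \rbrace$ coincides with $\lbrace u,v \rbrace$ and no triangle is inserted; if exactly one endpoint (say $u$) lies in $V$, insert the single triangle $\lbrace u,v,u' \rbrace$; and if both endpoints lie in $V$, insert the two triangles $\lbrace u,v,u' \rbrace$ and $\lbrace v,u',v' \rbrace$. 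Extend $F$ to a map $F'$ on the enlarged complex by declaring $F'(u') = f'(u)$ on the new vertices; contiguity then guarantees that each added triangle is sent to a genuine simplex of $X$, so $F'$ is simplicial and restricts to $f'$ on $(S^1_\triangle)'$.

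Gluing the collar to $D^2_\triangle$ along $S^1_\triangle$ yields a triangulated disc $(D^2_\triangle)'$ with boundary $(S^1_\triangle)'$, and a degree-sum computation controls its size: if $e_{VV}$ and $e_{V\bar V}$ denote the number of boundary edges with both, respectively exactly one, endpoints in $V$, then the number of added $2$-simplices equals $2 e_{VV} + e_{V \bar V} = \sum_{v \in V} \deg_{S^1_\triangle}(v) = 2k$. Hence $\vert (D^2_\triangle)'(2) \vert \leq \vert D^2_\triangle (2) \vert + 2k$, and since a minimal extension of $f'$ can only have fewer $2$-cells, the claim follows.

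The main obstacle I foresee is verifying that $(D^2_\triangle)'$ really is a triangulated disc rather than some degenerate complex: when several consecutive vertices of $S^1_\triangle$ lie outside $V$, segments of the inner boundary are reused verbatim as outer-boundary edges, and at the transitions between $V$ and its complement one must check that no $2$-simplex is inserted twice and that the resulting complex is piecewise-linearly a disc. This amounts to a routine but unavoidable case analysis on the cyclic pattern of $V$ inside $S^1_\triangle(0)$.
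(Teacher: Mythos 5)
Your proof is correct, and it reaches the same bound by a genuinely different route from the paper. The paper argues by induction on $k = \vert V \vert$: it reduces to the case of a single changed vertex $u$, and there it introduces one fresh vertex $u'$, adds exactly the two triangles $\lbrace u', u, v\rbrace$ and $\lbrace u', u, w\rbrace$ (where $v,w$ are the neighbours of $u$), and takes the resulting clique complex; the inductive step then iterates this one vertex at a time. Your collar construction does all $k$ replacements simultaneously and replaces the step-by-step bookkeeping by the degree-sum identity $2e_{VV} + e_{V\bar V} = \sum_{v\in V}\deg(v) = 2k$, which is a cleaner way to see the total. In fact, if one unwinds the paper's induction, each quadrilateral $\lbrace u, v, u', v' \rbrace$ (for an edge with both endpoints changed) ends up split along exactly the diagonal you chose, so the resulting triangulations essentially coincide; the difference is purely in presentation. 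Both proofs use contiguity identically: the four vertices $\lbrace f(u), f(v), f'(u), f'(v)\rbrace$ span a simplex of $X$, so every inserted triangle maps to a face of it. The point you flag as needing care — that $(D^2_\triangle)'$ is actually a triangulated disc with boundary $(S^1_\triangle)'$, especially at the interface between runs in $V$ and runs outside $V$ — is real, but it is equally implicit (and equally unverified) in the paper's single-vertex step, where the authors simply assert the clique complex is the required disc. So your proof has no gap the paper's proof doesn't also have.
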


\begin{proof}
By induction on $\vert \lbrace \lbrace u \rbrace \in S^1_\triangle (0) : f (\lbrace u \rbrace) \neq f' (\lbrace u \rbrace) \rbrace \vert$ it is enough to prove that if $f$, $f'$ are contiguous and there is only a single $\lbrace u \rbrace \in S^1_\triangle (0)$ such that $f (\lbrace u \rbrace) \neq f' (\lbrace u \rbrace)$, it follows that
$$\vert (D^2_\triangle)' (2) \vert \leq \vert D^2_\triangle (2) \vert + 2.$$
Let $F : D^2_\triangle \rightarrow X$ be a minimal extension of $f$ and denote by $\lbrace v \rbrace, \lbrace w \rbrace \in  S^1_\triangle (0)$ the two neighbours of $\lbrace u \rbrace$. We will show that there is an extension of $f'$, $F': (D^2_\triangle)' \rightarrow X$ such that
$$\vert (D^2_\triangle)' (2) \vert = \vert D^2_\triangle (2) \vert +2.$$
We do not claim that the extension $F'$ which we will define below is minimal and thus for a minimal extension of $f'$ there is an inequality.

Define $(D^2_\triangle)'$ as follows: add a vertex $\lbrace u' \rbrace$ to $D^2_\triangle$, connect $\lbrace u' \rbrace$ to the vertices $\lbrace u\rbrace, \lbrace v \rbrace, \lbrace w \rbrace \in S^1_\triangle \subseteq D^2_\triangle$ and take $(D^2_\triangle)'$ to be the resulting clique complex. In other words,
$$(D^2_\triangle)' (0) = D^2_\triangle (0) \cup \lbrace \lbrace u' \rbrace \rbrace,$$
$$(D^2_\triangle)' (1) = D^2_\triangle (1) \cup \lbrace \lbrace u', u \rbrace,\lbrace u', v \rbrace, \lbrace u', w \rbrace  \rbrace,$$
$$(D^2_\triangle)' (2) = D^2_\triangle (2) \cup \lbrace \lbrace u', u, v \rbrace,\lbrace u', u, w \rbrace  \rbrace.$$
Define $F': (D^2_\triangle)' \rightarrow X$ to be the following map
$$F' (\sigma) = \begin{cases}
F (\sigma) & u' \notin \sigma \\
F (\sigma \setminus \lbrace u ' \rbrace) \cup f' (\lbrace u \rbrace) & u' \in \sigma
\end{cases}.$$
We note that $\left. F' \right\vert_{S^1_\triangle} = f'$. To verify that $F'$ is simplicial it is enough to verify that it is simplicial on $\lbrace \lbrace u', u, v \rbrace,\lbrace u', u, w \rbrace  \rbrace$. Indeed, since $f,f'$ are contiguous, if follows that $f (\lbrace u, v \rbrace) \cup f' (\lbrace u, v \rbrace) \in X$, and we note that
$$F' (\lbrace u', u, v \rbrace) = f (\lbrace u, v \rbrace) \cup f' (\lbrace u, v \rbrace) \in X.$$
Similarly, $F'$ is simplicial on $\lbrace u', u, w \rbrace$. Thus $F'$ is an extension of $f'$ and $\vert (D^2_\triangle)' (2) \vert = \vert D^2_\triangle (2) \vert +2$ as needed (note that $F'$ is not necessarily minimal and thus the inequality).
\end{proof}
This Lemma allows us to pass to contiguous maps and ``bookkeep'' the maximal change in the number of $2$-simplices in a minimal extension.

Next, we will use the fact that $X$ is a coset complex code to code the paths in $X$. For convenience we define $S^1_m$ to be the triangulated $1$-sphere with vertices $S^1_m (0) = \lbrace \lbrace j \rbrace : 0 \leq j \leq m-1 \rbrace$ and edges $S^1_m (1) = \lbrace \lbrace j,j+1 \rbrace : 0 \leq j \leq m-1 \rbrace$ where $j+1$ is taken modulo $m$. Using this notation, the vertices of $S^1_m$ are $\lbrace 0,...,m-1 \rbrace$ and a simplicial map $f: S^1_m \rightarrow X$ is determined by $f(j)$ where $0 \leq j \leq m-1$.

\begin{definition}
Let $G$, $K_{\lbrace i \rbrace}, i \in \I$ and $X$ be as above. For $(g_1,...,g_m) \in \left( \bigcup_{i} K_{\lbrace i \rbrace} \right)^m$ with $g_1 ... g_m =e$, a simplicial map $f: S^1_m \rightarrow X$ is called a $(g_1,...,g_m)$-map if there are $i_0,...,i_{m-1} \in \I$ such that:
\begin{enumerate}
\item $f (0) = K_{\lbrace i_0 \rbrace} = g_1 ... g_m K_{\lbrace i_0 \rbrace}$.
\item For every $1 \leq j \leq m-1$, $f(j) = g_1 ... g_j K_{\lbrace i_j \rbrace}$.
\item It holds that $g_1 ... g_{m-1} \in K_{\lbrace i_{m-1} \rbrace}$ or, in other words, $f(m-1) = K_{\lbrace i_{m-1} \rbrace}$.
\end{enumerate}
\end{definition}

\begin{lemma}
\label{translation to (g_1,...,g_m) lemma}
Let $G$, $K_{\lbrace i \rbrace}, i \in \I$ and $X$ be as above. For every simplicial map $f: S^1_m \rightarrow X$ there is $g \in G$ and $(g_1,...,g_m) \in \left( \bigcup_{i} K_{\lbrace i \rbrace} \right)^m$ with $g_1 ... g_m =e$ such that $g.f$ is a $(g_1,...,g_m)$-map.
\end{lemma}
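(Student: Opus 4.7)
The plan is to build the group element $g$ and the factorization $(g_1,\dots,g_m)$ by walking around the cycle $S^1_m$ one edge at a time, using only two pieces of data at each stage: the fact that each vertex of $X$ has a canonical type, and Lemma~\ref{condition of edge connecting lemma} (or the raw definition of the coset complex), which describes edges via common coset representatives. First I would record the unique types: for each $0\le j\le m-1$ let $i_j\in I$ be the type of the vertex $f(j)$, so that $f(j)=a_j K_{\{i_j\}}$ for some $a_j\in G$.

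The key observation that makes the lemma work is that one must translate $f$ by a single $g\in G$ so that \emph{simultaneously} $(g.f)(0)=K_{\{i_0\}}$ and $(g.f)(m-1)=K_{\{i_{m-1}\}}$. These two conditions are the obstacle: one parameter $g$ must realize two identifications at once. But the edge $\{0,m-1\}\in S^1_m(1)$ forces $f(0)\cap f(m-1)\ne\emptyset$, so there exists $c\in a_0K_{\{i_0\}}\cap a_{m-1}K_{\{i_{m-1}\}}$. Setting $g=c^{-1}$ and replacing $f$ by $g.f$, one checks directly that both $f(0)=K_{\{i_0\}}$ and $f(m-1)=K_{\{i_{m-1}\}}$ hold. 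This is the main step that is not formal.

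Once both endpoints of the edge $\{m-1,0\}$ are the identity coset in their respective types, I would construct $g_1,\dots,g_{m-1}$ inductively along the cycle. Given a representative $a_{j-1}=g_1\cdots g_{j-1}$ of $f(j-1)$ (with the convention $a_0=e$), the edge $\{j-1,j\}$ yields $k_{j-1}\in K_{\{i_{j-1}\}}$ and $k_j\in K_{\{i_j\}}$ with $a_{j-1}k_{j-1}=bk_j$, where $f(j)=bK_{\{i_j\}}$. Setting $g_j:=k_{j-1}\in K_{\{i_{j-1}\}}$ gives $a_j:=a_{j-1}g_j=g_1\cdots g_j$ a representative of $f(j)$, completing the induction; this precisely matches condition (2) of the definition of a $(g_1,\dots,g_m)$-map.

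Finally I would close the loop. Because we arranged $f(m-1)=K_{\{i_{m-1}\}}$ in advance, the identity $g_1\cdots g_{m-1}K_{\{i_{m-1}\}}=f(m-1)=K_{\{i_{m-1}\}}$ automatically gives $g_1\cdots g_{m-1}\in K_{\{i_{m-1}\}}$, which is condition (3). Defining $g_m:=(g_1\cdots g_{m-1})^{-1}\in K_{\{i_{m-1}\}}$ yields $g_1\cdots g_m=e$ and in particular $g_1\cdots g_m K_{\{i_0\}}=K_{\{i_0\}}=f(0)$, which is condition (1). All the $g_j$ lie in $\bigcup_i K_{\{i\}}$ by construction, so $g.f$ is indeed a $(g_1,\dots,g_m)$-map, as required.
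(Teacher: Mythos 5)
Your proof is correct and follows the same overall scaffold as the paper's: normalize at the two endpoints of the closing edge $\{0,m-1\}$, build $g_1,\dots,g_{m-1}$ inductively along the cycle via Lemma~\ref{condition of edge connecting lemma}, and then set $g_m=(g_1\cdots g_{m-1})^{-1}$ to close the loop. Where you diverge is in the first, nontrivial step of producing a single $g\in G$ that simultaneously sends $f(0)$ to $K_{\lbrace i_0 \rbrace}$ and $f(m-1)$ to $K_{\lbrace i_{m-1} \rbrace}$. The paper invokes the standing hypothesis that $G$ acts transitively on $X(n)$: since the edge $\{f(0),f(m-1)\}$ sits inside a top-dimensional simplex, transitivity moves that simplex to the base chamber $\lbrace K_{\lbrace i \rbrace}\rbrace_{i\in I}$, which normalizes both endpoints at once. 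You instead extract $g$ directly from the coset-complex definition of an edge: $f(0)\cap f(m-1)\neq\emptyset$ supplies a common coset representative $c$, and $g=c^{-1}$ does the job. Your route is more elementary in that it uses only the combinatorics of cosets and never appeals to strong symmetry; in fact the same intersection argument is what underlies the purity of $X$ that the paper's transitivity appeal implicitly relies on. One small point, shared by the paper's proof: both arguments pass silently over the degenerate case in which $f$ collapses some edge of $S^1_m$ to a vertex, so that Lemma~\ref{condition of edge connecting lemma} does not literally apply; there one simply takes $g_j=e\in\bigcap_i K_{\lbrace i \rbrace}$, and the induction proceeds unchanged.
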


\begin{proof}
Recall that $G$ acts transitively on the $n$-simplices of $X$, thus if $f(0) = h K_{\lbrace i_0 \rbrace}$ and $f(m-1) = h' K_{\lbrace i_{m-1} \rbrace}$ there is $g \in G$ such that $g. f(0) = K_{\lbrace i_0 \rbrace}$ and $g. f(m-1) = K_{\lbrace i_{m-1} \rbrace}$. By the definition of $g.f$ there are $h_1,...,h_{m-2} \in G$ and $i_1,...,i_{m-2} \in \I$ such that for every $1 \leq j \leq j-2$, $g. f(j) = h_j K_{\lbrace i_{j} \rbrace}$.

Denote $h_0=h_{m-1} =e$. By Lemma \ref{condition of edge connecting lemma} there are $g_1,...,g_{m-1} \in \bigcup_{i} K_{\lbrace i \rbrace}$ such that $h_{j} K_{\lbrace i_{j} \rbrace} =  h_{j-1} g_j K_{\lbrace i_{j} \rbrace}$ for every $0 \leq j \leq m-1$. Since $h_0 =e$, it follows that
$g. f (j) = g_1...g_j K_{\lbrace i_{j} \rbrace}$ for every $0 \leq j \leq m-1$. Note that $h_{m-1} =e$ implies that $g_1...g_{m-1} K_{\lbrace i_{m-1} \rbrace} = K_{\lbrace i_{m-1} \rbrace}$ and therefore $g_1...g_{m-1} \in  K_{\lbrace i_{m-1} \rbrace}$ and we can choose $g_{m} = (g_1...g_{m-1})^{-1} \in K_{\lbrace i_{m-1} \rbrace}$. Thus $g_1...g_{m} = e$ and $g_1...g_{m-1} K_{\lbrace i_{0} \rbrace} = K_{\lbrace i_{0} \rbrace}$ as needed.
\end{proof}

Since $G$ acts by automorphisms on $X$, the takeaway for this Lemma is that it is enough to prove Theorem \ref{bound on N1 thm} on $(g_1,...,g_m)$-maps.

\begin{lemma}
\label{relation moves lemma}
Let $f : S^1_m \rightarrow X$ be a $(g_1,...,g_m)$-map.
If for some $1 \leq j' \leq m-1$, there is $i' \in I$ such that $g_{j'}, g_{j'+1} \in K_{\lbrace i' \rbrace}$ and $g_{j'} g_{j'+1} = g_{j'+1}'$, then there is $f' : S^1_m \rightarrow X$ such that $f'$ is a $(g_1,...g_{j'-1},e,g_{j'+1}',...,g_m)$-map and $\SFill_1 (f) \leq \SFill_1 (f')+ 2$.
\end{lemma}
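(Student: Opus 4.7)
The plan is to construct $f'$ as a modification of $f$ at the single vertex $j'$ and then invoke Lemma \ref{contiguous maps lemma} (with the roles of $f$ and $f'$ swapped) to control the filling. Let $i_0, \ldots, i_{m-1} \in I$ be types witnessing $f$ as a $(g_1, \ldots, g_m)$-map, so that $f(j) = g_1 \cdots g_j K_{\lbrace i_j \rbrace}$. I would set $f'(j) = f(j)$ for every $j \neq j'$ and define $f'(j') = g_1 \cdots g_{j'-1} K_{\lbrace i' \rbrace}$, using $i'$ as the new type at position $j'$. Using the hypothesis $g_{j'} g_{j'+1} = g_{j'+1}'$, one has $g_1 \cdots g_{j'-1} \cdot e \cdot g_{j'+1}' = g_1 \cdots g_{j'+1}$, so the coset formula at position $j'+1$ for the new tuple $(g_1, \ldots, g_{j'-1}, e, g_{j'+1}', g_{j'+2}, \ldots, g_m)$ reproduces $f(j'+1)$, confirming that $f'$ qualifies as the required map with types $(i_0, \ldots, i_{j'-1}, i', i_{j'+1}, \ldots, i_{m-1})$.

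Next I would verify that $f'$ is simplicial and that $f, f'$ are contiguous. The essential containments are: $g_{j'} \in K_{\lbrace i_{j'-1} \rbrace} \cap K_{\lbrace i' \rbrace}$, where the first factor comes from the simpliciality of $f$ via Lemma \ref{condition of edge connecting lemma} and the second is the hypothesis; $g_{j'+1} \in K_{\lbrace i_{j'} \rbrace} \cap K_{\lbrace i' \rbrace}$, similarly; and consequently $g_{j'+1}' = g_{j'} g_{j'+1} \in K_{\lbrace i' \rbrace}$. From these one checks that $g_1 \cdots g_{j'}$ lies simultaneously in $f(j'-1), f(j'), f'(j')$ and that $g_1 \cdots g_{j'+1}$ lies simultaneously in $f(j'), f(j'+1), f'(j')$. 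When the involved types are pairwise distinct, these common elements realize the two triples as $2$-simplices of $X$, which is precisely the contiguity condition on the edges $\lbrace j'-1, j' \rbrace$ and $\lbrace j', j'+1 \rbrace$; the remaining contiguity checks are immediate since $f$ and $f'$ agree outside $j'$. As $f$ and $f'$ differ at exactly one vertex, Lemma \ref{contiguous maps lemma} (used with the roles of $f, f'$ swapped) converts any minimal extension of $f'$ into an extension of $f$ using at most two additional $2$-faces, yielding $\SFill_1(f) \leq \SFill_1(f') + 2$.

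I expect the main obstacle to be the degenerate case $i' \in \lbrace i_{j'-1}, i_{j'}, i_{j'+1} \rbrace$, in which the auxiliary $2$-simplices above would have repeated type indices. These cases will be handled separately: when $i' = i_{j'}$, the containment $g_{j'} \in K_{\lbrace i_{j'} \rbrace}$ forces $f(j') = g_1 \cdots g_{j'-1} K_{\lbrace i_{j'} \rbrace}$, so $f$ itself is already a $(g_1, \ldots, g_{j'-1}, e, g_{j'+1}', \ldots, g_m)$-map and one may take $f' = f$; when $i' = i_{j'-1}$ or $i' = i_{j'+1}$, setting $f'(j') = f(j'-1)$ or $f'(j') = f(j'+1)$ respectively yields a simplicial map (possibly collapsing one edge of $S^1_m$ to a vertex) for which one of the two auxiliary $2$-simplices degenerates into an edge already present in $X$, so only a single extra $2$-face suffices --- still within the claimed $+2$ bound.
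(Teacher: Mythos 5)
Your overall plan — modify $f$ only at the vertex $j'$, verify the new map is simplicial and contiguous to $f$, and invoke Lemma \ref{contiguous maps lemma} — is exactly what the paper does, and the definition of $f'$ you give is the correct one. However, there is a genuine gap in the contiguity verification. You assert that simpliciality of $f$ together with Lemma \ref{condition of edge connecting lemma} forces $g_{j'} \in K_{\lbrace i_{j'-1} \rbrace}$ (and similarly $g_{j'+1} \in K_{\lbrace i_{j'} \rbrace}$), and you use this to place $g_1 \cdots g_{j'}$ simultaneously in $f(j'-1)$, $f(j')$ and $f'(j')$. That containment is not forced: simpliciality of $f$ at the edge $\lbrace j'-1, j' \rbrace$ only gives a nonempty intersection $g_1 \cdots g_{j'-1} K_{\lbrace i_{j'-1} \rbrace} \cap g_1 \cdots g_{j'} K_{\lbrace i_{j'} \rbrace}$, which is equivalent to $g_{j'} \in K_{\lbrace i_{j'-1} \rbrace} K_{\lbrace i_{j'} \rbrace}$, a much weaker condition than $g_{j'} \in K_{\lbrace i_{j'-1} \rbrace}$. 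The definition of a $(g_1,\dots,g_m)$-map only requires each $g_j$ to lie in $\bigcup_i K_{\lbrace i \rbrace}$, with no compatibility with the types $i_{j-1}$, and after the homotopy moves of Theorem \ref{Homotopy moves Theorem} have been applied a few times such compatibility can fail. So the claimed common element need not lie in $f(j'-1)$ at all.

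The fix (and the paper's actual argument) is to lower your ambition: you do not need a single element common to all three cosets. Simplices of a coset complex are, by definition, pairwise-intersecting families of cosets, and $X$ is a clique complex. So it suffices to check that the three pairwise intersections $f(j'-1) \cap f(j')$, $f(j'-1) \cap f'(j')$, and $f(j') \cap f'(j')$ are nonempty (witnessed by $g_1 \cdots g_{j'-1}$ for the second and $g_1 \cdots g_{j'}$ for the third, the first coming from simpliciality of $f$), and then the triple is a simplex of $X$ by the clique property; likewise for $\lbrace f(j'), f(j'+1), f'(j') \rbrace$. Finally, the extra casework you do for $i' \in \lbrace i_{j'-1}, i_{j'}, i_{j'+1} \rbrace$ is unnecessary: when types collide, the corresponding cosets coincide and the "triangle" degenerates to a lower-dimensional face, which is still a simplex of $X$, so the pairwise-intersection argument and Lemma \ref{contiguous maps lemma} go through unchanged.
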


\begin{proof}
Assume $g_{j'} g_{j'+1} = g_{j'+1}'$ and $g_{j'}, g_{j'+1} \in K_{\lbrace i' \rbrace}$. Define $f' : S^1_m \rightarrow X$ to be the map induced by:
$$f' (j) = \begin{cases}
f(j) & j \neq j' \\
g_1 ... g_{j' -1} K_{\lbrace i' \rbrace} & j = j'
\end{cases}.$$
If we show that $f'$ is a simplicial map then it is a $(g_1,...g_{j'-1},e,g_{j'+1}',...,g_m)$-map. If we also show that $f'$ is contiguous to $f$, it will follow from Lemma \ref{contiguous maps lemma} that $\SFill_1 (f) \leq \SFill_1 (f')+ 2$.

In order to show that $f'$ is simplicial, we need to verify that $\lbrace f' (j'-1), f'(j') \rbrace \in X$ and $\lbrace f' (j'+1), f'(j') \rbrace \in X$. Note that
$$f' (j'-1) \cap f'(j') = g_1 ... g_{j' -1} K_{\lbrace i_{j'-1} \rbrace} \cap g_1 ... g_{j' -1} K_{\lbrace i' \rbrace},$$
thus $g_1 ... g_{j' -1} \in f' (j'-1) \cap f(j')$ and therefore $\lbrace f' (j'-1), f(j') \rbrace \in X$. Also, note that since $g_{j'} \in K_{\lbrace i' \rbrace}$ it follows that
$$f' (j') =  g_1 ... g_{j' -1} K_{\lbrace i' \rbrace} = g_1 ... g_{j'} K_{\lbrace i' \rbrace}.$$
Thus $g_1 ... g_{j'} \in f' (j'+1) \cap f'(j')$ and $\lbrace f' (j'+1), f' (j') \rbrace \in X$.

Next, we will show that $f$ and $f'$ are contiguous. First, we observe that
$$f (j') \cap f'(j') = g_1 ... g_{j'} K_{\lbrace i_j \rbrace} \cap g_1 ... g_{j'} K_{\lbrace i' \rbrace},$$
thus $\lbrace f (j'), f'(j') \rbrace \in X$. Second, we note that
$$\lbrace f (j'-1), f(j') \rbrace \cup \lbrace f' (j'-1), f'(j') \rbrace = \lbrace f (j'-1), f(j'), f' (j') \rbrace,$$
and we already know that $\lbrace f (j'-1), f(j') \rbrace \in X$ and $\lbrace f (j'-1), f'(j') \rbrace \in X$. Thus, if we show that $\lbrace f (j'), f'(j') \rbrace \in X$ it will follow that $\lbrace f (j'-1), f(j'), f' (j') \rbrace \in X$ (since $X$ is a clique complex). A similar argument shows that $\lbrace f (j'+1), f(j') \rbrace \cup \lbrace f' (j'+1), f'(j') \rbrace \in X$. Thus $f$ and $f'$ are contiguous as needed.

\end{proof}

An immediate corollaries of this Lemma are:

\begin{corollary}
\label{Identity ordering coro}
Let $f : S^1_m \rightarrow X$ be a $(g_1,...,g_m)$-map where  $(g_1,...,g_m) \in \left( \bigcup_{i} K_{\lbrace i \rbrace} \right)^m$ and $g_1 ... g_m =e$. If for some $1 \leq j' \leq m$, $g_{j'} =e$, then there is $f' : S^1_m \rightarrow X$ such that $f'$ is a $(e,g_1,...g_{j'-1},g_{j'+1},...,g_m)$-map and $\SFill_1 (f) \leq \SFill_1 (f')+ 2(j'-1)$.
\end{corollary}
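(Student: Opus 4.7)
The plan is to prove this corollary by a short induction on $j'$, using Lemma \ref{relation moves lemma} to ``push'' the identity letter one slot to the left at a time. The key observation is that $e \in K_{\lbrace i \rbrace}$ for every $i \in \I$, so the identity letter is compatible with \emph{any} neighboring letter in the sense required by Lemma \ref{relation moves lemma}.

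More precisely, for the base case $j' = 1$ there is nothing to prove (the tuple is already of the desired form and the inequality holds with zero cost). For the inductive step, suppose $j' \geq 2$ and the given tuple is $(g_1,\dots,g_{j'-1},e,g_{j'+1},\dots,g_m)$. Let $i_{j'-1} \in \I$ be such that $g_{j'-1} \in K_{\lbrace i_{j'-1}\rbrace}$. Since $e \in K_{\lbrace i_{j'-1}\rbrace}$ as well, Lemma \ref{relation moves lemma} applies to the pair at positions $j'-1,j'$ and produces a map $\tilde f : S^1_m \to X$ which is a $(g_1,\dots,g_{j'-2},e,g_{j'-1},g_{j'+1},\dots,g_m)$-map with $\SFill_1(f) \leq \SFill_1(\tilde f) + 2$. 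Now $\tilde f$ has an identity letter at position $j'-1$, so the inductive hypothesis yields a $(e,g_1,\dots,g_{j'-2},g_{j'-1},g_{j'+1},\dots,g_m)$-map $f'$ with $\SFill_1(\tilde f) \leq \SFill_1(f') + 2(j'-2)$. Chaining the two inequalities gives $\SFill_1(f) \leq \SFill_1(f') + 2(j'-1)$, as required.

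There is no real obstacle here; the only point that needs a moment's verification is that Lemma \ref{relation moves lemma} can indeed be applied at position $j'-1$ with $g_{j'} = e$, which follows because $e$ lies in every subgroup $K_{\lbrace i \rbrace}$, and the resulting ``product'' $g_{j'-1} \cdot e = g_{j'-1}$ is exactly the letter that slides one slot to the right.
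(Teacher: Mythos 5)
Your proof is correct and follows essentially the same approach as the paper: induction on $j'$, using Lemma \ref{relation moves lemma} at positions $j'-1, j'$ (valid because $e \in K_{\lbrace i \rbrace}$ for every $i$, and $g_{j'-1}\cdot e = g_{j'-1}$) to slide the identity letter one slot to the left at a cost of $2$ per step. You spell out the base case and the chaining of the inequalities somewhat more explicitly than the paper does, but the argument is the same.
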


\begin{proof}
By induction on $j'$, it is enough to prove that for every $2 \leq j' \leq m-1$, if $g_{j'} =e$, there is $f' : S^1_m \rightarrow X$ such that $f'$ is a $(g_1,...,e,g_{j'-1},g_{j'+1},...,g_m)$-map and $\SFill_1 (f) \leq \SFill_1 (f')+ 2$. Note that there is some $i' \in \I$ such that $e, g_{j'-1} \in K_{\lbrace i' \rbrace}$ and therefore the needed assertion follows from (1) in Lemma \ref{relation moves lemma}.
\end{proof}

\begin{corollary}
\label{Cancellation move coro}
Let $f : S^1_m \rightarrow X$ be a $(g_1,...,g_m)$-map where  $(g_1,...,g_m) \in \left( \bigcup_{i} K_{\lbrace i \rbrace} \right)^m$ and $g_1 ... g_m =e$. If for some $1 \leq j' \leq m-2$, $g_{j'+1} = g_{j'}^{-1}$, then there is $f' : S^1_m \rightarrow X$ such that $f'$ is a $(g_1,...g_{j'-1},e,e,g_{j'+2},...,g_m)$-map and $\SFill_1 (f) \leq \SFill_1 (f')+ 2$.
\end{corollary}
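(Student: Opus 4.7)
The plan is to derive this corollary as an immediate specialization of Lemma~\ref{relation moves lemma}. The hypothesis of the corollary gives us consecutive letters $g_{j'}$ and $g_{j'+1}=g_{j'}^{-1}$ in the word. Since $(g_1,\dots,g_m)\in\left(\bigcup_{i}K_{\lbrace i\rbrace}\right)^m$, there is some $i'\in I$ with $g_{j'}\in K_{\lbrace i'\rbrace}$; then $g_{j'+1}=g_{j'}^{-1}\in K_{\lbrace i'\rbrace}$ as well, because $K_{\lbrace i'\rbrace}$ is a subgroup. This is exactly the hypothesis of Lemma~\ref{relation moves lemma}, with the product $g_{j'+1}':=g_{j'}g_{j'+1}=e$.

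I would then invoke Lemma~\ref{relation moves lemma} directly: it produces a map $f':S^1_m\to X$ which is a $(g_1,\dots,g_{j'-1},\,e,\,g_{j'+1}',\,g_{j'+2},\dots,g_m)$-map together with the bound $\SFill_1(f)\leq \SFill_1(f')+2$. Substituting $g_{j'+1}'=e$ shows that $f'$ is a $(g_1,\dots,g_{j'-1},e,e,g_{j'+2},\dots,g_m)$-map, which is precisely the conclusion we want. No further bookkeeping of contiguity or disc counts is needed, since that was already absorbed into Lemma~\ref{relation moves lemma} (which in turn relies on Lemma~\ref{contiguous maps lemma} for the $+2$ term).

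There is really no obstacle here; the only thing to check is that the index $i'$ whose existence is required by Lemma~\ref{relation moves lemma} can indeed be chosen to be the common subgroup containing both $g_{j'}$ and its inverse, and that the product $g_{j'}g_{j'+1}$ equals the identity (which then appears in the output word as the second inserted $e$). The corollary should therefore be stated and proved in one or two lines, essentially as a cancellation-style reformulation of the more general ``apply a relation'' move of Lemma~\ref{relation moves lemma}, to be used downstream together with Corollary~\ref{Identity ordering coro} when translating Dehn-function reductions into sequences of contiguous map modifications.
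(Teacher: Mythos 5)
Your proposal is correct and matches the paper's proof, which consists of the single sentence ``This follows immediately from Lemma~\ref{relation moves lemma}.'' You have simply spelled out why the immediacy holds: because $K_{\lbrace i'\rbrace}$ is a subgroup containing $g_{j'}$, it also contains $g_{j'+1}=g_{j'}^{-1}$, and the product $g_{j'}g_{j'+1}=e$ substituted as $g_{j'+1}'$ in the lemma's conclusion yields exactly the claimed $(g_1,\dots,g_{j'-1},e,e,g_{j'+2},\dots,g_m)$-map with the $+2$ bound.
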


\begin{proof}
This follows immediately from Lemma \ref{relation moves lemma}.
\end{proof}

\begin{lemma}
\label{identity reduction lemma}
Let $f : S^1_m \rightarrow X$ be a $(e,g_2,...,g_m)$-map where  $(e,g_2,...,g_m) \in \left( \bigcup_{i} K_{\lbrace i \rbrace} \right)^{m-1}$ and $e g_2 ... g_m =e$. Then there is a $(g_2,...,g_m)$-map $f' : S^1_{m-1} \rightarrow X$ such that
$\SFill_1 (f) \leq \SFill_1 (f')+ 2$.
\end{lemma}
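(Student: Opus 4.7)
The plan is to construct $f'$ by simply deleting the redundant vertex $0$ from $f$. Explicitly, I would set $f'(j) = f(j+1)$ for $0 \le j \le m-2$ (with indices in $S^1_m$ read cyclically), and take the types $i'_j := i_{j+1}$ as the witnesses that $f'$ is a $(g_2, \ldots, g_m)$-map. Since $g_1 = e$, the equalities $g_1 g_2 \cdots g_j = g_2 \cdots g_j$ immediately give conditions~(1)--(2) of the definition, and condition~(3) for $f$, namely $g_2 \cdots g_{m-1} \in K_{\{i_{m-1}\}}$, is exactly condition~(3) for $f'$ under this relabelling.

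Before proceeding I would verify that $f'$ is genuinely simplicial. The only edge of $S^1_{m-1}$ whose image is not immediate from $f$ being simplicial is the wrap-around edge $\{m-2, 0\}$, which $f'$ sends to $\{f(m-1), f(1)\} = \{K_{\{i_{m-1}\}}, K_{\{i_1\}}\}$. Both cosets contain $e$, so their intersection is non-empty; hence either they are distinct vertices of different types and span an edge of $X$, or they coincide and the edge collapses to a single vertex. Either way the image is a simplex of $X$.

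Next, given a minimal extension $F' \colon (D^1_\triangle)' \to X$ of $f'$, I would produce an extension of $f$ by attaching a single new $2$-simplex. Adjoin one new vertex $\hat v$ to $(D^1_\triangle)'$ together with the triangle $\{\hat v, 1, m-1\}$, glued along the boundary edge of $(D^1_\triangle)'$ coming from the wrap-around edge $\{m-2, 0\}$ of $S^1_{m-1}$. Relabelling $\hat v$ as the vertex $0$ of $S^1_m$, the resulting complex $D^1_\triangle$ is a triangulated $2$-disc whose boundary is $S^1_m$. Define $F$ by $F|_{(D^1_\triangle)'} = F'$ and $F(\hat v) := f(0)$.

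Simpliciality of $F$ reduces to the new triangle: the image $\{f(0), f(1), f(m-1)\} = \{K_{\{i_0\}}, K_{\{i_1\}}, K_{\{i_{m-1}\}}\}$ consists of three cosets all containing $e$, so they are pairwise intersecting and span a simplex of the clique complex $X$ (possibly degenerate if $i_1 = i_{m-1}$, but still a simplex). Hence $F$ is an extension of $f$, with $|D^1_\triangle(2)| = |(D^1_\triangle)'(2)| + 1 = \SFill_1(f') + 1$, giving $\SFill_1(f) \le \SFill_1(f') + 1 \le \SFill_1(f') + 2$. The only subtlety worth flagging is the potential coincidence $i_1 = i_{m-1}$, which merely collapses the new triangle to an edge in $X$ and does not break any step of the argument; otherwise the construction is a straightforward amalgamation of $(D^1_\triangle)'$ with a single $2$-simplex along one boundary edge.
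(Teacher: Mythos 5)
Your proof is correct, and it takes a genuinely different route from the paper's. The paper first replaces $f$ by an intermediate map $f''\colon S^1_m \to X$ that agrees with $f$ except that $f''(0) := f(1)$, checks that $f$ and $f''$ are contiguous, and invokes Lemma~\ref{contiguous maps lemma} to get $\SFill_1(f) \leq \SFill_1(f'') + 2$; it then discards the duplicated vertex to pass from $f''$ to $f'$ and asserts $\SFill_1(f'') = \SFill_1(f')$. Your argument bypasses the contiguity machinery entirely: you build an extension of $f$ directly from a minimal extension of $f'$ by attaching a single new $2$-simplex along the wrap-around boundary edge, using that $e$ lies in all three cosets $K_{\{i_0\}}, K_{\{i_1\}}, K_{\{i_{m-1}\}}$ so the new triangle's image is a simplex in the clique complex $X$. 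This yields the stronger inequality $\SFill_1(f) \leq \SFill_1(f') + 1$, which of course implies the stated $+2$. Your approach is both more elementary (no appeal to Lemma~\ref{contiguous maps lemma}) and tighter; it also sidesteps the step ``$\SFill_1(f'') = \SFill_1(f')$'' in the paper, which is in fact not an equality: any triangulated $2$-disc with boundary $S^1_m$ must contain at least one triangle on the edge $\{0,1\}$, and contracting that edge loses a triangle, so in general $\SFill_1(f'') = \SFill_1(f') + 1$ (making the paper's chain yield $+3$ rather than the asserted $+2$, though this only perturbs the eventual polynomial by a constant). Your handling of the degenerate cases ($i_1 = i_{m-1}$ or $i_0 \in \{i_1, i_{m-1}\}$, where the new triangle's image collapses) is also correct, as a simplicial map is allowed to collapse simplices.
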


\begin{proof}
Define $f'' : S^1_{m} \rightarrow X$ by
$$f'' (j) = \begin{cases}
f (1) & j = 0 \\
f(j) & 0<j
\end{cases}.$$
Note that $f '' (0) = f (1) = K_{i_1}$ and $f'' (m-1) = f (m-1) = K_{\lbrace i_{m-1} \rbrace}$, therefore $e \in f '' (0) \cap f'' (m-1)$ and thus $\lbrace f '' (0), f'' (m-1) \rbrace \in X$ and $f''$ is simplicial. Also note that
$$e \in f '' (0) \cap f (0) \cap f(1) \cap f (m-1)$$
and thus $f$ and $f''$ are contiguous. It follows from Lemma \ref{contiguous maps lemma} that $\SFill_1 (f) \leq \SFill_1 (f'')+ 2$. We finish by defining
$f' : S^1_{m-1} \rightarrow X$ by $f' (j) = f'' (j+1)$ and noting that $\SFill_1 (f'') = \SFill_1 (f')$.
\end{proof}

After this lemmas, we can prove the following Theorem that summarizes all the homotopy moves on $(g_1,...,g_m)$-maps:
\begin{theorem}[Homotopy moves Theorem]
\label{Homotopy moves Theorem}
Let $f : S^1_m \rightarrow X$ be a $(g_1,...,g_m)$-map and assume that $m >2$.
\begin{enumerate}
\item \textbf{Identity reduction move:} If for some $1 \leq j' \leq m$, $g_{j'} =e$, then there is $f' : S^1_{m-1} \rightarrow X$ such that $f'$ is a $(g_1,...g_{j'-1},g_{j'+1},...,g_m)$-map and $\SFill_1 (f) \leq \SFill_1 (f')+ 4+2(j'-1) \leq \SFill_1 (f')+ 4+2(m-1)$.
\item \textbf{Free reduction move:} If for some $1 \leq j' \leq m-2$, $g_{j'+1} = g_{j'}^{-1}$, then there is $f' : S^1_{m-2} \rightarrow X$ such that $f'$ is a $(g_1,...g_{j'-1},g_{j'+2},...,g_m)$-map and $\SFill_1 (f) \leq \SFill_1 (f')+ 6+4(j'-1) \leq \SFill_1 (f')+ 6+4(m-1)$.
\item \textbf{Relation move 1:} If for some $1 \leq j' \leq m-1$, there is $i' \in I$ such that $g_{j'}, g_{j'+1}, g_{j'} g_{j'+1} \in K_{\lbrace i' \rbrace}$ then there is $f' : S^1_{m-1} \rightarrow X$ such that $f'$ is a $(g_1,...,g_{j'-1},g_{j'} g_{j'+1},g_{j'+2},...,g_m)$-map and $\SFill_1 (f) \leq \SFill_1 (f')+ 4+2(j'-1)\leq \SFill_1 (f')+ 4+2(m-1)$.
\item \textbf{Relation move 2:} If for some $1 \leq j' \leq m-1$, there is $i' \in I$ such that $g_{j'} \in K_{\lbrace i' \rbrace}$ and $g_{j'} = g_{j'}' g_{j'}''$ with $ g_{j'}', g_{j'}'' \in K_{\lbrace i' \rbrace}$, then there is $f' : S^1_{m+1} \rightarrow X$ such that $f'$ is a $(g_1,...g_{j'-1},g_{j'}', g_{j'}'',g_{j'+1},...,g_m)$-map and $\SFill_1 (f) \leq \SFill_1 (f')+ 2$.
\end{enumerate}
\end{theorem}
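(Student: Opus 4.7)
The plan is to derive each of the four moves by composing the already-established Lemma \ref{contiguous maps lemma}, Lemma \ref{relation moves lemma}, Lemma \ref{identity reduction lemma}, Corollary \ref{Identity ordering coro}, and Corollary \ref{Cancellation move coro}; only move (4) requires a direct disc construction. The crucial bookkeeping observation is that after deleting an identity at position $j'$ via move (1), any identity previously at position $j'+1$ shifts down to position $j'$, so successive applications do not accumulate an increasing positional overhead.

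For move (1), I would cycle the identity from position $j'$ to position $1$ using Corollary \ref{Identity ordering coro} (added cost $2(j'-1)$), then delete the leading identity using Lemma \ref{identity reduction lemma} (added cost $2$), for a total within the stated $4+2(j'-1)$. For move (3), I would apply Lemma \ref{relation moves lemma} once (cost $2$) to obtain an identity at position $j'$, then invoke move (1) to remove it, for a total of $2+2+2(j'-1)=4+2(j'-1)$. For move (2), I would apply Corollary \ref{Cancellation move coro} (cost $2$) to convert $(g_{j'},g_{j'}^{-1})$ into a consecutive pair of identities, and then invoke move (1) twice; the point is that after the first deletion the remaining identity is again at position $j'$ in the shorter sphere, so each call contributes the same $2+2(j'-1)$, summing to $2+2\left(2+2(j'-1)\right)=6+4(j'-1)$.

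Move (4) is the only step that lengthens the sphere, so I would construct $f'$ directly on $S^1_{m+1}$ by inserting a new vertex at position $j'$ between $f(j'-1)$ and $f(j')$, assigning it the coset $g_1\cdots g_{j'-1} g_{j'}' K_{\lbrace i' \rbrace}$ of type $i'$, and shifting all subsequent vertices by one. Simpliciality of $f'$ at the two new boundary edges follows from the explicit witnesses $g_1\cdots g_{j'-1}\in f'(j'-1)\cap f'(j')$ (using $(g_{j'}')^{-1}\in K_{\lbrace i'\rbrace}$) and $g_1\cdots g_{j'}=g_1\cdots g_{j'-1} g_{j'}' g_{j'}''\in f'(j')\cap f'(j'+1)$ (using $g_{j'}''\in K_{\lbrace i'\rbrace}$). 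To obtain the bound $\SFill_1(f)\leq \SFill_1(f')+2$, I would take a minimal extension $F':D'\to X$ of $f'$ and attach a single extra $2$-simplex $\lbrace u_{j'-1}, u_{j'}, u_{j'+1}\rbrace$ along the boundary of $D'$, with $F$ sending it to $\lbrace f(j'-1), f'(j'), f(j')\rbrace\subset X$. This triple is a simplex in $X$ by the clique-complex property of coset complexes, since the three pairwise adjacencies $f(j'-1)\cap f(j')\neq\emptyset$ (from simpliciality of $f$), $f(j'-1)\cap f'(j')\neq\emptyset$, and $f'(j')\cap f(j')\neq\emptyset$ have all been established above. The resulting $D$ is a triangulated disc with one more $2$-simplex than $D'$, with $u_{j'}$ now interior, and its boundary map matches $f$ under the obvious relabeling.

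The main difficulty is purely bookkeeping: composing the $\SFill_1$ estimates in the correct order so that the constants come out as claimed, particularly for move (2) where a naive ordering would yield a larger constant. Beyond this, every simpliciality check at a ``seam'' introduced by a move reduces to exhibiting a single common group element (typically $g_1\cdots g_{j'-1}$ or $g_1\cdots g_{j'}$) in the relevant cosets and invoking the fact that $X$ is a clique complex.
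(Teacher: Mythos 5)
Your derivation of moves (1)--(3) is exactly the paper's: the Identity reduction move composes Corollary~\ref{Identity ordering coro} and Lemma~\ref{identity reduction lemma}, the Free reduction move further precomposes Corollary~\ref{Cancellation move coro}, and Relation move~1 further precomposes Lemma~\ref{relation moves lemma}; the positional-shift observation you emphasize is precisely what makes the constants work out, and your arithmetic (using the tight $2+2(j'-1)$ bound for the composed move~(1) rather than the loosely stated $4+2(j'-1)$) matches what the paper must implicitly be doing.

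For move~(4) you diverge from the paper. The paper first ``doubles'' position $j'$ to obtain $f'':S^1_{m+1}\to X$ with $f''(j')=f''(j'+1)=f(j')$, asserts $\SFill_1(f)=\SFill_1(f'')$, and then observes that $f''$ and $f'$ are contiguous so Lemma~\ref{contiguous maps lemma} gives $\SFill_1(f'')\leq\SFill_1(f')+2$. You instead build a disc for $f$ directly from a minimal disc $D'$ for $f'$ by attaching the single $2$-simplex $\{u_{j'-1},u_{j'},u_{j'+1}\}$ along the two consecutive boundary edges. Your identification of the common group elements witnessing simpliciality of $f'$ and the clique $\{f(j'-1),f'(j'),f(j')\}$ is correct. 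However, the surgery step has an unaddressed edge case: if the $1$-simplex $\{u_{j'-1},u_{j'+1}\}$ is already an interior edge of $D'$ (or if $\{u_{j'-1},u_{j'},u_{j'+1}\}\in D'(2)$ already), then attaching the new triangle either adds nothing, or produces an edge contained in three triangles, and the result is no longer a triangulated disc. This is precisely the situation the contiguity lemma is designed to avoid, since there a fresh vertex $u'$ is introduced so that all new faces are guaranteed to be new; using that lemma instead costs two triangles rather than one, which is consistent with (and explains) the factor $2$ in the stated bound. Your argument can be repaired by replacing the single-triangle attachment with the two-triangle, new-vertex attachment of Lemma~\ref{contiguous maps lemma}, at which point it coincides in effect with the paper's proof.
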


\begin{proof}
\textbf{Identity reduction move:} Combine Corollary \ref{Identity ordering coro} and Lemma \ref{identity reduction lemma}.

\textbf{Free reduction move:} Combine Corollary \ref{Cancellation move coro}, Corollary \ref{Identity ordering coro} and Lemma \ref{identity reduction lemma}.

\textbf{Relation move 1:} Combine Lemma \ref{relation moves lemma}, Corollary \ref{Identity ordering coro} and Lemma \ref{identity reduction lemma}.

\textbf{Relation move 2:} Assume $g_{j'} = g_{j'}', g_{j'}''$ and $g_{j'}, g_{j'}', g_{j'}'' \in K_{\lbrace i' \rbrace}$. Define first $f'' :S^1_{m+1} \rightarrow X$ by
$$f'' (j) = \begin{cases}
f(j) & j \leq j' \\
f(j-1) & j > j'
\end{cases}.$$
It is easy to see that $f''$ is simplicial and that $\SFill_1 (f) = \SFill_1 (f'')$. Define $f' :S^1_{m+1} \rightarrow X$ by
$$f' (j) = \begin{cases}
f'' (j) & j \neq j' \\
g_1...g_{j'}' K_{\lbrace i' \rbrace} & j = j'
\end{cases}.$$
Note that
$$f' (j'+1) = f'' (j'+1) = f' (j') = g_1...g_{j'} K_{\lbrace i_{j'} \rbrace} = g_1...g_{j'}' g_{j'}'' K_{\lbrace i_{j'} \rbrace}.$$
Thus, if we show that $f'$ is simplicial, it will be a $(g_1,...g_{j'-1},g_{j'}', g_{j'}'',g_{j'+1},...,g_m)$-map. As above, we will show that $f'$ is simplicial and that $f''$ and $f'$ are contiguous and it will follow from Lemma \ref{contiguous maps lemma} that
$$\SFill_1 (f) = \SFill_1 (f'') \leq \SFill_1 (f') +2.$$
Note that $g_{j'}',g_{j'}'' \in  K_{\lbrace i' \rbrace}$ implies that
$$f' (j') = g_1...g_{j'}' K_{\lbrace i' \rbrace} = g_1...g_{j'-1} K_{\lbrace i' \rbrace},$$
and
$$f' (j') = g_1...g_{j'}' g_{j'}'' K_{\lbrace i' \rbrace} = g_1...g_{j'} K_{\lbrace i' \rbrace}.$$
Thus $f' (j') \cap f' (j'-1)$ and $f' (j') \cap f' (j'+1) = f' (j') \cap f'' (j'+1) = f' (j') \cap f'' (j')$ are non-empty and it follows that $f'$ is simplicial and that $f''$ and $f'$ are contiguous (we use the fact that $X$ is a clique complex).
\end{proof}

Observe that the homotopy moves described in this Theorem correspond to reducing the word $w = g_1 ... g_m$ to the trivial word using relations from $\bigcup_{i} R_i$ and free reductions. We will use this Theorem we are ready to prove Theorem \ref{bound on N1 thm}:
\begin{proof}[Proof of Theorem \ref{bound on N1 thm}]
By Lemma \ref{translation to (g_1,...,g_m) lemma}, it is enough to prove the Theorem for $(g_1,...,g_m)$-maps.

We note that for every simplicial map $f : S^1_m \rightarrow X$, it holds that if $m = 2$, then $\SFill_1 (f) \leq 1$. Thus, we will assume that $m >2$.

Let $f : S^1_m \rightarrow X$ be a $(g_1,...,g_m)$-map. Some of the $g_i's$ might be equal to $e$ and we start by preforming identity reduction moves on $f$ to eliminate them. Indeed, by preforming identity reduction moves (if needed), we pass to a map $f^{(0)} : S^1_{m_0} \rightarrow X$ such that $m_0 \leq m$, $f^{(0)}$ is a $(g_1^{(0)},...,g_m^{(0)})$-map, $g_1^{(0)},..., g_{m_0}^{(0)} \neq e$ and
$$\SFill_1 (f) \leq \SFill_1 (f^{(0)}) + (m-m_0)(4+2(m-1)) \leq 4m + 2m^2.$$
If $m_0 \leq 2$, then it follows that $\SFill_1 (f) \leq 2m^2 + 4m +1$ and we are done, so we will assume that $m_0 >2$.

We define the following reduction algorithm on $f^{(0)}$: Note that $w=g_1^{(0)} ... g_m^{(0)}$ is a trivial word and thus there is an algorithm for reducing it to the trivial map using the relations in $\bigcup_i R_{i}$ and free reductions. We call this the reduction algorithm of $w$ and denote by $w_k$ the word after the $k$-step in the algorithm. We note that each relation in $\bigcup_i R_{i}$ either increases the word length of $w$ by $1$ or it decreases it by $1$ and a free reduction move reduces the length of $w$ by $2$. Use the reduction algorithm of $w$ to define a reduction algorithm of $f^{(0)}$: Let $f^{(k)} : S^1_{m_k} \rightarrow X$ be an ordered $(g_1^{(k)},...,g_{m_k}^{(k)})$-map, where $w_k = g_1^{(k)} ... g_{m_k}^{(k)}$. Note that every reduction step of $w$ corresponds to a homotopy move in Theorem \ref{Homotopy moves Theorem}, thus the difference between $\SFill_1 (f^{(k)})$ and $\SFill_1 (f^{(k+1)})$ can be bounded using this Theorem.

Note that since we are using relations of length $3$, if follows that $\vert w_k \vert \leq \vert w_k \vert +1$ and thus for every $k$, $m_k \leq m + \Dehn (m)$. It follows from Theorem \ref{Homotopy moves Theorem}, that for every $k$,
$$\SFill_1 (f^{(k)}) \leq \SFill_1 (f^{(k+1)}) + 6+4(m_k-1) \leq 6 + 4 (m + \Dehn (m)).$$
Let $k_{\text{stop}}$ be the smallest number such that $\vert w_{k_{\text{stop}}} \vert \leq 2$. By the fact stated regarding the Dehn function, $k_{\text{stop}} \leq 4\Dehn (m) + m$. Also, $\SFill_1 (f^{(k_{\text{stop}})}) \leq 1$. Thus
\begin{dmath*}
\SFill_1 (f^{(0)}) \leq  k_{\text{stop}} (6 + 4 (m + \Dehn (m))) + \SFill_1 (f^{(k_{\text{stop}})}) \leq (4\Dehn (m) + m) (6 + 4 (m + \Dehn (m))) + 1 = 16 \Dehn (m)^2 + 4 m^2 + 20 m \Dehn (m) + 24 \Dehn (m) + 6 m +1.
\end{dmath*}

Recall that $\SFill_1 (f) \leq \SFill_1 (f^{(0)}) + 2m^2 + 4m$ and therefore is follows that
$$\SFill_1 (f) \leq 16 \Dehn (m)^2 + 6 m^2 + 20 m \Dehn (m) + 24 \Dehn (m) + 10 m +1.$$
\end{proof}

\begin{remark}
As the reader might have noted, we did not optimize the bounds in proofs Theorems \ref{Homotopy moves Theorem}, \ref{bound on N1 thm}, so the polynomial $p(x,y)$ in Theorem \ref{bound on N1 thm} if far from being a tight bound.
\end{remark}

Combining Corollary \ref{bound on N_0 coro}, Theorem \ref{bound on N1 thm} and Theorem \ref{filling constants criterion for coboundary exp thm2} yields the following Theorem (that generalizes Theorem \ref{N_0 + N_1 bound thm intro} that appeared in the introduction):

\begin{theorem}
\label{N_0 + N_1 bound thm}
Let $G$ be a group with subgroups $K_{\lbrace i \rbrace}, i \in \I$, where $\I$ is a finite set and $\vert I \vert \geq 2$. Denote $X = X(G, (K_{\lbrace i \rbrace})_{i \in \I})$ and let $N_0, N_1$ be the constants defined in Theorem \ref{simplicial filling constants thm}. For every $i \in I$, denote $R_{i}$ to be all the non-trivial relations in the multiplication table of $K_{\lbrace i \rbrace}$, i.e., all the relations of the form $g_1 g_2 g_3 =e$, where $g_1,g_2, g_3 \in K_{\lbrace i \rbrace} \setminus \lbrace e \rbrace$.

Assume that $K_{\lbrace i \rbrace}, i \in \I$ boundedly generate $G$ and that $G = \langle \bigcup_{i} K_{\lbrace i \rbrace} \vert \bigcup_{i} R_i \rangle$. Let $\Dehn$ denote the Dehn function of $G$ with respect to this presentation of $G$.

Then:
\begin{enumerate}
\item The constant $N_0$ is bounded by
$$N_0 ' = 1+\max_{g \in G} \min \left\lbrace l : g = g_1...g_l \text{ and } g_1,...,g_l \in \bigcup_i K_{\lbrace i \rbrace} \right\rbrace.$$
Also, if $G$ acts strongly transitively on $X$, then
$$\Exp^0_b (X) \geq \frac{1}{(n+1) N_0 '} $$
\item The constant $N_1$ is bounded by $p(2 N_0  + 1, \Dehn (2 N_0 +1))$,
where $p(x,y)$ is the polynomial
$$p(x,y) = 16 y^2 + 6 x^2 +20 xy + 24 y+ 10x +1.$$
Also, if $G$ acts strongly transitively on $X$, then
$$\Exp^1_b (X) \geq \frac{1}{{n+1 \choose 2} p(2 N_0  + 1, \Dehn (2 N_0 +1))} ,$$

\end{enumerate}
\end{theorem}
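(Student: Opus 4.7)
The strategy is to assemble three earlier results: Corollary~\ref{bound on N_0 coro} for the bound on $N_0$, Theorem~\ref{bound on N1 thm} for the bound on $N_1$, and Theorem~\ref{filling constants criterion for coboundary exp thm2} to pass from bounds on the filling constants $N_0, N_1$ to the lower bounds on $\Exp^0_b(X), \Exp^1_b(X)$. Essentially all the heavy lifting has already been done; what remains is a careful verification that the hypotheses line up.

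\textbf{Verifying connectivity.} In order to invoke Theorem~\ref{filling constants criterion for coboundary exp thm2} at $k=0$ and $k=1$, I first need to check that $X$ is $0$-connected and $1$-connected. The bounded generation hypothesis forces $\bigcup_i K_{\{i\}}$ to generate $G$, so by the Abels--Holtz criterion \cite[Theorem 2.4]{AbelsH} the $1$-skeleton of $X$ is connected. Moreover, the assumed presentation $G = \langle \bigcup_i K_{\{i\}} \mid \bigcup_i R_i \rangle$ is precisely of the form required by the second clause of the Abels--Holtz criterion (each relator lies entirely in one of the subgroups $K_{\{i\}}$), which yields that $X$ is simply connected, i.e.\ $1$-connected.

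\textbf{Assembling Part (1).} Corollary~\ref{bound on N_0 coro} applied to the present setting directly gives $N_0 \leq N_0'$. Under the assumption that $G$ acts transitively on $X(n)$ (strong transitivity), Theorem~\ref{filling constants criterion for coboundary exp thm2} with $k=0$ yields
\[
\Exp^0_b(X) \geq \frac{1}{\binom{n+1}{1} N_0} = \frac{1}{(n+1)\,N_0} \geq \frac{1}{(n+1)\,N_0'},
\]
which is the first claim.

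\textbf{Assembling Part (2).} By the recursive definition in Theorem~\ref{simplicial filling constants thm}, $N_1 = \SFill_1(2N_0 + 1)$. Feeding $m = 2N_0 + 1$ into Theorem~\ref{bound on N1 thm} (whose hypotheses are exactly the bounded generation together with the presentation assumption that we already have) gives
\[
N_1 \;=\; \SFill_1(2N_0+1) \;\leq\; p\bigl(2N_0+1,\, \Dehn(2N_0+1)\bigr).
\]
Finally, applying Theorem~\ref{filling constants criterion for coboundary exp thm2} with $k=1$ (legitimate since $X$ is $1$-connected and $G$ acts strongly transitively) gives
\[
\Exp^1_b(X) \geq \frac{1}{\binom{n+1}{2} N_1} \geq \frac{1}{\binom{n+1}{2}\, p\bigl(2N_0+1,\, \Dehn(2N_0+1)\bigr)},
\]
as required.

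\textbf{Main obstacle.} There is essentially no new mathematical difficulty: the substance of the argument has already been absorbed into Corollary~\ref{bound on N_0 coro} and Theorem~\ref{bound on N1 thm}. The only point requiring care is bookkeeping the recursion defining $N_1$ in Theorem~\ref{simplicial filling constants thm} and making sure the connectivity hypotheses are genuinely available; this is exactly where the Abels--Holtz criterion enters, so one must be careful to note that the presentation hypothesis in the statement of Theorem~\ref{N_0 + N_1 bound thm} is strong enough to imply simple connectedness of $X$ and not merely $\widetilde{H}^1(X,\mathbb{F}_2)=0$ (cf.\ Remark~\ref{Hurewicz thm rmk}).
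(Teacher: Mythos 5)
Your proposal is correct and takes essentially the same route as the paper: the paper's proof is precisely "combine Corollary~\ref{bound on N_0 coro}, Theorem~\ref{bound on N1 thm}, and Theorem~\ref{filling constants criterion for coboundary exp thm2}," which is what you do. The one thing worth noting is that you make explicit the verification of the $0$- and $1$-connectivity hypotheses (via the Abels--Holtz criterion) needed to invoke Theorem~\ref{filling constants criterion for coboundary exp thm2}; the paper leaves this step implicit, so your spelling it out is a small but genuine improvement in care rather than a departure in method.
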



\section{New coboundary expanders}
\label{New coboundary expanders sec}

The aim of the section is to prove new examples of coboundary expanders. While our main motivation is proving Theorem \ref{new coboundary expanders thm intro} stated in the introduction. We start with the simplified case of a coset complex arising from a unipotent group over finite field. We then use this case to consider the more general case of a coset complex arising from unipotent group with polynomial entries (and those are the complexes that appear in Theorem \ref{new coboundary expanders thm intro}).

\subsection{New coboundary expanders arising from unipotent group over finite field}

Let $n \in \mathbb{N}, n \geq 2$ and $q$ a prime power. For $1 \leq i, j \leq n+1, i \neq j$ and $a \in \mathbb{F}_q$, let $e_{i,j} (a)$ be the $(n+1) \times (n+1)$ (elementary) matrix with $1$'s along the main diagonal, $a$ in the $(i,j)$ entry and $0$'s in all the other entries. The unipotent group $\Unip_{n+1} (\mathbb{F}_q)$ is the group generated by
$$\lbrace e_{i,j} (a) : a \in \mathbb{F}_q, 1 \leq i < j \leq n+1 \rbrace,$$
and it is easy to verify that $\Unip_{n+1} (\mathbb{F}_q)$ is in fact the group of upper triangular $(n+1) \times (n+1)$ matrices with entries in $\mathbb{F}_q$.

For $0 \leq i \leq n-1$, define a subgroup $K_{\lbrace i \rbrace} <\Unip_{n+1} (\mathbb{F}_q)$ as
$$K_{\lbrace i \rbrace} = \langle e_{j, j+1} (a) : j \in \lbrace 1,...,n \rbrace \setminus \lbrace i+1 \rbrace, a \in \mathbb{F}_q \rangle.$$
The aim of this section is to show that if $n \geq 4$ or $q$ is odd, then for $X = X(\Unip_{n+1} (\mathbb{F}_q), (K_{\lbrace i \rbrace})_{i =0,...,n-1})$,  $\Exp_b^0 (X), \Exp_b^1 (X)$ are bounded from below independently of $q$.

\begin{lemma}
\label{warm-up - bound on N_0 - lemma}
The subgroup $K_{\lbrace 0 \rbrace},...,K_{\lbrace n-1 \rbrace}$ boundedly generate $\Unip_{n+1} (\mathbb{F}_q)$ and in fact every $g \in \Unip_{n+1} (\mathbb{F}_q)$ can be written as a product of at most $6$ elements in $K_{\lbrace 0 \rbrace} \cup K_{\lbrace n-1 \rbrace}$.
\end{lemma}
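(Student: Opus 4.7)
The plan is to identify $K_{\{0\}}$ and $K_{\{n-1\}}$ as concrete block subgroups of $\Unip_{n+1}(\mathbb{F}_q)$ and then to exhibit an explicit six-factor decomposition of an arbitrary $g$. First I would observe that the Chevalley commutator identity $[e_{i,j}(a), e_{j,k}(b)] = e_{i,k}(ab)$ (valid for $i < j < k$) lets us build each $e_{i',k'}(c)$ with $2 \leq i' < k' \leq n+1$ from the generators $\{e_{j,j+1}(a) : 2 \leq j \leq n\}$ of $K_{\{0\}}$. Consequently $K_{\{0\}}$ is exactly the subgroup of upper-unitriangular matrices whose first row is $(1,0,\ldots,0)$; symmetrically, $K_{\{n-1\}}$ is the subgroup of matrices whose last column is $(0,\ldots,0,1)^\top$.

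For $g \in \Unip_{n+1}(\mathbb{F}_q)$, I would split off the first row by writing $g = u \cdot k$, where $k \in K_{\{0\}}$ is obtained from $g$ by zeroing the strictly-upper part of the first row, and $u = I + \sum_{j=2}^{n+1} g_{1,j} E_{1,j}$. Since $E_{1,j} E_{1,j'} = 0$ for all $j, j' \geq 2$, these matrix units commute pairwise and $u = \prod_{j=2}^{n+1} e_{1,j}(g_{1,j})$. I would then peel off the troublesome $(1,n+1)$ factor: $u = u' \cdot e_{1,n+1}(g_{1,n+1})$, where $u' = \prod_{j=2}^{n} e_{1,j}(g_{1,j})$ has last column $(0,\ldots,0,1)^\top$ and therefore lies in $K_{\{n-1\}}$.

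The crux of the argument is the single remaining factor $e_{1,n+1}(a)$, which lies in neither subgroup. Here I would apply the commutator identity to the chain $1 < 2 < n+1$ --- legitimate precisely because the hypothesis $n \geq 2$ forces $n+1 \geq 3$, providing the intermediate index $2$ --- to write
\[
e_{1,n+1}(a) = e_{1,2}(1) \cdot e_{2,n+1}(a) \cdot e_{1,2}(-1) \cdot e_{2,n+1}(-a),
\]
with each $e_{1,2}(\pm 1) \in K_{\{n-1\}}$ and each $e_{2,n+1}(\pm a) \in K_{\{0\}}$ by the first paragraph. Substituting into $g = u' \cdot e_{1,n+1}(g_{1,n+1}) \cdot k$ yields
\[
g = u' \cdot e_{1,2}(1) \cdot e_{2,n+1}(a) \cdot e_{1,2}(-1) \cdot e_{2,n+1}(-a) \cdot k,
\]
a product of six elements of $K_{\{0\}} \cup K_{\{n-1\}}$ as required. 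No other step presents an obstacle; the commutator-based trick of routing the ``long'' generator $e_{1,n+1}$ through the two shorter ones is really the only non-trivial ingredient, and the bound $6$ (rather than a tighter bound like $4$, which one could reach by merging adjacent factors in the same subgroup) emerges directly from the naive count of one $K_{\{n-1\}}$-factor, four commutator factors, and one $K_{\{0\}}$-factor.
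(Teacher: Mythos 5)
Your proof is essentially correct and takes the same approach as the paper: peel off one factor from $K_{\{0\}}$ and one from $K_{\{n-1\}}$ leaving only $e_{1,n+1}(a)$, then write $e_{1,n+1}(a)$ as a commutator (hence four factors), giving $1+1+4=6$. The paper performs the first step by ``Gauss elimination'' and takes the intermediate index $n$ rather than your $2$ in the commutator, but these choices are cosmetic. One small slip: with $k$ defined as $g$ with its first row zeroed and $u = I + \sum_{j\geq 2} g_{1,j}E_{1,j}$, the correct identity is $g = k\,u$, not $g = u\,k$ (equivalently, for the order $g = u\,k$ you must take $u = gk^{-1}$, whose first-row entries differ from $g_{1,j}$); in either case $u$ still has the shape $I + \sum_{j\geq 2}c_j E_{1,j}$, so the rest of the argument and the bound of $6$ go through unchanged.
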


\begin{proof}
Let $g \in \Unip_{n+1} (\mathbb{F}_q)$. By Gauss elimination, there are $g_1 \in K_{\lbrace 0 \rbrace}, g_2 \in K_{\lbrace n-1 \rbrace}$ and $a \in \mathbb{F}_q$ such that $g_2^{-1} g^{-1}_1 g = e_{1,n+1} (a)$.  Note that
$$e_{1, n+1} (a) = e_{1, n} (-a)  e_{n, n+1} (-1) e_{1, n} (a)  e_{n, n+1} (1),$$
and thus it is a product of $4$ elements in $K_{\lbrace 0 \rbrace} \cup K_{\lbrace n-1 \rbrace}$.
\end{proof}

\begin{corollary}
\label{warm-up - bound on N_0 coro}
For every prime power $q$, $ n\geq 2$, let $G = \Unip_{n+1} (\mathbb{F}_q), K_{\lbrace 0 \rbrace},...,K_{\lbrace n-1 \rbrace}$ as above and $X = X(G, (K_{\lbrace i \rbrace})_{i \in \lbrace 0,...,n-1 \rbrace})$. The filling constant $N_0$ of $X$ satisfies $N_0 \leq 7$ and in particular is bounded independently of $q$.
\end{corollary}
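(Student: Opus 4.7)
The proof is essentially a one-line combination of the preceding lemma with Corollary \ref{bound on N_0 coro}, but let me spell out how I would organize it.

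First, I would recall the definition of $N_0$ from Theorem \ref{simplicial filling constants thm}: it is a bound on the radius of the $1$-skeleton of $X$, and Corollary \ref{bound on N_0 coro} gives the explicit bound
\[
N_0 \;\leq\; 1 + \max_{g \in G} \min \left\{ l : g = g_1 \cdots g_l,\ g_j \in \bigcup_i K_{\lbrace i \rbrace} \right\},
\]
provided $K_{\lbrace 0 \rbrace}, \dots, K_{\lbrace n-1 \rbrace}$ boundedly generate $G$. Thus the task reduces to producing a uniform (in $q$) upper bound on the generation length.

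Second, I would invoke Lemma \ref{warm-up - bound on N_0 - lemma}, which is the substantive ingredient: every $g \in \Unip_{n+1}(\mathbb{F}_q)$ can be written as a product of at most $6$ elements of $K_{\lbrace 0 \rbrace} \cup K_{\lbrace n-1 \rbrace}$. Since $K_{\lbrace 0 \rbrace} \cup K_{\lbrace n-1 \rbrace} \subseteq \bigcup_{i=0}^{n-1} K_{\lbrace i \rbrace}$, this gives a generation length of at most $6$ with respect to the full union of subgroups, and crucially the constant $6$ does not depend on $q$ (nor on $n$).

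Plugging this into the bound from Corollary \ref{bound on N_0 coro} yields $N_0 \leq 1 + 6 = 7$, independently of $q$, which is exactly the claim. There is no real obstacle here — the work was already done in Lemma \ref{warm-up - bound on N_0 - lemma} (the Gauss-elimination-style argument for $e_{1,n+1}(a)$), and the corollary is just the bookkeeping step that translates bounded generation of the symmetry group into a bound on the graph radius of the coset complex.
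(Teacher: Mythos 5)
Your proposal is correct and follows exactly the paper's own proof, which is the one-line combination of Lemma \ref{warm-up - bound on N_0 - lemma} (generation length at most $6$) with the general bound $N_0 \leq 1 + (\text{generation length})$ from Corollary \ref{bound on N_0 coro}. Your added bookkeeping — noting that $K_{\lbrace 0\rbrace}\cup K_{\lbrace n-1\rbrace}\subseteq\bigcup_i K_{\lbrace i\rbrace}$ and that $1+6=7$ — is accurate and spells out the intended argument.
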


\begin{proof}
Combine Lemma \ref{warm-up - bound on N_0 - lemma} and Corollary \ref{bound on N_0 coro}.
\end{proof}

After bounding $N_0$, we want to apply Theorem \ref{N_0 + N_1 bound thm} in order to bound $N_1$. Thus we need to show that the group $G= \Unip_{n+1} (\mathbb{F}_q)$ can be presented as
$$G = \langle \bigcup_{i=0}^{n-1} K_{\lbrace i \rbrace} \vert \bigcup_{i=0}^{n-1} R_{i} \rangle,$$
where $R_i$ are all the relations of $K_{\lbrace i \rbrace}$ and that the Dehn function for this presentation can be bounded independently of $q$.

We start by introducing a known set of relations for $G$. We recall that for a group $G$ and $g, h \in G$, the commutator $[g, h]$ is defined by $[g, h] = g^{-1} h^{-1} g h$.
Fix $n \geq 2$ and for $1 \leq i < j \leq n+1$, $a \in \mathbb{F}_q$, denote by $e_{i,j} (a)$ the elementary matrix defined above. The Steinberg relations of $G$ are the following (we leave it to the reader to verify that these relations holds):
\begin{enumerate}[label=(St {{\arabic*}})]
\item For every $1 \leq i < j \leq n+1$, and every $a_1, a_2 \in \mathbb{F}_q$,
$$e_{i,j} (a_1) e_{i,j} (a_2) = e_{i,j} (a_1 + a_2).$$
\item For every $1 \leq i_1 < j_1 \leq n+1, 1 \leq i_2 < j_2 \leq n+1$, and every $a_1, a_2 \in \mathbb{F}_q$,
$$[e_{i_1,j_1} (a_1),e_{i_2,j_2} (a_2)] =
\begin{cases}
1 & i_1 \neq j_2, i_2 \neq j_1 \\
e_{i_1, j_2} (a_1 a_2) & j_1 = i_2 \\
e_{i_2, j_1} (- a_1 a_2) & j_2 = i_1
\end{cases}.$$
\end{enumerate}

\begin{lemma}
\label{warm-up - Steinberg lemma}
Let $G =  \Unip_{n+1} (\mathbb{F}_q)$, $S_{el}$ be the set of all elementary matrices  and $R_{St}$ be the Steinberg relations. Then $\langle S_{el}  \vert R_{Steinberg} \rangle = G$ and the Dehn function for this presentation is bounded independently of $q$.
\end{lemma}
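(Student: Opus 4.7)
The lemma has two halves: that the Steinberg relations present $G = \Unip_{n+1}(\mathbb{F}_q)$, and that the resulting Dehn function is bounded by a function $f(m)$ independent of $q$. My plan is to fix a normal form for elements of $G$ as ordered products of elementary matrices, exhibit an explicit rewriting procedure that takes any word in $S_{el}$ to its normal form using only Steinberg relations, and then observe that the complexity of this procedure depends only on the pattern of indices appearing in the word, not on the coefficients.

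Fix the lexicographic order on pairs $(i,j)$ with $1 \le i < j \le n+1$, using the height $j-i$ as primary key. Declare a word $e_{i_1,j_1}(a_1) \cdots e_{i_k,j_k}(a_k)$ to be in \emph{normal form} if $(i_1,j_1) < \cdots < (i_k,j_k)$ and every $a_\ell$ is nonzero. The map sending a tuple $(a_{i,j})_{1 \le i<j \le n+1} \in \mathbb{F}_q^{\binom{n+1}{2}}$ to the product $\prod_{(i,j)} e_{i,j}(a_{i,j})$ taken in the fixed order is a bijection from $\mathbb{F}_q^{\binom{n+1}{2}}$ onto $G$: one verifies surjectivity inductively on the height by reading off matrix entries, and both sides have cardinality $q^{\binom{n+1}{2}}$. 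Hence each element of $G$ has a unique normal form.

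The rewriting procedure on a word $w$ of length $m$ proceeds in three stages: first, replace each formal inverse $e_{i,j}(a)^{-1}$ by $e_{i,j}(-a)$ (allowed since $e_{i,j}(a) e_{i,j}(-a) = e_{i,j}(0) = 1$ follows from (St 1)); next, iteratively pick an adjacent out-of-order pair and apply (St 2) to swap it, noting that when $j_1 = i_2$ or $j_2 = i_1$ the swap introduces a single new letter $e_{i_*,j_*}(\pm a_1 a_2)$ of strictly larger height; finally, use (St 1) to merge consecutive same-index letters and delete any $e_{i,j}(0)$ that appears. Because introduced letters have strictly greater height and heights are bounded by $n$, scheduling the swaps to fully sort height-$h$ letters before touching height-$(h+1)$ letters makes the process terminate after a bounded number of steps. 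Together with the uniqueness of the normal form, this shows every trivial word reduces to the empty word via $R_{St}$, giving $\langle S_{el} \mid R_{St} \rangle = G$.

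For the Dehn function bound, the key observation is that at every step the rewriting procedure's choice of which relation to apply and where depends only on the multiset of index pairs $(i_\ell, j_\ell)$ currently present in the word, not on the field coefficients. Tracking this, an induction on height shows that the total number of Steinberg relations used in reducing a word of length $m$ is bounded by $f(m)$ for some polynomial $f$ whose degree depends only on $n$, and in particular is independent of $q$; this yields $\Dehn(m) \le f(m)$. The main obstacle is bookkeeping the cascade of higher-height letters introduced during sorting: a naive bubble sort could in principle blow the word length up badly, and one must schedule the swaps so that the number of letters of each height ever simultaneously present stays polynomially bounded in $m$, which is exactly where the finiteness of the height range $\{1,\ldots,n\}$ is used.
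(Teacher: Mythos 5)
Your proposal is essentially the same as the paper's proof: both fix a normal form as an ordered product of elementary matrices, reduce an arbitrary word to this normal form by a bubble-sort procedure using (St~1) to merge same-index letters and (St~2) to commute or swap out-of-order pairs, and observe that the rewriting schedule depends only on the pattern of index pairs (not on the field coefficients), which is precisely why the resulting Dehn-function bound is independent of $q$. The only difference is cosmetic — you sort by height $j-i$ whereas the paper sorts in row-lexicographic order $(1,2),(1,3),\dots,(n,n+1)$ — and both you and the paper leave the detailed bookkeeping of the polynomial growth of the word length at a sketch level, merely asserting (correctly) that it is bounded by a polynomial depending on $n$ but not on $q$.
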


\begin{proof}
We observe that every $g \in G$ can be written as a product of the form
$$g =  e_{1,2} (a_{1,2}) e_{1,3} (a_{1,3}) ... e_{n, n+1} (a_{n,n+1}),$$
where $a_{i,j} \in \mathbb{F}_q$. Moreover, $g = e$ if and only if $a_{i,j} = 0$ for every $1 \leq i < j \leq n+1$. Thus, it is enough to prove that any word $w$ of length $m$ can be brought to this form using the Steinberg relations and that the number of relations applied is independent of $q$.

Let $w$ be a word of length $m$. We bring $w$ to the desired form using the Steinberg relations above to preform bubble sort. First, we can always write $w$ as $w = w_1 e_{1,2} (a_{1,2}^{(1)}) w_2 e_{1,2} (a_{1,2}^{(2)}) ... w_{k} e_{1,2} (a_{1,2}^{(k)}) w_{k+1}$ where $w_i$ do not contain any $e_{1,2}$'s element. We use the Steinberg relations (St 2) to permute the $e_{1,2}$ and relation (St 1) to merge them and bring $w$ to the form $e_{1,2} (a_{1,2}^{(1)} + ...+a_{1,2}^{(k)}) w'$ where $e_{1,2} (a_{1,2}) = e_{1,2} (a_{1,2}^{(1)} + ...+a_{1,2}^{(k)})$ and $w' = w_1' ... w_k ' w_{k+1}$. Note that the number of relations that are applied is $\leq 2m$ (at most $m$ ``bubble'' permutations and at most $m$ merges). Also note that while $w'$ may be longer than $w$ (since applying relation (St 2) can add a letter to the word) its length is at most $2m$ and it does not contain any elements of the form $e_{1,2}$. Thus, ordering $e_{1,2}$ ``cost'' us at most $2m$ relations and made the prefix at most $2$ times longer. Repeat the same algorithm on $w'$ to order $e_{1,3}$: since $w'$ is at length $\leq 2m$, this will ``cost'' at most $4m$ applications of the relations and make the word into $w'' = e_{1,2} (a_{1,2}) e_{1,3} (a_{1,3}) w''$, where the length of $w''$ is at most $4m$ and is contains no $e_{1,2}, e_{1,3}$'s. Repeating this ordering procedure for all the $e_{i,j}$'s we are able to bring $w$ to the desired form applying at most $2m + 4m + ... + 2^{{n+1} \choose 2}$ relations and this number does not depend on $m$. Thus if we denote $\Dehn_{St}$ to be the is the Dehn function with respect to the Steinberg relations (with the generating set of elementary matrices), we proved that $\Dehn_{St} (m) \leq 2m + 4m + ... + 2^{{n+1} \choose 2} m$ and this number does not depend on $q$. We note that the bound we gave on $\Dehn_{St}$ is far from tight, since we made no to effort to optimize this bound, but only show it is independent of $q$.
\end{proof}

Note that every $g \in \bigcup_i K_{\lbrace i \rbrace}$ can be written as a product of at most ${n \choose 2}$ and thus from the above Lemma we can conclude that in order to show that
$$G = \langle \bigcup_{i=0}^{n-1} K_{\lbrace i \rbrace} \vert \bigcup_{i=0}^{n-1} R_{i} \rangle$$
and that the Dehn function for this presentation can be bounded independently of $q$, it is sufficient to show that every Steinberg relation can be reduced to the trivial word using $\bigcup_i R_i$ and that the number of relations in $\bigcup_i R_i$ that are applied in such a reduction is bounded independently of $q$. Formally, when proving that
$$G = \langle \bigcup_{i=0}^{n-1} K_{\lbrace i \rbrace} \vert \bigcup_{i=0}^{n-1} R_{i} \rangle$$
we can not refer to elements of the form $e_{1,n+1} (a)$ since they do not appear in our set of generators. The way to overcome this problem is to formally define $e_{1,n+1} (a)$ as $e_{1,n+1} (a) = [e_{1,n} (1), e_{n,n+1} (a)]$. The following observation states that we do not have to consider all the Steinberg relations, since some already appear in $\bigcup_i R_i$:

\begin{observation}
\label{rel obsrv}
We note that a lot of Steinberg relations already appear in $\bigcup_i R_i$. Namely, if we denote $e_{1,n+1} (a) = [e_{1,n} (1), e_{n,n+1} (a)]$ the Steinberg relations that do \textbf{not} appear in $R_0 \cup R_{n-1}$ are:
\begin{enumerate}
\item For every $1 < j \leq n, 2 \leq i < n+1$, $i \neq j$ and every $a, b \in \mathbb{F}_q$
$$[e_{1,j} (a),e_{i,n+1} (b)] = 1.$$
\item For every $1 < j < n+1$ and every $a, b \in \mathbb{F}_q$,
$$[e_{1,j} (a),e_{j,n+1} (b)] = e_{1,n+1} (a b).$$
\item For every $1 \leq i < j  \leq n+1$ and every $a, b \in \mathbb{F}_q$,
$$[e_{i,j} (a),e_{1,n+1} (b)] = 1.$$
\item For every $a, b \in \mathbb{F}_q$,
$$e_{1,n+1} (a) e_{1,n+1} (b) = e_{1,n+1} (a + b).$$
\end{enumerate}
\end{observation}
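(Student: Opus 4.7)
My plan is to reduce the observation to two simple facts about which elementary matrices lie in $K_{\lbrace 0 \rbrace}$ and $K_{\lbrace n-1 \rbrace}$, and then to perform routine case analysis on the Steinberg relations.

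First I would prove the characterization: $e_{i,j}(a) \in K_{\lbrace 0 \rbrace}$ iff $i \geq 2$, and $e_{i,j}(a) \in K_{\lbrace n-1 \rbrace}$ iff $j \leq n$. The ``only if'' direction follows because a product of the generators of $K_{\lbrace 0 \rbrace} = \langle e_{j,j+1}(a) : 2 \leq j \leq n \rangle$ can only populate rows of index $\geq 2$, and dually for $K_{\lbrace n-1 \rbrace}$. The ``if'' direction follows by iterating the identity $e_{i,j}(a) = [e_{i, j-1}(1), e_{j-1, j}(a)]$ (which is itself a special case of Steinberg relation (St 2)), expressing any elementary matrix with $i \geq 2$ as a word in the consecutive-index generators of $K_{\lbrace 0 \rbrace}$. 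From this, a Steinberg relation involving $e_{i_1, j_1}$ and $e_{i_2, j_2}$ is among the multiplication-table relations of $K_{\lbrace 0 \rbrace}$ iff $i_1, i_2 \geq 2$, and of $K_{\lbrace n-1 \rbrace}$ iff $j_1, j_2 \leq n$.

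Now I would run through the two types of Steinberg relations. A relation of type (St 1) on $e_{i,j}(\cdot)$ involves only the single pair $(i,j)$, so it lies in $R_0 \cup R_{n-1}$ unless $i = 1$ and $j = n+1$; this excluded case is exactly item (4). For (St 2), the exclusion criterion becomes ``(some $i_k = 1$) and (some $j_k = n+1$)''. I would split this into two sub-cases: (a) one of the slots is exactly $(1, n+1)$ — then the commutator is between the formal symbol $e_{1, n+1}$ and an arbitrary elementary matrix; since neither $1$ nor $n+1$ can serve as the shared ``middle'' index in the Steinberg commutator rule, this commutator is $1$, producing item (3) (both orderings are absorbed by $[x,y]=[y,x]^{-1}$). (b) Neither slot equals $(1, n+1)$, but after possibly swapping the two arguments we may assume $i_1 = 1$ and $j_2 = n+1$, which forces $j_1 \leq n$ and $i_2 \geq 2$; then the Steinberg rule gives $e_{1, n+1}(a_1 a_2)$ if $j_1 = i_2$ (item (2)) and $1$ if $j_1 \neq i_2$ (item (1)).

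Since the analysis is pure combinatorial bookkeeping of indices, I do not anticipate any real obstacle; the content of the observation is simply that $e_{1, n+1}$ is the unique elementary matrix lying outside both $K_{\lbrace 0 \rbrace}$ and $K_{\lbrace n-1 \rbrace}$, and items (1)--(4) enumerate precisely the Steinberg relations that mention it on at least one side.
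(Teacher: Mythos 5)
The paper presents this as a bare observation with no accompanying proof, so there is no ``paper's proof'' to compare against; your task was really to supply the missing reasoning, and you have done so correctly. Your two foundational facts—$e_{i,j}(a) \in K_{\lbrace 0 \rbrace}$ iff $i \geq 2$, and $e_{i,j}(a) \in K_{\lbrace n-1 \rbrace}$ iff $j \leq n$—are accurate, and combined with the check that the right-hand side of a commutator relation $[e_{i_1,j_1},e_{i_2,j_2}]$ (when it equals $e_{i_1,j_2}$ or $e_{i_2,j_1}$) inherits the same membership, they correctly reduce ``not a consequence of $R_0 \cup R_{n-1}$'' to ``$(i_1=1 \text{ or } i_2=1)$ and $(j_1=n+1 \text{ or } j_2=n+1)$.'' Your case split—(St 1) giving item (4) exactly when $(i,j)=(1,n+1)$; (St 2) with one slot equal to $(1,n+1)$ giving item (3); otherwise after swapping $i_1=1$, $j_1\leq n$, $i_2\geq 2$, $j_2=n+1$ and the test $j_1 = i_2$ separating item (2) from item (1)—is exactly the enumeration the observation asserts, and your remark that $e_{1,n+1}$ is the unique elementary matrix outside both $K_{\lbrace 0 \rbrace}$ and $K_{\lbrace n-1 \rbrace}$ captures the underlying reason cleanly.

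Two small points worth being explicit about, though they do not affect correctness. First, the paper's $R_i$ consists only of length-three words in $K_{\lbrace i \rbrace}$, so a Steinberg commutator relation does not literally lie in $R_0 \cup R_{n-1}$; what ``appears in'' means here, and what your argument shows, is that the relation is a relation entirely within $K_{\lbrace 0 \rbrace}$ or $K_{\lbrace n-1 \rbrace}$ and is therefore a consequence of the full multiplication table $R_0$ or $R_{n-1}$. Second, you implicitly use $[y,x]=[x,y]^{-1}$ to suppress the mirror-image orderings; since that identity holds already in the free group and costs no relations, this is harmless, but it is the reason the paper can list item (1) with the asymmetric typing $[e_{1,j},e_{i,n+1}]$ rather than both orders. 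Your treatment of item (3), including the degenerate instance $[e_{1,n+1}(a),e_{1,n+1}(b)]=1$ (which falls under (St 2) with $i_1\neq j_2$ and $i_2\neq j_1$), is also consistent with the paper's statement.
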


In \cite{BissD}, Biss and Dasgupta proved the following:
\begin{theorem}\cite[Theorem 1]{BissD}
\label{BissD thm 1}
Let $n \geq 3$ and $G = \Unip_{n+1} (\mathbb{F}_q)$. For $1 \leq i \leq n$, let $s_1,...,s_n$ be the generators of the abstract group $G_0$ and define the following relations:
\begin{enumerate}[label=(B-D {{\arabic*}})]
\item For every $1 \leq i \leq n$, $s_i^q = 1$.
\item For every $1 \leq i \leq n-2$ and every $j > i-1$, $[s_i,s_j] =1$.
\item For every $1 \leq i \leq n-1$,
$$[s_i, [s_i,s_{i+1}]]= [s_{i+1}, [s_i,s_{i+1}]] =1.$$
\item For every $1 \leq i \leq n-2$,
$$[[s_{i}, s_{i+1}], [s_{i+1}, s_{i+2}]] =1.$$
\end{enumerate}
Then $G_0$ which is $\lbrace s_i^{\pm} : 1 \leq i \leq n \rbrace$ with the relations $(1)-(4)$ is isomorphic to $G$. Moreover, if $q$ is odd, the relations of the form $(4)$ are not needed.

Explicitly, the proof of \cite[Theorem 1]{BissD} shows the following: Define in $G_0$ (by abuse of notation) $e_{i,j} (1)$ inductively as $e_{i,j} (1) = s_i$ if $j =i+1$ and $e_{i,j} (1) = [e_{i,j-1} (1), s_{j-1}]$. Then every relation of the form
\begin{enumerate}
\item For every $1 \leq i_1 < j_1 \leq n+1, 1 \leq i_2 < j_2 \leq n+1$,
$$[e_{i_1,j_1} (1),e_{i_2,j_2} (1)] =
\begin{cases}
1 & i_1 \neq j_2, i_2 \neq j_1 \\
e_{i_1, j_2} (1) & j_1 = i_2 \\
e_{i_2, j_1} (- 1) & j_2 = i_1
\end{cases}.$$
\item For every $1 \leq i_1 < i_2 < i_3 < i_4$,
$$[e_{i_1,i_2} (1),[e_{i_2,i_3} (1), e_{i_3,i_4} (1)]] = [[e_{i_1,i_2} (1),e_{i_2,i_3} (1)], e_{i_3,i_4} (1)].$$
\item For every $1 \leq l<i <j \leq n+1$,
$$[e_{l,i} (1), e_{i,j} (1)] = e_{l,j} (1).$$
\end{enumerate}
can be deduced from $(1)-(4)$ (or $(1)-(3)$ if $q$ is odd) in a finite number of steps.
\end{theorem}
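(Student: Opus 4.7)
The plan is to set up a surjective homomorphism $\phi : G_0 \to G = \Unip_{n+1}(\mathbb{F}_q)$ by sending $s_i \mapsto e_{i,i+1}(1)$, then prove that $\phi$ is injective via a counting argument based on a normal form in $G_0$. The homomorphism is well-defined once we verify that relations (B-D 1)--(B-D 4) all hold in $G$: these are routine computations with elementary matrices (relation (B-D 4) in fact holds trivially in $G$ because both sides equal the identity). Surjectivity is immediate since $e_{i,i+1}(1)$ generate $G$.

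The main task is to prove that every element of $G_0$ can be written in the canonical form
\[
\prod_{1 \leq i < j \leq n+1} e_{i,j}(1)^{a_{i,j}}, \qquad 0 \leq a_{i,j} < q,
\]
where the product is taken in some fixed order (say lexicographic on $(i,j)$) and $e_{i,j}(1)$ is defined inductively by the iterated commutator $e_{i,j}(1) = [e_{i,j-1}(1), s_{j-1}]$ (with $e_{i,i+1}(1) = s_i$). The counting $|G_0| \leq q^{\binom{n+1}{2}} = |G|$ will then force injectivity of $\phi$. To achieve the normal form I would prove, by double induction on $j-i$ and then on $j_2 - i_2$, the three families of identities listed after the theorem: the commutator formula, the associative/Jacobi-type identity on $[e_{i_1,i_2}, [e_{i_2,i_3}, e_{i_3,i_4}]]$, and the inductive formula $[e_{l,i}(1), e_{i,j}(1)] = e_{l,j}(1)$. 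Once these identities are available, a bubble-sort argument (analogous to the one carried out in Lemma \ref{warm-up - Steinberg lemma}) moves every letter into its prescribed position, yielding the normal form.

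The hard part is the inductive step for these commutator identities when the indices overlap in the ``middle'' --- this is where the distinction between odd $q$ and general $q$ enters. For the base cases $j-i = 1$ and $2$, relations (B-D 2) and (B-D 3) cover everything directly. The essential trick for the general case is a Hall--Witt-type expansion: writing $[e_{i_1,j_1}(1), e_{i_2,j_2}(1)]$ as a nested commutator involving an $s_k$, pulling out an $[s_k, \cdot]$ by the induction hypothesis, and rearranging using the centrality (in the appropriate sense) guaranteed by (B-D 3). When the elements $[s_i, s_{i+1}]$ commute with $[s_{i+1}, s_{i+2}]$ --- which is relation (B-D 4) --- these rearrangements go through for arbitrary $q$. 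When $q$ is odd, one shows (B-D 4) is actually a consequence of (B-D 1)--(B-D 3): expanding $[[s_i, s_{i+1}], [s_{i+1}, s_{i+2}]]$ twice yields the relation $x^2 = 1$ for some element $x$ that can simultaneously be shown (via the Jacobi-like identity applied in two different orders) to satisfy $x^q = 1$; since $\gcd(2, q) = 1$, $x = 1$, giving (B-D 4) for free.

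Finally, after the canonical form is established I would verify the three explicit relations $(1),(2),(3)$ stated at the end of the theorem by direct computation in the canonical form: $(3)$ is the definition, and $(1),(2)$ follow from the commutator identity together with the bubble-sort reordering. The main obstacle I anticipate is bookkeeping the Hall--Witt-style expansions cleanly enough to avoid a combinatorial explosion of cases for the commutator identity; following \cite{BissD}, the right organizing principle is to induct on $\max(j_1,j_2) - \min(i_1,i_2)$, which bounds the ``span'' of the commutator and lets each inductive step reduce to a strictly smaller span.
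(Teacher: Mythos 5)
This theorem is imported verbatim from Biss and Dasgupta, cited as \cite[Theorem 1]{BissD}; the paper at hand does not prove it but uses it as a black box in Corollary \ref{BissD coro 1} and Theorem \ref{warm-up bound on N_1 thm}. There is therefore no in-paper proof to compare your attempt against.

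That said, your outline does reconstruct the strategy that Biss and Dasgupta actually use: build the surjection $\phi : G_0 \to G$ by $s_i \mapsto e_{i,i+1}(1)$, establish a canonical form $\prod e_{i,j}(1)^{a_{i,j}}$ in $G_0$ by a commutator calculus induction, and conclude $|G_0| \leq q^{\binom{n+1}{2}} = |G|$ so $\phi$ is injective. Your description of the odd-$q$ reduction (that (B-D\,4) follows from (B-D\,1)--(B-D\,3) because the element in question simultaneously has order dividing $2$ and dividing $q$) is also the correct mechanism. One small point worth flagging: as printed, relation (B-D\,2) reads ``for every $j > i-1$,'' which would force $[s_i, s_{i+1}] = 1$; this is a transcription error for $j > i+1$ (nonadjacent elementary matrices commute), and you implicitly use the corrected version. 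Your acknowledged soft spot --- controlling the Hall--Witt expansions so the induction on the span $\max(j_1,j_2) - \min(i_1,i_2)$ closes --- is indeed where the genuine technical work lies in \cite{BissD}; a full proof would have to carry that out case by case, but the skeleton you describe is sound and matches the source.
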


\begin{corollary}
\label{BissD coro 1}
Let $n \geq 3$ and $G = \Unip_{n+1} (\mathbb{F}_q)$. Fix $a_1,...,a_{n} \in \mathbb{F}_q \setminus \lbrace 0 \rbrace$. For $1 \leq i \leq n$, denote $s_i (a_i) = e_{i,i+1} (a_i)$ and consider the following relations in $G$:
\begin{enumerate}
\item For every $1 \leq i \leq n$, $s_i (a_i)^q = 1$.
\item For every $1 \leq i \leq n-2$ and every $j > i-1$, $[s_i (a_i),s_j (a_j) ] =1$.
\item For every $1 \leq i \leq n-1$,
$$[s_i (a_i), [s_i (a_i),s_{i+1} (a_{i+1})]]= [s_{i+1} (a_{i+1}), [s_i (a_{i}),s_{i+1} (a_{i+1})]] =1.$$
\item For every $1 \leq i \leq n-2$,
$$[[s_{i} (a_{i}), s_{i+1} (a_{i+1})], [s_{i+1} (a_{i+1}), s_{i+2} (a_{i+2})]] =1.$$
\end{enumerate}
Denote $a_i^j = \prod_{l=i}^{j -1} a_l$ and define $e_{i,j} (a_i^j)$ inductively as $e_{i,i+1} (a_i^{i+1}) = s_i (a_i)$ if and $e_{i,j} (a_i^j) = [e_{i,j-1} (a_i^{j-1}), s_{j-1} (a_{j-1})]$. Then every relation of the form
\begin{enumerate}
\item For every $1 \leq i_1 < j_1 \leq n+1, 1 \leq i_2 < j_2 \leq n+1$,
$$[e_{i_1,j_1} (a_{i_1}^{j_1}),e_{i_2,j_2} (a_{i_2}^{j_2})] =
\begin{cases}
1 & i_1 \neq j_2, i_2 \neq j_1 \\
e_{i_1, j_2} (a_{i_1}^{j_1} a_{i_2}^{j_2}) & j_1 = i_2 \\
e_{i_2, j_1} (- a_{i_1}^{j_1} a_{i_2}^{j_2}) & j_2 = i_1
\end{cases}.$$
\item For every $1 \leq i_1 < i_2 < i_3 < i_4$,
$$[e_{i_1,i_2} (a_{i_1}^{i_2}),[e_{i_2,i_3} (a_{i_2}^{i_3}), e_{i_3,i_4} (a_{i_3}^{i_4})]] = [[e_{i_1,i_2} (a_{i_1}^{i_2}),e_{i_2,i_3} (a_{i_2}^{i_3})], e_{i_3,i_4} (a_{i_3}^{i_4})].$$
\item For every $1 \leq l<i <j \leq n+1$,
$$[e_{l,i} (a_l^{i}), e_{i,j} (a_i^{j})] = e_{l,j} (a_l^{j}).$$
\end{enumerate}
can be deduced from $(1)-(4)$ (or $(1)-(3)$ if $q$ is odd) in a finite number of steps (that is independent of $q$).
\end{corollary}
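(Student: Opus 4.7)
The plan is to reduce Corollary~\ref{BissD coro 1} to Theorem~\ref{BissD thm 1} by an essentially formal change of generators. The relations $(1)$--$(4)$ of the corollary are obtained from the Biss--Dasgupta relations $(\text{B-D }1)$--$(\text{B-D }4)$ by the literal substitution $s_i \mapsto s_i(a_i)$, and the inductive definition of $e_{i,j}(a_i^j)$ has the same word-structure in the generators $s_i(a_i)$ as the Biss--Dasgupta $e_{i,j}(1)$ has in the generators $s_i$. Since the derivations in \cite{BissD} are purely formal word manipulations that do not use any arithmetic property of the scalar~$1$, they apply verbatim after this substitution, with the same bound on the number of steps.

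I would execute this in three steps. First, I would verify that $(1)$--$(4)$ hold in $G=\Unip_{n+1}(\mathbb{F}_q)$; each case reduces to a direct computation with the Steinberg relations $(\text{St }1)$--$(\text{St }2)$, e.g.\ $s_i(a_i)^q = e_{i,i+1}(qa_i)=1$, and $[s_i(a_i),s_{i+1}(a_{i+1})] = e_{i,i+2}(a_i a_{i+1})$, which commutes with both $s_i(a_i)$ and $s_{i+1}(a_{i+1})$. Second, I would consider the abstract group $F=\langle t_1,\dots,t_n \mid (1),(2),(3),(4)\rangle$ with $t_i$ replacing $s_i(a_i)$, and define $E_{i,j}\in F$ by $E_{i,i+1}=t_i$ and $E_{i,j}=[E_{i,j-1},t_{j-1}]$. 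Rerunning the Biss--Dasgupta derivation inside $F$ (it is written out in \cite{BissD} as an explicit sequence of word moves) yields, in a bounded number of steps depending only on $n$, the analogs of the three relations in Theorem~\ref{BissD thm 1} stated for the $E_{i,j}$: the commutator relations $[E_{i_1,j_1},E_{i_2,j_2}]\in\{1,E_{i_1,j_2},E_{i_2,j_1}^{-1}\}$ (according to the case), the Jacobi-like identity $[E_{i_1,i_2},[E_{i_2,i_3},E_{i_3,i_4}]] = [[E_{i_1,i_2},E_{i_2,i_3}],E_{i_3,i_4}]$, and the relation $[E_{l,i},E_{i,j}] = E_{l,j}$. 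Finally, mapping $t_i\mapsto s_i(a_i)\in G$ sends $E_{i,j}$ to $e_{i,j}(a_i^j)$ (by induction using $(\text{St }2)$), and the relations just obtained translate into the three relations asserted by the corollary.

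The only subtle point is the bookkeeping in this last translation: one must check that the scalars appearing in the corollary's output match the scalars produced by the translation. Concretely, when $j_1=i_2$ in relation $(1)$ the corollary demands $e_{i_1,j_2}(a_{i_1}^{j_1}\cdot a_{i_2}^{j_2})$, whereas the translation gives $e_{i_1,j_2}(a_{i_1}^{j_2})$; these agree by the telescoping identity $a_{i_1}^{j_2}=a_{i_1}^{j_1}\cdot a_{j_1}^{j_2}$, using $j_1=i_2$. The case $j_2=i_1$ is handled identically, with an extra sign coming from $E_{i_2,j_1}^{-1}$. Beyond this routine telescoping I do not foresee any genuine obstacle, since the Biss--Dasgupta derivations are structural and insensitive to the choice of nonzero scalars $a_i$; the bound on the number of word moves therefore transfers unchanged and, in particular, is independent of $q$.
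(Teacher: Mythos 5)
Your proof is correct and captures the same underlying idea as the paper's, which disposes of the corollary in a single sentence by invoking the isomorphism $\Psi : G \to G$ with $\Psi(s_i(a_i)) = s_i(1)$ (this $\Psi$ is conjugation by a suitable diagonal matrix) and pulling back the conclusion of Theorem~\ref{BissD thm 1} along $\Psi$. Your formulation is more explicit: rather than appealing to an automorphism of $G$, you observe that the relations $(1)$--$(4)$ and the target relations are, as formal words in the generators, identical to those of Theorem~\ref{BissD thm 1}, so the Biss--Dasgupta derivation in the free group carries over verbatim under the substitution $s_i \mapsto s_i(a_i)$, and you then verify that the inductively defined $E_{i,j}$ maps to $e_{i,j}(a_i^j)$ (including the telescoping of scalars $a_{i_1}^{j_2} = a_{i_1}^{j_1} a_{i_2}^{j_2}$ when $j_1 = i_2$). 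This is the correct justification for why the paper's $\Psi$-argument works at the level of derivations and not merely at the level of group elements; the two approaches are essentially equivalent, and neither gives anything the other does not.
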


\begin{proof}
Consider the isomorphism $\Psi : G \rightarrow G$ defined by $\Psi (s_{i} (a_i)) = s_{i} (1)$ and apply Theorem \ref{BissD thm 1} on $\Psi (G)$.
\end{proof}

After this Corollary, we are ready to prove the following Theorem:
\begin{theorem}
\label{warm-up bound on N_1 thm}
Let $G =  \Unip_{n+1} (\mathbb{F}_q)$, with $K_{\lbrace 0 \rbrace},..., K_{\lbrace n-1 \rbrace}, R_0,...,R_{n-1}$ as above. If $n \geq 4$ or $q$ is odd, then
$$G = \langle \bigcup_i K_{\lbrace i \rbrace} \vert \bigcup_i R_i \rangle$$
and the Dehn function of this presentation is bounded independently of $q$.
\end{theorem}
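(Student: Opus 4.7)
The plan is to reduce the theorem, via Corollary \ref{BissD coro 1} (Biss--Dasgupta), to a combinatorial check showing that each Biss--Dasgupta relation lies inside a single $K_{\lbrace l \rbrace}$, and then to bound the Dehn function by combining Lemma \ref{warm-up - Steinberg lemma} with the fact that every $g \in K_{\lbrace l \rbrace}$ admits a canonical decomposition into elementary matrices derivable from $R_l$ using $O(n^2)$ length-$3$ relations.

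I would first check, for each Biss--Dasgupta relation in turn, whether it is a trivial word living inside some single $K_{\lbrace l \rbrace}$, in which case it is automatically derivable from $R_l$ in a number of steps linear in its length. Relation (B-D 1), $s_i^q = 1$, uses only $s_i = e_{i,i+1}(1)$, which lies in every $K_{\lbrace l \rbrace}$ with $l \neq i-1$. Relations (B-D 2) and (B-D 3) use at most two super-diagonal generators, which jointly lie in some $K_{\lbrace l \rbrace}$ whenever one can choose $l \in \lbrace 0, \ldots, n-1 \rbrace$ avoiding the two forbidden values; such an $l$ exists whenever $n \geq 3$. Relation (B-D 4) uses three consecutive super-diagonal generators $s_i, s_{i+1}, s_{i+2}$ and fits in some $K_{\lbrace l \rbrace}$ exactly when $l \notin \lbrace i-1, i, i+1 \rbrace$ is available, i.e., when $n \geq 4$. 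Since Corollary \ref{BissD coro 1} permits omitting (B-D 4) when $q$ is odd, the hypothesis ``$n \geq 4$ or $q$ odd'' precisely ensures that every Biss--Dasgupta relation belongs to some $R_l$.

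With this covering in hand, the passage to our presentation and the Dehn function bound proceed in three stages: \emph{(a)} each element $g \in K_{\lbrace l \rbrace}$ admits a unique ordered product decomposition as at most $\binom{n+1}{2}$ elementary matrices in $K_{\lbrace l \rbrace}$, and rewriting $g$ into this form is a trivial-word manipulation within $K_{\lbrace l \rbrace}$ implementable by $O(n^2)$ length-$3$ relations from $R_l$; \emph{(b)} for a trivial word $w$ of length $m$, applying \emph{(a)} letter-by-letter at cost $O(n^2 m)$ yields a trivial word $w'$ of length $O(n^2 m)$ in elementary matrices (with $e_{1,n+1}(a)$ formally expressed as $[e_{1,n}(1), e_{n,n+1}(a)]$), which by Lemma \ref{warm-up - Steinberg lemma} reduces to the empty word via a number of Steinberg relation applications polynomial in $m$ and independent of $q$; \emph{(c)} each Steinberg relation used in \emph{(b)} must be translated into a derivation from $\bigcup_i R_i$---those not involving $e_{1,n+1}$ belong directly to $R_0 \cup R_{n-1}$ by Observation \ref{rel obsrv} (or to a middle $R_l$), while the four $e_{1,n+1}$-involving families from Observation \ref{rel obsrv} are derived at bounded cost via the $q$-independent algebraic manipulations of Corollary \ref{BissD coro 1}, since each such manipulation is a Biss--Dasgupta relation application and, by the first part of the plan, each such relation lies in some $R_l$.

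The main obstacle will be stage \emph{(c)}, specifically the explicit derivation of each of the four $e_{1,n+1}$-involving Steinberg relations (families (i)--(iv) of Observation \ref{rel obsrv}) from $\bigcup_i R_i$: each must be decomposed into a chain of length-$3$ multiplication-table relations, each drawn from a specific $R_l$, following the algebraic scheme of Corollary \ref{BissD coro 1}. The subtle case $n = 3, q$ even fails exactly because (B-D 4) cannot then fit into any single $K_{\lbrace l \rbrace}$, matching the theorem's hypothesis ``$n \geq 4$ or $q$ odd''.
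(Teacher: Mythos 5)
Your proposal follows essentially the same route as the paper: reduce via the Biss--Dasgupta presentation, note that each element of a $K_{\{l\}}$ expands into $O(n^2)$ elementary matrices (at $O(n^2)$ cost in $R_l$), invoke the $q$-independent Dehn bound for the Steinberg presentation, and then translate Steinberg relations into $\bigcup_i R_i$, with the four $e_{1,n+1}$-involving families of Observation~\ref{rel obsrv} as the genuine work (which the paper carries out explicitly using Corollary~\ref{BissD coro 1}). One modest improvement in your write-up is that you spell out the index count showing exactly which $K_{\{l\}}$ absorbs each Biss--Dasgupta relation and why (B-D~4) forces $n\ge 4$ when $q$ is even---a point the paper asserts without verification.
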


\begin{proof}
First, we note that all the relations $(B-D 1)-(B-D 4)$ of Theorem \ref{BissD thm 1} appear in $\bigcup_i R_i$ and therefore $G = \langle \bigcup_i K_{\lbrace i \rbrace} \vert \bigcup_i R_i \rangle$. Thus we already know all the Steinberg relations holds in $\langle \bigcup_i K_{\lbrace i \rbrace} \vert \bigcup_i R_i \rangle$ and we just need to verify that every Steinberg relation can be deduced from a finite number of relations in $\bigcup_i R_i$ that is independent of $q$. As stated in Observation \ref{rel obsrv}, we do not have to check all the Steinberg relations (since some appear in  $\bigcup_i R_i$) and it is enough to check relations $(1)-(4)$ that were stated in Observation \ref{rel obsrv}.

\textbf{Observation \ref{rel obsrv}, relations of type (1):} We need to show that $\bigcup_i R_i$ imply that for every $1 < j \leq n, 2 \leq i < n+1$, $i \neq j$ and every $a, b \in \mathbb{F}_q$
$$[e_{1,j} (a),e_{i,n+1} (b)] = 1.$$
Apply Corollary \ref{BissD coro 1} with $a_1 = a$, $a_2 = ... = a_{n-1} =1$, $a_n =b$. Then for every $j \leq n$, $a_1^j =a$ and every $2 \leq i$, $a_i^{n+1} = b$ and from Corollary \ref{BissD coro 1}, $\bigcup_i R_i$ imply that
$$[e_{1,j} (a),e_{i,n+1} (b)] = 1,$$
and the number of relations for deducing this relation is independent of $q$.

\textbf{Observation \ref{rel obsrv}, relations of type (2):} We need to show that $\bigcup_i R_i$ imply that for every $1 < j < n+1$ and every $a, b \in \mathbb{F}_q$,
$$[e_{1,j} (a),e_{j,n+1} (b)] = e_{1,n+1} (a b),$$
where $e_{1,n+1} (a b)$ is formally defined as
$$e_{1,n+1} (a b) = [e_{1,n} (1), e_{n,n+1} (ab)].$$
We will first show this with $b=1$. Let $a_1 =...=a_{j-2} =1, a_{j-1} =a$ and $a_j=...=a_n =1$. Then $a_1^j = a, a_j^{n+1} =1, a_1^{n+1} =a$ and by Corollary \ref{BissD coro 1}, $\bigcup_i R_i$ imply that
$$[e_{1,j} (a), e_{j,n+1} (1)] = [e_{1,j} (a_1^{j}), e_{j,n+1} (a_j^{n+1})] = e_{l,n+1} (a_1^{n+1})= e_{1,n+1} (a)$$
and the number of relations needed for deducing this relation is independent of $q$.

Next, we will treat the case where $a,b$ are general and $j <n$. We first note that
$$[e_{j,j+1} (b), e_{j+1,n+1} (1)] = e_{j,n+1} (b),$$
is in $R_0$ and thus (after applying one relation)
$$[e_{1,j} (a),e_{j,n+1} (b)] = [e_{1,j} (a),[e_{j,j+1} (b), e_{j+1,n+1} (1)]].$$

Last, in the case where $j =n$, we use the relation $e_{1,n} (a) = [e_{1,2} (a), e_{2,n} (1)]$ and using similar arguments as above, we show that
$$[e_{1,n} (a),e_{n,n+1} (b)] = [e_{1,2} (a), e_{2,n+1} (b)],$$
and thus we can use the previous case.

\textbf{Observation \ref{rel obsrv}, relations of type (3):} We need to show that $\bigcup_i R_i$ imply that for every $1 \leq i < j  \leq n+1$ and every $a, b \in \mathbb{F}_q$,
$$[e_{i,j} (a),e_{1,n+1} (b)] = 1,$$
and that the number of relations used is independent of $q$.
We will first prove the case where $\lbrace i,j \rbrace \neq \lbrace 1,n+1 \rbrace$. Let $a, b$ as above. If $a=0$, then $e_{i,j} (a)$ is the identity and there is nothing to prove. Assume that $a \neq 0$ and  define $a_1 =a, a_2 =...=a_{n-1} =1, a_n = \frac{b}{a}$. Then $a_{i,j} =a$, $a_{1,n+1} =b$ and the relation follows from Corollary \ref{BissD coro 1}. We are left to prove that
$$[e_{1,n+1} (a),e_{1,n+1} (b)] = 1,$$
but this follows from the previous case combined with the fact that by definition  $e_{1,n+1} (a) = [e_{1,n} (1), e_{n,n+1} (a)]$.

\textbf{Observation \ref{rel obsrv}, relations of type (4):} We need to show that $\bigcup_i R_i$ imply that for every $1 \leq i < j  \leq n+1$ and every $a, b \in \mathbb{F}_q$,
$$e_{1,n+1} (a) e_{1,n+1} (b) = e_{1,n+1} (a+b),$$
and that the number of relations used is independent of $q$. By definition, this is equivalent to showing that
$$[e_{1,n} (1),e_{n,n+1} (a)][e_{1,n} (1),e_{n,n+1} (b)] = [e_{1,n} (1),e_{n,n+1} (a+b)].$$
Indeed,
\begin{dmath*}
[e_{1,n} (1),e_{n,n+1} (a+b)] =
e_{1,n} (-1) e_{n,n+1} (- a-b) e_{1,n} (1) e_{n,n+1} (a+b) =^{\text{Using the relations in } R_{0}}
e_{1,n} (-1) e_{n,n+1} (- a) e_{n,n+1} (-b) e_{1,n} (1) e_{n,n+1} (b) e_{n,n+1} (a) =
e_{1,n} (-1) e_{n,n+1} (- a) e_{1,n} (1) \left( e_{1,n} (-1) e_{n,n+1} (-b) e_{1,n} (1) e_{n,n+1} (b) \right) e_{n,n+1} (a) =
e_{1,n} (-1) e_{n,n+1} (- a) e_{1,n} (1) e_{1,n+1} (b) e_{n,n+1} (a) =^{\text{Using relations of type (3) proven above}}
e_{1,n} (-1) e_{n,n+1} (- a) e_{1,n} (1)  e_{n,n+1} (a) e_{1,n+1} (b) = e_{1,n+1} (a) e_{1,n+1} (b),
\end{dmath*}
as needed.
\end{proof}

\begin{corollary}
Let $G = \Unip_{n+1} (\mathbb{F}_q)$ with subgroups $K_{\lbrace 0 \rbrace},...,K_{\lbrace n-1 \rbrace}$ as above and let $X = X(G, (K_{\lbrace i \rbrace})_{i \in \lbrace 0,...,n-1 \rbrace})$ be the coset complex. If $n \geq 4$ or $q$ is odd then the constants $N_0$ and $N_1$ of $X$ are bounded independently of $q$ and thus $\Exp_b^0 (X), \Exp_b^1 (X)$ are bounded from below independently of $q$.
\end{corollary}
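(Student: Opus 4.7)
The plan is to read this corollary as a direct synthesis of the three ingredients already established in the section: the bound on $N_0$ (Corollary \ref{warm-up - bound on N_0 coro}), the bounded Dehn presentation (Theorem \ref{warm-up bound on N_1 thm}), and the general machine relating $N_0, N_1$ and Dehn functions to coboundary expansion (Theorem \ref{N_0 + N_1 bound thm}, which in turn rests on Theorem \ref{filling constants criterion for coboundary exp thm2}). No new computation is really needed; I just need to verify that the hypotheses line up and, in particular, that $X$ is strongly symmetric so that the conclusions of the last Theorem apply.

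First I would invoke Corollary \ref{warm-up - bound on N_0 coro} to conclude $N_0 \leq 7$ with a bound independent of $q$. Since the constant
$$N_0' := 1 + \max_{g \in G}\min\left\lbrace l : g = g_1 \cdots g_l,\ g_i \in \bigcup_i K_{\lbrace i\rbrace}\right\rbrace$$
appearing in Theorem \ref{N_0 + N_1 bound thm} is exactly the quantity bounded in Lemma \ref{warm-up - bound on N_0 - lemma}, we get $N_0' \leq 7$ uniformly in $q$.

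Next I would apply Theorem \ref{warm-up bound on N_1 thm} (which uses the hypothesis $n\geq 4$ or $q$ odd in order to avoid relations (B-D 4) of Theorem \ref{BissD thm 1}) to obtain the presentation $G = \langle \bigcup_i K_{\lbrace i\rbrace}\mid \bigcup_i R_i\rangle$ with Dehn function $\Dehn$ bounded independently of $q$. Plugging this into Theorem \ref{N_0 + N_1 bound thm} yields
$$N_1 \leq p\bigl(2N_0' + 1,\ \Dehn(2N_0'+1)\bigr),$$
and since both $N_0'$ and $\Dehn(2N_0'+1)$ are bounded independently of $q$, so is $N_1$. The main obstacle was really already surmounted in Theorem \ref{warm-up bound on N_1 thm}: the delicate point is that the Biss--Dasgupta presentation allows one to simulate the ``invisible'' Steinberg generator $e_{1,n+1}(a) = [e_{1,n}(1), e_{n,n+1}(a)]$ and all relations involving it using only a $q$-independent number of relations drawn from the multiplication tables of the $K_{\lbrace i\rbrace}$.

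Finally, to pass from bounds on $N_0, N_1$ to the lower bounds on $\Exp_b^0(X)$ and $\Exp_b^1(X)$, Theorem \ref{N_0 + N_1 bound thm} (through Theorem \ref{filling constants criterion for coboundary exp thm2}) requires that $G$ act transitively on $X(n-1)$, i.e.\ that $X$ be strongly symmetric. I would verify this by applying the criterion of Theorem \ref{transitive action thm}: one checks directly that for each $\tau \subsetneq \lbrace 0,\ldots,n-1\rbrace$ and each $i\notin \tau$ the equality $K_\tau K_{\lbrace i\rbrace} = \bigcap_{j\in\tau} K_{\lbrace j\rbrace} K_{\lbrace i\rbrace}$ holds; this is a routine matrix-entry calculation using the explicit upper-triangular description of the $K_{\lbrace i\rbrace}$ (alternatively, strong symmetry of this specific complex is recorded in \cite{KOCosetGeom}, as cited in the introduction). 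Once strong symmetry is in hand, Theorem \ref{N_0 + N_1 bound thm} immediately gives
$$\Exp_b^0(X) \geq \frac{1}{n\cdot N_0'}, \qquad \Exp_b^1(X) \geq \frac{1}{\binom{n}{2} p(2N_0'+1, \Dehn(2N_0'+1))},$$
and the right-hand sides are independent of $q$, completing the proof.
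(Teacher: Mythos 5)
Your proposal is correct and follows exactly the paper's own proof: the paper also cites \cite[Theorem 3.5]{KOCosetGeom} together with Theorem \ref{transitive action thm} to establish strong symmetry, and then combines Corollary \ref{warm-up - bound on N_0 coro}, Theorem \ref{warm-up bound on N_1 thm}, and Theorem \ref{N_0 + N_1 bound thm} to conclude. Your unwinding of the numerical bounds (noting that the ``$n$'' in Theorem \ref{N_0 + N_1 bound thm} refers to the dimension of $X$, here $n-1$, giving denominators $n\cdot N_0'$ and $\binom{n}{2}p(\cdot,\cdot)$) is also consistent.
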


\begin{proof}
By \cite[Theorem 3.5]{KOCosetGeom} and Theorem \ref{transitive action thm} stated above, $X = X(G, (K_{\lbrace i \rbrace})_{i \in \lbrace 0,...,n-1 \rbrace})$ is strongly symmetric. Thus, combining Corollary \ref{warm-up - bound on N_0 coro}, Theorem \ref{warm-up bound on N_1 thm} and Theorem \ref{N_0 + N_1 bound thm} yields the desired result.
\end{proof}


\subsection{New coboundary expanders arising from unipotent group with polynomial entries}

Below, we will show that we can generalize the example given above and get new examples of coboundary expander from unipotent groups with polynomial entries.

Define the group $G$ to be a subgroup of $(n+1) \times (n+1)$ invertible matrices with entries in $\mathbb{F}_q [t]$ in generated by the set
$\lbrace e_{i,i+1} (a +bt) : a,b \in \mathbb{F}_q, 1 \leq i \leq n \rbrace$. More explicitly, an $(n+1) \times (n+1)$ matrix $A$ is in $G$ if and only if
$$A (i,j) = \begin{cases}
1 & i=j \\
0 & i>j \\
a_0 + a_1 t + ... + a_{j-i} t^{j-i} & i<j, a_0,...,a_{j-i} \in \mathbb{F}_q
\end{cases},$$
(observe that all the matrices in $G$ are upper triangular).

For $0 \leq i \leq n-1$, define a subgroup $K_{\lbrace i \rbrace} <G$ as
$$K_{\lbrace i \rbrace} = \langle e_{j, j+1} (a+bt) : j \in \lbrace 1,...,n \rbrace \setminus \lbrace i+1 \rbrace, a,b \in \mathbb{F}_q \rangle.$$

Define $X$ to be the coset complex $X=X(G, (K_{\lbrace i \rbrace})_{i \in \lbrace 1,...,n-1 \rbrace})$. By applying Theorem \ref{N_0 + N_1 bound thm}, we will prove that for any odd $q$, the $0$-dimensional and the $1$-dimensional Cheeger constants of $X$ can be bounded away from $0$ and this bound is independent of $q$.

The bound on $N_0$ of $X$ follows from the following Lemma:

\begin{lemma}
\label{bound on N_0 - lemma}
Let $q$ be a prime power, $n \geq 2$ and $G, K_{\lbrace 0 \rbrace},...,K_{\lbrace n-1 \rbrace}$ be the groups defined above. Then the subgroups $K_{\lbrace 0 \rbrace},...,K_{\lbrace n-1 \rbrace}$ boundedly generate $G$ and
$$\max_{g \in G} \min \left\lbrace l : g = g_1...g_l \text{ and } g_1,...,g_l \in \bigcup_i K_{\lbrace i \rbrace} \right\rbrace \leq 2 + 4 (n+1).$$
\end{lemma}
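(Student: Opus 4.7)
The plan is to mimic the Gauss elimination strategy used in Lemma \ref{warm-up - bound on N_0 - lemma}, while being careful about polynomial degrees. I first reduce $g$ to a single ``hard'' factor $e_{1,n+1}(p)$, and then decompose $p$ into monomials so that the commutator trick can be applied to each monomial inside the allowed degree bounds.

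For the reduction, note that $K_{\lbrace 0\rbrace}$ is exactly the subgroup of matrices in $G$ with first row $(1,0,\ldots,0)$, and $K_{\lbrace n-1\rbrace}$ is exactly the subgroup with last column $(0,\ldots,0,1)^T$. Given $g\in G$, set $g_1:=I+\sum_{i=2}^{n}g(i,n+1)E_{i,n+1}$; the degree constraints defining $G$ are respected since $\deg g(i,n+1)\le n+1-i$, and $g_1\in K_{\lbrace 0\rbrace}$ because its first row is trivial. A direct computation using $E_{i,n+1}E_{j,n+1}=0$ shows that $g_1^{-1}g$ agrees with $g$ on columns $1$ through $n$, is zero in positions $(i,n+1)$ for $2\le i\le n$, and equals $p:=g(1,n+1)$ at position $(1,n+1)$. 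Letting $g_2$ be the matrix that agrees with $g$ on columns $1$ through $n$ and is the identity on column $n+1$, one gets $g_2\in K_{\lbrace n-1\rbrace}$ and $g_1^{-1}g=g_2\cdot e_{1,n+1}(p)$, i.e.\ $g=g_1\,g_2\,e_{1,n+1}(p)$.

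It remains to write $e_{1,n+1}(p)$ as a short product in $K_{\lbrace 0\rbrace}\cup K_{\lbrace n-1\rbrace}$. Write $p=\sum_{k=0}^{n}\alpha_k t^k$; because $E_{1,n+1}^{2}=0$, the factors $e_{1,n+1}(\alpha_k t^k)$ commute, so $e_{1,n+1}(p)=\prod_{k=0}^{n}e_{1,n+1}(\alpha_k t^k)$. Each monomial is handled by the Steinberg-type identity $[e_{1,n}(f),e_{n,n+1}(h)]=e_{1,n+1}(fh)$, expanded as $e_{1,n}(-f)\,e_{n,n+1}(-h)\,e_{1,n}(f)\,e_{n,n+1}(h)$. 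Take $(f_0,h_0)=(\alpha_0,1)$ and, for $1\le k\le n$, $(f_k,h_k)=(\alpha_k t^{k-1},t)$. In every case $f_k h_k=\alpha_k t^k$, $\deg f_k\le n-1$ and $\deg h_k\le 1$, so $e_{1,n}(\pm f_k)\in K_{\lbrace n-1\rbrace}$ and $e_{n,n+1}(\pm h_k)\in K_{\lbrace 0\rbrace}$. Each monomial thus contributes $4$ factors, for a grand total of $1+1+4(n+1)=2+4(n+1)$.

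The only real obstacle, and the single point where the polynomial case is more delicate than the finite-field warm-up, is the degree book-keeping in the commutator step: $p$ may have degree $n$, but the position $(1,n)$ in $G$ only admits polynomials of degree $\le n-1$. A naive single application of the commutator identity with $f=p$ would force an $e_{1,n}$-factor of degree $n$, which is not in $G$, let alone $K_{\lbrace n-1\rbrace}$. Splitting $p$ into monomials and absorbing one factor of $t$ into $h_k$ for each $k\ge 1$ is what keeps both factors within the admissible degree bounds.
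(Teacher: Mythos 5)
Your proof is correct and follows essentially the same route as the paper: reduce $g$ to $g_1\,g_2\,e_{1,n+1}(p)$ by Gauss elimination with $g_1\in K_{\lbrace 0\rbrace}$, $g_2\in K_{\lbrace n-1\rbrace}$, then split $p$ into monomials and express each $e_{1,n+1}(\alpha_k t^k)$ as a commutator of four factors with the degree of the $e_{1,n}$-entry kept at most $n-1$. The only (harmless) differences are that you spell out the Gauss-elimination matrices explicitly and choose $(f_k,h_k)=(\alpha_k t^{k-1},t)$ uniformly for $k\ge 1$, whereas the paper uses $h=1$ for $k<n$ and $h=t$ only for $k=n$; you also correctly place $e_{1,n}(\cdot)$ in $K_{\lbrace n-1\rbrace}$ and $e_{n,n+1}(\cdot)$ in $K_{\lbrace 0\rbrace}$, where the paper's text has the labels transposed.
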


The proof of this Lemma is very similar to the proof of Lemma \ref{warm-up - bound on N_0 - lemma}:
\begin{proof}
Let $g \in G$ be some group element. By Gauss elimination, there are $g_1 \in K_{\lbrace 0 \rbrace}, g_2 \in K_{\lbrace n-1 \rbrace}$ such that
$$g_2^{-1} g_1^{-1} g = e_{1, n+1} (a_0 + a_1 + ... + a_n t^n) = e_{1, n+1} (a_0) e_{1, n+1} (a_1 t)... e_{1, n+1} (a_n t^n).$$
Thus, it is enough to show that every element of the form $e_{1, n+1} (a_j t^j)$ can be written as a product of $4$ elements in $K_{\lbrace 0 \rbrace} \cup K_{\lbrace n \rbrace}$.

For every $0 \leq j < n$ and every $a_j \in \mathbb{F}_q$, we have that $e_{1, n} (\pm a_j t^{j}) \in K_{\lbrace 0 \rbrace},  e_{n, n+1} ( \pm 1) \in K_{\lbrace n \rbrace}$ and that
$$e_{1, n+1} (a_j t^j) = e_{1, n} (-a_j t^{j})  e_{n, n+1} (-1) e_{1, n} (a_j t^{j})  e_{n, n+1} (1),$$
as needed. Also, for every $a_n \in \mathbb{F}_q$, we have that $e_{1, n+1} (\pm a_n t^{n-1}) \in K_{\lbrace 0 \rbrace}, e_{n, n+1} ( \pm t) \in K_{\lbrace n \rbrace}$ and that
$$e_{1, n+1} (a_n t^n) = e_{1, n} (-a_n t^{n-1})  e_{n, n+1} (-t) e_{1, n} (a_n t^{n-1})  e_{n, n+1} (t).$$
\end{proof}

\begin{corollary}
\label{bound on N_0 coro}
For every prime power $q$, $ n\geq 2$, let $G, K_{\lbrace 0 \rbrace},...,K_{\lbrace n \rbrace}$ as above and $X = X(G, (K_{\lbrace i \rbrace})_{i \in \lbrace 0,...,n-1 \rbrace})$. The filling constant $N_0$ of $X$ satisfies $N_0 \leq 3 + 4 (n+1)$ and in particular is bounded independently of $q$.
\end{corollary}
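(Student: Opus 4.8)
The plan is to observe that this corollary is an immediate consequence of Lemma~\ref{bound on N_0 - lemma} together with the general upper bound on $N_0$ for coset complexes whose vertex subgroups boundedly generate the ambient group (Theorem~\ref{N_0 + N_1 bound thm}(1), equivalently the corollary established in Section~\ref{Bounding the first two filling constants for coset complexes sec}), exactly as in the finite-field warm-up case (Corollary~\ref{warm-up - bound on N_0 coro}). In other words, once Lemma~\ref{bound on N_0 - lemma} is in hand, the corollary is a one-line citation, so the substance of the argument lies entirely in that lemma.

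Concretely, I would first invoke Lemma~\ref{bound on N_0 - lemma}: the subgroups $K_{\{0\}},\dots,K_{\{n-1\}}$ boundedly generate $G$ and
$$\max_{g \in G}\,\min\left\{\, l : g = g_1\cdots g_l,\ g_1,\dots,g_l \in \bigcup_i K_{\{i\}} \,\right\} \le 2 + 4(n+1).$$
(Recall that this is proved by Gauss elimination: left-multiplying an arbitrary $g\in G$ by one element of $K_{\{0\}}$ and one of $K_{\{n-1\}}$ reduces it to $e_{1,n+1}(a_0 + a_1 t + \dots + a_n t^n)$, which splits as a product of the monomial elementary matrices $e_{1,n+1}(a_j t^j)$, and each such factor is written as a product of four elements of $K_{\{0\}}\cup K_{\{n-1\}}$ via the commutator identity $e_{1,n+1}(a_j t^j) = e_{1,n}(-a_j t^j)\,e_{n,n+1}(-1)\,e_{1,n}(a_j t^j)\,e_{n,n+1}(1)$ for $j<n$ and the analogous identity with $e_{n,n+1}(\pm t)$ in place of $e_{n,n+1}(\pm 1)$ for $j=n$.) Since $|I| = n \ge 2$, the bounded generation lets me apply the general bound, giving
$$N_0 \le 1 + \max_{g \in G}\,\min\left\{\, l : g = g_1\cdots g_l,\ g_1,\dots,g_l \in \bigcup_i K_{\{i\}} \,\right\} \le 1 + \bigl(2 + 4(n+1)\bigr) = 3 + 4(n+1).$$

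Finally I would note that the bound $3 + 4(n+1)$ depends only on $n$ and not on the field size $q$, which yields the claimed uniformity in $q$. I do not expect any genuine obstacle at this level: all of the work has been pushed into Lemma~\ref{bound on N_0 - lemma}, and the only mildly delicate point there is that the top-degree monomial $t^n$ cannot be handled by the same factorization as the lower-degree ones (since $e_{1,n}(t^n)\notin K_{\{0\}}$), which is why one instead factors through $e_{1,n}(\pm a_n t^{n-1})\in K_{\{0\}}$ and $e_{n,n+1}(\pm t)\in K_{\{n-1\}}$.
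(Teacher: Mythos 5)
Your proposal is correct and matches the paper's argument exactly: combine Lemma~\ref{bound on N_0 - lemma} (bounded generation with $L\le 2+4(n+1)$) with the general coset-complex bound $N_0\le 1+L$ from Section~\ref{Bounding the first two filling constants for coset complexes sec}, giving $N_0\le 3+4(n+1)$. Your recap of the Gauss-elimination argument in the lemma, including the special treatment of the top-degree monomial $t^n$ via $e_{1,n}(\pm a_n t^{n-1})$ and $e_{n,n+1}(\pm t)$, is also accurate.
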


\begin{proof}
Combine Lemma \ref{bound on N_0 - lemma} and Corollary \ref{bound on N_0 coro}.
\end{proof}

After bounding $N_0$, we want to apply Theorem \ref{N_0 + N_1 bound thm} in order to bound $N_1$. Thus we need to show that the group $G$ can be presented as
$$G = \langle \bigcup_{i=0}^{n-1} K_{\lbrace i \rbrace} \vert \bigcup_{i=0}^{n-1} R_{i} \rangle,$$
where $R_i$ are all the relations of $K_{\lbrace i \rbrace}$ and that the Dehn function for this presentation can be bounded independently of $q$.

As in the case where $\Unip (\mathbb{F}_q)$ discussed above, the group $G$ can be presented using Steinberg relations. Fix $n \geq 2$ and for $1 \leq i < j \leq n+1$, $r \in \mathbb{F}_q [t]$ of degree $\leq j-i$, denote by $e_{i,j} (r)$ the elementary matrix defined above. The Steinberg relations of $G$ are the following (we leave it to the reader to verify that these relations holds):
\begin{enumerate}[label=(S{{\arabic*}})]
\item For every $1 \leq i < j \leq n+1$, and every $r_1, r_2 \in \mathbb{F}_q [t]$ of degree $\leq j-i$,
$$e_{i,j} (r_1) e_{i,j} (r_2) = e_{i,j} (r_1 + r_2).$$
\item For every $1 \leq i_1 < j_1 \leq n+1, 1 \leq i_2 < j_2 \leq n+1$, and every $r_1, r_2 \in \mathbb{F}_q [t]$ of degree $\leq j_1-i_1, j_2-i_2$,
$$[e_{i_1,j_1} (r_1),e_{i_2,j_2} (r_2)] =
\begin{cases}
1 & i_1 \neq j_2, i_2 \neq j_1 \\
e_{i_1, j_2} (r_1 r_2) & j_1 = i_2 \\
e_{i_2, j_1} (- r_1 r_2) & j_2 = i_1
\end{cases}.$$
\end{enumerate}

\begin{lemma}
Let $G$ be as above, $S < G$ be the set $S = \lbrace e_{i,j} (r) : 1 \leq i < j \leq n+1, r \in \mathbb{F}_q [t], \deg (r) \leq j-i \rbrace$ and $R_{Steinberg}$ be the Steinberg relations as above. Then $\langle S \vert R_{Steinberg} \rangle = G$ and the Dehn function for this presentation is bounded independently of $q$.
\end{lemma}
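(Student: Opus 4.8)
The plan is to run the argument of Lemma~\ref{warm-up - Steinberg lemma} essentially unchanged, replacing each scalar elementary matrix $e_{i,j}(a)$, $a\in\mathbb{F}_q$, by the polynomial one $e_{i,j}(r)$, $r\in\mathbb{F}_q[t]$, $\deg r\le j-i$. The one point that must be verified for this substitution to be legitimate is that $S$ is closed under the products on the right-hand sides of (S1)--(S2): relation (S1) produces $e_{i,j}(r_1+r_2)$ with $\deg(r_1+r_2)\le\max\{\deg r_1,\deg r_2\}\le j-i$, and in the non-trivial cases of (S2) (say $j_1=i_2$) one gets $e_{i_1,j_2}(r_1 r_2)$ with
$$\deg(r_1 r_2)\le(j_1-i_1)+(j_2-i_2)=j_2-i_1,$$
so again an element of $S$; also $e_{i,j}(r)^{-1}=e_{i,j}(-r)\in S$ by (S1). (This is, of course, the same computation that shows $G$ is closed under matrix multiplication.) Thus the Steinberg relations are bona fide words over $S\cup S^{-1}$.

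First I would record the normal form. Order the pairs $(i,j)$, $1\le i<j\le n+1$, by increasing diagonal $d=j-i$, ties broken arbitrarily, and write $\prec$ for this order; it has the feature that a non-trivial commutator of a diagonal-$d_1$ letter with a diagonal-$d_2$ letter lands on diagonal $d_1+d_2>\max\{d_1,d_2\}$. By Gaussian elimination (reading coefficients off column by column) every $A\in G$ equals the $\prec$-ordered product $\prod_{(i,j)}e_{i,j}(A(i,j))$, and $A=e$ if and only if all $A(i,j)=0$; this uses that elementary matrices on a fixed diagonal pairwise commute. Hence $\langle S\mid R_{Steinberg}\rangle=G$ will follow once we show that every word $w$ over $S$ can be brought to normal form using only (S1)--(S2); this is the bubble sort from the proof of Lemma~\ref{warm-up - Steinberg lemma}, carried out diagonal by diagonal: collect all diagonal-$1$ letters at the front (each commutation past a letter it does not commute with is one application of (S1)--(S2), lengthens the word by at most one letter, and the new letter has diagonal $\ge 2$), merge them by (S1), then repeat for diagonals $2,\dots,n$ on the remaining suffix. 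At every stage the current word length and the number of relations applied are bounded by functions of $|w|$ and $n$ alone; in particular — and this is the whole content of the statement — they do not depend on $q$, because although $|S|$ grows with $q$ we never enumerate generators, only the finitely many relation schemes (S1)--(S2) indexed by the pairs $(i,j)$. Therefore the Dehn function of this presentation is bounded above by a $q$-independent function of $m$.

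The only genuinely new ingredient, compared with literally copying Lemma~\ref{warm-up - Steinberg lemma}, is the degree bookkeeping: one needs both that $S$ is closed under the relations (via $\deg(r_1 r_2)\le(j_1-i_1)+(j_2-i_2)$) and that ordering pairs by diagonal makes the bubble sort well-founded, since a commutation can only create letters of strictly larger diagonal. Neither is an obstacle, and with them in hand the proof of Lemma~\ref{warm-up - Steinberg lemma} transfers with no essential change.
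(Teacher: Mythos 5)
Your argument follows the same route as the paper's own proof: exhibit the ordered normal form via Gaussian elimination, then show any word can be bubble-sorted into that form with a number of Steinberg-relation applications that depends on $|w|$ and $n$ alone, hence is independent of $q$. The only substantive difference is your choice to order by diagonal rather than lexicographically; both orderings make the sort terminate (under the paper's lex order, new letters produced by a commutation always lie strictly later in lex order among the unprocessed generators once the earlier ones have been cleared), so this is a cosmetic variant.

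One side remark in your proof is incorrect and should be fixed, though it does not affect the main argument. You write that the normal form of $A$ is $\prod_{(i,j)} e_{i,j}(A(i,j))$ and that this ``uses that elementary matrices on a fixed diagonal pairwise commute.'' Neither is right: same-diagonal elementary matrices do \emph{not} commute in general, e.g.\ $[e_{1,2}(a), e_{2,3}(b)] = e_{1,3}(ab)$; and already for $3\times 3$ matrices $e_{1,2}(a)\,e_{2,3}(b)\,e_{1,3}(c)$ has $(1,3)$-entry $ab+c$, not $c$. The correct statement — and all that is needed — is that every $A \in G$ admits a unique $\prec$-ordered decomposition $A = \prod_{(i,j)} e_{i,j}(a_{i,j})$ with some coefficients $a_{i,j}$ (determined recursively diagonal by diagonal), so that $A = e$ iff all $a_{i,j} = 0$. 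With that correction your proof stands.
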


The proof is repeating the proof of Lemma \ref{warm-up - Steinberg lemma} almost verbatim and we give it below for completeness.
\begin{proof}
We observe that every $g \in G$ can be written as a product of the form
$$g =  e_{1,2} (r_{1,2}) e_{1,3} (r_{1,3}) ... e_{n, n+1} (r_{n,n+1}),$$
where $r_{i,j} \in \mathbb{F}_q [t]$ of degree $\leq j-i$. Moreover, $g = e$ if and only if $r_{i,j} = 0$ for every $1 \leq i < j \leq n+1$. Thus, it is enough to prove that any word $w$ of length $m$ can be brought to this form using the Steinberg relations and that the number of relations applied is independent of $q$.

Let $w$ be a word of length $m$. We bring $w$ to the desired form using the Steinberg relations above to preform bubble sort. First, we can always write $w$ as $w = w_1 e_{1,2} (r_{1,2}^{(1)}) w_2 e_{1,2} (r_{1,2}^{(2)}) ... w_{k} e_{1,2} (r_{1,2}^{(k)}) w_{k+1}$ where $w_i$ do not contain any $e_{1,2}$'s element. We use the Steinberg relations (S2) to permute the $e_{1,2}$ and relation (S1) to merge them and bring $w$ to the form $e_{1,2} (r_{1,2}^{(1)} + ...+r_{1,2}^{(k)}) w'$ where $e_{1,2} (r_{1,2}) = e_{1,2} (r_{1,2}^{(1)} + ...+r_{1,2}^{(k)})$ and $w' = w_1' ... w_k ' w_{k+1}$. Note that the number of relations that are applied is $\leq 2m$ (at most $m$ ``bubble'' permutations and at most $m$ merges). Also note that while $w'$ may be longer than $w$ (since applying relation (3) adds a letter to the word) its length is at most $2m$ and it does not contain any elements of the form $e_{1,2}$. Thus, ordering $e_{1,2}$ ``cost'' us at most $2m$ relations and made the prefix at most $2$ times longer. Repeat the same algorithm on $w'$ to order $e_{1,3}$: since $w'$ is at length $\leq 2m$, this will ``cost'' at most $4m$ applications of the relations and make the word into $w'' = e_{1,2} (r_{1,2}) e_{1,3} (r_{1,3}) w''$, where the length of $w''$ is at most $4m$ and is contains no $e_{1,2}, e_{1,3}$'s. Repeating this ordering procedure for all the $e_{i,j}$'s we are able to bring $w$ to the desired form applying at most $2m + 4m + ... + 2^{{n+1} \choose 2}$ relations and this number does not depend on $m$. Thus is $\Dehn_{Steinberg}$ is the Dehn function with respect to the Steinberg relations, we proved that $\Dehn_{Steinberg} (m) \leq 2m + 4m + ... + 2^{{n+1} \choose 2} m$ and this number does not depend on $q$. We note that the bound we gave on $\Dehn_{Steinberg}$ is far from tight, since we made no to effort to optimize this bound, but only show it is independent of $q$.
\end{proof}

Next, we define what we call \textit{pure degree} Steinberg relations that are the following subset of the Steinberg relations mentioned above:
\begin{enumerate}[label=(pdS{{\arabic*}})]
\item For every $1 \leq i < j \leq n+1$, every $0 \leq k \leq j-i$ and every $a, b\in \mathbb{F}_q$
$$e_{i,j} (a t^k) e_{i,j} (b t^k) = e_{i,j} ((a+b)t^k),$$
\item For every $1 \leq i_1 < j_1 \leq n+1, 1 \leq i_2 < j_2 \leq n+1$, every $0 \leq k_1 \leq j_1 - i_1, 0 \leq k_2 \leq j_2 - i_2$ and every $a, b\in \mathbb{F}_q$,
$$[e_{i_1,j_1} (a t^{k_1}),e_{i_2,j_2} (b t^{k_2})] =
\begin{cases}
1 & i_1 \neq j_2, i_2 \neq j_1 \\
e_{i_1, j_2} (ab t^{k_1 + k_2}) & j_1 = i_2 \\
e_{i_2, j_1} (- ab t^{k_1 + k_2}) & j_2 = i_1
\end{cases}.$$
\end{enumerate}

We observe that every Steinberg relation can be written as a finite number of conjugates of pure degree relations and thus if we consider the presentation of $G$ with respect to pure dimension Steinberg relations and the generating set $\lbrace e_{i,j} (a t^k) : a \in \mathbb{F}_q, 1 \leq i < j \leq n+1, 0 \leq k \leq j-i \rbrace$, we will get that up to a multiplicative constant that is independent of $q$ the Dehn function with respect to this presentation bounds the function $\Dehn_{Steinberg}$ discussed above. The upshot of this discussion is that in order to show that the group $G$ above can be presented as
$$G = \langle \bigcup_{i=0}^{n-1} K_{\lbrace i \rbrace} \vert \bigcup_{i=0}^{n-1} R_{i} \rangle$$
and that the Dehn function for this presentation can be bounded independently of $q$, it is sufficient to show that every pure degree Steinberg relation can be reduced to the trivial word using $\bigcup_i R_i$ and that the number of relations in $\bigcup_i R_i$ that are applied in such a reduction is bounded independently of $q$. Similar to Observation \ref{rel obsrv}, we observe that we do not have to prove all the pure degree Steinberg relations, since some already appear in $\bigcup_i R_i$:

\begin{observation}
\label{rel obsrv 2}
We note that a lot of Steinberg relations already appear in $\bigcup_i R_i$. Namely, if we denote
$$e_{1,n+1} (at^k) = \begin{cases}
[e_{1,n} (t^k), e_{n,n+1} (a)] & k < n \\
[e_{1,n} (t^{k-1}), e_{n,n+1} (at)] & k = n \\
\end{cases}$$
the pure degree Steinberg relations that do \textbf{not} appear in $R_0 \cup R_{n-1}$ are:
\begin{enumerate}
\item For every $1 < j \leq n, 2 \leq i < n+1$, $i \neq j$, every $0 \leq k_1 \leq j-1, 0 \leq k_2 \leq n+1-i$ and every $a, b \in \mathbb{F}_q$
$$[e_{1,j} (a t^{k_1}),e_{i,n+1} (b t^{k_2})] = 1.$$
\item For every $1 < j < n+1$, every $0 \leq k_1 \leq j-1, 0 \leq k_2 \leq n+1-j$ and every $a, b \in \mathbb{F}_q$,
$$[e_{1,j} (at^{k_1}),e_{j,n+1} (b t^{k_2})] = e_{1,n+1} (a b t^{k_1 + k_2}).$$
\item For every $1 \leq i < j  \leq n+1$, every $0 \leq k_1 \leq j-i, 0 \leq k_2 \leq n$ and every $a, b \in \mathbb{F}_q$,
$$[e_{i,j} (a t^{k_1}),e_{1,n+1} (b t^{k_2})] = 1.$$
\item For every $a, b \in \mathbb{F}_q$ and every $0 \leq k \leq n$,
$$e_{1,n+1} (a t^k) e_{1,n+1} (b t^k) = e_{1,n+1} ((a + b)t^k).$$
\end{enumerate}
\end{observation}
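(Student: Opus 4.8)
The plan is to prove the claimed equivalence by a direct case analysis over the two families of pure degree Steinberg relations, organized around an explicit description of which elementary matrices lie in $K_{\{0\}}$ and in $K_{\{n-1\}}$. First I would establish two structural facts. Since $K_{\{0\}}$ is generated by $\{e_{j,j+1}(a+bt): j\in\{2,\dots,n\}\}$, every matrix of $K_{\{0\}}$ has first row $(1,0,\dots,0)$; conversely, Gauss elimination with these generators (equivalently, the Steinberg relations among them) shows that every upper triangular polynomial matrix with that first row lies in $K_{\{0\}}$. Hence for $a\neq 0$ one has $e_{i,j}(at^k)\in K_{\{0\}}$ if and only if $i\geq 2$, in which case the degree bound $k\leq j-i$ is automatic. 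Symmetrically, $e_{i,j}(at^k)\in K_{\{n-1\}}$ if and only if $j\leq n$; in particular the formal elements $e_{1,n+1}(at^k)$ fixed in the statement lie in neither subgroup, regardless of which convention ($k<n$ or $k=n$) is used. Since $R_0$ (resp.\ $R_{n-1}$) is the full multiplication table of $K_{\{0\}}$ (resp.\ $K_{\{n-1\}}$) recorded as three-term relations, a relation ``appears in'' $R_0$ (resp.\ $R_{n-1}$) exactly when every elementary matrix occurring in it --- the two generators together with the right-hand side --- has first index $\geq 2$ (resp.\ second index $\leq n$). This reduces the whole observation to tracking the index patterns $\{1,\bullet\}$ and $\{\bullet,n+1\}$ through the two types of Steinberg relations.

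With this reduction in hand the verification is bookkeeping. A relation of type (pdS1), $e_{i,j}(at^k)e_{i,j}(bt^k)=e_{i,j}((a+b)t^k)$, involves only the pair $(i,j)$, so it fails to come from $R_0$ exactly when $i=1$ and fails to come from $R_{n-1}$ exactly when $j=n+1$; the ones escaping both are precisely those with $(i,j)=(1,n+1)$, which is family (4). For a relation of type (pdS2), $[e_{i_1,j_1}(at^{k_1}),e_{i_2,j_2}(bt^{k_2})]=\cdots$, escaping $R_0$ forces $1\in\{i_1,i_2\}$ and escaping $R_{n-1}$ forces $n+1\in\{j_1,j_2\}$; using $[x,y]=[y,x]^{-1}$ and $e_{a,b}(r)^{-1}=e_{a,b}(-r)$ to swap the two factors if needed, I may assume $i_1=1$ and then split on whether $j_1=n+1$. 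If $j_1=n+1$ the first factor is the formal element $e_{1,n+1}$, and since $i_1=1\neq j_2$ and $i_2\neq n+1=j_1$ (both forced by $1\leq i_2<j_2\leq n+1$) the Steinberg commutator formula gives the trivial commutator, i.e.\ family (3). If instead $j_2=n+1$ and $j_1\neq n+1$, the Steinberg formula yields $e_{1,n+1}(abt^{k_1+k_2})$ when $j_1=i_2$ --- family (2), with the degree bound $k_1+k_2\leq n$ automatic from $k_1\leq j_1-1$ and $k_2\leq n+1-j_1$ --- and the trivial commutator when $j_1\neq i_2$, which is family (1) if $i_2\geq 2$ and family (3) if $i_2=1$ (so that $e_{i_2,n+1}=e_{1,n+1}$). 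For the converse inclusion I would simply observe that each of families (1)--(4) contains an $e_{1,\bullet}$ or an $e_{1,n+1}$, hence does not come from $R_0$, and an $e_{\bullet,n+1}$ or an $e_{1,n+1}$, hence does not come from $R_{n-1}$.

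I do not expect any real obstacle: beyond the two structural facts about the subgroups, the argument is an exhaustive but short bookkeeping of the Steinberg commutator cases. The only points that need genuine care are the polynomial-degree constraints (checking, in each branch, that the degree appearing on the right-hand side is admissible for the corresponding elementary matrix) and the uniform treatment of the formal symbol $e_{1,n+1}(at^k)$ across the ranges $k<n$ and $k=n$ --- which is precisely why the statement fixes that convention up front.
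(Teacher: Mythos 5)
The paper states this as an unproved Observation, so there is no official proof to compare against; your proposal supplies the verification that the authors leave implicit. Your argument is correct, and the approach is the natural one. The key reduction is exactly right: $K_{\{0\}}$ is the subgroup of $G$ with trivial first row (generated by $e_{j,j+1}$ for $j\geq 2$), so an elementary matrix $e_{i,j}(at^k)$ lies in $K_{\{0\}}$ iff $i\geq 2$, and symmetrically $e_{i,j}(at^k)\in K_{\{n-1\}}$ iff $j\leq n$; since $R_0$, $R_{n-1}$ are full multiplication tables, a true identity among elements of a single $K_{\{i\}}$ is a consequence of $R_i$, so ``appears in $R_0\cup R_{n-1}$'' amounts to the index constraint you formulate. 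The subsequent case split on (pdS1) versus (pdS2), the normalization $i_1=1$ via $[x,y]=[y,x]^{-1}$, the branch on whether $n+1$ is $j_1$ or $j_2$, and the identification of the four residual families are all carried through cleanly, including the degree-bound check $k_1+k_2\leq n$ for family~(2) and the cases $i_2=1$ or $j_1=n+1$ landing in family~(3). The converse direction (each of families (1)--(4) involves an $e_{1,\bullet}$ and an $e_{\bullet,n+1}$, hence escapes both $R_0$ and $R_{n-1}$) is also correctly noted. One minor point of precision: the structural claim should be that every matrix \emph{of $G$} (i.e.\ respecting the degree bounds $\deg A(i,j)\leq j-i$) with first row $(1,0,\dots,0)$ lies in $K_{\{0\}}$; you implicitly work within $G$, so this does not affect the argument, but it is worth saying explicitly when invoking Gauss elimination.
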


Let $r_1,...,r_n \in \mathbb{F}_q [t]$ of degree $\leq 1$. We denote $\Unip_{n+1} (r_1,...,r_n)$ to be the subgroup of $G$ generated by $\lbrace e_{i,i+1} (r_i) : 1 \leq i \leq n \rbrace$.
\begin{lemma}
\label{rel in subgrp lemma}
Let $G, K_{\lbrace 0 \rbrace},...,K_{\lbrace n \rbrace}, R_0,...,R_{n-1}$ as above. Fix $r_1,...,r_n \in \mathbb{F}_q [t]$ of degree $\leq 1$ $\Unip_{n+1} (r_1,...,r_n)$ and denote $r_i^j = \prod_{l=i}^{j -1} r_l$. Then every Steinberg relation of of of the following forms can be deduced from $\bigcup_i R_i$ and the number of relations needed is independent of $q$:
\begin{enumerate}
\item For every $1 \leq i < j \leq n+1$, and every $a, a \in \mathbb{F}_q$,
$$e_{i,j} (a_1 r_{i}^j) e_{i,j} (a_2 r_{i}^j) = e_{i,j} ((a_1 + a_2) r_{i}^j).$$
\item For every $1 \leq i_1 < j_1 \leq n+1, 1 \leq i_2 < j_2 \leq n+1$, and every $a_1, a_2 \in \mathbb{F}_q$,
$$[e_{i_1,j_1} (a_1 r_{i_1}^{j_1}),e_{i_2,j_2} (a_2 r_{i_2}^{j_2})] =
\begin{cases}
1 & i_1 \neq j_2, i_2 \neq j_1 \\
e_{i_1, j_2} (a_1 a_2 r_{i_1}^{j_2}) & j_1 = i_2 \\
e_{i_2, j_1} (- a_1 a_2 r_{i_2}^{j_1}) & j_2 = i_1
\end{cases}.$$
\end{enumerate}
\end{lemma}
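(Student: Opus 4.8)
The plan is to bypass any new Biss--Dasgupta-style bookkeeping and instead transport the finite-field result of Theorem~\ref{warm-up bound on N_1 thm} through an isomorphism. The point is that, once all $r_i$ are nonzero, the subgroup $H:=\Unip_{n+1}(r_1,\dots,r_n)\le G$ is isomorphic to the honest unipotent group $\Unip_{n+1}(\mathbb{F}_q)$ \emph{in a way compatible with the subgroups $K_{\{l\}}$}, so the relations we must deduce are literally the $\Psi^{-1}$-images of the Steinberg relations (St~1), (St~2) in $\Unip_{n+1}(\mathbb{F}_q)$, whose areas Theorem~\ref{warm-up bound on N_1 thm} already controls uniformly in $q$.

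First I would dispose of the degenerate case: if some $r_i=0$ then $e_{i,i+1}(r_i)=e$, every product $r_k^j$ with $k\le i<j$ vanishes, and each relation in the statement either becomes trivially true or is an instance with strictly fewer generators, so by induction on the number of vanishing $r_i$ we may assume all $r_i\neq 0$. Since $\mathbb{F}_q[t]$ is an integral domain, this forces $r_i^j=\prod_{l=i}^{j-1}r_l\neq 0$ for all $1\le i<j\le n+1$. I would then define $\Psi\colon H\to\Unip_{n+1}(\mathbb{F}_q)$ on generators by $\Psi\bigl(e_{i,i+1}(a r_i)\bigr)=e_{i,i+1}(a)$ for $a\in\mathbb{F}_q$, $1\le i\le n$, and check that it is a well-defined group isomorphism carrying $e_{i,j}(a r_i^j)$ to $e_{i,j}(a)$ for all $i<j$ and all $a\in\mathbb{F}_q$. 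The cleanest way to see this is at the level of $\mathbb{F}_q$-Lie algebras: $\mathrm{Lie}(H)$ is spanned over $\mathbb{F}_q$ by $\{E_{i,j}r_i^j\}$ with bracket $[E_{i,j}r_i^j,E_{j,k}r_j^k]=E_{i,k}r_i^k$, and $E_{i,j}r_i^j\mapsto E_{i,j}$ is a Lie isomorphism onto $\mathfrak{n}_{n+1}(\mathbb{F}_q)$ precisely because each $r_i^j\neq0$. Moreover $\Psi$ is a \emph{bijection on generating sets} ($\{e_{i,i+1}(a r_i)\}\leftrightarrow\{e_{i,i+1}(a)\}$), and it carries $H\cap K_{\{l\}}$ onto the analogous subgroup $\widehat K_{\{l\}}:=\langle e_{j,j+1}(a):j\neq l+1\rangle\le\Unip_{n+1}(\mathbb{F}_q)$ (this uses the two diagonal blocks of $K_{\{l\}}$ obtained by deleting the generator $e_{l+1,l+2}$). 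Consequently every relation $g_1g_2g_3=e$ with $g_k\in\widehat K_{\{l\}}\setminus\{e\}$ pulls back under $\Psi^{-1}$ to a relation of the same shape with letters in $K_{\{l\}}\setminus\{e\}$, i.e.\ to a member of $R_l$.

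With this identification in hand the argument is immediate. A relation of type (1) or type (2) in Lemma~\ref{rel in subgrp lemma}, written in the commutator-expanded form dictated by the inductive definition of $e_{i,j}(\cdot)$, is exactly the $\Psi^{-1}$-image of a Steinberg relation (St~1), resp.\ (St~2), for $\Unip_{n+1}(\mathbb{F}_q)$ (the free scalars $a_1,a_2$ simply ride along, and $r_{i_1}^{j_1}r_{i_2}^{j_2}=r_{i_1}^{j_2}$ when $j_1=i_2$ makes the target coefficients match). Read in the generating set $\bigcup_l\widehat K_{\{l\}}$ these Steinberg relators are words of length bounded independently of $q$ --- for $j=i+1$ they are $3$-letter words, for $j>i+1$ iterated commutators of depth $\le n$ --- so by Theorem~\ref{warm-up bound on N_1 thm} (applicable since $q$ is odd or $n\ge4$) they have area bounded independently of $q$, i.e.\ each is a product of boundedly many conjugates of members of $\bigcup_l\widehat R_l$, where $\widehat R_l$ is the multiplication table of $\widehat K_{\{l\}}$. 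Applying the letter-map $\Psi^{-1}$ to such a product yields a product of the \emph{same} number of conjugates of members of $\bigcup_l R_l$ equal to the desired relation. This proves relations (1) and (2) with a number of relations independent of $q$.

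The step I would be most careful about is the compatibility of $\Psi$ with the $K_{\{l\}}$'s: one must verify that $H\cap K_{\{l\}}$ is precisely the $r$-rescaled copy of $\widehat K_{\{l\}}$ and that $\Psi$ is a genuine bijection on generators, so that the translation of derivations is length- and area-preserving rather than incurring a blow-up that depends on $q$ (this is the analogue of the one-line ``apply the automorphism $\Psi$'' in the proof of Corollary~\ref{BissD coro 1}, but here the map is only an isomorphism onto a subgroup, so the bookkeeping is slightly more than a triviality). Once that identification is established, all the real combinatorial-group-theory --- ultimately the Biss--Dasgupta presentation of Theorem~\ref{BissD thm 1} --- has already been done inside Theorem~\ref{warm-up bound on N_1 thm}, and no further dependence on $q$ enters.
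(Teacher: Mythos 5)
Your proposal is correct and takes exactly the route the paper takes: the paper's entire proof is the one sentence ``the subgroup of $G$ generated by $\{e_{i,i+1}(r_i) : 1\le i\le n\}$ is isomorphic to $\Unip_{n+1}(\mathbb{F}_q)$ and thus we can apply Theorem \ref{warm-up bound on N_1 thm},'' and you have simply unpacked what ``apply'' must mean for the transported area bound to be uniform in $q$. The additional checks you flag as necessary --- that $\Psi$ identifies the generating copies of the $K_{\{l\}}$'s inside $H$ with the $\widehat K_{\{l\}}$'s so that $\Psi^{-1}$ maps each relator from $\widehat R_l$ to a relator in $R_l$, and that all $r_i^j\neq 0$ once the degenerate case is disposed of --- are exactly the points the paper leaves implicit and are verified correctly in your write-up. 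One small remark: as literally stated the paper takes $\Unip_{n+1}(r_1,\dots,r_n)$ to be generated by the single elements $e_{i,i+1}(r_i)$, which for non-prime $q$ gives only a copy of $\Unip_{n+1}(\mathbb{F}_p)$; your formulation of $\Psi$ on $\{e_{i,i+1}(a r_i) : a\in\mathbb{F}_q\}$ is the correct reading of what is intended, so your version is in fact a bit more careful on this point than the source.
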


\begin{proof}
We note that the subgroup of $G$ generated by $\lbrace e_{i,i+1} (r_i) : 1 \leq i \leq n \rbrace$ is isomorphic to $\Unip_{n+1} (\mathbb{F}_q)$ and thus we can apply Theorem \ref{warm-up bound on N_1 thm}.
\end{proof}

Using this Lemma we can prove the following Theorem:
\begin{theorem}
\label{bound on N_1 thm}
Let $G, K_{\lbrace 0 \rbrace},..., K_{\lbrace n-1 \rbrace}, R_0,...,R_{n-1}$ as above. If $n \geq 4$ or $q$ is odd, then
$$G = \langle \bigcup_i K_{\lbrace i \rbrace} \vert \bigcup_i R_i \rangle$$
and the Dehn function of this presentation is bounded independently of $q$.
\end{theorem}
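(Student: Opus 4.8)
The plan is to mimic the proof of Theorem \ref{warm-up bound on N_1 thm}, with Lemma \ref{rel in subgrp lemma} (equivalently, Corollary \ref{BissD coro 1} transported into $G$) playing the role that Theorem \ref{BissD thm 1} played there. By the reduction carried out in the paragraphs preceding Observation \ref{rel obsrv 2}, the group $G$ is presented by the pure degree Steinberg relations over the generating set $\lbrace e_{i,j}(at^k) : 1 \le i < j \le n+1,\ 0 \le k \le j-i,\ a \in \mathbb{F}_q \rbrace$, with Dehn function bounded independently of $q$; since each generator $e_{i,j}(at^k)$ is a product of at most $2^n$ elements of $\bigcup_i K_{\lbrace i \rbrace}$ (unfold a length-$(j-i)$ iterated commutator of generators $e_{l,l+1}(r_l)$ with $\deg r_l \le 1$), and conversely $e_{l,l+1}(a+bt)=e_{l,l+1}(a)\,e_{l,l+1}(bt)$, the two generating sets are interchangeable at cost depending only on $n$. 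Hence it suffices to show that every pure degree Steinberg relation can be derived from $\bigcup_i R_i$ in a number of steps independent of $q$, and by Observation \ref{rel obsrv 2} only the four listed families (1)--(4) need to be treated, since all the others already lie in $R_0 \cup R_{n-1}$. Once this is done, the natural epimorphism $\langle \bigcup_i K_{\lbrace i \rbrace} \mid \bigcup_i R_i \rangle \to G$ is an isomorphism and its Dehn function is bounded independently of $q$.

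The uniform mechanism for the four families is to push each relation into a subgroup $\Unip_{n+1}(r_1,\dots,r_n) \le G$, where $r_1,\dots,r_n \in \mathbb{F}_q[t]$ are chosen of degree $\le 1$ — each $r_l$ being a scalar or a scalar multiple of $t$ — so that the products $r_i^j = \prod_{l=i}^{j-1} r_l$ realize exactly the monomials $at^{k_1},\,bt^{k_2},\dots$ appearing in the relation (placing a ``$t$'' in precisely $k$ of the slots $r_i,\dots,r_{j-1}$ produces $t^{k}$, and the leftover scalars are absorbed into the parameters $a_1,a_2$ of Lemma \ref{rel in subgrp lemma}). Because $\Unip_{n+1}(r_1,\dots,r_n) \cong \Unip_{n+1}(\mathbb{F}_q)$, Lemma \ref{rel in subgrp lemma} — which rests on Theorem \ref{warm-up bound on N_1 thm}, hence on the hypothesis $n \ge 4$ or $q$ odd — then yields the relation as a consequence of a $q$-independent number of relations in $\bigcup_i R_i$.

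Concretely, I would spell this out for family (2), $[e_{1,j}(at^{k_1}),e_{j,n+1}(bt^{k_2})] = e_{1,n+1}(ab\,t^{k_1+k_2})$ with $0 \le k_1 \le j-1$ and $0 \le k_2 \le n+1-j$, as the representative case: choose $r_1,\dots,r_{j-1}$ with exactly $k_1$ entries equal to $t$ (so $r_1^j = t^{k_1}$) and $r_j,\dots,r_n$ with exactly $k_2$ entries equal to $t$ (so $r_j^{n+1} = t^{k_2}$, hence $r_1^{n+1} = t^{k_1+k_2}$); when $k_1+k_2 = n$ one puts the last ``$t$'' in the $(n,n+1)$-slot, so that the inductive bracketing $e_{1,n+1}(r_1^{n+1}) = [e_{1,n}(r_1^{n}), s_n(r_n)]$ used by Lemma \ref{rel in subgrp lemma} matches the formal symbol $e_{1,n+1}(ab\,t^n)=[e_{1,n}(t^{n-1}),e_{n,n+1}(abt)]$ fixed in Observation \ref{rel obsrv 2} (reconciling the two bracketings is itself a Steinberg relation inside the subgroup, costing $O_n(1)$ relations). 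Family (1) is the commuting case of Lemma \ref{rel in subgrp lemma}. For family (3), when $k_2 \ge k_1$ and $\lbrace i,j\rbrace \ne \lbrace 1,n+1\rbrace$ one again chooses the $r_l$'s so that $r_i^j = at^{k_1}$ and $r_1^{n+1} = bt^{k_2}$ (place the extra $k_2-k_1$ factors of $t$, and a correction scalar, in slots outside $\lbrace i,\dots,j-1\rbrace$), and invokes Lemma \ref{rel in subgrp lemma}; the remaining cases, where $e_{1,n+1}(bt^{k_2})$ is forced to have too low a $t$-degree to lie in such a subgroup, are handled by expanding $e_{1,n+1}(bt^{k_2})$ via its formal definition $[e_{1,l}(\cdot),e_{l,n+1}(\cdot)]$ for a suitable $l \notin \lbrace i,j\rbrace$ and combining the two commuting relations $[e_{i,j}(at^{k_1}),e_{1,l}(\cdot)]=1$, $[e_{i,j}(at^{k_1}),e_{l,n+1}(\cdot)]=1$ (themselves instances of family (1) or relations in $R_0\cup R_{n-1}$) with the identity ``$x$ commutes with $y$ and $z$ $\Rightarrow$ $x$ commutes with $[y,z]$'', which needs only free-group manipulation and $O(1)$ steps. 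Finally family (4), the additivity $e_{1,n+1}(at^k)\,e_{1,n+1}(bt^k) = e_{1,n+1}((a+b)t^k)$, follows by exactly the commutator computation in the proof of Theorem \ref{warm-up bound on N_1 thm}, using the family-(3) relations just derived.

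The main obstacle will be purely one of bookkeeping: one must verify that the chosen polynomials $r_l$ never force any matrix entry to exceed its allowed degree $j-i$ (this is precisely why the symbols $e_{1,n+1}(at^n)$ are defined with a degree-$1$ factor in the $(n,n+1)$-slot), that the iterated bracketings produced by Lemma \ref{rel in subgrp lemma} agree with the boundary symbols fixed in Observation \ref{rel obsrv 2}, and that the ``degree-incompatible'' sub-cases of family (3) are genuinely covered by the formal-definition argument. Since every $r_l$ has degree $\le 1$, each individual derivation takes place inside a single copy of $\Unip_{n+1}(\mathbb{F}_q)$, so no phenomenon beyond the finite-field case intervenes, and all the required $q$-independence is inherited from Theorem \ref{warm-up bound on N_1 thm}.
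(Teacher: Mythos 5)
Your overall architecture matches the paper's: reduce to the pure-degree Steinberg relations, invoke Observation \ref{rel obsrv 2} to shrink the list to families (1)--(4), and derive each family by embedding it into a subgroup $\Unip_{n+1}(r_1,\dots,r_n)\cong\Unip_{n+1}(\mathbb F_q)$ via Lemma \ref{rel in subgrp lemma}. Families (2) and (4) are treated exactly as the paper does, with monomial $r_l\in\{a,\,bt\}$. But there is a genuine gap in your treatment of family (1), and a departure from the paper's route in family (3).

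\textbf{Family (1).} You dismiss it with ``Family (1) is the commuting case of Lemma \ref{rel in subgrp lemma},'' under the stated mechanism that ``each $r_l$ [is] a scalar or a scalar multiple of $t$.'' That mechanism does not reach all of family (1). For $[e_{1,j}(at^{k_1}),e_{i,n+1}(bt^{k_2})]=1$ with $i<j$ the slot ranges $\{1,\dots,j-1\}$ and $\{i,\dots,n\}$ overlap, and you need the number of $t$-slots in the overlap to simultaneously satisfy $m\le\min(j-i,k_1,k_2)$ and $m\ge\max(0,\,k_1-(i-1),\,k_2-(n-j+1))$; this system can be infeasible. The paper's own worked example ($n=3$, $j=3$, $i=2$, $k_1=0$, $k_2=2$) is exactly such a case: you would need $r_2$ to be a scalar (to get $r_1^3$ of degree $0$) and simultaneously of degree $1$ (to get $r_2^4$ of degree $2$). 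The paper's fix is to allow \emph{non-monomial} degree-$1$ polynomials $r_l$ (there $r_1=t+a$, $r_2=t+1$, $r_3=bt$), apply Lemma \ref{rel in subgrp lemma} to get a commuting relation between sums of monomials, and then expand each side via the additivity relations in $R_0\cup R_{n-1}$ and peel off the already-established sub-commutators to isolate the wanted one. This ``polynomial expansion'' step is the heart of the argument and is entirely absent from your proposal; without it family (1) is not established, and you also rely on family (1) inside your family (3) argument.

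\textbf{Family (3).} Here you do notice the degree-incompatibility phenomenon, but you propose a different remedy from the paper's: expand $e_{1,n+1}(bt^{k_2})$ through its formal bracket $[e_{1,l}(\cdot),e_{l,n+1}(\cdot)]$ and use ``$x$ commutes with $y$ and $z\Rightarrow x$ commutes with $[y,z]$.'' This free-group identity is fine and costs $O(1)$ moves, and for $n=3$ one can in fact choose $l$ (taking $e_{1,4}=[e_{1,2}(\cdot),e_{2,4}(\cdot)]$ or $[e_{1,3}(\cdot),e_{3,4}(\cdot)]$ when $l=i$ or $l=i+1$ is forced) so that both factors land in a single $K_{\{i'\}}$ or in instances of family (1). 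The paper instead reduces to the two generators $e_{i,i+1}(a)$, $e_{i,i+1}(at)$ and reuses the polynomial expansion trick with $r_i=at+b$. Your route is a plausible alternative, but it is circular until family (1) is fixed, and you would still need to check that a valid $l$ exists for every $n\ge 3$ and every residual sub-case.

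To repair the proposal, import the paper's polynomial expansion device: allow arbitrary degree-$\le 1$ polynomials $r_l$ in Lemma \ref{rel in subgrp lemma}, and after applying it, decompose each $e_{i,j}(r_i^j)$ into a $q$-independent product of pure-degree generators using the additivity relations in $\bigcup_i R_i$, then subtract the already-proved commutator pieces. With that in place your scheme for families (2) and (4) stands, family (1) becomes complete, and family (3) can be finished by either your bracket-expansion argument (once $l$ is shown to exist) or the paper's.
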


\begin{proof}
Denote
$$e_{1,n+1} (at^k) = \begin{cases}
[e_{1,n} (t^k), e_{n,n+1} (a)] & k < n \\
[e_{1,n} (t^{k-1}), e_{n,n+1} (at)] & k = n \\
\end{cases}.$$
We only need to show that one can deduce each relation of types (1)-(4) of Observation \ref{rel obsrv 2} from a finite number of relations in $\bigcup_i R_i$ that is independent of $q$.

\textbf{Observation \ref{rel obsrv 2}, relations of type (1):} We need to show that $\bigcup_i R_i$ imply that for every $1 < j \leq n, 2 \leq i < n+1$, $i \neq j$, every $0 \leq k_1 \leq j-1, 0 \leq k_2 \leq n+1-i$ and every $a, b \in \mathbb{F}_q$
$$[e_{1,j} (a t^{k_1}),e_{i,n+1} (b t^{k_2})] = 1,$$
and that the number of relations needed to deduce this relation is independent of $q$. We will only prove this for $n=3$. The proof in the general case is similar, but more tedious and is left for the reader. In this case, either $j=2$ and $i=3$ or $j=3$ and $i=2$. If $j=2$ and $i=3$, then the needed relation appears in $R_1$ and we are done. Thus we are left to show that
$$[e_{1,3} (a t^{k_1}),e_{2,4} (b t^{k_2})] = 1,$$
where $a,b \in \mathbb{F}_q$ and $0 \leq k_1, k_2 \leq 2$. Applying Lemma \ref{rel in subgrp lemma} in all the cases where $r_1, r_2, r_3 \in \lbrace 1,t \rbrace$ prove the cases where $k_1 = k_2$ and the cases where $k_1 =1$ or $k_2 =1$. Thus, we are left to prove the cases
$$[e_{1,3} (a),e_{2,4} (b t^{2})] = 1,$$
and
$$[e_{1,3} (at^2),e_{2,4} (b)] = 1.$$
We will show only $[e_{1,3} (a),e_{2,4} (b t^{2})] = 1$: Applying Lemma \ref{rel in subgrp lemma} with $r_1 = t+a, r_2 = t+1, r_3 = bt$, we get that
$$[e_{1,3} (t^2 + (a+1)t +a),e_{2,4} (b t^{2} + bt)] = 1.$$
For the relations in $R_0 \cup R_2$, we have that
$$e_{1,3} (t^2 + (a+1)t +a) = e_{1,3} (t^2) e_{1,3} ((a+1)t) e_{1,3} (a),$$
$$e_{2,4} (b t^{2} + bt) = e_{2,4} (b t^2) e_{2,4} (bt).$$
We also already showed that
$$[ e_{1,3} (t^2), e_{2,4} (b t^2)] =1, [ e_{1,3} (t^2), e_{2,4} (b t)] =1,  [ e_{1,3} ((a+1)t), e_{2,4} (b t^2)] =1,$$
$$[ e_{1,3} ((a+1)t), e_{2,4} (b t)] =1, [ e_{1,3} (a), e_{2,4} (b t)] = 1,$$
and thus it follows from $[e_{1,3} (t^2 + (a+1)t +a),e_{2,4} (b t^{2} + bt)] = 1$, that
$$[e_{1,3} (a),e_{2,4} (b t^{2})] = 1,$$
as needed.

\textbf{Observation \ref{rel obsrv 2}, relations of type (2):} We need to show that $\bigcup_i R_i$ imply that for every $1 < j < n+1$, every $0 \leq k_1 \leq j-1, 0 \leq k_2 \leq n+1-j$ and every $a, b \in \mathbb{F}_q$,
$$[e_{1,j} (at^{k_1}),e_{j,n+1} (b t^{k_2})] = e_{1,n+1} (a b t^{k_1 + k_2}),$$
and that the number of relations needed to deduce this relation is independent of $q$. This follows from Lemma \ref{rel in subgrp lemma} by taking $r_1 =...= r_{k_1} =t, r_{k_1+1} =...=r_{j-1} =1, r_j = ... = r_{j+k_2-1}=t, r_{j+k_2} =...=r_n =1$.

\textbf{Observation \ref{rel obsrv 2}, relations of type (3):}  We need to show that $\bigcup_i R_i$ imply that for every $1 \leq i < j  \leq n+1$, every $0 \leq k_1 \leq j-i, 0 \leq k_2 \leq n$ and every $a, b \in \mathbb{F}_q$,
$$[e_{i,j} (a t^{k_1}),e_{1,n+1} (b t^{k_2})] = 1,$$
and that the number of relations needed to deduce this relation is independent of $q$. We note that since every elementary matrix in $G$ can be written as a product of elements of the form $e_{i,i+1} (a)$ and $e_{i,i+1} (a t)$ and the number of elements in such product is bounded independently of $q$. Thus, it is enough to show that for every $1 \leq i \leq n$,
$$[e_{i,i+1} (a),e_{1,n+1} (b t^{k_2})] = 1,$$
and
$$[e_{i,i+1} (at),e_{1,n+1} (b t^{k_2})] = 1.$$
If $k_2 <n$, then for every $i$, we can always choose $r_1,...,r_n \in \lbrace 1, t \rbrace$ such that exactly $k_2$ of them are $t$ and $r_i =1$. With this choice, applying Lemma \ref{rel in subgrp lemma}, implies that
$$[e_{i,i+1} (a),e_{1,n+1} (b t^{k_2})] = 1.$$
Similarly, if $k_2 >0$, we can apply Lemma \ref{rel in subgrp lemma} and show that
$$[e_{i,i+1} (at),e_{1,n+1} (b t^{k_2})] = 1.$$
Thus, we are left with the cases
$$[e_{i,i+1} (at),e_{1,n+1} (b)] = 1,$$
and
$$[e_{i,i+1} (a),e_{1,n+1} (bt^n)] = 1.$$
We will prove the first case. Apply Lemma \ref{rel in subgrp lemma} with
$r_i = at +b$ and all the other $r_j$'s equal $1$. We get that
$$[e_{i,i+1} (at +b),e_{1,n+1} (at +b)] = 1.$$
Note that
$$e_{i,i+1} (at +b) = e_{i,i+1} (at) e_{i,i+1} (b),$$
and that
\begin{dmath*}
e_{1,n+1} (at +b) = [e_{1,n} (1),e_{n,n+1} (at +b)] =
e_{1,n} (-1) e_{n,n+1} (-at-b)e_{1,n} (1) e_{n,n+1} (at+b)=
e_{1,n} (-1) e_{n,n+1}(-b) e_{n,n+1} (-at) e_{1,n} (1) e_{n,n+1} (at) e_{n,n+1} (b)=
e_{1,n} (-1) e_{n,n+1}(-b) e_{1,n} (1) [e_{1,n} (1), e_{n,n+1} (at)]  e_{n,n+1} (b)=
e_{1,n} (-1) e_{n,n+1}(-b) e_{1,n} (1)  e_{1,n+1} (at)  e_{n,n+1} (b)=
e_{1,n} (-1) e_{n,n+1}(-b) e_{1,n} (1)  e_{n,n+1} (b) e_{1,n+1} (at) =
e_{1,n+1} (b) e_{1,n+1} (at),
\end{dmath*}
where the last equality follows from the fact that we already proven that $e_{1,n+1} (at)$ commutes with all the elementary matrices. Thus, we have that
$$[e_{i,i+1} (at)e_{i,i+1} (b),e_{1,n+1} (at)e_{1,n+1} (b)] = 1.$$
We already showed that
$$[e_{i,i+1} (at),e_{1,n+1} (at)] =1, [e_{i,i+1} (b), e_{1,n+1} (at)] =1,$$
$$[e_{i,i+1} (b), e_{1,n+1} (b)] =1,$$
and thus it follows that
$$[e_{i,i+1} (at),e_{1,n+1} (b)] = 1,$$
as needed.

\textbf{Observation \ref{rel obsrv 2}, relations of type (4):} We need to show that $\bigcup_i R_i$ imply that for every $a, b \in \mathbb{F}_q$ and every $0 \leq k \leq n$,
$$e_{1,n+1} (a t^k) e_{1,n+1} (b t^k) = e_{1,n+1} ((a + b)t^k),$$
and that the number of relations needed to deduce this relation is independent of $q$. This follows from Lemma \ref{rel in subgrp lemma} with $r_1 = ... r_k = t$ and $r_{k+1} =... = r_n =1$.
\end{proof}

As a corollary, we get a generalization of Theorem \ref{new coboundary expanders thm intro} that appeared in the introduction:
\begin{corollary}
\label{N_0 + N_1 bound for unip coro}
Let $G, K_{\lbrace 0 \rbrace},...,K_{\lbrace n-1 \rbrace}$ as above and let $X = X(G, (K_{\lbrace i \rbrace})_{i \in \lbrace 0,...,n-1 \rbrace})$ be the coset complex. If $n \geq 4$ or $q$ is odd then the constants $N_0$ and $N_1$ of $X$ are bounded independently of $q$ and thus $\Exp_b^0 (X), \Exp_b^1 (X)$ are bounded from below independently of $q$ (the bound does depend on $n$).
\end{corollary}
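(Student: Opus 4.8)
The plan is to deduce Corollary~\ref{N_0 + N_1 bound for unip coro} by feeding the preceding results of this section into Theorem~\ref{N_0 + N_1 bound thm}, after first recording the strong symmetry of $X$. First I would observe that $X = X(G,(K_{\lbrace i \rbrace})_{i \in \lbrace 0,\dots,n-1 \rbrace})$ is strongly symmetric: here $\vert I \vert = n \geq 3$, and the subgroups $K_{\lbrace i \rbrace}$ are exactly the ones studied in \cite{KOCosetGeom}, so either the transitivity criterion of Theorem~\ref{transitive action thm} applies, or one cites \cite[Theorems~2.4, 3.5]{KOCosetGeom} directly (as was done for $X_{link,q}$ in the introduction). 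Hence $G$ acts transitively on the top-dimensional simplices of $X$, which is the hypothesis needed for the $\Exp^k_b$-conclusions of Theorem~\ref{N_0 + N_1 bound thm}. Note also that $G$ is a finite group (its matrix entries range over polynomials of bounded degree over $\mathbb{F}_q$), so the $K_{\lbrace i \rbrace}$ are finite, as required by the hypotheses of Theorem~\ref{bound on N_1 thm}.

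Next I would bound $N_0$: by Lemma~\ref{bound on N_0 - lemma} the subgroups $K_{\lbrace 0 \rbrace},\dots,K_{\lbrace n-1 \rbrace}$ boundedly generate $G$ with a bound depending only on $n$, and therefore Corollary~\ref{bound on N_0 coro} gives $N_0 \leq 3 + 4(n+1)$, independently of $q$. Then I would invoke Theorem~\ref{bound on N_1 thm}: under the hypothesis ``$n \geq 4$ or $q$ odd'' one has $G = \langle \bigcup_i K_{\lbrace i \rbrace} \vert \bigcup_i R_i \rangle$ with Dehn function $\Dehn$ bounded independently of $q$ (by a bound depending on $n$). Combining the $q$-independent bound on $N_0$ with the $q$-independent bound on $\Dehn(2N_0+1)$ and substituting into the polynomial $p(x,y) = 16 y^2 + 6 x^2 + 20 xy + 24 y + 10x + 1$ of Theorem~\ref{N_0 + N_1 bound thm} yields $N_1 \leq p(2N_0+1,\Dehn(2N_0+1))$, again independent of $q$. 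Finally, since $X$ is strongly symmetric, parts (1) and (2) of Theorem~\ref{N_0 + N_1 bound thm} convert these bounds into lower bounds on $\Exp^0_b(X)$ and $\Exp^1_b(X)$ of the form $1/c(n)$ for an explicit constant $c(n)$, where one only has to keep track of the dimension shift (here $\dim X = n-1$ rather than $n$). This completes the argument.

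There is essentially no obstacle internal to this corollary; it is a bookkeeping assembly of results already established, the substantive input being the generalized Biss--Dasgupta presentation of Theorem~\ref{bound on N_1 thm} (and, behind it, the warm-up Theorem~\ref{warm-up bound on N_1 thm}). The only point I would expect to require care is checking that all hypotheses of Theorem~\ref{N_0 + N_1 bound thm} are literally met: finiteness of $G$ and of the $K_{\lbrace i \rbrace}$, the condition $\vert I \vert = n \geq 3$ (which is also what makes $\Exp^1_b$ meaningful for the $(n-1)$-dimensional complex $X$), and above all strong symmetry of $X$ via \cite{KOCosetGeom}; once these are in place, the conclusion is immediate.
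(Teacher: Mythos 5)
Your proposal is correct and follows essentially the same route as the paper's own proof, which simply cites \cite[Theorem 3.5]{KOCosetGeom} together with Theorem~\ref{transitive action thm} for strong symmetry and then combines Corollary~\ref{bound on N_0 coro}, Theorem~\ref{bound on N_1 thm}, and Theorem~\ref{N_0 + N_1 bound thm}. Your additional checks (finiteness of $G$ and the $K_{\lbrace i \rbrace}$, $\vert I\vert \geq 3$, the dimension shift $\dim X = n-1$) are exactly the kind of hypothesis verification the paper leaves implicit.
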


\begin{proof}
By \cite[Theorem 3.5]{KOCosetGeom} and Theorem \ref{transitive action thm} stated above, $X = X(G, (K_{\lbrace i \rbrace})_{i \in \lbrace 0,...,n-1 \rbrace})$ is strongly symmetric. Thus, combining Corollary \ref{bound on N_0 coro}, Theorem \ref{bound on N_1 thm} and Theorem \ref{N_0 + N_1 bound thm} yields the desired result.
\end{proof}

\section{New cosystolic and topological expanders}
\label{New cosys expanders sec}

After all this, we are ready to prove Theorem \ref{new cosystolic expanders thm intro} from the introduction. Let us state it again for completeness:

\begin{theorem}
\label{new cosystolic expanders thm}
Let $s \in \mathbb{N}, s > 4$ and $q$ be a prime power. Denote $G^{(s)}_q$ to be the group of $4 \times 4$ matrices with entries in $\mathbb{F}_q [t] / \langle t^s \rangle$ generated by the set
$$\lbrace e_{1,2} (a +bt), e_{2,3} (a +bt), e_{3,4} (a +bt), e_{4,1} (a +bt)  : a,b \in \mathbb{F}_q \rbrace.$$
For $0 \leq i \leq 2$, define $K_{\lbrace i \rbrace}$ to be the subgroup of $G^{(s)}_q$ generated by
$$\lbrace e_{j,j+1} (a +bt),  e_{4,1} (a +bt) : a,b \in \mathbb{F}_q, 1 \leq j \leq 3, j \neq i+1 \rbrace$$
and define $K_{\lbrace 3 \rbrace}$ to be the subgroup of $G^{(s)}_q$ generated by
$$\lbrace e_{1,2} (a +bt), e_{2,3} (a +bt), e_{3,4} (a +bt)  : a,b \in \mathbb{F}_q \rbrace.$$
Denote $X^{(s)}_q = X(G^{(s)}_q, (K_{\lbrace i \rbrace})_{i \in \lbrace 0,...,3 \rbrace})$ to be the coset complex as defined above. Then for any fixed $q$, $\lbrace X^{(s)}_q \rbrace_{s \geq 5}$ is a family of bounded degree simplicial complexes and if $q$ is odd and large enough, then there are $\varepsilon >0, \mu >0$ such that for every $s$, the $2$-skeleton of $X^{(s)}$ is a a $(\varepsilon , \mu)$-cosystolic expander. Thus, the sequence of $2$-skeleton of $X^{(s)}_q$ is a sequence of bounded degree cosystolic and topological expanders.
\end{theorem}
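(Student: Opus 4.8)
The plan is to assemble the machinery of the previous sections through the Evra--Kaufman criterion (Theorem \ref{EK criterion thm}), using the coboundary expansion of the links established in Corollary \ref{N_0 + N_1 bound for unip coro}. First I would record what is already known from \cite{KOconstruction}: for each fixed prime power $q$ the family $\{X^{(s)}_q\}_{s>4}$ has bounded degree (with a bound depending on $q$), the number of vertices of $X^{(s)}_q$ tends to infinity with $s$, and $X^{(s)}_q$ is a $\lambda_q$-local spectral expander with $\lambda_q = \frac{1}{\sqrt q - 3}$, which becomes arbitrarily small as $q$ grows. This already gives the first assertion of the theorem (the $q$-dependent bounded degree) and two of the three defining properties of a family of bounded degree cosystolic expanders.

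Next I would identify the links. By \cite[Theorems 2.4, 3.5]{KOCosetGeom}, for every $s>4$ the link in $X^{(s)}_q$ of any vertex is isomorphic to the fixed coset complex $X_{link,q} = X(G_{link,q},(K_{\{i\}})_{i\in\{0,1,2\}})$ of the unipotent group with polynomial entries, and this complex is strongly symmetric. Here $n=3$, so we are in the range $n<4$ where the odd-characteristic hypothesis is needed; invoking that $q$ is odd, Corollary \ref{N_0 + N_1 bound for unip coro} yields a constant $\varepsilon_1>0$, independent of both $q$ and $s$, with $X_{link,q}$ an $\varepsilon_1$-coboundary expander. For the links of edges $\tau\in X^{(s)}_q(1)$, which are graphs, I would argue spectrally instead: since $X^{(s)}_q$ is a $\lambda_q$-local spectral expander, the $1$-skeleton of each such link is a connected graph whose random walk has second eigenvalue at most $\lambda_q$, so the easy direction of the Cheeger inequality bounds $\Exp^0_b$ of the link below by a constant $\varepsilon_2>0$ as soon as $\lambda_q$ is small. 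Putting $\varepsilon'=\min(\varepsilon_1,\varepsilon_2)$, the second hypothesis of Theorem \ref{EK criterion thm} holds for all $\tau\in X^{(s)}_q(k)$ with $0\le k\le n-2=1$.

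Now I would apply Theorem \ref{EK criterion thm} with $n=3$ and this fixed $\varepsilon'$, obtaining $\mu=\mu(3,\varepsilon')>0$, $\varepsilon=\varepsilon(3,\varepsilon')>0$ and a threshold $\lambda(3,\varepsilon')>0$. Choosing a large enough odd prime power $q$ so that $\lambda_q\le\lambda(3,\varepsilon')$ (and so that the edge-link estimate above is in force) gives that the $2$-skeleton $Y^{(s)}_q$ of $X^{(s)}_q$ — which is exactly its $(n-1)$-skeleton for $n=3$ — is an $(\varepsilon,\mu)$-cosystolic expander for every $s$, with $\varepsilon,\mu$ independent of $s$. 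Combined with the bounded degree and the growth of the vertex count, this shows $\{Y^{(s)}_q\}_{s>4}$ is a family of bounded degree cosystolic expanders. Finally, a family of bounded degree cosystolic expanders has the topological overlapping property (Definition \ref{top exp def}), so these complexes are topological expanders as well.

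The genuinely hard work lies upstream and is already done: it is the bound on the coboundary expansion of the links $X_{link,q}$ \emph{uniformly in $q$}, obtained via the filling-constant criterion (Theorems \ref{filling constants criterion for coboundary exp thm2} and \ref{N_0 + N_1 bound thm}) together with the presentation-theoretic bound on the Dehn function coming from the Biss--Dasgupta analysis (Theorem \ref{bound on N_1 thm} and Corollary \ref{N_0 + N_1 bound for unip coro}). At the present stage the only new checks are bookkeeping: verifying that the vertex links are precisely the complexes handled by Corollary \ref{N_0 + N_1 bound for unip coro}, that the edge links are spectral expanders (immediate from the definition of a local spectral expander), and that the quantifiers are arranged correctly — $\varepsilon'$ is fixed first, then $q$ is chosen large — so that no circularity arises.
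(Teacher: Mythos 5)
Your proposal is correct and follows essentially the same route as the paper: identify the vertex links as $X_{link,q}$ and invoke the uniform-in-$q$ coboundary expansion from the Dehn-function machinery, handle edge links via the easy direction of the Cheeger inequality, and then feed the resulting fixed $\varepsilon'$ into the Evra--Kaufman criterion before choosing $q$ large. The only (immaterial) deviation is that you bound the edge-link second eigenvalue by $\lambda_q = \frac{1}{\sqrt q - 3}$ from the definition of local spectral expander, whereas the paper uses the sharper bound $\frac{1}{\sqrt q}$ stated in \cite{KOconstruction}, which yields an $\varepsilon_2$ valid for all $q\geq 3$ rather than only for $q$ past a threshold; both versions arrange the quantifiers correctly.
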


\begin{proof}
We start by proving that there is $\varepsilon ' >0$ such that for every $s >4$ and every odd $q$, all the links of $X^{(s)}_q$ are $\varepsilon '$-coboundary expanders. Since $X^{(s)}_q$ is $3$-dimensional, we have to consider links of edges and links of vertices.

For links of edges, it is shown in \cite{KOconstruction} that every link is a bipartite graph with a second eigenvalue $\leq \frac{1}{\sqrt{q}}$. In this case, $\Exp_b^0$ is equal to the Cheeger of the graph and thus by the Cheeger inequality it is always larger than $\frac{1}{2}-\frac{1}{2 \sqrt{q}} \geq \frac{1}{2}-\frac{1}{2 \sqrt{3}}$.

For links of vertices, we note that every link is exactly the simplicial complex discussed in Theorem \ref{new coboundary expanders thm intro} and thus these links are coboundary expanders with expansion that does not depend on $q$.

Let $\lambda = \lambda (\varepsilon ')$, $\varepsilon = \varepsilon (\varepsilon ', \lambda) >0$ and $\mu = \mu (\varepsilon ', \lambda) >0$ be the constants of Theorem \ref{EK criterion thm}. For every odd $q$ such that $\frac{1}{\sqrt{q}-3} \leq \lambda$, we have that $X^{(s)}$ is a $\lambda$-local spectral expander and thus by Theorem \ref{EK criterion thm} the $2$-skeleton of $X^{(s)}$ is a $(\varepsilon, \mu)$-cosystolic expander.
\end{proof}

The motivation behind the definition of cosystolic expansion was to prove topological overlapping: Let $X$ be an $n$-dimensional simplicial complex as before. Given a map $f : X^{(0)} \rightarrow \mathbb{R}^n$, a topological extension of $f$ is a continuous map $\widetilde{f} : X \rightarrow \mathbb{R}^n$ which coincides with $f$ on $X^{(0)}$.
\begin{definition}[Topological overlapping]
\label{top exp def}
A simplicial complex $X$ as above is said to have $c$-topological overlapping (with $1 \geq c>0$) if for every $f: X^{(0)} \rightarrow \mathbb{R}^n$ and every topological extension $\widetilde{f}$, there is a point $z \in \mathbb{R}^n$ such that
$$\vert \lbrace \sigma \in X (n) : z \in \widetilde{f} (\sigma)\rbrace \vert \geq c \vert X (n) \vert .$$
In other words, this means that at least $c$ fraction of the images of $n$-simplices intersect at a single point.

A family of pure $n$-dimensional simplicial complexes $\lbrace X_j \rbrace$ is called a family of topological expanders, if there is some $c>0$ such that for every $j$, $X_j$ has $c$-topological overlapping.
\end{definition}

In \cite{DKW}, it was shown that cosystolic expansion implies topological expansion and thus as a Corollary of \cite[Theorem 8]{DKW} and Theorem \ref{new cosystolic expanders thm} we get that:
\begin{theorem}
\label{new top exp thm}
There is a constant $Q$ such that for every odd prime power $q \geq Q$, such there is $c >0$ such that for every $s >4$, the $2$-skeleton of $X^{(s)}_q$ is $c$-topological overlapping, i.e., the family $\lbrace 2 \text{-skeleton of } X^{(s)}_q \rbrace_{s >4}$ is a family of bounded degree topological expanders.
\end{theorem}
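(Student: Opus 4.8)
The plan is to deduce topological overlapping directly from the cosystolic expansion already established in Theorem \ref{new cosystolic expanders thm}, using the general implication of Dinur, Kaufman and Wigderson that a bounded degree cosystolic expander is a topological expander \cite[Theorem 8]{DKW}. So essentially no new geometry is needed; the work is to check that all relevant constants can be chosen uniformly in $s$.

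First I would unwind the proof of Theorem \ref{new cosystolic expanders thm} to record the dependence of constants. By Corollary \ref{N_0 + N_1 bound for unip coro} (equivalently Theorem \ref{new coboundary expanders thm intro}) there is an absolute $\varepsilon ' > 0$ so that for every odd prime power $q$ and every $s > 4$, every link of $X^{(s)}_q$ is an $\varepsilon '$-coboundary expander. Let $\lambda, \varepsilon, \mu > 0$ be the constants produced by the Evra--Kaufman criterion (Theorem \ref{EK criterion thm}) from the input $(n, \varepsilon ') = (3, \varepsilon ')$, and take $Q$ to be any integer with $\tfrac{1}{\sqrt{Q} - 3} \leq \lambda$. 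For odd $q \geq Q$ the complex $X^{(s)}_q$ is a $\lambda$-local spectral expander for every $s$ (from the summary of \cite{KOconstruction}), so Theorem \ref{EK criterion thm} shows that $Y^{(s)}_q$, the $2$-skeleton of $X^{(s)}_q$, is an $(\varepsilon, \mu)$-cosystolic expander; crucially $\varepsilon, \mu$ depend only on $\varepsilon '$, hence not on $s$.

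Next I would invoke \cite[Theorem 8]{DKW}: there is a function $c(n, L, \varepsilon, \mu) > 0$ such that any $n$-dimensional simplicial complex of degree at most $L$ that is an $(\varepsilon, \mu)$-cosystolic expander has $c(n, L, \varepsilon, \mu)$-topological overlapping. For fixed odd $q \geq Q$, the family $\{ Y^{(s)}_q \}_{s > 4}$ has $n = 2$, is of degree at most some $L = L(q)$ independent of $s$ (again from \cite{KOconstruction}), and is $(\varepsilon, \mu)$-cosystolic with $\varepsilon, \mu$ as above. Hence with $c = c(2, L(q), \varepsilon, \mu) > 0$, every $Y^{(s)}_q$ in the family has $c$-topological overlapping, which is exactly the assertion of Theorem \ref{new top exp thm}.

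The one point that needs attention — and the only thing that could go wrong — is the uniformity in $s$: one must verify that $\varepsilon '$ is genuinely absolute, that the Evra--Kaufman constants depend only on $(n, \varepsilon ')$, and that both the local spectral bound $\tfrac{1}{\sqrt{q} - 3}$ and the degree bound $L(q)$ coming from \cite{KOconstruction} are $s$-independent. All three hold, so there is no real obstacle; the theorem is a formal consequence of the results proved in the preceding sections together with \cite{DKW}.
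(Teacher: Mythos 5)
Your proposal is correct and follows exactly the route the paper takes: the theorem is stated there as an immediate corollary of Theorem \ref{new cosystolic expanders thm} combined with \cite[Theorem 8]{DKW}, which is precisely the deduction you carry out. The only addition you make — explicitly tracking the $s$-uniformity of $\varepsilon'$, the Evra--Kaufman constants, the spectral bound, and the degree bound — is a useful sanity check but not a deviation from the paper's argument.
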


\appendix

\section{The existence of a cone function and vanishing of (co)homology}
\label{The existence of a cone function and vanishing of (co)homology sec}

As seen in Examples \ref{0-Cone example}, \ref{1-Cone example} above, for a simplicial complex $X$ and $k \geq 0$, a $k$-cone function may not exist and if it exists it may not be unique. The existence of a $k$-cone function turns out to be equivalent to vanishing of (co)homology (we recall that by the universal coefficient theorem the vanishing of the $j$-th homology with coefficients in $\mathbb{F}_2$ is equivalent to the vanishing of the $j$-th cohomology with coefficients in $\mathbb{F}_2$):
\begin{proposition}
\label{existence of cone func prop}
Let $X$ be a finite $n$-dimensional simplicial complex and $0 \leq k \leq n-1$. There exists a $k$-cone function (with some apex) if and only if $\widetilde{H}_j (X)=\widetilde{H}^j (X) = 0$ for every $0 \leq j \leq k$.
\end{proposition}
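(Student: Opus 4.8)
The plan is to prove both implications by relating the inductive structure of a $k$-cone function to the exactness of the augmented chain complex in degrees $\le k$. Recall that $\widetilde{H}_j(X)=0$ for all $0\le j\le k$ is equivalent to saying that the augmented chain complex $C_k(X)\xrightarrow{\partial_k}\cdots\xrightarrow{\partial_0}C_{-1}(X)\to 0$ is exact at $C_j(X)$ for all $-1\le j\le k-1$ (and, at the top, $Z_k(X)=B_k(X)$, i.e. exactness at $C_k(X)$ using $\partial_{k+1}$). The two directions of the biconditional are then:

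\emph{($\Leftarrow$) Existence from vanishing.} Assume $\widetilde{H}_j(X)=0$ for $0\le j\le k$. I would construct $\Cone_j^v$ for a fixed vertex $v$ by induction on $j$, mirroring Examples \ref{0-Cone example} and \ref{1-Cone example}. For $j=-1$ set $\Cone_{-1}^v(\emptyset)=\{v\}$. Given a $(j-1)$-cone function with $-1\le j-1\le k-1$, for each $\tau\in X(j)$ consider the $j$-chain $B_\tau := \tau + \Cone_{j-1}^v(\partial_j\tau)$. By Lemma \ref{cone equation is cycle lemma}, $B_\tau\in Z_j(X)$; since $\widetilde H_j(X)=0$ we have $Z_j(X)=B_j(X)$, so there is $A_\tau\in C_{j+1}(X)$ with $\partial_{j+1}A_\tau=B_\tau$. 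Define $\Cone_j^v(\tau)=A_\tau$ and extend linearly (using Observation \ref{Cone function defined on simplices obs}). This yields a valid $j$-cone function; iterating up to $j=k$ gives the desired $k$-cone function. One subtlety worth a sentence: for $j=0$ the vanishing of $\widetilde H_0(X)$ is exactly connectedness of the $1$-skeleton (the reduced $0$-th homology being zero), which is what is needed to solve $\partial_0\Cone_0^v(\{u\})=\{u\}+\{v\}$.

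\emph{($\Rightarrow$) Vanishing from existence.} Suppose a $k$-cone function $\Cone_k^v$ exists. Fix $0\le j\le k$ and let $z\in Z_j(X)$. By the Remark following the definition of a cone function (the computation $\partial_{j+1}\Cone_k^v(z)=z+\Cone_k^v(\partial_j z)=z+\Cone_k^v(0)=z$), we get $z\in B_j(X)$. Hence $Z_j(X)=B_j(X)$ and $\widetilde H_j(X)=0$ for all $0\le j\le k$. (For $j=0$ one checks the reduced version: $\Cone_k^v$ restricted to $C_{-1}\oplus C_0$ is a $0$-cone function, so every $z\in Z_0(X)=\ker\partial_0$ is $\partial_1$ of something, i.e. $\widetilde H_0(X)=0$; this is the place where the augmentation $C_{-1}(X)=\{\emptyset\}$ and the base case $\Cone_{-1}^v(\emptyset)=\{v\}$ are used.) Finally invoke the universal coefficient theorem over $\mathbb{F}_2$ to pass between $\widetilde H_j$ and $\widetilde H^j$, as the statement requires both.

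\textbf{Main obstacle.} The mathematical content is essentially routine — it is a restatement of the fact that a contracting chain homotopy to a point is equivalent to acyclicity in a range of degrees, and the key computations are already spelled out in Lemma \ref{cone equation is cycle lemma} and the Remark in Section \ref{Cone radius as a bound on coboundary expansion sec}. The only real care needed is \emph{bookkeeping at the bottom of the complex}: making sure the $j=0$ and $j=-1$ cases are handled with the \emph{reduced} homology (so that the single vertex $\{v\}$ and the augmentation term $X(-1)=\{\emptyset\}$ play their role correctly), and confirming that "a $k$-cone function exists for some apex $v$" does not depend on the choice of $v$ — which follows because existence for one apex forces acyclicity, and acyclicity then produces a cone function for any apex by the construction in the $(\Leftarrow)$ direction. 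I would also state explicitly that linearity plus Observation \ref{Cone function defined on simplices obs} lets us define everything on simplices only, so no infinite or ill-defined choices arise in the finite complex $X$.
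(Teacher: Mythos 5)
Your proposal is correct and takes essentially the same route as the paper: for the forward direction you use the identical computation $\partial_{j+1}\Cone_k^v(z)=z$ on cycles, and for the converse you carry out the same inductive filling construction that the paper delegates to Proposition \ref{filling constants prop} (via Lemma \ref{cone equation is cycle lemma} and the vanishing of $\widetilde H_j$ to produce a chain bounding $\tau+\Cone_{j-1}^v(\partial_j\tau)$). The only cosmetic difference is that you inline that induction rather than citing the proposition, and you add a helpful explicit remark about the role of reduced homology at $j=0$.
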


\begin{proof}[Proof of Proposition \ref{existence of cone func prop}]
Let $X$ be a finite $n$-dimensional simplicial complex and $0 \leq k \leq n-1$.

Assume first that for every $0 \leq j \leq k$, $\widetilde{H_j} (X) = 0$. Then by definition, for every $0 \leq j \leq k$, $\Sys_j (X) = \infty$. Thus conditions of Proposition \ref{filling constants prop} are fulfilled trivially and as a result there exists a $k$-cone function.

In the other direction, assume there exists a $k$-cone function $\Cone_k^v$. Let $A \in Z_k (X)$. Then by the definition of the cone function,
$$\partial_{k+1} \Cone_k^v (A) = A + \Cone_k^v (\partial_{k} A) = A + \Cone_k^v (0) = A,$$
thus $A \in B_k (X)$ and since this holds for every $A$, it follows that $\widetilde{H}^k (X) = 0$.

By definition, the existence of a $k$-cone function implies the existence of a $j$-cone function for every $0 \leq j \leq k$ and therefore the above argument shows that $\widetilde{H}^j (X) = 0$ for every $0 \leq j \leq k$.
\end{proof}

\bibliographystyle{alpha}
\bibliography{bibl1}
\end{document}